\newtheorem{theorem}{Theorem}
\newtheorem{lemma}[theorem]{Lemma}
\newtheorem{proposition}[theorem]{Proposition}
\newtheorem{corollary}[theorem]{Corollary}
\newtheorem{theorema}{Theorem}
\newtheorem{corollarya}[theorema]{Corollary}
\theoremstyle{definition}
\newtheorem{remark}[theorem]{Remark}
\newtheorem{definition}[theorem]{Definition}
\newtheorem{notation}[theorem]{Notation}
\newcommand{\acos}{\mskip2mu{\rm acos}\mskip1mu}
\newcommand{\rmi}{\mskip2mu{\rm i}\mskip1mu}
\newcommand{\Order}{\mathop{\rm O}\nolimits}
\newcommand{\Sign}{\mathop{\rm sign}\nolimits}
\newcommand{\rmd}{\,{\rm d}}
\newcommand{\rme}{{\rm e}}
\newcommand{\Interior}{\mathop {\rm Int}\nolimits}
\newcommand{\Volume}{\mathop {\rm V}\nolimits}
\newcommand{\stretches}{\rightsquigarrow}
\def\J{\mathcal{J}}
\def\Cset{\mathbb{C}}
\def\Nset{\mathbb{N}}
\def\Rset{\mathbb{R}}
\def\Tset{\mathbb{T}}
\def\Zset{\mathbb{Z}}
\def\nf{\mathfrak{n}}
\def\pf{\mathfrak{p}}
\def\qf{\mathfrak{q}}
\def\tf{\mathfrak{t}}
\def\Pf{\mathfrak{P}}
\def\Qf{\mathfrak{Q}}
\def\Tf{\mathfrak{T}}
\def\nv{\boldsymbol{n}}
\def\ov{\boldsymbol{o}}
\def\pv{\boldsymbol{p}}
\def\qv{\boldsymbol{q}}
\def\tv{\boldsymbol{t}}
\def\uv{\boldsymbol{u}}
\def\vv{\boldsymbol{v}}
\def\wv{\boldsymbol{w}}
\def\xv{\boldsymbol{x}}
\def\yv{\boldsymbol{y}}
\def\Qv{\boldsymbol{Q}}
\title{Chaotic properties of billiards in circular
       polygons\footnote{To appear
       in \emph{Communications in Mathematical Physics}}}
\author{Andrew Clarke\footnote{Universitat
        Polit\`ecnica de Catalunya, Barcelona, Spain
        (\texttt{andrew.michael.clarke@upc.edu})}
        \and
        Rafael Ram{\'\i}rez-Ros\footnote{Universitat
        Polit\`ecnica de Catalunya, Barcelona, Spain
        (\texttt{rafael.ramirez@upc.edu})}}
\date{July 31, 2024}
\begin{document}

\maketitle

\abstract{We study billiards in domains enclosed by circular polygons.
These are closed $C^1$ strictly convex curves
formed by finitely many circular arcs.
We prove the existence of a set in phase space,
corresponding to generic sliding trajectories close enough
to the boundary of the domain,
in which the return billiard dynamics is semiconjugate to
a transitive subshift on infinitely many symbols that contains
the full $N$-shift as a topological factor for any $N \in \Nset$,
so it has infinite topological entropy.
We prove the existence of uncountably many asymptotic generic sliding
trajectories approaching the boundary with optimal uniform linear speed,
give an explicit exponentially big (in $q$) lower bound on
the number of $q$-periodic trajectories as $q \to \infty$,
and present an unusual property of the length spectrum.
Our proofs are entirely analytical.}

\medskip

\noindent
\textbf{Keywords:}
Billiards, circular polygons, chaos, symbolic dynamics, periodic trajectories,
length spectrum

\section{Introduction}

A \emph{billiard problem} concerns the motion of a particle inside
the domain bounded by a closed plane curve $\Gamma$
(or the domain bounded by a hypersurface of some higher-dimensional
 Euclidean space).
The motion in the interior of the domain is along straight lines,
with elastic collisions at the boundary according to the optical
law of reflection: the angles of incidence and reflection are equal.
These dynamical systems were first introduced by
Birkhoff~\cite{birkhoff1922surface}.
See~\cite{KozlovTreschev1991,Tabachnikov1995,ChernovMarkarian2006}
for a general description.

In the case of dispersing billiards (i.e. when the boundary is a
union of concave components),
the dynamics is chaotic~\cite{sinai1970dynamical};
indeed, such billiards exhibit ergodicity, the Bernoulli property,
sensitive dependence on initial conditions, and so forth.
In fact, it was believed for some years that billiards without any
dispersing walls could not display chaos.

Thus it came as a surprise when Bunimovich, in his famous paper,
detailed a proof that the billiard in a stadium exhibits
the Bernoulli property~\cite{bunimovich1979ergodic}.
The boundary of the stadium billiard consists of two
straight parallel lines connected at either end by semicircles.
Stadia are $C^1$ and convex, but not $C^2$ or strictly convex.
We study the class of $C^1$ \emph{strictly} convex billiards bounded
by finitely many circular arcs.
No billiard in this class satisfies the celebrated
\emph{B-condition}---that is, the condition that all circular arcs
can be completed to a disk within the billiard domain---,
which is the hallmark for the \emph{defocusing mechanism}
in billiards whose focusing boundaries are all circular
arcs~\cite[Section 8.3]{ChernovMarkarian2006}.
In spite of this, we observe several chaotic phenomena.

Denote by $\Gamma$ a $C^1$, closed, strictly convex curve in $\Rset^2$.
The phase space of the billiard inside
the domain bounded by $\Gamma$ is
the 2-dimensional cylinder $\mathcal{M} = \Tset \times [0, \pi]$;
the angular component of the cylinder is a parameter on
the curve $\Gamma$,
and the height component is the angle of incidence/reflection.
We denote by $f:\mathcal{M} \to \mathcal{M}$ the billiard map
(i.e. the collision map of the billiard flow with the
 boundary~$\Gamma$ of the domain;
 see Section~\ref{sec:Fundamental} for a precise definition).
We say that the curve $\Gamma$ is a \emph{circular polygon}
if it is a union of a finite number of circular arcs,
concatenated in such a way that $\Gamma$ is strictly convex,
and at the points where two circular arcs with different radii meet,
the tangents agree, so that $\Gamma$ is $C^1$, but not $C^2$.
We do not consider circles as circular polygons.
A \emph{circular $k$-gon} is a circular polygon
with exactly $k$ circular arcs.
Note that a circular polygon cannot have fewer than 4 circular arcs,
so if $\Gamma$ is a circular $k$-gon, then $k \ge 4$
(see Lemma~\ref{lemma_geq4arcs} in Section~\ref{sec:CircularPolygons}
 for a proof of this fact).

In this paper we are interested in the \emph{sliding} trajectories
that do not skip any arc in any of their infinite turns
around $\Gamma$.
These are trajectories close to
the boundary $\partial \mathcal M = \Tset \times \{0,\pi\}$.
That is, trajectories where the angle of incidence/reflection
is close to $0$ or $\pi$ (see Definition~\ref{def:GenericSliding}).

In what follows we give heuristic statements of our main results.

\begin{theorema}\label{thm:SymbolicDynamicsIntro}
If $\Gamma$ is a circular polygon,
then there is a set $\mathcal{J} \subset \mathcal{M}$
accumulating on $\partial \mathcal{M}$ such that the return map
$F:\mathcal{J} \to \mathcal{J}$ of $f$ to $\mathcal{J}$
is topologically semiconjugate to a transitive subshift on
infinitely many symbols that contains the full $N$-shift as
a topological factor for any $N \in \Nset$,
so it has infinite topological entropy.
\end{theorema}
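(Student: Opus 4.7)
The plan is to exploit the near-integrable structure of the billiard map on each individual arc of $\Gamma$, together with the jump of curvature at the tangency points joining consecutive arcs, in order to build a hyperbolic invariant Cantor set near $\partial\mathcal{M}$ carrying a natural infinite symbolic coding. Since the billiard inside a disk is integrable, motion along a single arc $\Gamma_i$ of radius $r_i$ preserves $\theta$ exactly and advances the arclength parameter by $2r_i\theta$ per bounce; a trajectory with $\theta$ small therefore undergoes many consecutive bounces on $\Gamma_i$ before reaching the next tangency point $\pv_{i+1}$. I would first fix a cyclic labelling $\Gamma_1,\ldots,\Gamma_k$ of the arcs and write the billiard map in arclength/angle coordinates $(s,\theta)$.

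For a small $\eps>0$, I would take $\mathcal{J}$ to be the disjoint union of narrow strips $\mathcal{J}_i\subset\mathcal{M}$ sitting in $\{\theta<\eps\}$ and located immediately past $\pv_i$. For $(s_0,\theta_0)\in\mathcal{J}_i$ the number $n=n(s_0,\theta_0)$ of bounces on $\Gamma_i$ before the first landing on $\Gamma_{i+1}$ satisfies $n\sim L_i/(2r_i\theta_0)\to\infty$ as $\theta_0\to 0$, where $L_i$ is the arclength of $\Gamma_i$. The first return $F\colon\mathcal{J}_i\to\mathcal{J}_{i+1}$ is then well defined on $\mathcal{J}\cap\{\theta<\eps\}$, and I would label each iteration by the pair $\alpha=(i,n)$ with $n\ge n_0$. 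Thus the alphabet $\mathcal{A}=\{(i,n):1\le i\le k,\ n\ge n_0\}$ has infinitely many symbols, and admissible sequences are those with $i_{j+1}\equiv i_j+1\pmod k$ and $n_j\ge n_0$.

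The crucial step is to prove that every admissible sequence is realized by an orbit and that the coding is continuous; I plan to establish this by a cone-field / graph-transform argument. The derivative of $F$ on $\mathcal{J}_i$ decomposes into a near-identity shear accumulated over $n$ integrable bounces on $\Gamma_i$, and a nontrivial contribution at $\pv_{i+1}$ arising from the discontinuity $1/r_i\to 1/r_{i+1}$ of curvature, precisely where $\Gamma$ is $C^1$ but not $C^2$. The $n$-fold accumulation yields expansion of order $n\sim 1/\theta_0$ in the $\theta$-direction and, with $n_0$ large enough, a uniform invariant cone field; standard horseshoe and shadowing arguments then produce a hyperbolic Cantor set on which $F$ is topologically semiconjugate to the subshift of admissible sequences.

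Transitivity and the full $N$-shift factor are essentially combinatorial. Transitivity: any two admissible finite words can be joined by a connecting word that cycles once around $\Gamma_1,\ldots,\Gamma_k$ with arbitrary $n$-values $\ge n_0$. For the $N$-shift factor, fix $N\in\Nset$, partition $\{n:n\ge n_0\}$ into $N$ nonempty classes $C_1,\ldots,C_N$, and define the map $\phi$ from admissible sequences to $\{1,\ldots,N\}^{\Zset}$ by $\phi((i_j,n_j))_j=(\ell_j)_j$ with $n_j\in C_{\ell_j}$; this is continuous, shift-equivariant, and surjective, exhibiting the full $N$-shift as a topological factor of $F$ and forcing infinite topological entropy. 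The main obstacle I anticipate is the uniform hyperbolicity estimate of the third paragraph: quantitatively controlling the composition of $n$ near-identity shears on $\Gamma_i$ together with the tangency-point kick, so as to obtain a cone field and an expansion constant which remain compatible as $n\to\infty$ and $\theta_0\to 0$.
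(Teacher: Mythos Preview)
Your proposal has a genuine gap in the symbolic model: the admissibility condition you propose is too permissive and is not actually realized by the dynamics. You declare that admissible sequences are simply those with $i_{j+1}\equiv i_j+1\pmod k$ and $n_j\ge n_0$, i.e.\ the $n$-values are free. But the angle $\theta$ changes only at the transition from $\Gamma_i$ to $\Gamma_{i+1}$, and there it changes by a factor that lies in a bounded interval governed by $\mu_i=\sqrt{r_i/r_{i+1}}$ (roughly between $\min\{1,\mu_i\}$ and $\max\{1,\mu_i\}$; see Lemma~\ref{lem:BilliardProperties}). Since $n_j\sim \delta_j/(2\theta)$, this forces $n_{j+1}$ to lie in an interval of the form $\alpha_j^- n_j + \beta_j^- < n_{j+1} < \alpha_j^+ n_j - \beta_j^+$ with $\alpha_j^\pm=\delta_{j+1}/(\delta_j\,\{\mu_j^{\pm 1}\})$; this is exactly the constraint in the paper's Fundamental Lemma. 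In particular you cannot have $n_j$ small and $n_{j+1}$ huge in one step. This invalidates both your transitivity argument (``cycle once around with arbitrary $n$-values'') and your $N$-shift factor argument (partitioning $\{n\ge n_0\}$ into $N$ classes will not give a surjection because not all class-transitions are realizable). The paper handles transitivity by showing any two symbols can be connected through a chain where $|q_1^i|$ changes by at most one per turn (Lemma~\ref{lem:AdmissibleSymbols} and Corollary~\ref{cor:AdmissibleSequences}), and obtains the $N$-shift factor by choosing $N$ specific symbols all of which are pairwise admissible, which requires their $|q_1|$-components to be large and within $N$ of each other.

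Methodologically your route also differs from the paper's. You aim for uniform hyperbolicity via cone fields and graph transforms, whereas the paper uses the purely topological ``stretching along the paths'' technique of Papini--Zanolin (Theorem~\ref{thm:PapiniZanolin}), which requires no derivative control and directly yields a \emph{semi}conjugacy. Your claim that the $n$-fold shear gives ``expansion of order $n$ in the $\theta$-direction'' is not right as stated: on a single arc the map is $(\varphi,\theta)\mapsto(\varphi+2\theta,\theta)$ with derivative $\begin{pmatrix}1&2n\\0&1\end{pmatrix}$ after $n$ steps, a pure shear with no $\theta$-expansion; any hyperbolicity must come entirely from the composition of this shear with the transition kick, and extracting a uniform cone field from that composition as $n\to\infty$ is a nontrivial computation you have not begun. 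The paper sidesteps this by working topologically, at the price of obtaining only a semiconjugacy rather than a conjugacy (the authors remark they expect a full conjugacy is possible by other methods, so your hyperbolic programme is not hopeless, but it is substantially harder than you suggest and must in any case respect the correct admissibility constraints).
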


See Proposition~\ref{prop:ShiftMapProp} in Section~\ref{sec:Symbols} and
Theorem~\ref{thm:SymbolicDynamics} in Section~\ref{sec:ChaoticMotions}
for a precise formulation of this theorem.
Be aware that the map with infinite entropy is the return map $F$,
not the billiard map $f$.

The \emph{final sliding motions} are the possible qualitative behaviors
that a sliding billiard trajectory posses as the number of impacts
tends to infinity, forward or backward.
Every forward counter-clockwise sliding billiard trajectory
$(\varphi_n,\theta_n) = f^n(\varphi,\theta)$,
where $\varphi_n$ are the angles of impact and
$\theta_n$ are the angles of incidence/reflection,
belongs to exactly one of the following three classes:
\begin{itemize}
\item
\emph{Forward bounded} ($\mathcal{B}_0^+$):
$\inf_{n \ge 0} \theta_n > 0$;
\item
\emph{Forward oscillatory} ($\mathcal{O}_0^+$):
$0  = \liminf_{n \to +\infty} \theta_n <
\limsup_{n \to +\infty} \theta_n$; and
\item
\emph{Forward asymptotic} ($\mathcal{A}_0^+$):
$\lim_{n \to +\infty} \theta_n = 0$.
\end{itemize}
This classification also applies for backward counter-clockwise
sliding trajectories when $n \le -1$ and $n \to -\infty$,
in which case we write a superindex $-$ instead of $+$ in each of
the classes: $\mathcal{B}_0^-$, $\mathcal{O}_0^-$ and $\mathcal{A}_0^-$.
And it also applies to (backward or forward) clockwise sliding
trajectories,
in which case we replace $\theta_n$ with $|\theta_n - \pi|$ in
the definitions above and we write a subindex $\pi$ instead of $0$ in each
of the classes: $\mathcal{B}_\pi^\pm$,
$\mathcal{O}_\pi^\pm$ and $\mathcal{A}_\pi^\pm$.

Terminologies \emph{bounded} and \emph{oscillatory}
are borrowed from Celestial Mechanics.
See, for instance, \cite{Guardia_etal2022}.
In our billiard setting, bounded means bounded away
from  $\theta = \pi$ in the clockwise case and
bounded away from $\theta = 0$ in the counter-clockwise case.
That is, a sliding billiard trajectory is bounded when it does
not approach $\partial \mathcal{M}$.

The following corollary is an immediate consequence of
Theorem~\ref{thm:SymbolicDynamicsIntro},
see Section~\ref{sec:ChaoticMotions}.

\begin{corollarya}
\label{cor:PossibleBehaviors}
If $\Gamma$ is a circular polygon, then
$\mathcal{X}_\lambda^- \cap \mathcal{Y}_\lambda^+ \neq \emptyset$
for
$\mathcal{X},\mathcal{Y} \in \{\mathcal{B}, \mathcal{O}, \mathcal{A}\}$
and $\lambda \in \{0,\pi\}$.
\end{corollarya}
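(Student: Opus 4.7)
The plan is to invoke Theorem~A and transfer the combinatorial flexibility of the associated subshift to the sliding dynamics of $f$. First I will reduce to the counter-clockwise case $\lambda=0$; the clockwise case $\lambda=\pi$ is entirely analogous, obtained either by running the same construction near the other boundary component of $\mathcal{M}$ or by applying the involution $\theta\mapsto\pi-\theta$. Theorem~A supplies a set $\mathcal{J}\subset\mathcal{M}$ accumulating on $\partial\mathcal{M}$ together with a continuous surjection $\pi_\Sigma:\mathcal{J}\to\Sigma$ satisfying $\pi_\Sigma\circ F=\sigma\circ\pi_\Sigma$, where $\Sigma$ is transitive on a countably infinite alphabet $S$.

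Next I would identify, using the construction in Section~\ref{sec:Symbols}, a correspondence in which each symbol $s\in S$ labels a ``level'' controlling how close to $\partial\mathcal{M}$ the associated return-orbit point sits, with larger symbols encoding smaller $\theta$. Under this correspondence, a forward itinerary $(s_n)_{n\ge 0}\in\Sigma$ belongs to $\mathcal{B}_0^+$, $\mathcal{O}_0^+$, or $\mathcal{A}_0^+$ according as it is supported on a finite subalphabet, is unbounded but also has a subsequence inside a finite subalphabet, or satisfies $|s_n|\to\infty$; and analogously for the backward tail and the classes with superindex~$-$.

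For each of the nine combinations $(\mathcal{X},\mathcal{Y})\in\{\mathcal{B},\mathcal{O},\mathcal{A}\}^2$ I would then exhibit an admissible bi-infinite itinerary with the prescribed past and future. Transitivity of $\Sigma$, together with the fact that the full $N$-shift appears as a topological factor for every $N$, permits the free concatenation of template blocks encoding boundedness (a finite symbol pool), escape ($|s_n|\to\infty$) or oscillation (alternating finite and far-out symbols), glued by transitive connecting words. Surjectivity of $\pi_\Sigma$ then produces a point of $\mathcal{J}$ whose $F$-orbit realizes this itinerary, hence a billiard orbit whose return-angle sequence has the required $\liminf$/$\limsup$ behavior.

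The main obstacle is to confirm that the asymptotic behavior along the return times truly governs the behavior of the full sequence $(\theta_n)_{n\in\Zset}$, since $\mathcal{B}_0^{\pm}$, $\mathcal{O}_0^{\pm}$, $\mathcal{A}_0^{\pm}$ are defined in terms of the latter. I would handle this by exploiting that in the near-boundary regime where $\mathcal{J}$ lives, the angular increment $|\theta_{n+1}-\theta_n|$ is of order $\theta_n$, so an orbit whose return angles remain in $(0,\eps)$ stays in a strip $(0,C\eps)$ at all intermediate iterates, and an orbit whose return angles tend to $0$ does so along every subsequence of iterates. This comparison reduces the nine nonemptiness claims to the purely combinatorial construction above, which the transitivity of $\Sigma$ resolves at once.
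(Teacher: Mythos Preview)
Your proposal is correct and follows essentially the same route as the paper's proof: reduce to the counter-clockwise case, use the symbol--to--$\theta$ correspondence (larger symbol $\Leftrightarrow$ smaller angle), construct admissible bi-infinite sequences with any prescribed past/future behavior (bounded, oscillating, diverging), and invoke surjectivity of the semiconjugacy to realize them.

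One minor simplification: your concern about return times versus the full sequence $(\theta_n)$ is handled more directly than you suggest. Since $\theta$ is exactly constant while the trajectory stays on a single arc (part~(a) of Lemma~\ref{lem:BilliardProperties}) and the fundamental quadrilateral $\mathcal{Q}_{j,|q_j|}$ pins $\theta$ between $\delta_j/(2|q_j|+2)$ and $\delta_j/(2|q_j|-2)$ (Lemma~\ref{lem:MinMax}), the angle at \emph{every} iterate between two consecutive returns is already controlled by the corresponding symbol---no separate estimate on $|\theta_{n+1}-\theta_n|$ is needed. Also, the paper's reduction to $\lambda=0$ reverses the cyclic order of the arcs rather than using the involution $\theta\mapsto\pi-\theta$; your involution would work too but swaps forward and backward time, which is harmless here since the statement is symmetric in $\mathcal{X}$ and $\mathcal{Y}$.
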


From now on, we focus on counter-clockwise sliding trajectories.
Corollary~\ref{cor:PossibleBehaviors} does not provide
data regarding the maximal \emph{speed} of diffusion for asymptotic
trajectories.
Which is the faster way in which $\theta_n \to 0$
for asymptotic sliding trajectories?
The answer is provided in the following theorem.

\begin{theorema}\label{thm:BoundaryIntro}
If $\Gamma$ is a circular polygon,
then there are uncountably many asymptotic generic sliding billiard
trajectories that approach the boundary with uniform linear speed.
That is, there are constants $0 < a < b$ such that if
$\{ \theta_n \}_{n \in \Zset}$ is
the corresponding sequence of angles of incidence/reflection of any of
these uncountably many asymptotic generic sliding billiard trajectories,
then
\[
a |n| \le 1/\theta_n \le b |n|, \qquad \forall |n| \gg 1.
\]
Linear speed is optimal.
That is, there is no billiard trajectory such that
\[
\lim_{n \to +\infty} n \theta_n = 0.
\]
\end{theorema}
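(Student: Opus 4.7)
The plan has two parts: a local estimate of how the angle of incidence $\theta$ evolves across a single vertex of $\Gamma$, and a choice of admissible symbolic sequences for the return map $F$ of Theorem~A producing the advertised linear rate. At a vertex $v$ joining two adjacent arcs of radii $r_i, r_{i+1}$, I would set local coordinates so that $v$ is the origin and the common tangent is horizontal, and write the endpoints of a transition chord as $P_i=(-\xi,-\xi^2/(2r_i)+\cdots)$ and $P_{i+1}=(\eta,-\eta^2/(2r_{i+1})+\cdots)$; a direct expansion gives
\[
\theta_i+\theta_{i+1}=\frac{\xi}{r_i}+\frac{\eta}{r_{i+1}}+\Order(\theta^2),\qquad
\theta_{i+1}-\theta_i=\frac{\xi\eta}{\xi+\eta}\left(\frac{1}{r_{i+1}}-\frac{1}{r_i}\right)+\Order(\theta^3).
\]
Since $\xi\le 2r_i\theta_i$ and $\eta\le 2r_{i+1}\theta_{i+1}$, the second identity forces a uniform multiplicative bound $\gamma\le\theta_{i+1}/\theta_i\le 1/\gamma$ with $\gamma=\gamma(\Gamma)\in(0,1)$; this is the main analytic input.

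The optimality claim follows from this bound by a counting argument. Denote by $\theta^{(j)}$ the angle during the $j$-th arc excursion and by $J(n)$ the number of vertices crossed by bounce $n$. Each completed excursion contains at least $c/\theta^{(j)}$ bounces, for a constant $c=c(\Gamma)>0$ set by the smallest angular extent of $\Gamma$'s arcs; iterating the multiplicative bound yields $\theta^{(j)}\le\gamma^{-(J(n)-j)}\theta_n$, hence
\[
n\ge\sum_{j=0}^{J(n)-1}\frac{c}{\theta^{(j)}}\ge\frac{c}{\theta_n}\sum_{k=1}^{J(n)}\gamma^{k}\ge\frac{c'}{\theta_n}
\]
for $n$ large, with $c'=c'(\Gamma)>0$. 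Therefore $\liminf_{n\to\infty}n\theta_n>0$, which is incompatible with $n\theta_n\to 0$. For bounded or oscillatory trajectories the conclusion is trivial since $\limsup\theta_n>0$; the argument is symmetric under $n\mapsto -n$ and under $\theta\mapsto\pi-\theta$.

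For the existence half I would feed into the semiconjugacy of Theorem~A a Cantor family of admissible sequences along which the bounce count per arc excursion grows geometrically. Fix $\rho>1$ large and let $\mathcal{S}$ be the family of integer sequences $(s_j)_{j\in\Zset}$ with $s_j\in[\rho^{|j|},2\rho^{|j|}]$. When $\rho$ is large enough, each such sequence is admissible in the subshift of Theorem~A, hence realised by some $F$-orbit in $\J$, automatically generic because its symbols are finite at every position. The coding is built so that $s_j$ pins down $\theta^{(j)}$ up to a universal multiplicative constant, yielding $\theta^{(j)}\asymp\rho^{-|j|}$; the cumulative bounce count at the $j$-th transition is comparable to $s_j\asymp\rho^{|j|}$, so $|n|\theta_n$ stays in a fixed compact subinterval of $(0,\infty)$ independent of $s$. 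Since $\theta_n$ is constant along each excursion while $|n|$ advances one step per bounce, the same inequalities extend to every sufficiently large $|n|$, giving the required constants $a,b$. The family $\mathcal{S}$ is uncountable, producing uncountably many trajectories of the required type.

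The main obstacle I expect is the quantitative identification $s_j\asymp 1/\theta^{(j)}$. The semiconjugacy of Theorem~A alone matches orbits and symbol sequences only topologically, so sharp two-sided bounds on $\theta_n$ require the multiplicative control of Step~1 to be propagated carefully through the construction of the coding; after that, the remainder reduces to routine comparisons between geometric progressions and their partial sums.
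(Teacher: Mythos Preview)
Your optimality argument is essentially the paper's: the multiplicative bound $\gamma\le\theta_{i+1}/\theta_i\le 1/\gamma$ at each vertex is exactly what appears as part~(\ref{item:Balint_etal}) of Lemma~\ref{lem:BilliardProperties}, and the counting of bounces per excursion matches the paper's proof of Proposition~\ref{prop:OptimalSpeed}. That half is fine.

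The existence half has a genuine gap. You propose sequences with $s_j\in[\rho^{|j|},2\rho^{|j|}]$ and claim that for $\rho$ large enough these are admissible in the subshift of Theorem~A. But the admissibility condition (Definitions~\ref{def:Alphabet} and~\ref{def:Admissible}) reads $\alpha_j^- s_j+\beta_j^- < s_{j+1} < \alpha_j^+ s_j-\beta_j^+$, which forces the ratio $s_{j+1}/s_j$ into the \emph{fixed} interval $(\alpha_j^-,\alpha_j^+)$ determined by $\Gamma$. Taking $\rho$ large makes $s_{j+1}/s_j\approx\rho$ exit this interval and destroys admissibility rather than ensuring it. The growth rate is not a free parameter: the fastest admissible growth per full turn is exactly $\alpha=\prod_j\alpha_j^+$, a specific constant of the table. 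The paper's construction (Theorem~\ref{thm:Boundary}) therefore picks the \emph{maximal} admissible values $n^i_j=\xi^i_j(n)=\max\Xi^i_j(n)$ at every step, which by Lemma~\ref{lem:AdmissibleSymbols} grow like $\alpha^{|i|}$; this is the unique extremal sequence modulo the starting level $n$, so there is essentially no wiggle room in the $s_j$ themselves.

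Uncountability in the paper comes from a different source: each $n^i_j$ carries a sign $\varsigma\in\{-,+\}$ selecting one of the two sub-quadrilaterals $\mathcal{K}^\pm_{j,n}$ (Definition~\ref{def:FundamentalQuadrilaterals}), and the sign space $\Tf=(\{-,+\}^k)^{\Zset}$ is uncountable. Since the cells $\mathcal{K}^\varsigma_{j,n}$ are pairwise disjoint, distinct sign sequences force distinct trajectories. Your scheme can be repaired by fixing $s_j=\xi^{\,|j|}_j(n)$ (or any single admissible sequence with the maximal growth rate) and getting uncountability from these sign choices instead.
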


See Theorem~\ref{thm:Boundary} in Section~\ref{sec:OptimalSpeed},
where we also get uncountably many one-parameter families (paths)
of forward asymptotic generic sliding billiard trajectories, for a
more detailed version of Theorem~\ref{thm:BoundaryIntro}.
The definition of \emph{generic} billiard trajectories is a bit
technical, see Definition~\ref{def:GenericSliding}
and Remark~\ref{rem:GenericTrajectories}.
The term \emph{uniform} means that the constants $0 < a < b$
do not depend on the billiard trajectory.
The term \emph{linear} means that $1/\theta_n$ is bounded between
two positive multiples of $|n|$.
\emph{Optimality} comes as no surprise since
$\sum_{n \ge 0} \theta_n = +\infty$
for any billiard trajectory in any circular polygon---or, for that matter,
in any strictly convex billiard table whose billiard flow is defined
for all time~\cite{halpern1977strange}.
Optimality is proved in Proposition~\ref{prop:OptimalSpeed}
in Section~\ref{sec:OptimalSpeed}.

There are two key insights (see Section~\ref{sec:Fundamental})
behind this theorem.
First, when we iterate $f$ along one of the circular arcs
of the circular polygon $\Gamma$, the angle of reflection $\theta$
is constant, so $\theta$ can drop only when the trajectory crosses
the singularities between consecutive circular arcs.
Second, the maximal drop corresponds to multiplying $\theta$
by a uniform (in $\theta$) factor that is smaller than one.
As we must iterate the map many (order $ \theta^{-1}$) times
to slide fully along each circular arc,
we cannot approach the boundary with a faster than linear speed.

As Theorem~\ref{thm:SymbolicDynamicsIntro} gives us only a
topological \emph{semi}conjugacy to symbolic dynamics,
it does not immediately provide us with the abundance of
periodic orbits that the shift map possesses.
However our techniques enable us to find many
periodic sliding billiard trajectories.
We state in the following theorem that the number of such trajectories
in circular polygons grows exponentially with respect to the period.
In contrast, Katok~\cite{Katok1987} showed that the numbers
of isolated periodic billiard trajectories and
of parallel periodic billiard trajectories grow
subexponentially in any (linear) polygon.

Given any integers $1 \le p < q$,
let $\Pi(p,q)$ be the set of $(p,q)$-periodic billiard trajectories
in $\Gamma$.
That is, the set of periodic trajectories that close
after $p$ turns around $\Gamma$ and $q$ impacts in $\Gamma$,
so they have rotation number $p/q$.
Let $\Pi(q) = \cup_{1 \le p < q} \Pi(p,q)$
be the set of periodic billiard trajectories with period $q$.
The symbol~$\#$ denotes the \emph{cardinality} of a set.

\begin{theorema}\label{thm:PeriodicIntro}
If $\Gamma$ is a circular $k$-gon and $p \in \Nset$, there are constants
$c_\star(p), M_\star, h_\star > 0$ such that
\begin{enumerate}[(a)]
\item
$\# \Pi (p,q) \ge c_\star(p) q^{kp-1} + \Order(q^{kp-2})$
as $q \to \infty$ for any fixed $p \in \Nset$; and
\item
$\# \Pi (q) \ge M_\star \rme^{h_\star q}/q$
as $q \to + \infty$.
\end{enumerate}
\end{theorema}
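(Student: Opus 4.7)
The plan is to combine a variational construction of periodic orbits with an elementary combinatorial count, using the symbolic coding underlying Theorem \ref{thm:SymbolicDynamicsIntro}. A sliding $(p,q)$-periodic trajectory on a circular $k$-gon necessarily visits each of the $k$ arcs exactly $p$ times (once per turn), so, up to its geometric realization, it is described by a cyclic sequence $(n_1,\dots,n_{kp})$ of positive integers with $n_1+\dots+n_{kp}=q$, where $n_j$ counts the number of consecutive impacts on the $j$-th arc encountered in cyclic order.

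The first step, and the technically delicate one, is to produce, for every such composition, a genuine $(p,q)$-periodic billiard trajectory of the prescribed combinatorial type. I would do this by a Birkhoff-style maximization: restrict the perimeter (length) functional to the space of closed inscribed $q$-gons of winding number $p$ subject to the constraint that the $j$-th block of $n_j$ consecutive vertices lies in the closure of the $j$-th arc. This configuration space is compact and non-empty, so a maximizer exists, and the essential point is to verify that no vertex of the maximizer lies at a $C^1$ junction between two consecutive arcs. This is where the specific geometry of circular polygons enters: the uniform angular-contraction estimate across junctions supporting the ``second key insight'' preceding Theorem \ref{thm:BoundaryIntro}, together with strict convexity, should imply that nudging a junction vertex into the interior of its designated arc strictly increases the length. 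Once interiority is secured, the standard first variation gives that every vertex satisfies the reflection law, so the maximizer is a genuine periodic billiard trajectory of the prescribed combinatorial type.

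The second step is pure combinatorics. The number of compositions of $q$ into $kp$ positive parts is $\binom{q-1}{kp-1}$, and each geometric $(p,q)$-orbit realizes exactly one cyclic equivalence class of such compositions, containing at most $kp$ representatives. Hence
\[
\# \Pi(p,q) \ge \frac{1}{kp}\binom{q-1}{kp-1} = \frac{q^{kp-1}}{kp \cdot (kp-1)!} + \Order(q^{kp-2}),
\]
which proves part~(a) with $c_\star(p) = 1/(kp \cdot (kp-1)!)$. For part~(b), summing over $1 \le p < q/k$ and applying a roots-of-unity filter to the identity $\sum_{m \ge 0}\binom{q-1}{m} = 2^{q-1}$ (or, more simply, keeping only the central term $kp\approx q/2$ and invoking Stirling) gives $\# \Pi(q) \ge M_\star \rme^{h_\star q}/q$ with $h_\star = \log 2$, up to a $k$-dependent prefactor absorbed into $M_\star$.

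The main obstacle is clearly Step~1: securing interiority of the constrained length maximizer so as to rule out vertex collapse onto an arc junction. Because $\Gamma$ is only $C^1$ (not $C^2$) at these junctions, standard smooth critical-point arguments do not apply off the shelf, and one must rely on the circular-polygon-specific angle estimates established in Section \ref{sec:Fundamental}. An alternative route, should the direct variational argument become awkward, is to lift periodic sequences from the subshift of Theorem \ref{thm:SymbolicDynamicsIntro} to periodic points of the return map $F$ through the semiconjugacy; however, that strategy requires verifying that periodic fibres of the projection are non-empty and dynamically consistent, which is not automatic from a mere semiconjugacy and therefore appears no easier than the direct variational approach.
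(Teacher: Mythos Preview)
Your Step~1 contains a genuine gap that cannot be repaired: it is \emph{not} true that every composition $(n_1,\dots,n_{kp})$ of $q$ is realized by a sliding $(p,q)$-periodic trajectory. The angle of reflection $\theta$ is constant along each arc and, by part~(\ref{item:Balint_etal}) of Lemma~\ref{lem:BilliardProperties}, changes by a factor lying in $[\min(1,\mu_j),\max(1,\mu_j)]$ (up to $\Order(\theta^2)$) when the trajectory crosses from $\Gamma_j$ to $\Gamma_{j+1}$. Since a block of $n_j$ impacts on $\Gamma_j$ forces $\theta\approx\delta_j/2n_j$, consecutive impact counts must satisfy $n_{j+1}/n_j\in[\alpha_j^-,\alpha_j^+]$ approximately, where $\alpha_j^\pm=\delta_{j+1}/(\delta_j\min\!/\!\max\{1,\mu_j\})$. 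For a composition violating these constraints (say $n_1=1$, $n_2=1000$ with $\mu_1$ close to $1$), your constrained length maximizer necessarily pushes vertices onto the nodes, and the interiority argument you sketch cannot succeed: the very ``uniform angular-contraction estimate'' you invoke is precisely what forbids such combinatorics. Consequently the count $\binom{q-1}{kp-1}$ is a gross overcount, and the value $h_\star=\log 2$ in part~(b) is unsupported; indeed the paper notes that the correct exponent tends to $0$ as $\Gamma$ approaches a circle.

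The paper proceeds via the alternative route you dismiss at the end. The stretching-along-paths machinery (version~({\bf P}) of Theorem~\ref{thm:PapiniZanolin}, applied through Corollary~\ref{cor:Fundamental}) guarantees a periodic point for every periodic admissible sequence, so the issue of ``non-empty periodic fibres of a semiconjugacy'' does not arise. One then counts only the \emph{admissible} compositions, namely integer points in the convex polytope $P^{(p)}\subset\Rset^{kp}$ cut out by the inequalities $\alpha_j^-x_j+\beta_j^-<x_{j+1}<\alpha_j^+x_j-\beta_j^+$ (Corollary~\ref{cor:ConvexPolytopes}), together with a factor $2^{kp}$ coming from the independent sign choices $\varsigma\in\{-,+\}$ in each fundamental quadrilateral. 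The polynomial bound in~(a) then reduces to a lattice-point count in a cube inscribed in $P^{(p)}$, and~(b) is obtained by optimizing the resulting bound over $p\approx x_\star q/k$ using the Lambert $W$ function; the exponent $h_\star$ depends explicitly on the ratios $\mu_j$ and $\delta_{j+1}/\delta_j$.
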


We give explicit expressions for $c_\star(p)$,
$M_\star$ and $h_\star$ in Section~\ref{sec:PeriodicTrajectories}.
The optimal value of $c_\star(p)$ is equal to the volume
of a certain $(kp-1)$-dimensional compact convex polytope with
an explicitly known half-space representation,
see Proposition~\ref{prop:Optimalc}.
We do not give optimal values of $M_\star$ and $h_\star$.
The relation between the optimal value of $h_\star$ and the
topological entropy of the billiard map $f$ is an open problem.
We acknowledge that some periodic trajectories in $\Pi(p,q)$
may have period less than $q$ when $\gcd(p,q) \neq 1$,
but they are a minority,
so the previous lower bounds capture the growth rate
of the number of periodic trajectories with rotation number $p/q$
and minimal period $q$ even when $p$ and $q$ are not coprime.

If the circular polygon has some symmetry,
we can perform the corresponding natural reduction to count the
number of symmetric sliding periodic trajectories,
but then the exponent $kp-1$ in the first lower bound would be smaller
because there are fewer reduced arcs than original arcs.
The exponent $h_\star$ would be smaller too.
See~\cite{CasasRamirezRos2012,Goncalves_etal2024} for examples
of symmetric periodic trajectories in other billiards.
The first reference deals with axial symmetries.
The second one deals with rotational symmetries.

Let $|\Gamma|$ be the length of $\Gamma$.
If $g = \{ z_0,\ldots,z_{q-1}\} \subset \Gamma$ is a
$q$-periodic billiard trajectory,
let $L(g) = |z_1 - z_0| + \cdots + |z_{q-1} - z_0|$
be its \emph{length}.
If $(g_q)_q$ is any sequence such that $g_q \in \Pi(1,q)$,
then $\lim_{q \to +\infty} L(g_q) = |\Gamma|$.
There are so many generic sliding $(1,q)$-periodic billiard trajectories
inside circular polygons that we can find sequences
$(g_q)_q$ such that the differences $L(g_q) - |\Gamma|$
have rather different asymptotic behaviors as $q \to +\infty$.

\begin{theorema}
\label{thm:AsymptoticConstantIntro}
If $\Gamma$ is a circular polygon,
then there are constants $c_- < c_+ < 0$
such that for any fixed $c \in [c_-,c_+]$ there exists a
sequence $(g_q)_q$, with $g_q \in \Pi(1,q)$, such that
\[
L(g_q) =
|\Gamma| + c/q^2 + \Order(1/q^3),\quad
\mbox{as $q \to +\infty$.}
\]
Consequently, there exists a
sequence $(h_q)_q$, with $h_q \in \Pi(1,q)$, such that
\[
c_- = \liminf _{q \to +\infty} \big( (L(h_q) - |\Gamma|)q^2 \big) <
\limsup _{q \to +\infty} \big( (L(h_q) - |\Gamma|)q^2 \big) = c_+,\quad
\mbox{as $q \to +\infty$.}
\]
Besides,
$c_- \le -\pi^2 |\Gamma|/6$ and
$c_+ = -\frac{1}{24}
\left[ \int_{\Gamma} \kappa^{2/3}(s) \rmd s \right]^3$,
where $\kappa(s)$ is the curvature of $\Gamma$ as a function
of an arc-length parameter $s \in [0,|\Gamma|)$.
\end{theorema}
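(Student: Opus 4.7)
The plan is to derive, for any $(1,q)$-periodic sliding trajectory $g_q$ with $n_i$ impacts on the $i$-th arc, the length expansion
\[
L(g_q) = |\Gamma| - \frac{1}{24}\sum_{i=1}^{k}\frac{\ell_i^3 \kappa_i^2}{n_i^2} + \Order(q^{-3}),\qquad \sum_{i=1}^{k} n_i = q,
\]
and then to treat $(n_1,\ldots,n_k)$ as a free parameter. Here $\ell_i$ and $\kappa_i=1/R_i$ are the length and (constant) curvature of the $i$-th arc. Since the angle of incidence $\theta_i$ is constant along each circular arc and the $n_i$ consecutive bounces on arc $i$ sweep out a subarc of length $\ell_i + \Order(q^{-1})$, one obtains $\theta_i = \ell_i\kappa_i/(2 n_i) + \Order(q^{-2})$, and the cumulative chord/arc defect on the arc is
\[
2n_i R_i (\theta_i - \sin\theta_i) = \frac{n_i R_i \theta_i^3}{3} + \Order(n_i\theta_i^5) = \frac{\ell_i^3\kappa_i^2}{24 n_i^2} + \Order(q^{-3}).
\]
Summing over the $k$ arcs and the $k$ vertex-straddling chords (each of which contributes $\Order(q^{-3})$ because both the chord and the replaced arc pieces are $\Order(q^{-1})$) produces the displayed expansion.

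Next, write $n_i = \alpha_i q + \Order(1)$ with $\alpha=(\alpha_1,\ldots,\alpha_k)$ in the open simplex $\Delta^\circ=\{\alpha\in\Rset^k:\alpha_i>0,\sum\alpha_i=1\}$; then $L(g_q)=|\Gamma|+c(\alpha)/q^2+\Order(q^{-3})$ with
\[
c(\alpha) = -\frac{1}{24}\sum_{i=1}^{k}\frac{\ell_i^3\kappa_i^2}{\alpha_i^2}.
\]
A Lagrange-multiplier calculation (equivalently, the power-mean inequality) gives $\max_{\Delta^\circ} c = c_+$ at $\alpha_i^\star=\ell_i\kappa_i^{2/3}\bigl/\int_{\Gamma}\kappa^{2/3}\rmd s$, with $c_+=-\frac{1}{24}\bigl[\int_{\Gamma}\kappa^{2/3}(s)\rmd s\bigr]^3$. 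The function $c\colon\Delta^\circ\to\Rset$ is continuous, attains $c_+$ and tends to $-\infty$ as $\alpha\to\partial\Delta$, so its range equals $(-\infty,c_+]$. Evaluating at the uniform weights $\alpha_i=\ell_i/|\Gamma|$, and using $\sum\ell_i\kappa_i=\int_{\Gamma}\kappa\rmd s=2\pi$ together with the Cauchy--Schwarz inequality $\sum\ell_i\kappa_i^2\ge(\sum\ell_i\kappa_i)^2/\sum\ell_i$, yields $c(\alpha^{\mathrm{unif}})\le -\pi^2|\Gamma|/6$; taking this value as $c_-$ one has $c_-\le -\pi^2|\Gamma|/6<c_+$, with strict inequality because H\"older's inequality $\int\kappa^{2/3}\le(2\pi)^{2/3}|\Gamma|^{1/3}$ is strict whenever $\kappa$ is not constant, i.e.\ whenever $\Gamma$ is not a circle.

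Finally, for any $c\in[c_-,c_+]$, choose $\alpha\in\Delta^\circ$ with $c(\alpha)=c$, set $n_i=\lfloor\alpha_i q\rfloor$, and absorb the $\Order(1)$ residue into one of the $n_i$ to enforce $\sum n_i=q$; the construction underlying Theorem~\ref{thm:PeriodicIntro}(a) supplies, for all sufficiently large $q$, a generic sliding $g_q\in\Pi(1,q)$ with exactly these bounce counts, and the length expansion (stable under the $\Order(q^{-3})$ perturbation caused by integer rounding) gives $L(g_q)=|\Gamma|+c/q^2+\Order(q^{-3})$. The ``consequently'' statement then follows by interleaving, on odd and even $q$, two sequences realizing the limits $c_-$ and $c_+$. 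The main obstacle is the length expansion itself: I expect it to require a careful local analysis of the billiard map at each vertex, via the framework of Section~\ref{sec:Fundamental}, to verify that each of the $k$ vertex-straddling chords contributes only $\Order(q^{-3})$ to the defect despite the discontinuous jump of $\theta$ across the vertex.
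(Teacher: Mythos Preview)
Your strategy matches the paper's: derive the length expansion in terms of the impact counts $(n_1,\ldots,n_k)$ and then optimize $c(\alpha)=-\tfrac{1}{24}\sum_j \delta_j^3 r_j/\alpha_j^2$ (your $\ell_j^3\kappa_j^2$ equals $\delta_j^3 r_j$) over the admissible limit ratios $\alpha$. The identification of $c_+$ at $\alpha_j^\star=\delta_j r_j^{1/3}/\sum_l\delta_l r_l^{1/3}$ is correct and coincides with the paper's $\wv(1/3)$. The transition-chord issue you flag as the ``main obstacle'' is handled in the paper by a direct trigonometric computation (Lemma~\ref{lem:LengthPeriodicTrajectory}), not via Section~\ref{sec:Fundamental}.

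The genuine gap is realizability. The construction behind Theorem~\ref{thm:PeriodicIntro} (Corollary~\ref{cor:ConvexPolytopes}) supplies a $(1,q)$-periodic trajectory with impact counts $(n_1,\ldots,n_k)$ only when this point lies in the polytope $P^{(1)}$ of~\eqref{eq:UnboundedPolytope}; in the limit $q\to\infty$, the ratio $\alpha$ must lie in the proper sub-polytope $P^{(1)}_\infty\subsetneq\Delta_{k-1}$ cut out by $\alpha_j^-\,x_j\le x_{j+1}\le\alpha_j^+\,x_j$ (Lemma~\ref{lem:AsymptoticConstantInterval}). Your point $\alpha^{\mathrm{unif}}_j=\ell_j/|\Gamma|=\delta_j r_j/|\Gamma|$ violates these constraints: it gives $x_{j+1}/x_j=(\delta_{j+1}/\delta_j)\,\mu_j^{-2}$, whereas the polytope forces $x_{j+1}/x_j$ between $(\delta_{j+1}/\delta_j)\cdot 1$ and $(\delta_{j+1}/\delta_j)\cdot\mu_j^{-1}$, and $\mu_j^{-2}$ lies strictly outside $[\min\{1,\mu_j^{-1}\},\max\{1,\mu_j^{-1}\}]$ whenever $\mu_j\neq 1$. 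Hence you cannot invoke Theorem~\ref{thm:PeriodicIntro} at $\alpha^{\mathrm{unif}}$, and your Cauchy--Schwarz bound for $c_-$ is vacuous. The paper instead evaluates at $\alpha_j=\delta_j/2\pi$ (no radii), shows this point lies in $\overline{P^{(1)}_\infty}$ as the limit of the interior points $\delta_j r_j^\xi/\sum_l\delta_l r_l^\xi$ for $\xi\downarrow 0$, and obtains $c(\delta_j/2\pi)=-\pi^2|\Gamma|/6$ exactly. For the same reason $c$ does not range over $(-\infty,c_+]$ but only over a bounded interval, with $c_-$ the infimum of $c$ over $P^{(1)}_\infty$; the boundary value $c=c_-$ is then handled by a diagonal argument.
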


Let us put these results into perspective by comparing them
with the observed behavior in sufficiently smooth
(say $C^6$) and strictly convex billiards,
which for the purpose of this discussion we refer to
as \emph{Birkhoff billiards}.
Lazutkin's theorem (together with a refinement due to Douady)
implies that Birkhoff billiards possess a family of
caustics\footnote{A closed curve $\gamma$ contained in
the interior of the region bounded by $\Gamma$ is called
a \emph{caustic} if it has the following property:
if one segment of a billiard trajectory is tangent to $\gamma$,
then so is every segment of that trajectory.}
accumulating on the
boundary~\cite{douady1982applications,lazutkin1973existence}.
These caustics divide the phase space into invariant regions,
and therefore guarantee a certain \emph{regularity} of
the dynamics near the boundary,
in the sense that the conclusion of Theorem~\ref{thm:SymbolicDynamicsIntro}
never holds for Birkhoff billiards.
Not only does the conclusion of Theorem~\ref{thm:BoundaryIntro}
not hold for Birkhoff billiards,
but in such systems there are no trajectories approaching the boundary
asymptotically as the orbits remain in invariant regions bounded
by the caustics.
As for Theorem~\ref{thm:PeriodicIntro},
a well-known result of Birkhoff~\cite{birkhoff1922surface}
implies that Birkhoff billiards have $\# \Pi(p,q) \ge 2$
for each coprime pair $p, q$ such that $1 \leq p < q$.
This lower bound turns out to be sharp,
in the sense that for any such pair $p,q$,
there exist Birkhoff billiards with exactly two
geometrically distinct periodic orbits of
rotation number $p/q$~\cite{pinto2012billiards};
a simple example is that the billiard in a non-circular ellipse
has two periodic orbits of rotation number $1/2$,
corresponding to the two axes of symmetry.
It follows that the conclusion of
Theorem~\ref{thm:PeriodicIntro} does not hold in general
for Birkhoff billiards.
Finally,
as for Theorem~\ref{thm:AsymptoticConstantIntro},
a well-known result of Marvizi-Melrose~\cite{marvizi1982spectral}
implies that if $(g_q)_q$, with $g_q \in \Pi(1,q)$,
is any sequence of periodic billiard trajectories in
a Birkhoff billiard $\Gamma$, then
\[
L(g_q) =
|\Gamma| + c_+/q^2 + \Order(1/q^4),\quad
\mbox{as $q \to +\infty$,}
\]
where
$c_+ = -\frac{1}{24}
\left[\int_{\Gamma} \kappa^{2/3}(s) \rmd s \right]^3$.
Hence, $(1,q)$-periodic billiard trajectories
in circular polygons are \emph{asymptotically shorter}
than the ones in Birkhoff billiards.

An interesting question in general that has been considered to a
significant extent in the literature is:
what happens to the caustics of Lazutkin's theorem,
and thus the conclusions of Theorems~\ref{thm:SymbolicDynamicsIntro}
and~\ref{thm:BoundaryIntro},
if we loosen the definition of a Birkhoff billiard?
Without altering the basic definition of the billiard map $f$,
there are three ways that we can generalise Birkhoff billiards:
\eqref{item_loose1}~by relaxing the strict convexity hypothesis,
\eqref{item_loose2}~by relaxing the smoothness hypothesis,
or \eqref{item_loose3}~by increasing the dimension of
the ambient Euclidean space.

\begin{enumerate}[(i)]
\item
\label{item_loose1}
Mather proved that if the boundary is convex and $C^r$ for $r \geq 2$,
but has at least one point of zero curvature,
then there are no caustics and there exist trajectories
which come arbitrarily close to being positively tangent to the boundary
and also come arbitrarily close to being negatively tangent to the
boundary~\cite{mather1982glancing}.
Although this result is about finite segments of billiard trajectories,
there are also infinite trajectories tending to the boundary
both forward and backward in time in such
billiards: $\mathcal{A}_0^\pm \cap \mathcal{A_\pi^\mp} \neq \emptyset$, see~\cite{mather1991variational}.

\item
\label{item_loose2}
Despite six continuous derivatives being
the stated smoothness requirement for Lazutkin's
theorem~\cite{douady1982applications,lazutkin1973existence},
there is some uncertainty regarding what happens for $C^5$ boundaries,
and in fact it is generally believed that 4 continuous derivatives
should suffice.
Halpern constructed billiard tables that are strictly convex
and $C^1$ but not $C^2$ such that the billiard particle
experiences an infinite number of collisions in finite
time~\cite{halpern1977strange}; that is to say,
the billiard flow is incomplete.
This construction does not apply to our case,
as our billiard boundaries have only a finite number of singularities
(points where the boundary is only $C^1$ and not $C^2$),
whereas Halpern's billiards have infinitely many.

The case of boundaries that are strictly convex and $C^1$
but not $C^2$ and have only a finite number (one, for example)
of singularities was first considered by
Hubacher~\cite{hubacher1987instability},
who proved that such billiards have no caustics in a neighborhood
of the boundary. This result opens the door for our analysis.

\item
\label{item_loose3}
It has been known since the works of Berger and Gruber that
in the case of strictly convex and sufficiently smooth billiards
in higher dimension (i.e. the billiard boundary is a codimension~1
submanifold of $\mathbb{R}^d$ where $d \geq 3$),
only ellipsoids have caustics~\cite{berger1995seules,gruber1995only}.
However Gruber also observed that in this case,
even in the absence of caustics,
the Liouville measure of the set of trajectories approaching
the boundary asymptotically is zero~\cite{gruber1990convex}.
The question of existence of such trajectories was thus left open.

It was proved in~\cite{clarke2022arnold}
(combined with results of~\cite{clarke2019generic})
that generic strictly convex analytic billiards in $\Rset^3$
(and `many' such billiards in $\Rset^d$ for $d \geq 4$)
have trajectories approaching the boundary asymptotically.
It is believed that the meagre set of analytic strictly convex
billiard boundaries in $\Rset^d$, $d \geq 3$, for which these
trajectories do not exist consists entirely of ellipsoids,
but the perturbative methods of~\cite{clarke2022arnold}
do not immediately extend to such a result.
\end{enumerate}

Billiards in circular polygons have been studied numerically in the
literature~\cite{benettin1978numerical, dullin1996two,
hayli1986experiences, henon1983benettin, makino2019bifurcation}.
In the paper~\cite{balint2011chaos} the authors use
numerical simulations and semi-rigorous arguments to study
billiards in a 2-parameter family of circular polygons.
They conjecture that, for certain values of the parameters,
the billiard is ergodic.
In addition they provide heuristic arguments in favor of
this conjecture.

A related problem is the lemon-shaped billiard,
which is known to display chaos~\cite{bunimovich2016another,
chen2013ergodicity, jin2020hyperbolicity}.
These billiards are strictly convex but not $C^1$,
so the billiard map is well-defined only on a proper subset of
the phase space.

The \emph{elliptic flowers} recently introduced by
Bunimovich~\cite{Bunimovich2022} are closed $C^0$ curves
formed by finitely many pieces of ellipses.
\emph{Elliptic polygons} are elliptic flowers
that are $C^1$ and strictly convex,
so they are a natural generalisation of circular polygons.
One can obtain a 1-parameter family of elliptic polygons
with the string construction from any convex (linear) polygon.
The \emph{string construction} consists of wrapping
an inelastic string around the polygon and tracing
a curve around it by keeping the string taut.
Billiards in elliptic polygons can be studied with
the techniques presented here for circular polygons.
We believe that all results previously stated in this introduction,
with the possible exception of the inequality $c_- \le -\pi^2|\Gamma|/6$
given in Theorem~\ref{thm:AsymptoticConstantIntro},
hold for generic elliptic polygons.
However, there are elliptic polygons that are globally~$C^2$,
and not just~$C^1$, the \emph{hexagonal string billiard} first studied by
Fetter~\cite{fetter2012numerical}
being the most celebrated example.
We do not know how to deal with $C^2$ elliptic polygons
because jumps in the curvature of the boundary are a key
ingredient in our approach to get chaotic billiard dynamics.
Fetter suggested that the hexagonal string billiard could be integrable,
in which case it would be a counterexample to the Birkhoff
conjecture\footnote{It is well-known that billiards in ellipses
are integrable. The so-called \emph{Birkhoff conjecture} says that
elliptical billiards are in fact the only integrable billiards.
This conjecture, in its full generality, remains open.}.
However, such integrability was numerically put in doubt
in \cite{bialy2022numerical}.

In what follows we describe the main ideas of our proofs.
Let $\Gamma$ be a circular $k$-gon.
It is well-known that the angle of incidence/reflection is a
constant of motion for billiards in circles.
Therefore, for the billiard in $\Gamma$, the angle of incidence/reflection
can change only when we pass from one circular arc to another,
and not when the billiard has consecutive impacts on
the same circular arc.
The main tool that we use to prove our theorems is what we call
the \emph{fundamental lemma} (Lemma~\ref{lem:Fundamental} below),
which describes how trajectories move up and down
after passing from one circular arc to the next.
The phase space~$\mathcal{M}$ of the billiard map~$f$ is a cylinder,
with coordinates $(\varphi,\theta)$
where $\varphi \in \Tset$ is a parameter on the boundary $\Gamma$,
and where $\theta \in [0, \pi]$ is the angle of incidence/reflection.
We consider two vertical segments $\mathcal{L}_j$ and
$\mathcal{L}_{j+1}$ in~$\mathcal{M}$
corresponding to consecutive singularities of $\Gamma$,
and sufficiently small values of $\theta$.
The index~$j$ that labels the singularities is defined modulo $k$.
The triangular region $\mathcal{D}_j$ bounded by
$\mathcal{L}_j$ and $f(\mathcal{L}_j)$ is a fundamental domain
of the billiard map~$f$;
that is, a set with the property that sliding trajectories have
exactly one point in $\mathcal{D}_j$ on each turn around $\Gamma$.
Consider now the sequence of backward iterates
$\big\{ f^{-n}(\mathcal{L}_{j+1}) \big\}$ of $\mathcal{L}_{j+1}$.
This sequence of slanted segments divides the fundamental domain
$\mathcal{D}_j$ into infinitely many quadrilaterals,
which we call \emph{fundamental quadrilaterals}.
The fundamental lemma describes which fundamental quadrilaterals
in $\mathcal{D}_{j+1}$ we can visit if we start in a given
fundamental quadrilateral in $\mathcal{D}_j$.

In order to prove Theorem~\ref{thm:SymbolicDynamicsIntro},
we apply the fundamental lemma iteratively to describe how
trajectories visit different fundamental quadrilaterals consecutively
in each of the $k$ fundamental domains $\mathcal{D}_j$ in $\mathcal{M}$.
A particular coding of possible sequences of $k$ fundamental
quadrilaterals that trajectories can visit gives us our symbols.
We then use a method due to
Papini and Zanolin~\cite{papini2004fixed,papini2004periodic}
(extended to higher dimensions by Pireddu and
Zanolin~\cite{pireddu2009fixed,pireddu2007cutting,pireddu2008chaotic})
to prove that the billiard dynamics is semiconjugate to
a shift map on the sequence space of this set of symbols;
this method is called \emph{stretching along the paths}.
Observe that we could equally have used the method of
correctly aligned windows~\cite{arioli2001symbolic, gidea2004covering, 
zgliczynski2004covering}, or the crossing number
method~\cite{kennedy2001topological};
note however that the latter would not have provided us with
the large amount of periodic orbits that the other two methods do.
We note that, although Theorem~\ref{thm:SymbolicDynamicsIntro}
provides us with a topological semiconjugacy to symbolic dynamics,
we expect that this could be improved to a full conjugacy by
using other methods.

Once the proof of Theorem~\ref{thm:SymbolicDynamicsIntro} is completed,
Theorems~\ref{thm:BoundaryIntro}, \ref{thm:PeriodicIntro},
and~\ref{thm:AsymptoticConstantIntro} are proved by
combining additional arguments with the symbolic dynamics
we have constructed.
With respect to the symbolic dynamics,
we choose a coding of the fundamental quadrilaterals visited
by a trajectory that corresponds to $\theta$
tending to $0$ in the fastest way possible.
We then prove that the corresponding billiard trajectories satisfy
the conclusion of Theorem~\ref{thm:BoundaryIntro}.
As for Theorem~\ref{thm:PeriodicIntro},
the method of stretching along the paths guarantees
the existence of a periodic billiard trajectory for
every periodic sequence of symbols.
Consequently, the proof of Theorem~\ref{thm:PeriodicIntro}
amounts to counting the number of sequences of symbols that
are periodic with period $p$ (because each symbol describes
one full turn around the table; see Section~\ref{sec:Symbols} for details)
such that the corresponding periodic sliding billiard trajectories
after $p$ turns around the table have rotation number $p/q$.
It turns out that this reduces to counting the number of
integer points whose coordinates sum $q$ in a certain
$kp$-dimensional convex polytope.
We do this by proving that the given convex polytope contains a
hypercube with sides of a certain length,
and finally by counting the number of integer points whose
coordinates sum $q$ in that hypercube.

The structure of this paper is as follows.
In Section~\ref{sec:CircularPolygons} we describe
the salient features of circular polygons.
We summarise the \emph{stretching along the paths} method in
Section~\ref{sec:Stretching}.
In Section~\ref{sec:Fundamental} we define the billiard map and
dicuss the billiard dynamics in circular polygons,
before giving the definition of fundamental quadrilaterals,
as well as the statement and proof of the fundamental lemma.
Symbols are described in Section~\ref{sec:Symbols}.
Chaotic dynamics,
and thus the proofs of Theorem~\ref{thm:SymbolicDynamicsIntro} and
Corollary~\ref{cor:PossibleBehaviors},
is established in Section~\ref{sec:ChaoticMotions},
whereas Section~\ref{sec:OptimalSpeed} contains the proof of
Theorem~\ref{thm:BoundaryIntro}.
In Section~\ref{sec:PeriodicTrajectories}, we count the periodic orbits,
thus proving Theorem~\ref{thm:PeriodicIntro}.
Finally, Theorem~\ref{thm:AsymptoticConstantIntro} is proved
in Section~\ref{sec:LengthSpectrum}.
Some technical proofs are relegated to appendices.

\section{Circular polygons}\label{sec:CircularPolygons}

In this section we define our relevant curves,
construct their suitable parametrisations,
and introduce notations that will be extensively used in the rest
of the paper.

A \emph{piecewise-circular curve} (or \emph{PC curve} for short) is
given by a finite sequence of circular arcs in
the Euclidean plane $\Rset^2$,
with the endpoint of one arc coinciding with the beginning point
of the next.
PC curves have been studied by several authors.
See~\cite{banchoff1994geometry,BurelleKirk2021} and
the references therein.
Lunes and lemons (two arcs),
yin-yang curves, arbelos and PC cardioids (three arcs),
salinons, Moss's eggs and pseudo-ellipses (four arcs),
and Reuleaux polygons (arbitrary number of arcs)
are celebrated examples of simple closed PC
curves~\cite{Dixon1987,AlsinaNelsen2011}.
A simple closed PC curve is a PC curve not crossing itself such that
the endpoint of its last arc coincides with the beginning point
of its first arc.

All simple closed PC curves are Jordan curves,
so we could study the billiard dynamics in any domain enclosed by
a simple closed PC curve.
However, such domains are too general for our purposes.
We will only deal with strictly convex domains without corners or cusps.
Strict convexity is useful, because then any ordered pair of
points on the boundary defines a unique billiard trajectory.
Absence of cusps and corners implies that the corresponding billiard map
is a global homeomorphism in the phase space $\mathcal{M}$,
see Section~\ref{sec:Fundamental}.

Therefore, we will only consider \emph{circular polygons},
defined as follows.

\begin{definition}
A \emph{circular $k$-gon} is a simple closed strictly convex curve
in $\Rset^2$ formed by the concatenation of $k > 1$ circular arcs,
in such a way that the curve is $C^1$, but not $C^2$,
at the intersection points of any two consecutive circular arcs.
The \emph{nodes} of a circular polygon are the intersection points
of each pair of consecutive circular arcs.
\end{definition}

Reuleaux polygons, lemons, lunes, yin-yang curves, arbelos, salinons,
PC cardioids are not circular polygons,
but pseudo-ellipses and Moss's eggs
(described later on; see also~\cite[Section~1.1]{Dixon1987}) are.
We explicitly ask that consecutive arcs always have different radii,
so the curvature has jump discontinuities at all nodes.
We do not consider circumferences as circular polygons
since circular billiards are completely integrable.

Let $\Gamma$ be a circular $k$-gon with arcs
$\Gamma_1,\ldots,\Gamma_k$,
listed in the order in which they are concatenated,
moving in a counter-clockwise direction.
Each arc $\Gamma_j$ is completely determined by
its \emph{center} $O_j$, its \emph{radius} $r_j > 0$ and
its \emph{angular range} $[a_j,b_j] \subset \Tset$.
Then $\delta_j = b_j - a_j$ is the \emph{central angle}
of $\Gamma_j$.
Using the standard identification $\Rset^2 \simeq \Cset$,
let
\[
A_j = O_j + r_j \rme^{\rmi a_j}, \qquad B_j = O_j + r_j \rme^{\rmi b_j}
\]
be the two nodes of arc $\Gamma_j$.
We denote by
\begin{equation}
\label{eq:SetOfNodes}
\Gamma_\star =
\{ A_1,\ldots,A_k \} = \{ B_1,\ldots,B_k \}
\end{equation}
the \emph{set of nodes} of $\Gamma$.

\begin{notation}
The index $j$ that labels the arcs of any circular $k$-gon is
defined modulo $k$.
Hence, $\Gamma_j = \Gamma_{j \mod k}$,
$r_j = r_{j \mod k}$, $a_j = a_{j \mod k}$ and so forth.
In particular, $\Gamma_{k+1} = \Gamma_1$.
\end{notation}

\begin{definition}
\label{def:PolarParametrization}
The \emph{polar parametrisation} of $\Gamma$
is the counter-clockwise parametrisation
\[
z : \Tset \to \Gamma \subset \Rset^2 \simeq \Cset,\qquad
z(\varphi) = O_j + r_j \rme^{\rmi\varphi}, \qquad
\forall \varphi \in [a_j,b_j].
\]
The points $a_1,\ldots,a_k$ are the \emph{singularities} of $\Gamma$.
\end{definition}

This parametrisation is well-defined because, by definition,
$B_j = A_{j+1}$ (the endpoint of any arc coincides with
the beginning point of the next), and $b_j = a_{j+1}$
(two consecutive arcs have the same oriented tangent line at their
 intersecting node).
From now on,
the reader should keep in mind that singularities
$a_1,\ldots,a_k$ are always ordered in such a way that
\begin{equation}
\label{eq:AngularRanges}
a_1 < b_1 = a_2 < b_2 = a_3 < \cdots <
b_{k-1} = a_k < b_k = a_1 + 2\pi.
\end{equation}

As far as we know,
all the billiards in circular polygons that have been studied
in the past correspond to cases with \emph{exactly} four
arcs~\cite{balint2011chaos, benettin1978numerical, dullin1996two,
 hayli1986experiences, henon1983benettin, makino2019bifurcation}.
It turns out that this is the simplest case,
in the context of the next lemma.

\begin{lemma}
\label{lemma_geq4arcs}
Let $\Gamma$ be a circular $k$-gon with radii $r_j > 0$,
singularities $a_j \in \Tset$ (or $b_j = a_{j+1}$)
and central angles $\delta_j = b_j - a_j \in (0,2\pi)$.
Set $w_j = \rme^{\rmi b_j} - \rme^{\rmi a_j} \in \Cset$.
Then $\Gamma$ has at least four arcs: $k \ge 4$, and
\begin{equation}
\label{eq:NecessarySufficient}
\sum_{j=1}^k \delta_j = 2\pi, \qquad
\sum_{j=1}^k r_j w_j = 0.
\end{equation}
\end{lemma}

\begin{proof}
Clearly,
$\sum_{j=1}^k \delta_j = \sum_{j=1}^k (b_j-a_j) = b_k - a_1 = 2\pi$.
It is known that a bounded measurable function
$\rho \in L(\Tset)$ is the radius of curvature of a closed curve
if and only if
\begin{equation}
\label{eq:ClosedCondition}
\int_{a_1}^{b_k} \rho(\varphi) \rme^{\rmi \varphi} \rmd \varphi =
\int_0^{2\pi} \rho(\varphi) \rme^{\rmi \varphi} \rmd \varphi = 0.
\end{equation}
Since the radius of curvature of $\Gamma$
is the piecewise constant function
$\rho_{|(a_j,b_j)} \equiv r_j$,
the general condition~(\ref{eq:ClosedCondition}) becomes
$-\rmi \sum_{j=1}^k r_j w_j = 0$.
Note that $\sum_{j=1}^k w_j = 0$, $w_j \neq 0$ for all $j$,
and $\dim_\Rset [w_1,\ldots,w_k] = 2$ when $k \ge 3$.
If $\Gamma$ has just two arcs: $k = 2$, then
\[
r_1 w_1 + r_2 w_2 = 0, \qquad w_1 + w_2 = 0, \qquad w_1,w_2 \neq 0.
\]
This implies that $r_1 = r_2$ and
contradicts our assumption about radii of consecutive arcs.
If $\Gamma$ has just three arcs: $k = 3$, then
\[
r_1 w_1 + r_2 w_2 + r_3 w_3 = 0,\qquad
w_1 + w_2 + w_3 = 0,\qquad \dim_\Rset [w_1,w_2,w_3] = 2.
\]
This implies that $r_1 = r_2 = r_3$ and we reach the same
contradiction.
\end{proof}

Necessary conditions~\eqref{eq:AngularRanges}
and~\eqref{eq:NecessarySufficient} are sufficient ones too.
To be precise, if the radii $r_j > 0$,
the angular ranges $[a_j,b_j] \subset \Tset$ and
the central angles $\delta_j = b_j - a_j \in (0,2\pi)$
satisfy~\eqref{eq:AngularRanges} and~\eqref{eq:NecessarySufficient},
then there exists a $2$-parameter family of circular $k$-gons
sharing all those elements.
Moreover,
all circular $k$-gons in this family are the same modulo translations.
Let us prove this claim.
Once we put the center $O_1$ at an arbitrary location,
all other centers are recursively determined by imposing
that $A_{j+1} = B_j$, which implies (since $b_j = a_{j+1}$) that
\[
O_{j+1} = O_j + (r_j - r_{j+1})\rme^{\rmi b_j}, \qquad j=1,\ldots,k-1.
\]
The obtained PC curve $\Gamma = z(\Tset)$,
where $z(\varphi)$ is the polar parametrisation introduced in
Definition~\ref{def:PolarParametrization},
is closed by~\eqref{eq:ClosedCondition} and
it is $C^1$ and strictly convex by construction.
Hence, $\Gamma$ is a circular $k$-gon.
This means that any circular $k$-gon is completely determined
once we know its first center $O_1$, its first singularity $a_1$,
its radii $r_1,\ldots,r_k$,
and its central angles $\delta_1,\ldots,\delta_k$.

The above discussion shows that circular $k$-gons form,
modulo translations and rotations, a $(2k-3)$-parameter family.
More precisely, if we set $O_1 = (0,0)$ and $a_1 = 0$
by means of a translation and a rotation,
then parameters $r_1,\ldots,r_k,\delta_1,\ldots,\delta_k$
are restricted by~\eqref{eq:NecessarySufficient},
which has codimension three.
If, in addition,
we normalise somehow (in the literature one can find many
different choices) our circular $k$-gons with a scaling,
we get that they form a $(2k-4)$-parameter family
modulo similarities.
The reader can find a complete geometric description,
modulo similarities, of the four-parameter family of
(convex and nonconvex, symmetric and nonsymmetric)
closed $C^1$ PC curves with four arcs in~\cite{hayli1986experiences},
whose goal was to exhibit numerically the richness of the billiard
dynamics in those $C^1$ PC curves.

For brevity, we only give a few simple examples of
symmetric and non-symmetric circular polygons with four and six arcs.
We skip many details.

\begin{figure}[t]
\includegraphics*[width=78mm]{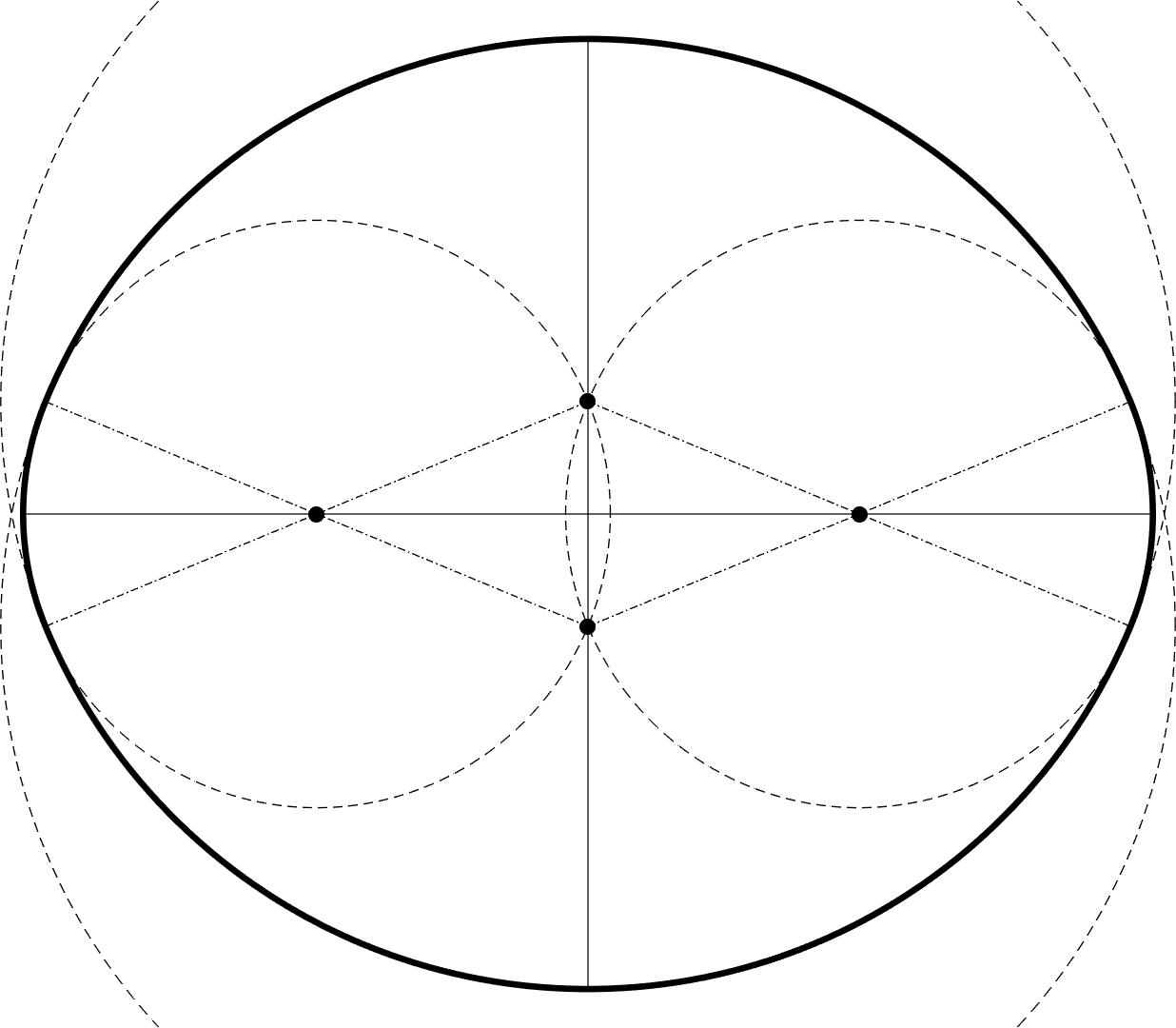}\hfill
\includegraphics*[width=78mm]{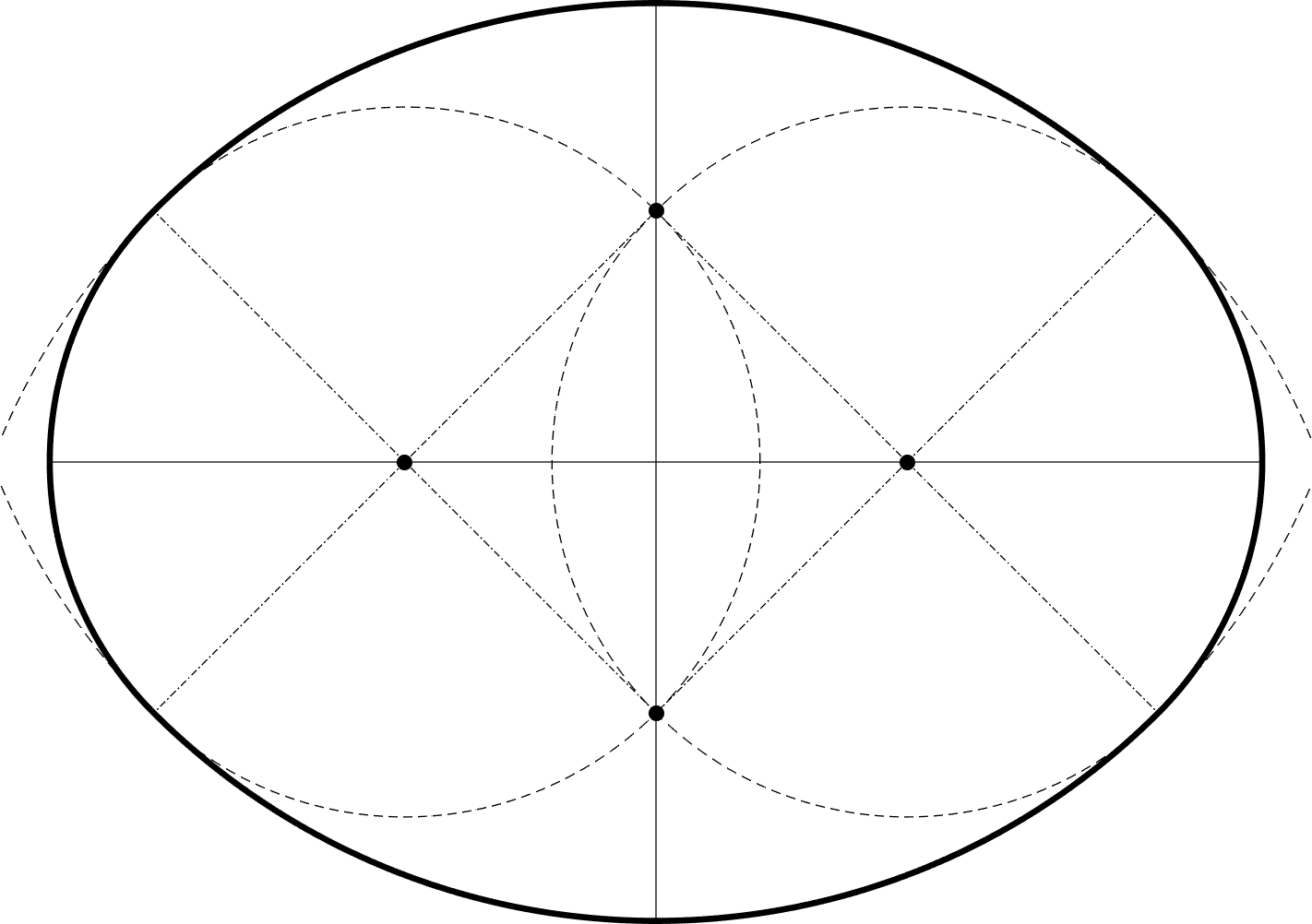}
\caption{\small Left: Pseudo-ellipse $E_{\pi/4,1,2}$.
Right: Squared pseudo-ellipse $E_{\pi/2,1,2}$.
Pseudo-ellipses are represented with thick lines,
their pairs of symmetry lines with thin continuous lines,
their centers $O_j$ with solid dots,
the circumferences of radii $r_j$ centered at $O_j$ with dashed thin lines,
their angular ranges $[a_j,b_j]$ with dash-dotted thin lines, and
their nodes are the intersections of the thick and dash-dotted thin
lines.}
\label{fig:PseudoEllipses}
\end{figure}

\emph{Pseudo-ellipses} are the simplest examples.
We may define them as the circular $4$-gons with a
$\Zset_2 \times \Zset_2$-symmetry.
They form, modulo translations and rotations, a three-parameter family.
The radii and central angles of any pseudo ellipse have the form
\[
r_1 = r_3 = r,\qquad
r_2 = r_4 = R,\qquad
\delta_1 = \delta_3 = \alpha,\qquad
\delta_2 = \delta_4 = \pi - \alpha,
\]
for some free parameters $\alpha \in (0,\pi)$, and $r,R > 0$.
We will assume that $0 < r < R$ for convenience.
We will denote by $E_{\alpha,r,R}$ the corresponding
pseudo-ellipse.
Given any pseudo-ellipse $E_{\alpha,r,R}$,
its centers form a \emph{rhombus} (4~equal sides) and
its nodes form a \emph{rectangle} (4~equal angles).
If $\alpha = \pi/2$, then $\delta_1=\delta_2=\delta_3=\delta_4=\pi/2$
and we say that $E_{\pi/2,r,R}$ is
a \emph{squared pseudo-ellipse}.
The term \emph{squared} comes from the fact that the centers of such
pseudo-ellipses form a square.
See Figure~\ref{fig:PseudoEllipses}.
The nodes of a squared pseudo-ellipse still form a rectangle, not a square.
On the contrary, the celebrated Benettin-Strelcyn ovals,
whose billiard dynamics was numerically studied
in~\cite{benettin1978numerical,henon1983benettin,makino2019bifurcation},
are pseudo-ellipses whose nodes form a square,
but whose centers only form a rhombus.
Later on,
the extent of chaos in billiards associated to general pseudo-ellipses
was numerically studied in~\cite{dullin1996two}.

Another celebrated example of a circular $4$-gon
is \emph{Moss's egg}~\cite[Section 1.1]{Dixon1987},
whose radii and central angles have the form
\[
r_1 = r, \quad
r_2 = 2r = r_4,\quad
r_3 = (2-\sqrt{2})r,\quad
\delta_1 = \pi,\quad
\delta_2 = \pi/4 = \delta_4,\quad
\delta_3 = \pi/2,
\]
for some free parameter $r>0$, called the radius of the egg.
All Moss's eggs are congruent modulo similarities.
They have a $\Zset_2$-symmetry, so their nodes form an
\emph{isosceles trapezoid} (2 pairs of consecutive equal angles) and
its centers form a \emph{kite} (2 pairs of adjacent equal-length sides).
In fact, this kite is somewhat degenerate since it is, in fact, a triangle.
See Figure~\ref{fig:OtherExamples}.
Billiards in a 2-parameter family of circular $4$-gons with
$\Zset_2$-symmetry, but not containing Moss's egg,
were considered in~\cite{balint2011chaos}.
The heuristic analysis of sliding trajectories contained
in Section~4.5 of that paper is closely related to our study.

\begin{figure}[t]
\includegraphics*[width=65mm]{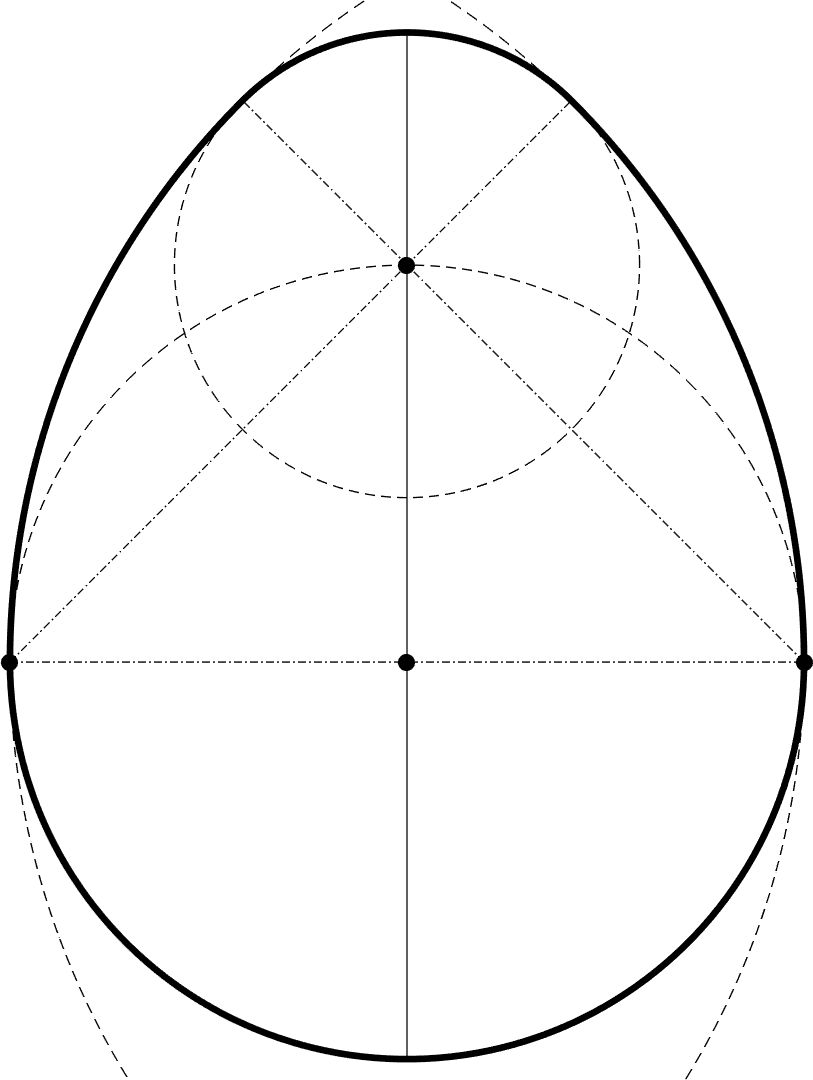}\hfill
\includegraphics*[width=85mm]{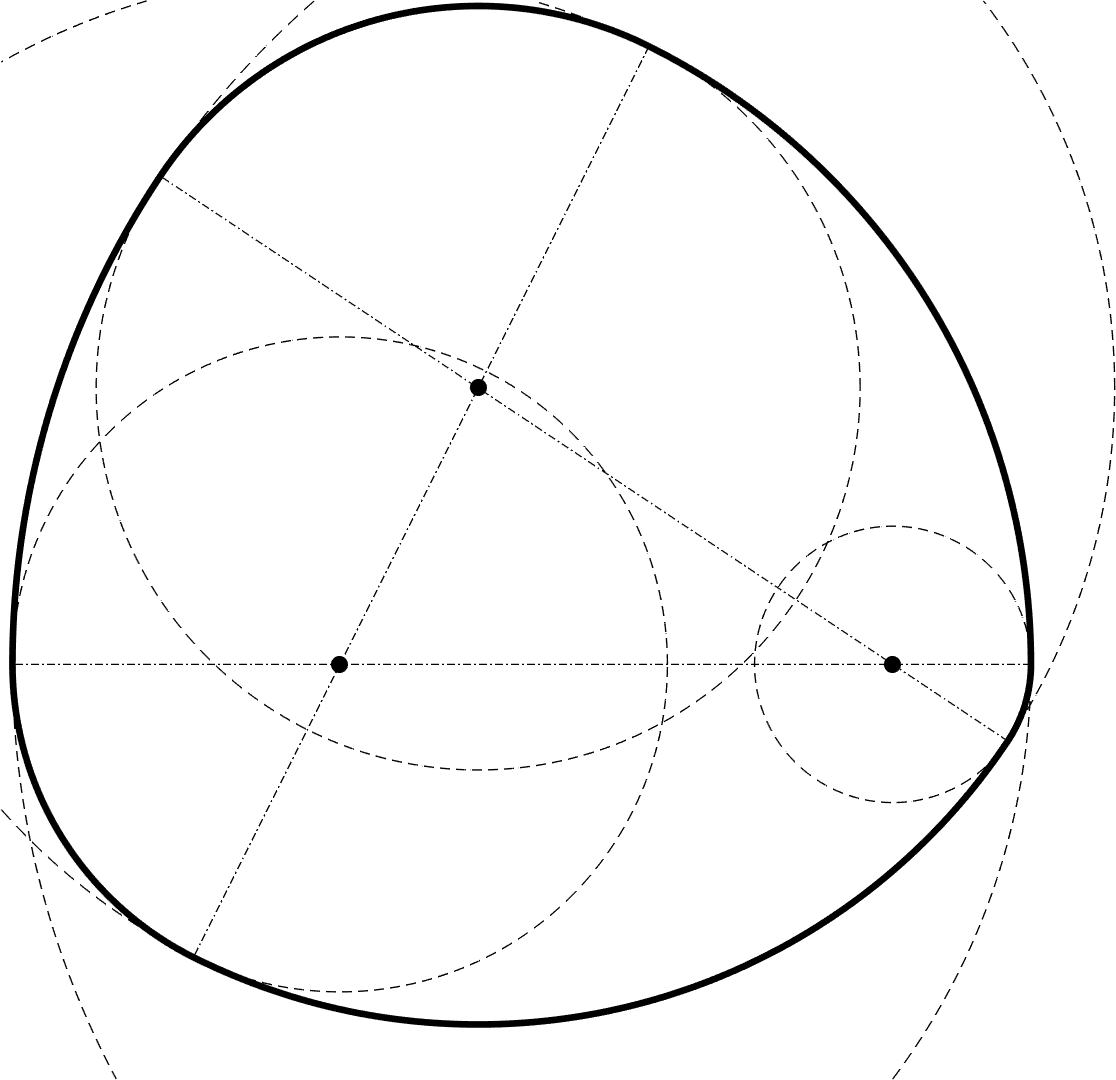}
\caption{\small Left: Moss's egg.
Right: A nonsymmetric circular $6$-gon with centers
$O_1 = O_4 = (3,-1)$, $O_2 = O_5 = (-1,-1)$ and $O_3 = O_6 = (0,1)$,
which form a triangle.
Circular polygons are represented with thick lines,
the symmetry line of Moss's egg with a thin continuous line,
their centers $O_j$ with solid dots,
the circumferences of radii $r_j$ centered at $O_j$
with dashed thin lines,
their angular ranges $[a_j,b_j]$ with dash-dotted thin lines,
and their nodes are the intersections of the thick and
dash-dotted thin lines.}
\label{fig:OtherExamples}
\end{figure}

Next, we describe a way to construct some circular $6$-gons.
Fix a triangle $\triangle ABC$ with vertexes $A$, $B$ and $C$
ordered in the \emph{clockwise} direction.
Let $\alpha$, $\beta$ and $\gamma$ be its internal angles.
Let $a$, $b$ and $c$ be the lengths of its sides,
following the standard convention.
That is, $a$ refers to the side opposed to vertex $A$ and so forth.
Then we look for circular $6$-gons with centers
$O_1 = O_4 = A$, $O_2 = O_5 = B$, $O_3 = O_6 = C$ and central angles
$\delta_1 = \delta_4 = \alpha$, $\delta_2 = \delta_5 = \beta$ and
$\delta_3 = \delta_6 = \gamma$.
In this setting, all radii are determined by the choice of the first one.
Namely, we can take
\[
r_1 = r,\quad
r_2 = r + c,\quad
r_3 = r + c - a,\quad
r_4 = r + c - a + b,\quad
r_5 = r + b - a,\quad
r_6 = r + b,
\]
for any $r > \max\{0,a-c,a-b\}$.
Therefore, we obtain a one-parameter family of parallel
circular $6$-gons, parameterised by the first radius $r_1 = r$.
See Figure~\ref{fig:OtherExamples} for a non-symmetric example
with $A = (3,-1)$, $B=(-1,-1)$, $C=(0,1)$ and $r=1$.

One can draw circular polygons with many arcs by applying
similar constructions,
but that challenge is beyond the scope of this paper.
The interested reader can look for inspiration in the nice
construction of elliptic
flowers due to Bunimovich~\cite{Bunimovich2022}.

To end this section,
we emphasise that all our theorems are general.
They can be applied to any circular polygon.
Thus, we do not need to deal with concrete circular polygons.

\section{The `stretching along the paths' method}
\label{sec:Stretching}

In this short section,
we present the main ideas of the
\emph{stretching along the paths} method developed by
Papini and Zanolin~\cite{papini2004fixed,papini2004periodic},
and extended by Pireddu and
Zanolin~\cite{pireddu2007cutting,pireddu2008chaotic,pireddu2009fixed}.
The reader acquainted with the method can take note of
the notation introduced in Definition~\ref{def:Streching}
and skip the rest of this section.

This method is a technical tool to establish the existence of \emph{topological chaos};
that is, chaotic dynamics in continuous maps.
We present a simplified version of the method because we work
in the two-dimensional annulus $\mathcal{M} = \Tset \times [0,\pi]$
and our maps are \emph{homeomorphisms} on $\mathcal{M}$.
We also change some terminology because our maps stretch
along \emph{vertical} paths, instead of \emph{horizontal} paths.
The reader interested in more general statements about
higher dimensions,
finding fixed and periodic points in smaller compact sets,
the study of crossing numbers, non-invertible maps,
and maps not defined in the whole space $\mathcal{M}$,
is referred to the original references.

Let $\mathcal{M} = \Tset \times [0,\pi]$.
By a \emph{continuum} we mean a compact connected subset
of $\mathcal{M}$.
\emph{Paths} and \emph{arcs} are the continuous and the homeomorphic
images of the unit interval $[0,1]$, respectively.
Most definitions below are expressed in terms of paths,
but we could also use arcs or continua,
see~\cite[Table 3.1]{papini2004fixed}.
\emph{Cells} are the homeomorphic image of the unit square $[0,1]^2$
so they are simply connected and compact.
The Jordan-Shoenflies theorem implies that any simply connected compact
subset of $\mathcal{M}$ bounded by a Jordan curve is a cell.
Figure~\ref{fig:Cells} provides a visual guide for the
following definitions.

\begin{definition}\label{def:OrientedCell}
An \emph{oriented cell} $\widetilde{\mathcal{Q}}$
is a cell $\mathcal{Q} \subset \mathcal{M}$
where we have chosen four different points
$\widetilde{\mathcal{Q}}_{\rm bl}$ (base-left),
$\widetilde{\mathcal{Q}}_{\rm br}$ (base-right),
$\widetilde{\mathcal{Q}}_{\rm tr}$ (top-right) and
$\widetilde{\mathcal{Q}}_{\rm tl}$ (top-left) over the boundary
$\partial \mathcal{Q}$ in a counter-clockwise order.
The \emph{base side} of $\widetilde{\mathcal{Q}}$ is the arc
$\widetilde{\mathcal{Q}}_{\rm b} \subset \partial \mathcal{Q}$ that
goes from $\widetilde{\mathcal{Q}}_{\rm bl}$
to $\widetilde{\mathcal{Q}}_{\rm br}$ in the counter-clockwise
orientation.
Similarly, $\widetilde{\mathcal{Q}}_{\rm l}$,
$\widetilde{\mathcal{Q}}_{\rm r}$ and $\widetilde{\mathcal{Q}}_{\rm t}$
are the \emph{left}, \emph{right} and \emph{top sides}
of $\widetilde{\mathcal{Q}}$.
Finally,
$\widetilde{\mathcal{Q}}_{\rm h} =
 \widetilde{\mathcal{Q}}_{\rm b} \cup \widetilde{\mathcal{Q}}_{\rm t}$ and
$\widetilde{\mathcal{Q}}_{\rm v} =
 \widetilde{\mathcal{Q}}_{\rm l} \cup \widetilde{\mathcal{Q}}_{\rm r}$
are the \emph{horizontal} and \emph{vertical sides} of
$\widetilde{\mathcal{Q}}$.
\end{definition}

All our cells will have line segments as vertical sides,
some being even quadrilaterals.

\begin{figure}[t]
\includegraphics*[width=80mm]{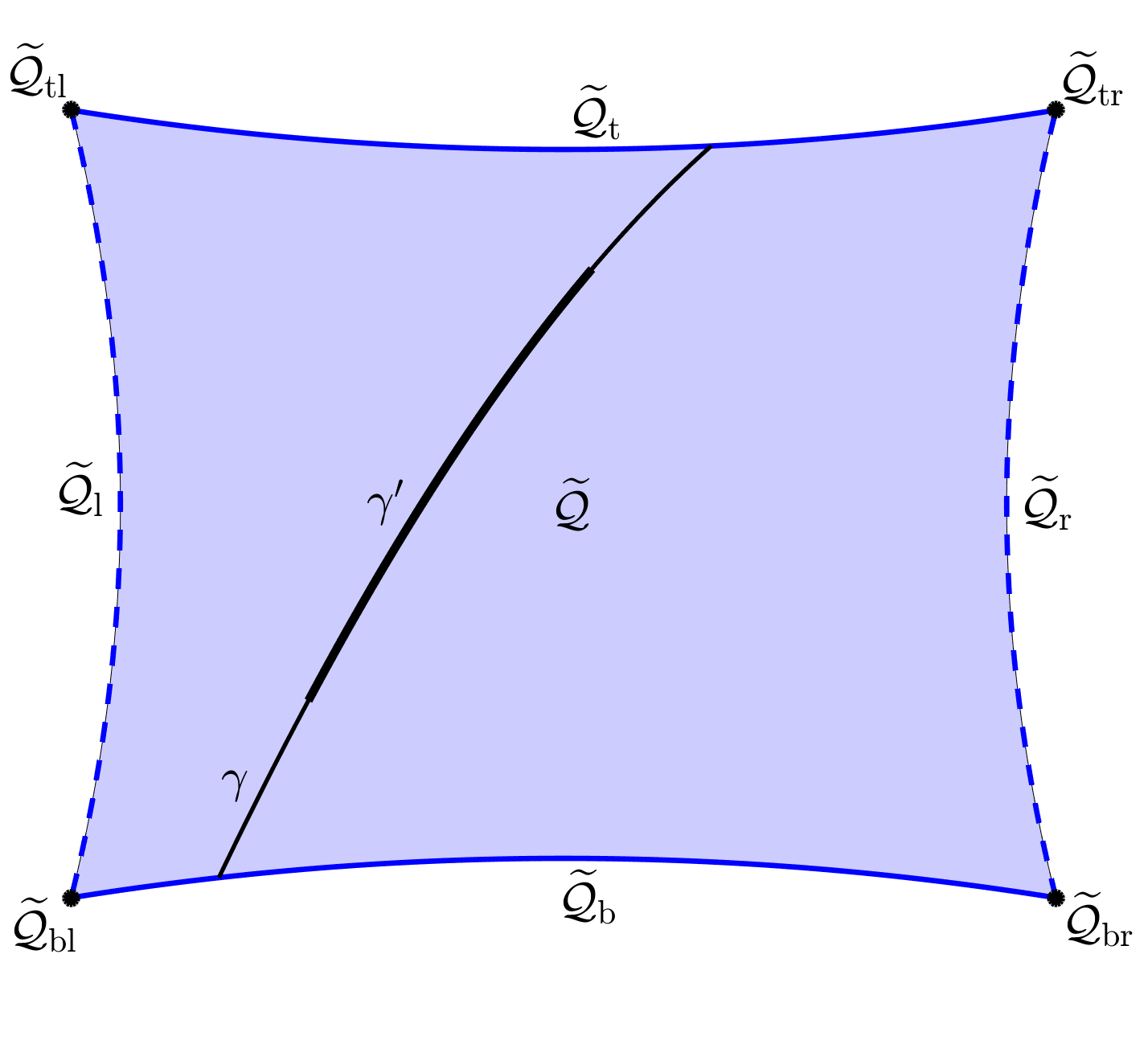}\hfill
\includegraphics*[width=70mm]{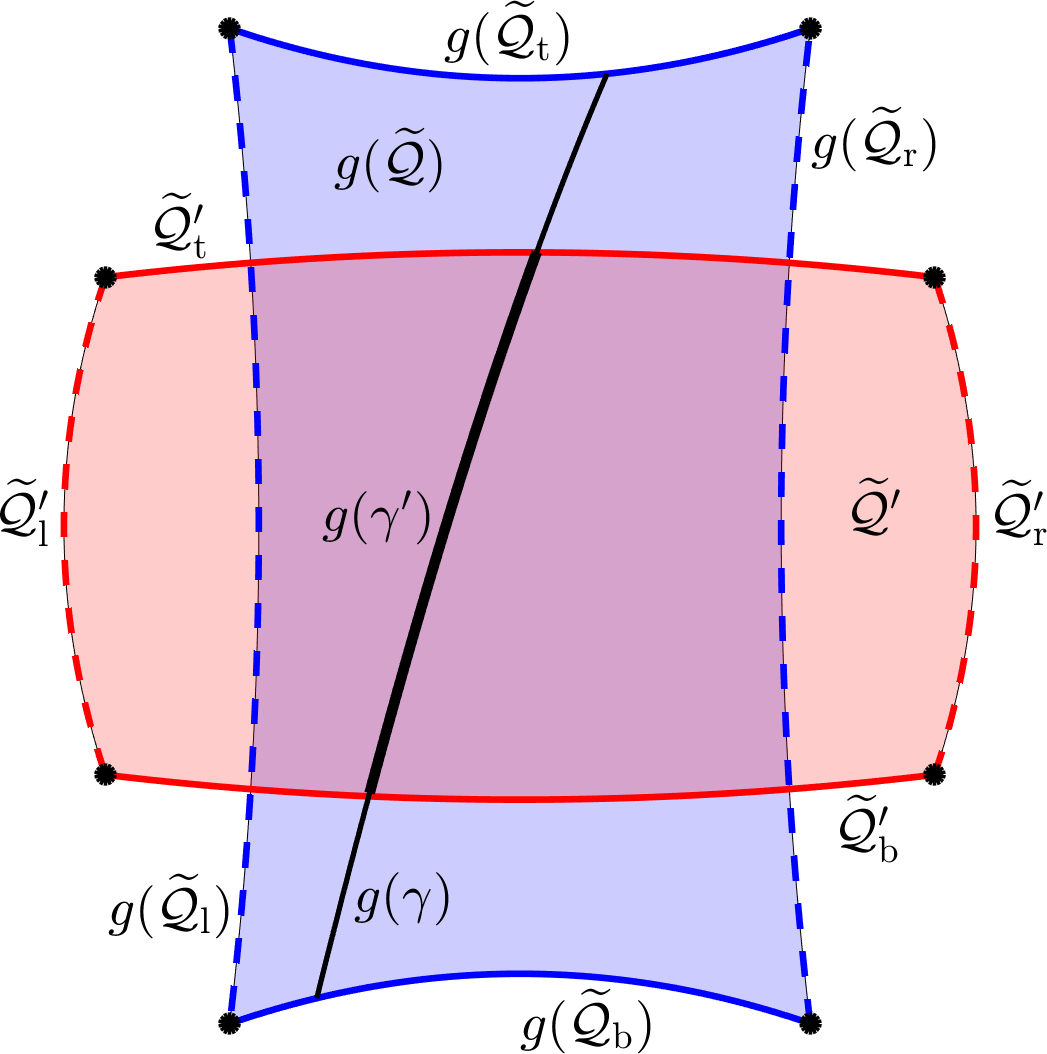}
\caption{\small
Left: An oriented cell $\widetilde{\mathcal{Q}}$ (in blue)
      with a vertical path $\gamma$ (in black).
Right: A homeomorphism $g$ stretches $\widetilde{\mathcal{Q}}$
       to a second oriented cell $\widetilde{\mathcal{Q}}'$ (in red)
       along vertical paths.
The horizontal and vertical sides of both cells are
represented with continuous and dashed lines, respectively.
The vertical path $\gamma$ and its subpath $\gamma'$
(and their corresponding images in the picture on the right)
are represented with thin and thick lines, respectively.}
\label{fig:Cells}
\end{figure}

\begin{definition}\label{def:VerticalPath}
Let $\widetilde{\mathcal{Q}}$ be an oriented cell.
A path $\gamma:[a,b] \to Q$ is \emph{vertical}
(respectively, \emph{horizontal}) in $\widetilde{\mathcal{Q}}$
when it connects the two horizontal (respectively, vertical) sides
of $\widetilde{\mathcal{Q}}$ and
$\gamma(t) \not \in \widetilde{\mathcal{Q}}_{\rm h}$
(respectively, $\gamma(t) \not \in \widetilde{\mathcal{Q}}_{\rm v}$)
for all $t \in (a,b)$.
We say that an oriented cell $\widetilde{\mathcal{K}}$
is a \emph{horizontal slab} in $\widetilde{\mathcal{Q}}$ and write
\[
\widetilde{\mathcal{K}} \subset_{\rm h} \widetilde{\mathcal{Q}}
\]
when $\mathcal{K} \subset \mathcal{Q}$ and, either
$\widetilde{\mathcal{K}}_{\rm l} \subset \widetilde{\mathcal{Q}}_{\rm l}$
and
$\widetilde{\mathcal{K}}_{\rm r} \subset \widetilde{\mathcal{Q}}_{\rm r}$,
or
$\widetilde{\mathcal{K}}_{\rm l} \subset \widetilde{\mathcal{Q}}_{\rm r}$
and
$\widetilde{\mathcal{K}}_{\rm r} \subset \widetilde{\mathcal{Q}}_{\rm l}$.
If, in addition,
$\mathcal{K} \cap \widetilde{\mathcal{Q}}_{\rm h} = \emptyset$,
then we say that $\tilde{\mathcal{K}}$ is a
\emph{strict horizontal slab} in $\widetilde{\mathcal{Q}}$ and write
\[
\widetilde{\mathcal{K}} \varsubsetneq_{\rm h} \widetilde{\mathcal{Q}}.
\]
\end{definition}

\emph{Vertical slabs} can be defined analogously.
Note that
$\widetilde{\mathcal{K}} \varsubsetneq_{\rm h} \widetilde{\mathcal{Q}}$
is a much stronger condition than
$\widetilde{\mathcal{K}} \subset_{\rm h} \widetilde{\mathcal{Q}}$ and
$\mathcal{K} \varsubsetneq \mathcal{Q}$.

\begin{definition}\label{def:Streching}
Let $g: \mathcal{M} \to \mathcal{M}$ be a homeomorphism.
Let $\widetilde{\mathcal{Q}}$ and $\widetilde{\mathcal{Q}}'$
be oriented cells in~$\mathcal{M}$.
We say that $g$ \emph{stretches $\widetilde{\mathcal{Q}}$ to
$\widetilde{\mathcal{Q}}'$ along vertical paths} and write
\[
g: \widetilde{\mathcal{Q}} \stretches \widetilde{\mathcal{Q}}'
\]
when every path $\gamma:[a,b] \to \mathcal{Q}$
that is vertical in $\widetilde{\mathcal{Q}}$
contains a \emph{subpath} $\gamma' = \gamma_{|[s,t]}$
for some $a \le s < t \le b$ such that the image path
$g \circ \gamma':[s,t] \to \mathcal{Q}'$
is vertical in $\widetilde{\mathcal{Q}}'$.
\end{definition}

This stretching condition does not imply that
$g(\mathcal{Q}) \subset \mathcal{Q}'$.
In fact, we see $\mathcal{Q}'$ as a `target set' that we want to `visit',
and not as a codomain.
If $\gamma : [a,b] \to \mathcal{M}$ is a path,
we also use the notation $\gamma$ to mean the set
$\gamma([a,b]) \subset \mathcal{M}$.
This allows us to state the stretching condition more succinctly.
Namely, we ask that every path $\gamma$ vertical in
$\widetilde{\mathcal{Q}}$ contains a subpath $\gamma' \subset \gamma$
such that the image path $g(\gamma')$ is vertical
in $\widetilde{\mathcal{Q}}'$.

\begin{definition}
Let $f:\mathcal{M} \to \mathcal{M}$ be a homeomorphism.
Let $(\mathcal{Q}_i;n_i)_{i \in I}$ be a two-sided sequence: $I \in \Zset$,
one-sided sequence: $I \in \Nset_0 = \Nset \cup \{0\}$,
$p$-periodic sequence: $I=\Zset/p\Zset$,
or finite sequence $I = \{0,1,\ldots,k\}$
with $\mathcal{Q}_i \subset \mathcal{M}$ and $n_i \in \Nset$.
Let $x \in \mathcal{Q}_0$.
We say that the point $x$ \emph{$f$-realises} the sequence
$(\mathcal{Q}_i;n_i)_{i \in I}$ when
\[
f^{-(n_{-1} + \cdots + n_{-i})}(x) \in \mathcal{Q}_{-i},\qquad
f^{n_0 + \cdots + n_{i-1}}(x) \in \mathcal{Q}_i, \qquad
\forall i \ge 1.
\]
Clearly,
condition $f^{-(n_{-1} + \cdots + n_{-i})}(x) \in \mathcal{Q}_{-i}$
does not apply in the case of one-sided or finite sequences.
A subset of $\mathcal{Q}_0$ \emph{$f$-realises}
the sequence $(\mathcal{Q}_i;n_i)_{i \in I}$ when all its points do so.
\end{definition}

The subsets $\mathcal{Q}_i$ in this definition do not have to be cells,
but that is the case considered in the following powerful 3-in-1 theorem
about the existence of points and paths of the phase space $\mathcal{M}$
that $f$-realise certain two-sided, one-sided, and periodic sequences
of oriented cells.

\begin{theorem}[Papini \& Zanolin~\cite{papini2004periodic}]
\label{thm:PapiniZanolin}
Let $f:\mathcal{M} \to \mathcal{M}$ be a homeomorphism.
Let $(\widetilde{\mathcal{Q}}_i;n_i)_{i \in I}$
be a two-sided, one-sided or $p$-periodic sequence
where $\widetilde{\mathcal{Q}}_i$ are oriented cells with
$\mathcal{Q}_i \subset \mathcal{M}$ and $n_i \in \Nset$.
If
\[
f^{n_i}:
\widetilde{\mathcal{Q}}_i \stretches \widetilde{\mathcal{Q}}_{i+1},
\qquad \forall i,
\]
then the following statements hold.
\begin{itemize}
\item[{\rm ({\bf T})}]
If $I = \Zset$,
there is a point $x \in \mathcal{Q}_0$ that $f$-realises the
two-sided sequence $(\mathcal{Q}_i;n_i)_{i \in \Zset}$.
\item[{\rm ({\bf O})}]
If $I = \Nset_0$,
there is a path $\gamma$ horizontal in $\widetilde{\mathcal{Q}}_0$
that $f$-realises the sequence $(\mathcal{Q}_i;n_i)_{i \ge 0}$.
\item[{\rm ({\bf P})}]
If $(\widetilde{\mathcal{Q}}_{i+p};n_{i+p}) =
    (\widetilde{\mathcal{Q}}_i;n_i)$ for all $i \in \Zset$
and $n = n_0 + \cdots + n_{p-1}$,
there is a point $x \in \mathcal{Q}_0$ such that
$f^n(x) = x$ and $x$ $f$-realises the $p$-periodic sequence
$(\mathcal{Q}_i;n_i)_{i \in \Zset/p\Zset}$.
\end{itemize}
\end{theorem}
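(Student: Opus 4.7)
The plan is to reduce all three statements to one topological key lemma about stretching and a "crossing property" inside oriented cells, then use induction plus compactness for (O), diagonalization for (T), and a Brouwer-type fixed-point argument for (P).

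First I would establish the \emph{crossing property}: inside any oriented cell $\widetilde{\mathcal{Q}}$, every vertical path and every horizontal path must intersect. This follows from the Jordan curve theorem applied in $\mathcal{Q} \simeq [0,1]^2$, since a vertical path together with the two horizontal sides cuts $\mathcal{Q}$ into two pieces containing, respectively, the left and right vertical sides. Using this, I would prove the key \emph{pullback lemma}: if $f^n : \widetilde{\mathcal{Q}} \stretches \widetilde{\mathcal{Q}}'$ and $\beta$ is any horizontal path in $\widetilde{\mathcal{Q}}'$, then the closed set $\mathcal{A} = \mathcal{Q} \cap f^{-n}(\beta)$ contains a horizontal path of $\widetilde{\mathcal{Q}}$. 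Indeed, any vertical path $\gamma$ in $\widetilde{\mathcal{Q}}$ has, by the stretching hypothesis, a subpath $\gamma'$ with $f^n(\gamma')$ vertical in $\widetilde{\mathcal{Q}}'$, and by the crossing property in $\widetilde{\mathcal{Q}}'$ the path $f^n(\gamma')$ meets $\beta$, so $\gamma$ meets $\mathcal{A}$. A closed subset of a cell that blocks every vertical path contains a horizontal continuum (a Kuratowski/Whyburn-type result), and inside a cell every horizontal continuum contains a horizontal arc.

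For \textbf{(O)}, I would iterate the pullback lemma: starting from any horizontal path $\beta_0^{(0)}$ in $\widetilde{\mathcal{Q}}_0$, build for each $k \geq 1$ a horizontal path $\beta_0^{(k)}$ inside $\beta_0^{(k-1)}$ whose first $k$ images under $f^{n_0}, f^{n_0+n_1}, \ldots$ lie in $\mathcal{Q}_1, \ldots, \mathcal{Q}_k$ respectively; this works by pulling back horizontal paths $\beta_k, \beta_{k-1}, \ldots, \beta_0^{(k)}$ through the stretchings $f^{n_{k-1}}, \ldots, f^{n_0}$. The sets $\overline{\beta_0^{(k)}}$ are nested continua in the compact cell $\mathcal{Q}_0$ meeting both vertical sides; by the Janiszewski/Kuratowski theorem on nested continua, their intersection is itself a continuum meeting both vertical sides, hence contains a horizontal path that $f$-realizes the full one-sided sequence.

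For \textbf{(T)} I would combine (O) with its backward analogue. The map $f^{-1}$ together with the cells in reverse order, after swapping the roles of left/right (or equivalently, base/top) in each $\widetilde{\mathcal{Q}}_i$, satisfies an analogous stretching condition, because the stretching hypothesis is symmetric under inverting both $f$ and the orientation of the cells. Applying (O) to both sides produces a forward-realizing horizontal continuum $H^+$ and a backward-realizing horizontal (in the dual orientation, hence vertical in the original) continuum $H^-$ inside $\mathcal{Q}_0$; the crossing property forces $H^+ \cap H^- \neq \emptyset$, and any such point $x$ realizes the full two-sided sequence. For \textbf{(P)}, I would apply the one-period stretching $f^n : \widetilde{\mathcal{Q}}_0 \stretches \widetilde{\mathcal{Q}}_0$ obtained by composing the individual stretchings along the period, restrict $f^n$ to the horizontal continuum $H^+$ from (O) built using the periodic sequence, and argue by a one-dimensional intermediate-value/Brouwer argument on the induced map $H^+ \to \mathcal{Q}_0$ that $f^n$ has a fixed point in $\mathcal{Q}_0$ realizing the periodic sequence.

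The main obstacle will be the pullback lemma and the limit step in (O): turning "meets every vertical path" into an honest horizontal arc (not merely a continuum), and ensuring the nested limit does not collapse onto the horizontal boundary sides where the slab condition would fail. The symmetrization argument for (T) also requires checking that the inverse stretching in the oppositely oriented cells really does hold, which is a careful but routine verification using that $f$ is a homeomorphism.
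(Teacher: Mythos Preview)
The paper does not prove this theorem; it is quoted as a result of Papini and Zanolin, and the reader is referred to Theorem~2.2 of~\cite{papini2004periodic} for a more general statement. There is therefore no proof in the paper to compare your sketch against.

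That said, your outline has the right architecture but several steps do not go through as written. In~({\bf O}) the nesting $\beta_0^{(k)}\subset\beta_0^{(k-1)}$ is not justified: if you pull back a fresh horizontal path from $\mathcal{Q}_k$ all the way to $\mathcal{Q}_0$, there is no mechanism forcing the result inside the previously constructed $\beta_0^{(k-1)}$, because the forward image $f^{s_{k-1}}(\beta_0^{(k-1)})$ need not be horizontal in $\widetilde{\mathcal{Q}}_{k-1}$, so your pullback lemma cannot be re-applied to it. The clean fix is to work with the nested compact sets $A_k=\{x\in\mathcal{Q}_0: f^{s_i}(x)\in\mathcal{Q}_i,\ 0\le i\le k\}$; iterating the stretching shows each $A_k$ meets every vertical path, hence so does $\bigcap_k A_k$ (nested nonempty compacts along each vertical path), and then your Whyburn-type lemma yields the horizontal continuum. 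In~({\bf T}) the claimed symmetry---that inverting $f$ and swapping the orientation of the cells turns the stretching hypothesis for $f^{n_i}$ into one for $f^{-n_i}$---is not obvious and would itself need proof; the direct route avoids it entirely: starting from any vertical path in $\widetilde{\mathcal{Q}}_{-m}$ and pushing it \emph{forward} through the given stretchings $f^{n_{-m}},\ldots,f^{n_{-1}}$ lands a vertical path inside $\mathcal{Q}_0$ whose points realize the backward itinerary, so the backward-realizing set contains a vertical continuum which crosses your forward horizontal one. In~({\bf P}) the ``one-dimensional intermediate-value/Brouwer argument on $H^+$'' is too vague to be a proof: $f^n$ does not map $H^+$ into itself and $H^+$ is only a continuum, so no intermediate-value argument is available; the actual Papini--Zanolin proof uses a genuine planar fixed-point argument tailored to the stretching property.
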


\begin{remark}
We believe that the following finite version~({\bf F}) also holds:
``If $I = \{ 0,\ldots,k \}$,
  there is a horizontal slab
  $\widetilde{\mathcal{K}} \subset_{\rm h} \widetilde{\mathcal{Q}}_0$
  such that $\mathcal{K}$ $f$-realises the finite sequence
  $(\mathcal{Q}_i;n_i)_{i=0,\ldots,k}$.'',
but we have not found such statement in the literature.
Therefore, we will not use it.
\end{remark}

We refer to Theorem~2.2 in~\cite{papini2004periodic} for
a more general statement which deals with sequences of
maps that are not some power iterates of a single map.
Version~({\bf T}) of Theorem~\ref{thm:PapiniZanolin} is the key tool
to obtain orbits that follow prescribed itineraries,
so that we can establish the existence of topological chaos and
we can construct a suitable symbolic dynamics in
Section~\ref{sec:ChaoticMotions}.
We will use version~({\bf O}) of Theorem~\ref{thm:PapiniZanolin}
to prove the existence of `paths' of generic sliding billiard trajectories
that approach the boundary asymptotically with optimal uniform speed
in Section~\ref{sec:OptimalSpeed}.
Finally, we will establish several lower bounds on the number of
periodic billiard trajectories from version~({\bf P}) of
Theorem~\ref{thm:PapiniZanolin} in Section~\ref{sec:PeriodicTrajectories}.

\section{The fundamental lemma for circular polygons}
\label{sec:Fundamental}

In this section, we define the billiard map,
and describe in Lemma~\ref{lem:BilliardProperties}
the sliding dynamics in circular polygons.
That is,
the dynamics when the angle of reflection $\theta$ is small.
See Definition~\ref{def:GenericSliding} for the precise
formulation.
We then introduce \emph{fundamental quadrilaterals},
which will later serve as symbol sets for symbolic dynamics.
In Lemma~\ref{lem:MinMax} we compute the extreme points of the
fundamental quadrilaterals,
as well as those of their iterates after crossing
the singularities between two consecutive circular arcs.
With these estimates on hand,
we finally state and prove the \emph{fundamental lemma}
(Lemma~\ref{lem:Fundamental}, as well as an important consequence,
 Corollary~\ref{cor:Fundamental}), which describes how orbits
 of the fundamental quadrilaterals visit other
 fundamental quadrilaterals.

We begin with the definition of the billiard map.
Recall that the phase space of the billiard map is
$\mathcal{M} = \Tset \times [0, \pi]$,
and let $(\varphi, \theta) \in \mathrm{Int} \,\mathcal{M}$.
Write $z = z (\varphi)$,
where $z$ is the polar parametrisation of
the circular polygon $\Gamma$ introduced in
Definition~\ref{def:PolarParametrization},
and $v = R_\theta z' (\varphi)$,
where $R_\theta$ is the standard $2 \times 2$
counter-clockwise rotation matrix by an angle $\theta$.
The straight line $L=L(\varphi,\theta)$ passing through $z$
in the direction $v$ has exactly two points of intersection
with $\Gamma$ since $\theta \in (0, \pi)$.
One of these is $z$; denote by $\bar{z}$ the other.
Then there is a unique $\bar{\varphi} \in \Tset$ such that
$\bar{z} = z (\bar{\varphi})$.
Denote by $\bar{\theta}$ the angle between $L$ and
$z'(\bar{\varphi})$ in the counter-clockwise direction.
The \emph{billiard map}
$f: \mathrm{Int} \,\mathcal{M} \to \mathrm{Int} \,\mathcal{M}$
is defined by $f(\varphi,\theta) = (\bar{\varphi}, \bar{\theta})$,
see Figure~\ref{fig:BilliardMap}.

\begin{figure}[t]
\begin{center}
\includegraphics*[width=85mm]{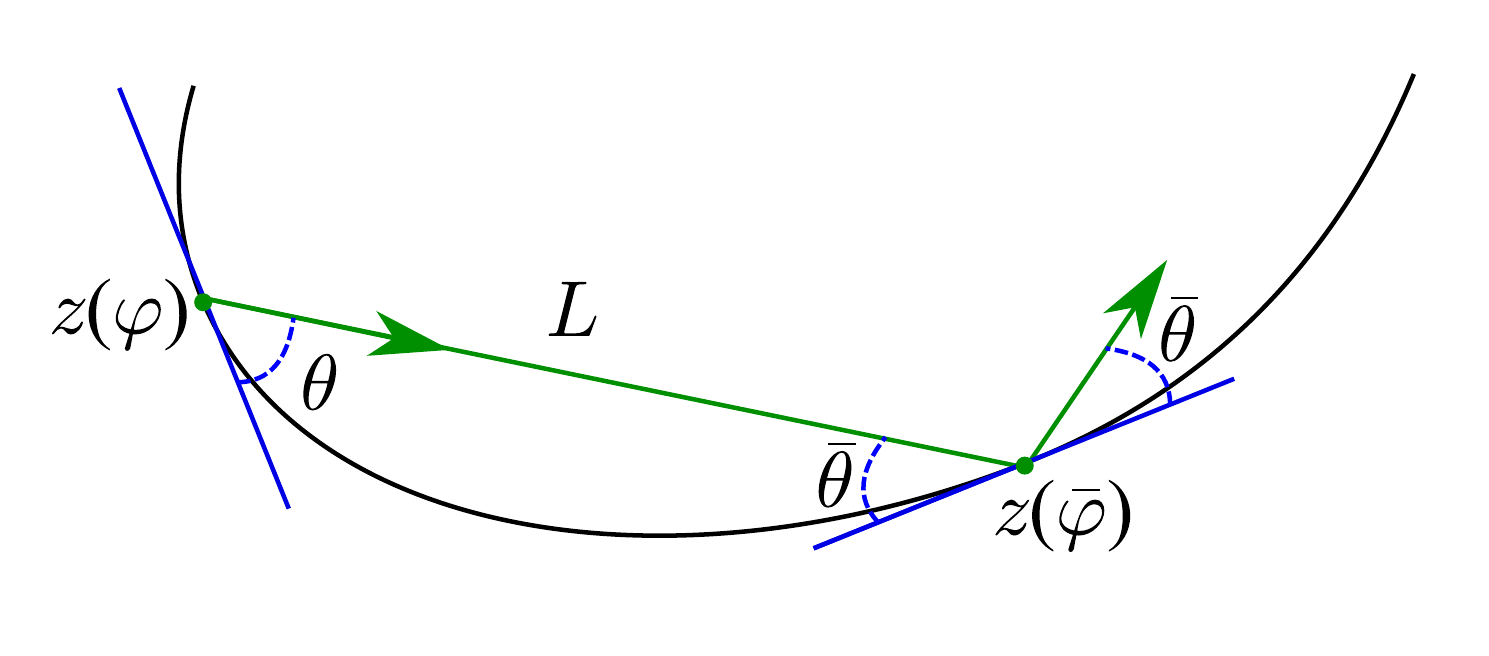}
\end{center}
\caption{\small Definition of the billiard map
$f(\varphi,\theta) = (\bar{\varphi}, \bar{\theta})$.}
\label{fig:BilliardMap}
\end{figure}

Note that $f$ is continuous since $\Gamma$ is $C^1$
and strictly convex.
The billiard map can be extended continuously to $\partial \mathcal{M}$
by setting $f(\varphi,0)=(\varphi,0)$ and
$f(\varphi,\pi) = (\varphi,\pi)$ for each $\varphi \in \Tset$.
The billiard map $f: \mathcal{M} \to \mathcal{M}$ is a homeomorphism;
indeed, the map $f^{-1} = I \circ f \circ I$ is a continuous inverse
where the involution $I : \mathcal{M} \to \mathcal{M}$ is defined by
$I(\varphi,\theta) = (\varphi,\pi - \theta)$.

A key geometric property of the billiard dynamics in the case
of impacts in consecutive arcs was presented
in~\cite{hubacher1987instability}.
Later on, a more detailed description was given in~\cite{balint2011chaos}.
Both results follow from trigonometric arguments.
The following lemma
summarises these properties, thus giving a clear picture of the billiard dynamics near $\partial \mathcal M$.

\begin{lemma}\label{lem:BilliardProperties}
The billiard map $f: \mathcal{M} \to \mathcal{M}$
satisfies the following properties.
\begin{enumerate}[(a)]
\item
\label{item:CircularDynamics}
If $a_j \le \varphi \le \varphi + 2 \theta \le b_j$, then
$(\bar{\varphi}, \bar{\theta}) =
 f(\varphi,\theta) = (\varphi + 2 \theta, \theta)$.

\item
\label{item:Hubacher}
Let $g(\theta;\mu) = \acos\big( (1-\mu^2) + \mu^2 \cos\theta \big)$
and $\mu_j = \sqrt{r_j/r_{j+1}} \neq 1$.
If $0 < \theta \le \delta_j$ and
$\bar{\theta} = g(\theta;\mu_j) \le \delta_{j+1}$,
then
\begin{equation}
\label{eq:Hubacher}
f(b_j-\theta,\theta) = (a_{j+1} + \bar{\theta},\bar{\theta})
\qquad \mbox{and} \qquad
\begin{cases}
\bar{\theta} < \mu_j \theta, & \mbox{when $\mu_j < 1$,} \\
\bar{\theta} > \mu_j \theta, & \mbox{when $\mu_j > 1$.}
\end{cases}
\end{equation}
\item
\label{item:Balint_etal}
Given any $\epsilon > 0$ there exists $\psi = \psi(\epsilon) > 0$
such that
\[
\left.
\begin{array}{c}
f(\varphi,\theta) = (\bar{\varphi},\bar{\theta})
\mbox{ with } 0 < \theta \le \psi \\
\mbox{and } a_j \le \varphi \le a_{j+1} \le \bar{\varphi} \le a_{j+2} \\
\end{array}
\right\} \Rightarrow
\begin{cases}
\bar{\theta} > (\mu_j-\epsilon) \theta, & \mbox{when $\mu_j < 1$,} \\
\bar{\theta} < (\mu_j+\epsilon) \theta, & \mbox{when $\mu_j > 1$.}
\end{cases}
\]
\end{enumerate}
\end{lemma}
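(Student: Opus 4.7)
The three claims are at increasing depth. Part~(a) is immediate: under the hypothesis $a_j\le \varphi\le \varphi+2\theta\le b_j$, the chord from $z(\varphi)$ in direction $R_\theta z'(\varphi)$ stays inside the closed disc of radius $r_j$ centred at $O_j$, so on this trajectory $f$ coincides with the (completely integrable) billiard in that disc, which sends $(\varphi,\theta)\mapsto (\varphi+2\theta,\theta)$ by rotational symmetry.

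For part~(b) I would pass to local coordinates at the node. Translate so that $b_j=a_{j+1}$ is at the origin and rotate so that the common tangent to $\Gamma_j$ and $\Gamma_{j+1}$ there is the $x$-axis; $C^1$ convexity forces both circles above, with centres $(0,r_j)$ and $(0,r_{j+1})$. In these coordinates $z(b_j-\theta) = (-r_j\sin\theta,\, r_j(1-\cos\theta))$ and a direct check shows that $R_\theta z'(b_j-\theta)$ is horizontal, so the chord is the line $y=r_j(1-\cos\theta)$. Intersecting with $x^2+(y-r_{j+1})^2=r_{j+1}^2$ and picking the positive root gives the next impact; matching it with $z(a_{j+1}+\bar\theta) = (r_{j+1}\sin\bar\theta,\, r_{j+1}(1-\cos\bar\theta))$ yields both the identification $\bar\varphi = a_{j+1}+\bar\theta$ and the relation
\[
r_{j+1}(1-\cos\bar\theta) = r_j(1-\cos\theta),
\]
which rewrites via the half-angle identity as $\sin(\bar\theta/2)=\mu_j\sin(\theta/2)$, the half-angle form of $\bar\theta = g(\theta;\mu_j)$. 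The comparison with $\mu_j\theta$ then rests on
\[
\frac{\bar\theta}{\theta} = \mu_j\cdot\frac{\sin(\theta/2)/(\theta/2)}{\sin(\bar\theta/2)/(\bar\theta/2)}
\]
and on the strict monotonicity of $x\mapsto (\sin x)/x$ on $(0,\pi)$: if $\mu_j<1$ the half-angle identity gives $\bar\theta<\theta$ and hence $\sin(\bar\theta/2)/(\bar\theta/2) > \sin(\theta/2)/(\theta/2)$, making the right-hand ratio strictly smaller than $1$ and so $\bar\theta<\mu_j\theta$; the case $\mu_j>1$ is symmetric.

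Part~(c) is a uniformity statement, which I would prove by continuity from (b). In the same local coordinates any admissible pair satisfies $\varphi = b_j-\theta+\eta$ with $\eta\in[-\theta,\theta]$: the constraint $\varphi\le a_{j+1}=b_j$ gives $\eta\le\theta$, and $\bar\varphi\ge a_{j+1}$ forces the chord to cross the node, i.e.\ $\eta\ge-\theta$. The same circle/chord intersection recipe produces a continuous function $\bar\theta = G(\theta,\eta)$ with $G(\theta,0) = g(\theta;\mu_j) = \mu_j\theta + \Order(\theta^3)$, and a Taylor expansion at the origin gives $G(\theta,\eta) = \mu_j\theta + \Order(\theta^2)$ uniformly on the region $|\eta|\le\theta$. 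Hence $\bar\theta/\theta\to\mu_j$ uniformly in $\eta$ as $\theta\to 0^+$, and one picks $\psi=\psi(\epsilon)>0$ so that $|\bar\theta/\theta - \mu_j|<\epsilon$ for every admissible pair with $0<\theta\le\psi$; this yields the one-sided bounds stated in both cases. The main technical point is exactly the uniform-in-$\eta$ estimate on the Taylor remainder; what makes it tractable is that the admissible $\eta$ is itself forced into an interval of length $2\theta$ around $0$, so the apparent two-parameter perturbation from (b) is effectively a one-parameter problem in the small parameter $\theta$.
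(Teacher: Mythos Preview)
Parts~(a) and~(b) are fine and in fact more detailed than the paper, which simply cites the relevant sources for (b). Your half-angle identity $\sin(\bar\theta/2)=\mu_j\sin(\theta/2)$ is equivalent to $\bar\theta=g(\theta;\mu_j)$, and the $(\sin x)/x$ monotonicity argument is clean.

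Part~(c), however, has a genuine gap. Your claim that a Taylor expansion at the origin gives $G(\theta,\eta)=\mu_j\theta+\Order(\theta^2)$ uniformly on $|\eta|\le\theta$, and hence that $\bar\theta/\theta\to\mu_j$ uniformly, is false. In your own coordinates (or directly from the paper's formula with $\varphi^+=\theta-\eta$) one has $\cos\bar\theta=(1-\mu_j^2)\cos\eta+\mu_j^2\cos\theta$, so to leading order
\[
\bar\theta \;\approx\; \sqrt{\mu_j^2\theta^2+(1-\mu_j^2)\eta^2}\;=\;\Omega_j(\eta/\theta)\,\theta,\qquad \Omega_j(s)=\sqrt{\mu_j^2+(1-\mu_j^2)s^2}.
\]
This is a square root of a quadratic form, not differentiable at the origin as a function of two variables, and its leading ratio $\bar\theta/\theta$ depends on $s=\eta/\theta\in[-1,1]$. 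Concretely, at $\eta=\pm\theta$ (impact at the node) one has $\bar\theta=\theta$, so $\bar\theta/\theta=1$, not $\mu_j$; the two-sided bound $|\bar\theta/\theta-\mu_j|<\epsilon$ that you derive is therefore wrong in general.

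What rescues the lemma is precisely the structure of $\Omega_j$: it is even, equals $\mu_j$ at $s=0$, equals $1$ at $s=\pm 1$, and is monotone in between, so $\Omega_j(s)\ge\mu_j$ when $\mu_j<1$ and $\Omega_j(s)\le\mu_j$ when $\mu_j>1$. This gives only the one-sided inequalities stated in the lemma, which is all that is claimed and all that is true. The paper obtains exactly this via the $\Omega_j$ formula; your continuity-from-(b) strategy cannot work because the case $\eta=0$ is an extremum of $\bar\theta/\theta$, not a generic value.
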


\begin{proof}
\begin{enumerate}[(a)]
\item
If $a_j \le \varphi \le \varphi + 2 \theta \le b_j$,
then $z(\varphi), z(\varphi+2\theta) \in \Gamma_j$,
so $f$ behaves as a circular billiard map,
in which case is well-known that
$f(\varphi,\theta) = (\varphi + 2 \theta, \theta)$.
\item
Set $\varphi = b_j - \theta$ and
$(\bar{\varphi},\bar{\theta}) = f(\varphi,\theta)$.
Condition $0 < \theta \le \delta_j$ implies that $z(\varphi) \in \Gamma_j$.
Identity $\varphi + \theta = b_j$ implies that lines
$L = L(\varphi,\theta)$ and $N_j$ are perpendicular,
where $N_j$ is the normal to $\Gamma$ at $z(b_j)$.
If, in addition, $z(\bar{\varphi}) \in \Gamma_{j+1}$,
then Hubacher proved~\eqref{eq:Hubacher}
in~\cite[page~486]{hubacher1987instability}.
Finally, we note that $\bar{\theta} \le \delta_{j+1}$ implies that
$z(\bar{\varphi}) \in \Gamma_{j+1}$.

\item
B\'alint \emph{et al.}~\cite{balint2011chaos} proved the following
generalisation of Hubacher computation.
Set $f(\varphi,\theta) = (\bar{\varphi},\bar{\theta})$.
If $a_j \le \varphi \le a_{j+1} \le \bar{\varphi} \le a_{j+2}$,
so that $z(\varphi) \in \Gamma_j$ and $z(\bar{\varphi}) \in \Gamma_{j+1}$,
then there exist angles $\varphi^+ \in [0,2\theta]$ and
$\varphi^- \in [0,2\bar{\theta}]$ such that
\[
\varphi = b_j - \varphi^+, \qquad
\bar{\varphi} = a_{j+1} + \varphi^-, \qquad
\varphi^+ + \varphi^- = \theta + \bar{\theta},
\]
and
\[
\bar{\theta} =
\acos \left( (1-\mu^2_j) \cos(\theta - \varphi^+) +
             \mu^2_j \cos \theta \right).
\]
Hubacher's computation corresponds to the case
$\varphi^+ = \theta$ and $\varphi^- = \bar{\theta}$.
We introduce the auxiliary coordinate
$s = 1 - \varphi^+/\theta \in [-1,1]$ and the positive function
\begin{equation}
\label{eq:Omega}
\Omega_j:[-1,1] \to \Rset_+, \qquad
\Omega_j(s) = \sqrt{\mu_j^2 + (1-\mu_j^2) s^2}.
\end{equation}
Function $\Omega_j(s)$ is even, $\Omega_j (0) = \mu_j$
and $\Omega_j(\pm 1) = 1$.
If $\mu_j < 1$,
then $\Omega_j(s)$ increases for $s > 0$ and decreases for $s < 0$,
so $\mu_j \le \Omega_j(s) \le 1$ for all $s \in [-1,1]$.
If $\mu_j > 1$,
then $\Omega_j(s)$ decreases for $s > 0$ and increases for $s < 0$,
so $1 \le \Omega_j(s) \le \mu_j$ for all $s \in [-1,1]$.

A straightforward computation with Taylor expansions shows that
\[
1 - \bar{\theta}^2/2 + \Order(\bar{\theta}^4) =
\cos \bar{\theta} =
(1-\mu^2_j) \cos(s\theta) + \mu^2_j \cos \theta =
1 - \Omega^2_j(s) \theta^2/2 + \Order(\theta^4) 
\]
as $\theta \to 0^+$,
where the error term $\Order(\theta^4)$ is uniform
in $s \in [-1,1]$ and $j=1,\ldots,k$.
Therefore,
\[
\bar{\theta} = \big[ \Omega_j(s) + \Order(\theta^2) \big] \theta 
\]
as $\theta \to 0^+$, where the error term $\Order(\theta^2)$ is uniform
in $s \in [-1,1]$ and $j=1,\ldots,k$.
\qedhere
\end{enumerate}
\end{proof}

If  $a_j \le \varphi < \varphi + 2 \theta = b_j$,
then $z(\varphi) \in \Gamma_j$ and
$z(\bar{\varphi}) = b_j = a_{j+1} \in
 \Gamma_j \cap \Gamma_{j+1}$,
but part~(a) of Lemma~\ref{lem:BilliardProperties} still applies,
because the tangents to $\Gamma_j$ and $\Gamma_{j+1}$ agree
at $z(\bar{\varphi})$ by the definition of circular polygon.
This fact will be used in Proposition~\ref{prop:RationalCircularGons}
to construct some special periodic \emph{nodal} billiard trajectories in
\emph{rational} circular polygons,
which are introduced in Definition~\ref{def:RationalCircularPolygon}.
Similar nodal periodic billiard trajectories were constructed
in~\cite{BuhovskyKaloshin2018} to answer a question about
length spectrum and rigidity.

Lemma~\ref{lem:BilliardProperties} and the above observation
describe two rather different ways in which the angle $\theta$ can vary
as a sliding billiard trajectory jumps from one arc to the next.
On the one hand,
if the trajectory impacts at the corresponding node,
there is no change: $\bar{\theta} = \theta$.
On the other hand,
if the billiard trajectory is perpendicular at the normal line
at the corresponding node, we have the largest possible change:
$\bar{\theta} < \mu_j \theta$ for $\mu_j < 1$ or
$\bar{\theta} > \mu_j \theta$ for $\mu_j > 1$.
The great contrast between these two situations is the crucial fact
behind the non-existence of caustics near the boundary obtained by
Hubacher~\cite{hubacher1987instability}.
It is also the main ingredient to obtain all chaotic properties
stated in the introduction.

Next, we introduce the main geometric subsets of the phase
space $\mathcal{M} = \Tset \times [0,\pi]$.
All of them are denoted with calligraphic letters.

\begin{definition}\label{def:SingularitySegment}
The \emph{$j$-singularity segment} is the vertical segment
$\mathcal{L}_j = \{a_j\} \times [0,\pi] \subset \mathcal{M}$.
Given any $s > 0$,
the \emph{$(j,\pm s)$-singularity segments} are the slanted segments
\[
\mathcal{L}_j^{-s} =
\big\{
(\varphi,\theta) \in \mathcal{M} : a_{j-1} \le \varphi = a_j - 2\theta s
\big\}, \quad
\mathcal{L}_j^s =
\big\{
(\varphi,\theta) \in \mathcal{M} : \varphi = a_j + 2\theta s \le a_{j+1}
\big\}.
\]
The \emph{$j$-fundamental domain} is the triangular domain
\[
\mathcal{D}_j = 
\left\{
(\varphi,\theta) \in \mathcal{M} :
a_j \le \varphi \le a_j + 2\theta \le a_{j+1}
\right\}.
\]
Finally,
$\mathcal{L} =
 \bigcup_{j=1}^k
 \left( \mathcal{L}_j \cup \mathcal{L}_j^{1/2} \cup \mathcal{L}_j^1 \right)$
is the \emph{extended singularity set}.
\end{definition}

Note that $\mathcal{L}^n_j \subset f^n(\mathcal{L}_j)$ for all
$n \in \Zset$, so $\mathcal{L}^s_j$ is a generalisation of
the forward and backward iterates under the billiard map
of the $j$-singularity segments when $s \not \in \Zset$.
We will only need the segments $\mathcal{L}^s_j$ for
values $s = n$ and $s = n + 1/2$ with $n\in \Zset$.
The left (respectively, right) side of the triangle $\mathcal{D}_j$
is contained in the vertical segment $\mathcal{L}_j$
(respectively, coincides with the slanted segment $\mathcal{L}_j^1$).

We have used the term `sliding' in a clumsy way until now.
Let us clarify its precise meaning.
Let $\Pi_\varphi:\mathcal{M} \to \Tset$ and
$\Pi_\theta:\mathcal{M} \to [0,\pi]$ be the projections
$\Pi_\varphi(\varphi,\theta) = \varphi$ and
$\Pi_\theta(\varphi,\theta) = \theta$.
Let $J: \mathcal{M} \setminus \mathcal{L} \to \Zset/k\Zset$ be the
piece-wise constant map defined by
$a_j < \Pi_\varphi(x) < b_j \Rightarrow J(x) = j$.
This map is well-defined since
$\Pi_\varphi(x) \not \in \{a_1,\ldots,a_k\} = \{b_1,\ldots,b_k\}$
when $x \not \in \mathcal{L}$.

\begin{definition}
\label{def:GenericSliding}
A billiard orbit is \emph{(counter-clockwise) sliding}
when any consecutive impact points are either in the same arc
or in consecutive arcs in the counter-clockwise direction.
An orbit is \emph{generic} when it avoids
the extended singularity set.
We denote by $\mathcal{S}_0$ the set of all initial conditions
that give rise to generic counter-clockwise sliding orbits.
That is,
\[
\mathcal{S}_0 =
\left\{
x \in \mathcal{M} :
\mbox{$J(f^{n+1}(x))-J(f^n(x)) \in \{0,1\}$ and
$f^n(x) \not \in \mathcal{L}$ for all $n \in \Zset$}
\right\}.
\]
\end{definition}

The \emph{(counter-clockwise) generic sliding set} $\mathcal{S}_0$
is $f$-invariant.
The term \emph{glancing}
---see, for instance, ~\cite{mather1982glancing}---
is also used in the literature,
but sliding is the most widespread term.
A consequence of part~(a) of Lemma~\ref{lem:BilliardProperties}
is that any generic sliding billiard orbit has exactly one point
in $\Interior \mathcal{D}_j$ on each turn around $\Gamma$.
This fact establishes the \emph{fundamental} character of $\mathcal{D}_j$.
Following the notation used in the introduction,
$\mathcal{S}_\pi$ is the clockwise generic sliding set,
but we are not going to deal with it.

\begin{remark}
\label{rem:GenericTrajectories}
If $x \in \mathcal{M}$ is a point such that
$x_i = (\varphi_i,\theta_i) = f^i(x) \in \mathcal{L}$
for some $i \in \Zset$, then its billiard trajectory
$\big( z_n = z(\Pi_\varphi(f^n(x))) \big)_{n \in \Zset}$
has some impact point $z_m \in \Gamma_\star$,
where $\Gamma_\star$ is the set of nodes~\eqref{eq:SetOfNodes},
or has two consecutive impact points $z_m \in \Gamma_j$
and $z_{m+1} \in \Gamma_{j+1}$ such that the segment from $z_m$
to $z_{m+1}$ is perpendicular to the normal to $\Gamma$ at the node
$\Gamma_j \cap \Gamma_{j+1}$.
\end{remark}

\begin{lemma}
\label{lem:ThetaIntersections}
Let $s,t \ge 0$ and $j$ such that $s + t \ge \delta_j/2\pi$.
Then $\mathcal{L}_j^s \cap \mathcal{L}_{j+1}^{-t} \neq \emptyset$
and
\[
\Pi_\varphi \left( \mathcal{L}_j^s \cap \mathcal{L}_{j+1}^{-t} \right) =
a_j + \frac{s\delta_j}{s+t},\qquad
\Pi_\theta \left( \mathcal{L}_j^s \cap \mathcal{L}_{j+1}^{-t} \right) =
\frac{\delta_j}{2s+2t}.
\]
\end{lemma}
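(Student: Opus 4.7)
The plan is to reduce the claim to solving a small linear system and then checking that the unique algebraic solution actually lies in each of the two half-segments making up $\mathcal{L}_j^s$ and $\mathcal{L}_{j+1}^{-t}$.

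First, I will write out the defining equations. A point $(\varphi,\theta)$ lies in $\mathcal{L}_j^s$ iff $\varphi = a_j + 2\theta s$ with the side constraint $\varphi \le a_{j+1}$, and it lies in $\mathcal{L}_{j+1}^{-t}$ iff $\varphi = a_{j+1} - 2\theta t$ with the side constraint $\varphi \ge a_j$. Equating the two expressions for $\varphi$ and using $a_{j+1} - a_j = \delta_j$ gives $2\theta(s+t) = \delta_j$. Since $s+t > 0$ (it is bounded below by $\delta_j/2\pi > 0$), this uniquely determines
\[
\theta = \frac{\delta_j}{2(s+t)}, \qquad
\varphi = a_j + 2\theta s = a_j + \frac{s\,\delta_j}{s+t},
\]
which matches the claimed formulas for $\Pi_\theta$ and $\Pi_\varphi$.

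Next I will verify that this candidate point genuinely belongs to $\mathcal{M} = \Tset \times [0,\pi]$ and to both slanted segments. The two side constraints amount to $2\theta s \le \delta_j$ and $2\theta t \le \delta_j$; substituting the value of $\theta$ turns these into $s/(s+t) \le 1$ and $t/(s+t) \le 1$, both trivially valid because $s,t \ge 0$. The only nontrivial condition is $\theta \in [0,\pi]$: nonnegativity is immediate, while $\theta \le \pi$ becomes $\delta_j/(2(s+t)) \le \pi$, i.e. $s+t \ge \delta_j/2\pi$, which is precisely the hypothesis of the lemma. Hence under that hypothesis the intersection is nonempty and consists of the single point just computed.

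I do not anticipate any real obstacle here: the segments $\mathcal{L}_j^s$ and $\mathcal{L}_{j+1}^{-t}$ are genuine line segments in the $(\varphi,\theta)$ strip, given by explicit linear equations, so the only thing to watch is the bookkeeping of the three inequality constraints (the two horizontal endpoint conditions and the upper bound $\theta \le \pi$). The hypothesis $s+t \ge \delta_j/2\pi$ is exactly the one that keeps the unique algebraic solution inside $\mathcal{M}$, which is what makes the statement sharp.
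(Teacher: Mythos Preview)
Your proof is correct and follows essentially the same approach as the paper's own proof: write the defining equations of the two segments, solve the resulting linear system for $(\varphi,\theta)$, and then verify the three inequality constraints ($\varphi\in[a_j,a_{j+1}]$ and $\theta\le\pi$) using $s/(s+t),\,t/(s+t)\le 1$ and the hypothesis $s+t\ge\delta_j/2\pi$. The paper's argument is just a terser version of what you wrote.
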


\proof
By definition,
$(\varphi,\theta) \in \mathcal{L}_j^s \cap \mathcal{L}_{j+1}^{-t}
 \Leftrightarrow
 a_j \le a_j+ 2\theta s = \varphi = a_{j+1} - 2\theta t \le a_{j+1}$.
Identity $a_j + 2\theta s = a_{j+1} - 2\theta t$ implies that
$2\theta = \delta_j/(s+t)$.
Then inequality $s+t \ge \delta_j/2\pi$ implies that $\theta \le \pi$.
Finally, $a_j + 2\theta s \le a_{j+1}$ and $a_{j+1} - 2\theta t \ge a_j$
because $s/(s+t), t/(s+t) \le 1$.
\qed

This lemma implies that segments
$\mathcal{L}_{j+1}^{-n+1}$ and $\mathcal{L}_{j+1}^{-n}$
intersect segments $\mathcal{L}_j$ and $\mathcal{L}_j^1$
for any integer $n \ge 2 > 1 + \delta_j/2\pi$,
so the following definition makes sense.
See Figure~\ref{fig:FundamentalQuadrilaterals}.

\begin{definition}\label{def:FundamentalQuadrilaterals}
Let $n$ be an integer such that $n \ge 2$.
The \emph{$(j,n)$-fundamental quadrilateral} is the oriented cell
$\widetilde{\mathcal{Q}}_{j,n} \subset \mathcal{M}$ bounded by
$\mathcal{L}_j$ (left side), $\mathcal{L}_{j+1}^{-n}$ (base side),
$\mathcal{L}_j^1$ (right side) and $\mathcal{L}_{j+1}^{-n+1}$ (top side).
We split $\mathcal{Q}_{j,n}$ in two by means of the segment
$\mathcal{L}_j^{-n+1/2}$, which gives rise to two smaller oriented cells:
$\widetilde{\mathcal{Q}}^-_{j,n}$ (the lower one)
and $\widetilde{\mathcal{Q}}^+_{j,n}$ (the upper one),
whose left and right sides are still contained in
$\mathcal{L}_j$ and $\mathcal{L}_j^1$, respectively.
We say that $\tilde{\mathcal{Q}}^\pm_{j,n}$ is the
\emph{$(\pm, j, n)$-fundamental quadrilateral}.
\end{definition}

\begin{figure}[t]
\begin{center}
\includegraphics*[width=15.9cm]{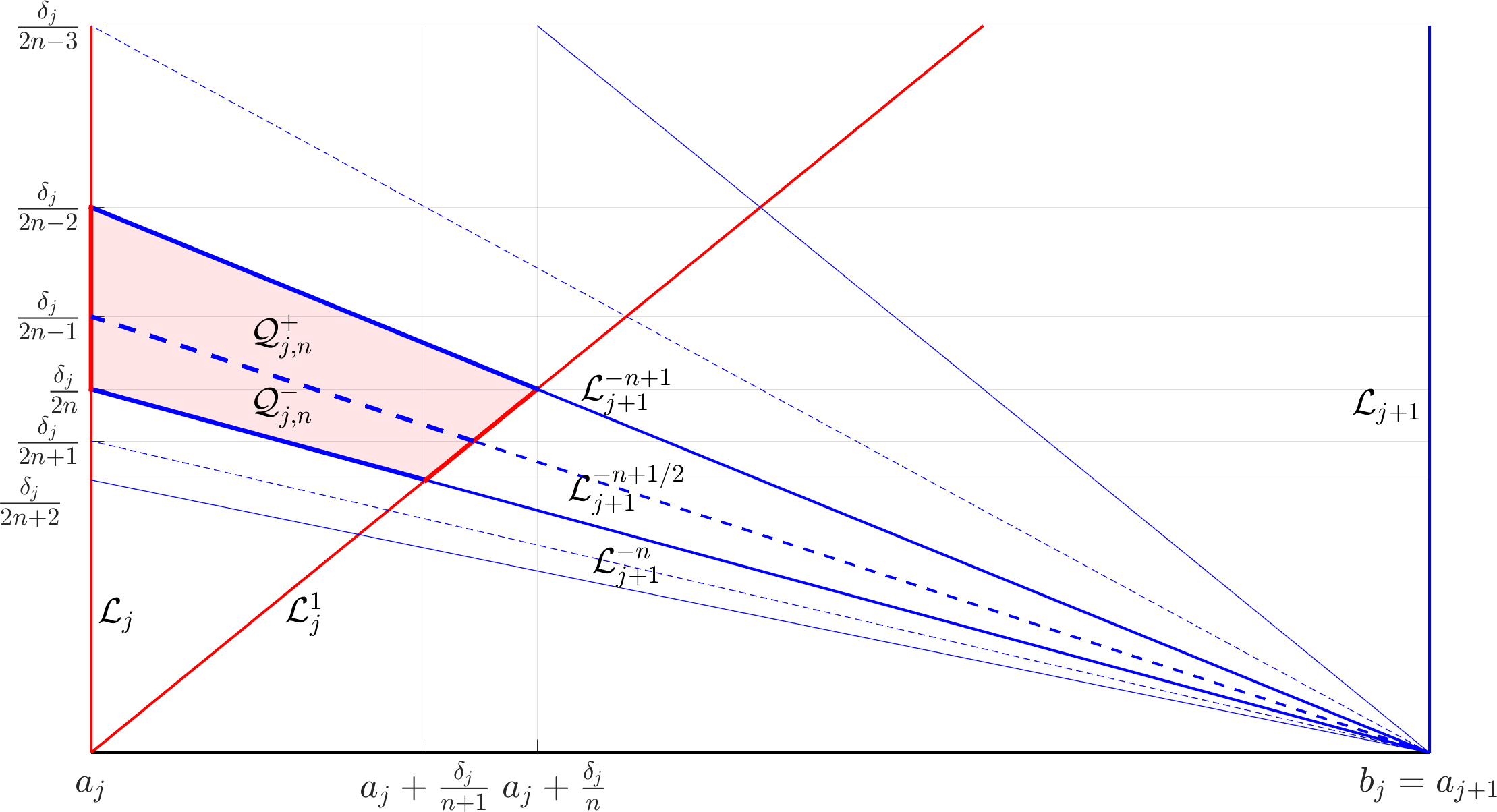}
\end{center}
\caption{\small The fundamental quadrilateral
$\mathcal{Q}_{j,n} =  \mathcal{Q}^-_{j,n} \cup \mathcal{Q}^+_{j,n}$.
Its horizontal (base and top) and vertical (left and right) sides
are displayed in blue and red, respectively.
Accordingly, singularity segments $\mathcal{L}^s_j$ and
$\mathcal{L}^{-t}_{j+1}$ are displayed in red and blue, respectively.
Moreover, they are displayed with continuous and dashed lines
when $s,t \in \Nset$ and $t \in \Nset + \frac{1}{2}$,
respectively.
This is a quantitative representation computed for
$\delta_j = \pi/2$ and $n = 3$.
The images $f^n(\mathcal{Q}^\pm_{j,n})$ are displayed in
Figure~\ref{fig:FundamentalLemma}.
}
\label{fig:FundamentalQuadrilaterals}
\end{figure}

In order to find sufficient conditions for
$f^n: \widetilde{\mathcal{Q}}^\varsigma_{j,n} \stretches
\widetilde{\mathcal{Q}}^{\varsigma'}_{j+1,n'}$,
we need the extreme values of $\Pi_\theta(f^n(x))$ when $x$ moves on
the horizontal sides of $\widetilde{\mathcal{Q}}^\varsigma_{j,n}$
and the extreme values of $\Pi_\theta(x)$ when
$x \in \mathcal{Q}_{j+1,n'} =
 \mathcal{Q}^-_{j+1,n'} \cup \mathcal{Q}^+_{j+1,n'}$.
These extreme values are defined and estimated in the lemma below.
Those estimates are used in the proof of Lemma~\ref{lem:Fundamental}.

\begin{lemma}
\label{lem:MinMax}
Fix any $j$.
With the above notations,
if $\chi_j \ge 2$ is a large enough integer,
then the following properties hold for all $n \ge \chi_j$.
\begin{enumerate}[(a)]
\item
$\nu_{j,n} :=
 \min_{x \in \mathcal{Q}_{j,n}} \Pi_\theta(x) = \delta_j/(2n+2)$
and
$\omega_{j,n} :=
 \max_{x \in \mathcal{Q}_{j,n}} \Pi_\theta(x) = \delta_j/(2n-2)$.
\item
If $\nu^s_{j,n} :=
\min_{x \in \mathcal{Q}_{j,n} \cap \mathcal{L}^{-n+s}_{j+1}}
\Pi_\theta(f^n(x))$ and
$\omega^s_{j,n} :=
\max_{x \in \mathcal{Q}_{j,n} \cap \mathcal{L}^{-n+s}_{j+1}}
\Pi_\theta(f^n(x))$, then
\begin{enumerate}[i)]
\item
$\nu^0_{j,n} = \delta_j/(2n+2)$,
$\nu^1_{j,n} = \delta_j/2n = \omega^0_{j,n}$
and $\omega^1_{j,n} = \delta_j/(2n-2)$;
\item
$\omega^{1/2}_{j,n} < \mu_j \delta_j/(2n-1)$ when $\mu_j < 1$; and
\item
$\nu^{1/2}_{j,n} > \mu_j \delta_j/(2n+1)$ when $\mu_j > 1$.
\end{enumerate}
\end{enumerate}
\end{lemma}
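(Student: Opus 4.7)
My plan is to reduce both statements to Lemma~\ref{lem:ThetaIntersections}, which explicitly locates the relevant singularity-segment intersections, and Lemma~\ref{lem:BilliardProperties}, which describes the dynamics between consecutive arcs. The region $\mathcal{Q}_{j,n}$ is the intersection of four half-planes (the vertical strip $\{a_j \le \varphi\}$, the half-plane $\{\varphi \le a_j + 2\theta\}$ determined by $\mathcal{L}_j^1$, and the two half-planes determined by $\mathcal{L}_{j+1}^{-n}$ and $\mathcal{L}_{j+1}^{-n+1}$), so it is a convex quadrilateral in the $(\varphi,\theta)$-plane, and the linear projection $\Pi_\theta$ attains its extrema at the vertices. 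I would compute those vertices via Lemma~\ref{lem:ThetaIntersections}: the four corners carry $\theta$-values $\delta_j/(2n)$, $\delta_j/(2n-2)$, $\delta_j/(2n)$ and $\delta_j/(2n+2)$, from which the claimed expressions for $\nu_{j,n}$ and $\omega_{j,n}$ in part~(a) follow immediately.

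For part~(b), I would first show that for any $x = (\varphi,\theta) \in \mathcal{Q}_{j,n}$ the first $n-1$ iterates of $f$ stay on $\Gamma_j$: the defining inequality $\varphi \le a_{j+1} - 2(n-1)\theta$ coming from the top side yields $\varphi + 2m\theta \le a_{j+1} = b_j$ for all $0 \le m \le n-1$, so Lemma~\ref{lem:BilliardProperties}(a) gives $f^{n-1}(x) = (\varphi + 2(n-1)\theta, \theta)$. If additionally $x \in \mathcal{L}_{j+1}^{-n+s}$, substitution of $\varphi = a_{j+1} - 2\theta(n-s)$ yields $f^{n-1}(x) = (a_{j+1} + 2\theta(s-1), \theta)$, which specializes to $(b_j - 2\theta, \theta)$, $(b_j - \theta, \theta)$ and $(a_{j+1}, \theta)$ for $s = 0, 1/2, 1$ respectively.

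One more iteration then handles each case. For $s = 0$ the next impact is exactly at the node, preserving $\theta$; for $s = 1$ we are already at the node and the circular dynamics (extended continuously across it, as noted after Lemma~\ref{lem:BilliardProperties}) gives $f^n(x) = (a_{j+1} + 2\theta, \theta)$, again with $\theta$ preserved; for $s = 1/2$ this is precisely the configuration of Lemma~\ref{lem:BilliardProperties}(b), so $\Pi_\theta(f^n(x)) = g(\theta; \mu_j)$. Combined with the $\theta$-range on $\mathcal{L}_{j+1}^{-n+s} \cap \mathcal{Q}_{j,n}$ supplied by Lemma~\ref{lem:ThetaIntersections}, the three identities of (b.i) are direct. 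For (b.ii) and (b.iii) I would observe that $g(\cdot; \mu_j)$ is monotonically increasing on $(0,\pi)$ (the cosine factor inside the $\acos$ is decreasing in $\theta$), so the extrema of $\Pi_\theta \circ f^n$ over the middle segment are realized at its endpoints $\theta = \delta_j/(2n-1)$ and $\theta = \delta_j/(2n+1)$; applying the strict comparisons $g(\theta; \mu_j) < \mu_j \theta$ when $\mu_j < 1$ and $g(\theta; \mu_j) > \mu_j \theta$ when $\mu_j > 1$ from Lemma~\ref{lem:BilliardProperties}(b) then yields the asserted bounds.

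The only point requiring care is the choice of $\chi_j$. To invoke Lemma~\ref{lem:BilliardProperties}(b) in the $s = 1/2$ case we need $g(\theta; \mu_j) \le \delta_{j+1}$, and in the $s = 1$ case we need $2\theta \le \delta_{j+1}$ so that the point just past the node still lies on $\Gamma_{j+1}$. Since $\theta \le \delta_j/(2n-2)$ tends to zero as $n \to \infty$ and $g(\cdot; \mu_j)$ is continuous with $g(0; \mu_j) = 0$, both conditions hold uniformly for all $n$ larger than some threshold, and any such threshold may be taken as $\chi_j$. I do not anticipate a genuine obstacle here: the argument is essentially a bookkeeping exercise combining the two previously established lemmas, with the mild care above concerning the domain of validity of the trigonometric Hubacher estimate.
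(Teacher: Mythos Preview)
Your proposal is correct and follows essentially the same route as the paper: compute the vertices of $\mathcal{Q}_{j,n}$ via Lemma~\ref{lem:ThetaIntersections} for part~(a), then track $f^{n-1}(x)$ explicitly and apply Lemma~\ref{lem:BilliardProperties} for the three values of $s$ in part~(b). The only minor difference is that for (b.ii)--(b.iii) you invoke the monotonicity of $g(\cdot;\mu_j)$ to locate the extremum on the middle segment, whereas the paper bypasses this by using the pointwise bound $\Pi_\theta(f^n(x)) \lessgtr \mu_j \Pi_\theta(x)$ together with the extreme value of $\Pi_\theta(x)$ on that segment; both arguments are valid.
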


\begin{proof}
The fundamental domain $\mathcal{Q}_{j,n}$ is
only well-defined for $n \ge 2$.
The reader must keep in mind Lemmas~\ref{lem:BilliardProperties}
and~\ref{lem:ThetaIntersections}.
See Figure~\ref{fig:FundamentalQuadrilaterals} for a visual guide.
\begin{enumerate}[(a)]
\item
The minimum and maximum values are attained at the intersections
$\mathcal{L}_j^1 \cap \mathcal{L}_{j+1}^{-n}$ and
$\mathcal{L}_j \cap \mathcal{L}_{j+1}^{-n+1}$, respectively.
\item
\begin{enumerate}[i)]
\item
If $x \in \mathcal{Q}_{j,n} \cap \mathcal{L}^{-n}_{j+1}$
or $x \in \mathcal{Q}_{j,n} \cap \mathcal{L}^{-n+1}_{j+1}$,
then $\Pi_\theta(f^n(x)) = \Pi_\theta(x)$
by part~(\ref{item:CircularDynamics}) of Lemma~\ref{lem:BilliardProperties}.
Therefore, the four extreme values $\nu^0_{j,n}$, $\nu^1_{j,n}$,
$\omega^0_{j,n}$ and $\omega^1_{j,n}$ are attained at
the four intersections
$\mathcal{L}_j^1 \cap \mathcal{L}_{j+1}^{-n}$,
$\mathcal{L}_j^1 \cap \mathcal{L}_{j+1}^{-n+1}$,
$\mathcal{L}_j \cap \mathcal{L}_{j+1}^{-n}$ and
$\mathcal{L}_j \cap \mathcal{L}_{j+1}^{-n+1}$, respectively.
\item
First, the value
$\max_{x \in  \mathcal{Q}_{j,n} \cap \mathcal{L}^{-n+1/2}_{j+1}}
 \Pi_\theta(x)$
is attained at $\mathcal{L}_j \cap \mathcal{L}_{j+1}^{-n+1/2}$.
Second,
if $x \in  \mathcal{Q}_{j,n} \cap \mathcal{L}^{-n+1/2}_{j+1}$ and
$\mu_j < 1$, then $\Pi_\theta(f^n(x)) < \mu_j \Pi_\theta(x)$
by part~(\ref{item:Hubacher}) of Lemma~\ref{lem:BilliardProperties}.

We need hypotheses $0 < \theta \le \delta_j$ and
$\bar{\theta} = g(\theta;\mu_j) \le \delta_{j+1}$ to apply
Lemma~\ref{lem:BilliardProperties}.
In order to guarantee them, it suffices to take $n \ge \chi_j$ with
\begin{equation}
\label{eq:chi_mulessthan1}
\chi_j \ge 1 + \lceil \mu_j \delta_j/2\delta_{j+1} \rceil,
\end{equation}
since then $\chi_j \ge 2$ and $2\chi_j - 2 \ge \mu_j \delta_j/\delta_{j+1}$,
so $\theta \le \omega_{j,n} \le \delta_j/(2\chi_j -2) \le
     \delta_j/2 < \delta_j$
and
$\bar{\theta} < \mu_j \theta \le \mu_j \delta_j/(2\chi_j -2) \le \delta_{j+1}$.
Here $\lceil \cdot \rceil$ denotes the \emph{ceiling} function.

\item
First, the value
$\min_{x \in  \mathcal{Q}_{j,n} \cap \mathcal{L}^{-n+1/2}_{j+1}}
 \Pi_\theta(x)$
is attained at $\mathcal{L}_j^1 \cap \mathcal{L}_{j+1}^{-n+1/2}$.
Second,
if $x \in  \mathcal{Q}_{j,n} \cap \mathcal{L}^{-n+1/2}_{j+1}$ and
$\mu_j > 1$, then $\Pi_\theta(f^n(x)) > \mu_j \Pi_\theta(x)$
by part~(\ref{item:Hubacher}) of Lemma~\ref{lem:BilliardProperties}.
We still need hypotheses $0 < \theta \le \delta_j$ and
$\bar{\theta} = g(\theta;\mu_j) \le \delta_{j+1}$ in
Lemma~\ref{lem:BilliardProperties}.
In order to guarantee them,
it suffices to take $n \ge \chi_j$ for some large enough integer $\chi_j$,
since $\lim_{n \to +\infty} \omega_{j,n} = 0$ and
$\lim_{\theta \to 0^+} g(\theta;\mu_j) = 0$.
\qedhere
\end{enumerate}
\end{enumerate}
\end{proof}

The following lemma (which we refer to as the \emph{fundamental lemma})
is the key step in constructing generic sliding billiard trajectories that
approach the boundary in optimal time, and in constructing symbolic dynamics.
It describes which fundamental quadrilaterals
in $\mathcal{D}_{j+1}$ we can `nicely' visit if we start in a given
fundamental quadrilateral in $\mathcal{D}_j$.
See Figure~\ref{fig:FundamentalLemma} for a visual guide.

\begin{lemma}[Fundamental Lemma]\label{lem:Fundamental}
With the above notations, let
\begin{equation}
\label{eq:SetUpsilon}
\Upsilon_j =
\left\{
(n,n') \in \Nset^2 :
\alpha_j^- n + \beta_j^- \le n' \le \alpha_j^+ n - \beta_j^+,
\ n \ge \chi_j, \ n' \ge \chi_{j+1}
\right\},
\end{equation}
where $\alpha_j^- = \delta_{j+1}/\delta_j\max\{1,\mu_j\}$,
$\alpha_j^+ = \delta_{j+1}/\delta_j\min\{1,\mu_j\}$ and
$\beta_j^\pm = \alpha_j^\pm + 1$ for all $j$.
Then
$f^n:\widetilde{\mathcal{Q}}^\varsigma_{j,n} \stretches
    \widetilde{\mathcal{Q}}^{\varsigma'}_{j+1,n'}$
for all $j$, $(n,n') \in \Upsilon_j$ and
$\varsigma,\varsigma' \in \{-,+\}$.
\end{lemma}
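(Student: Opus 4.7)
My plan is to analyze the image $f^n(\widetilde{\mathcal{Q}}^\varsigma_{j,n})$ using the decomposition of $f^n$ forced by Lemma~\ref{lem:BilliardProperties}, and then deduce the stretching property from an intermediate value argument in a well-chosen continuous coordinate on $\mathcal{D}_{j+1}$. The first $n-1$ collisions of any orbit starting in $\mathcal{Q}_{j,n}$ land on $\Gamma_j$, so by part~(a) of Lemma~\ref{lem:BilliardProperties} one has $f^{n-1}(\varphi,\theta) = (\varphi + 2(n-1)\theta, \theta)$ throughout. The $n$-th iterate is either nodal (on the base $\mathcal{L}_{j+1}^{-n}$ or the top $\mathcal{L}_{j+1}^{-n+1}$) and remains on $\Gamma_j$, or it crosses into $\Gamma_{j+1}$; on the splitting segment $\mathcal{L}_{j+1}^{-n+1/2}$ the impact on $\Gamma_j$ is exactly perpendicular to the normal at the node, so part~(b) of Lemma~\ref{lem:BilliardProperties} applies verbatim and yields $\bar\theta = g(\theta;\mu_j)$. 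A short tracking of $\bar\varphi$ in each of these cases shows that
\[
f^n\bigl(\mathcal{L}_{j+1}^{-n+s} \cap \mathcal{Q}_{j,n}\bigr) \subset \mathcal{L}_{j+1}^{s}
\qquad \text{for } s \in \{0,\,\tfrac{1}{2},\,1\},
\]
so each horizontal side of $\widetilde{\mathcal{Q}}^\varsigma_{j,n}$ is sent by $f^n$ into a specific slanted segment emanating from the node $a_{j+1}$.

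Next I would introduce the continuous \emph{depth} function $T : \mathcal{D}_{j+1} \to \Rset$ defined by $T(\bar\varphi, \bar\theta) = (a_{j+2} - \bar\varphi)/(2\bar\theta)$, whose level sets are precisely the segments $\mathcal{L}_{j+2}^{-t} = \{T = t\}$. Each fundamental quadrilateral $\widetilde{\mathcal{Q}}^{\varsigma'}_{j+1,n'}$ then becomes a horizontal strip bounded by two consecutive $T$-levels drawn from $\{n'-1,\,n'-\tfrac{1}{2},\,n'\}$, and on $\mathcal{L}_{j+1}^{s}$ one reads off $T = \delta_{j+1}/(2\bar\theta) - s$. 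Combining Lemma~\ref{lem:ThetaIntersections} (which pins down the $\theta$-values at the corners of $\mathcal{Q}_{j,n}$ and its subdivision) with the estimates $\omega^{1/2}_{j,n} < \mu_j\delta_j/(2n-1)$ for $\mu_j < 1$ and $\nu^{1/2}_{j,n} > \mu_j\delta_j/(2n+1)$ for $\mu_j > 1$ from Lemma~\ref{lem:MinMax}, the inequalities $\alpha_j^- n + \beta_j^- \le n' \le \alpha_j^+ n - \beta_j^+$ in $\Upsilon_j$ translate into a strict separation: the $T$-values taken by $f^n$ on one horizontal side of $\widetilde{\mathcal{Q}}^\varsigma_{j,n}$ are uniformly less than $n'-1$, while on the other they are uniformly greater than $n'$. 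Which horizontal side lies above and which below the target is dictated by $\varsigma$ and by the sign of $\mu_j-1$, but the straddling property always holds.

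Given any vertical path $\gamma : [a,b] \to \mathcal{Q}^\varsigma_{j,n}$, the composition $T \circ f^n \circ \gamma$ is therefore a continuous scalar function whose endpoint values lie on opposite sides of the $T$-interval of the target. Taking $s_1 < s_2$ in $(a,b)$ as a consecutive pair of crossings of the two $T$-levels bounding $\widetilde{\mathcal{Q}}^{\varsigma'}_{j+1,n'}$---one crossing on each level, chosen so that $T$ remains strictly inside the target-interval throughout $(s_1, s_2)$---yields a subpath $\gamma' = \gamma|_{[s_1,s_2]}$ whose image has endpoints on the two horizontal sides of the target. Because $f^n$ is a homeomorphism and the image of the interior of $\mathcal{Q}^\varsigma_{j,n}$ meets each vertical side $\mathcal{L}_{j+1}$, $\mathcal{L}_{j+1}^1$ of the target only along the image of a single horizontal side of the source (which lies on the topological boundary of $f^n(\mathcal{Q}^\varsigma_{j,n})$), the interior of $f^n \circ \gamma'$ avoids both vertical sides of the target. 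Hence $f^n \circ \gamma'$ is vertical in $\widetilde{\mathcal{Q}}^{\varsigma'}_{j+1,n'}$, as required.

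The main technical work is the case analysis underlying the second step: for each of the four choices of $(\varsigma, \varsigma')$ and each of the two signs of $\mu_j - 1$ one has to verify separately that the $\Upsilon_j$ inequalities produce the strict $T$-separation on the relevant horizontal sides. The slack built into $\beta_j^\pm = \alpha_j^\pm + 1$ (as opposed to a tighter value roughly half that size) is exactly what absorbs the $\Order(\theta^3)$ corrections in the expansion~\eqref{eq:Omega} underlying Lemma~\ref{lem:MinMax}, and is what makes the estimates robust enough to hold uniformly for all $n \ge \chi_j$.
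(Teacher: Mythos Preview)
Your approach is essentially the paper's: both reduce the stretching to an intermediate-value argument on the image of a vertical path, with the key separation coming from Lemma~\ref{lem:MinMax}, and your depth function $T$ is just a reparametrization of the paper's direct comparison of $\Pi_\theta(f^n(\gamma(a)))$ and $\Pi_\theta(f^n(\gamma(b)))$ against $\nu_{j+1,n'}$ and $\omega_{j+1,n'}$. The paper is cleaner at the last step: instead of your discussion of vertical sides, it observes $f^n(\gamma^\varsigma) \subset \mathcal{D}_{j+1}$ and uses that $\overline{\mathcal{D}_{j+1}\setminus \mathcal{Q}_{j+1,n'}}$ has exactly two connected components, which immediately yields the required vertical subpath in any horizontal slab $\widetilde{\mathcal{Q}}^{\varsigma'}_{j+1,n'}\subset_{\rm h}\widetilde{\mathcal{Q}}_{j+1,n'}$; this makes your final paragraph unnecessary.

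One small correction: your closing remark that the slack in $\beta_j^\pm=\alpha_j^\pm+1$ absorbs the $\Order(\theta^3)$ corrections of~\eqref{eq:Omega} is not accurate. The proof on the splitting segment uses the \emph{exact} strict inequality $\bar\theta<\mu_j\theta$ (respectively $\bar\theta>\mu_j\theta$) from part~(\ref{item:Hubacher}) of Lemma~\ref{lem:BilliardProperties}, with no asymptotic error at all; the extra room in $\beta_j^\pm$ is purely algebraic, coming from converting the inequalities $\mu_j\delta_j/(2n-1)\le \delta_{j+1}/(2n'+2)$ and $\delta_{j+1}/(2n'-2)\le \delta_j/(2n+2)$ into the uniform linear form defining $\Upsilon_j$.
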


\begin{figure}[t]
\begin{center}
\includegraphics*[width=15.9cm]{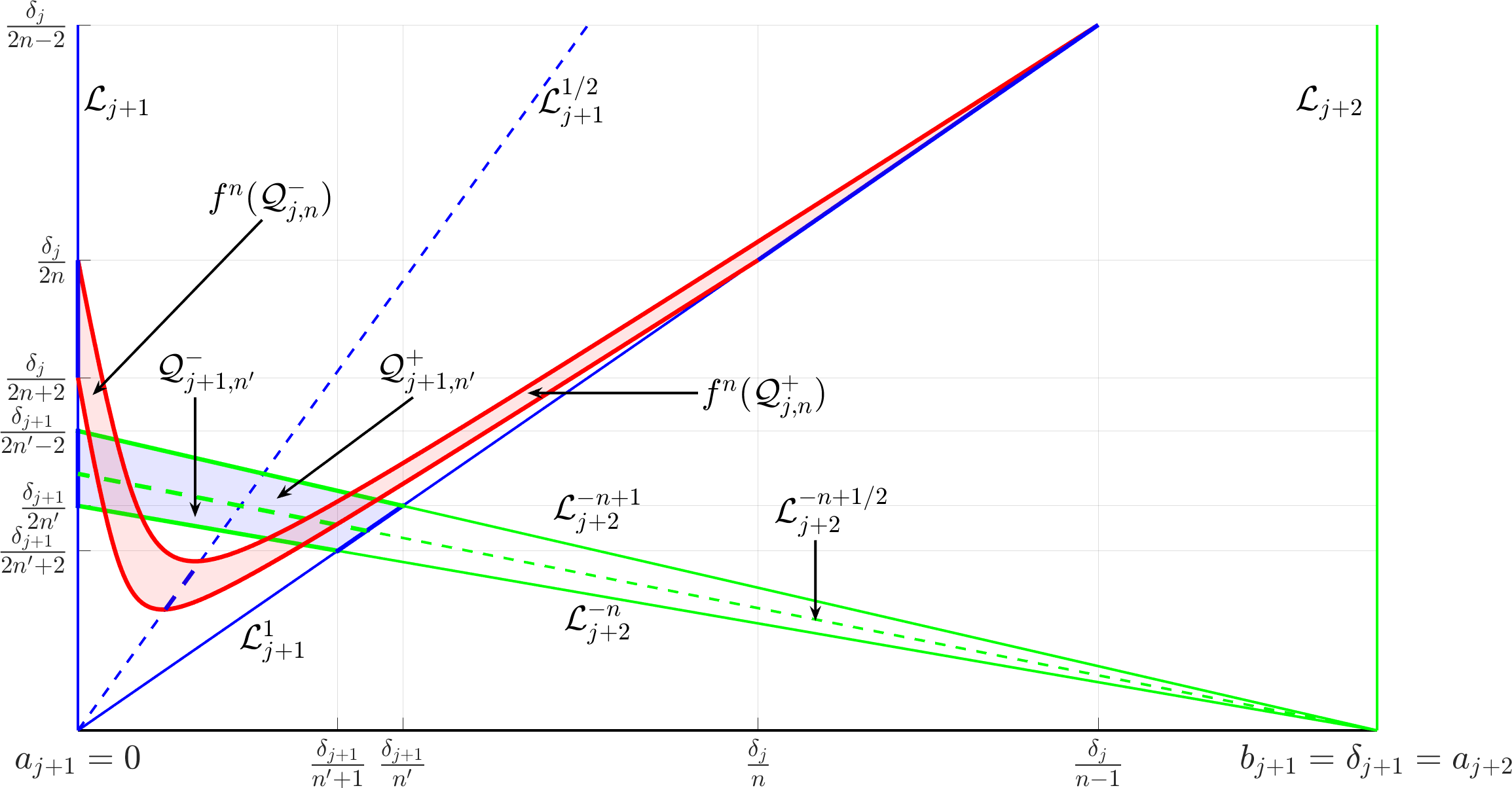}
\end{center}
\caption{\small Overlapping of the image under $f^n$
of the fundamental quadrilateral
$\mathcal{Q}_{j,n} =  \mathcal{Q}^-_{j,n} \cup \mathcal{Q}^+_{j,n}$
displayed in Figure~\ref{fig:FundamentalQuadrilaterals} with
the `target' fundamental quadrilateral
$\mathcal{Q}_{j+1,n'} =  \mathcal{Q}^-_{j+1,n'} \cup \mathcal{Q}^+_{j+1,n'}$
in the case $\mu_j < 1$.
Singularity segments $\mathcal{L}^s_{j+1}$ and
$\mathcal{L}^{-t}_{j+2}$ are displayed in blue and green, respectively.
Besides, they are displayed with continuous and dashed lines
when $s,t \in \Nset$ and $t \in \Nset + \frac{1}{2}$,
respectively.
The upper and lower red thick continuous curves are the images
of the left and right sides of $\mathcal{Q}_{j,n}$, respectively.
We have assumed that $a_{j+1} = 0$ to avoid the overlapping
of labels in the horizontal axis.
This is a quantitative representation computed for
$\delta_j = \pi/2$, $\delta_{j+1} = 1$, $\mu_j = \sqrt{r_j/r_{j+1}} = 0.3$,
$n = 3$ and $n' = 4$, showing that
$f^n: \widetilde{\mathcal{Q}}^\varsigma_{j,n} \stretches
      \widetilde{\mathcal{Q}}^{\varsigma'}_{j+1,n'}$
for all $\varsigma,\varsigma' \in \{ -,+ \}$.
The parabolic-like shape of $f^n(\mathcal{Q}_{j,n})$ was expected,
see~\eqref{eq:Omega}.
}
\label{fig:FundamentalLemma}
\end{figure}

\begin{proof}
We fix an index $j$ such that $\mu_j < 1$,
so $\alpha^-_j = \delta_{j+1}/\delta_j$ and
$\alpha_j^+ = \delta_{j+1}/\mu_j \delta_j$.
Let $(n,n') \in \Upsilon_j$.
We want to show that
$f^n:\widetilde{\mathcal{Q}}^\varsigma_{j,n} \stretches
     \widetilde{\mathcal{Q}}^{\varsigma'}_{j+1,n'}$
for any $\varsigma,\varsigma' \in \{-,+\}$.

Let $\gamma^\varsigma:[a,b] \to \mathcal{Q}^\varsigma_{j,n}$ be
a vertical path in $\widetilde{Q}^\varsigma_{j,n}$.
We assume, without loss of generality,
that $\gamma^\varsigma(a)$ and $\gamma^\varsigma(b)$ belong to the
base side and top side of $\tilde{\mathcal{Q}}^\varsigma_{j,n}$.
Note that $\overline{\mathcal{D}_{j+1} \setminus \mathcal{Q}_{j+1,n'}}$
has a connected component above and other one below $\mathcal{Q}_{j+1,n'}$.
By continuity and using that
$\mathcal{Q}^{\varsigma'}_{j+1,n'} \subset \mathcal{Q}_{j+1,n'}$ and
$f^n(\gamma^\varsigma) \subset \mathcal{D}_{j+1}$,
we know that if $f^n(\gamma^\varsigma(a))$ and $f^n(\gamma^\varsigma(b))$
are in different connected components of
$\overline{\mathcal{D}_{j+1} \setminus \mathcal{Q}_{j+1,n'}}$,
then there is a subpath
$\eta^{\varsigma,\varsigma'} \subset \gamma^\varsigma$ such that
the image path $f^n(\eta^{\varsigma,\varsigma'})$ is vertical in
$\widetilde{\mathcal{Q}}^{\varsigma'}_{j+1,n'}$.
Thus, we only have to check that endpoints
$f^n(\gamma^\varsigma(a))$ and $f^n(\gamma^\varsigma(b))$
are in different connected components of
$\overline{\mathcal{D}_{j+1} \setminus \mathcal{Q}_{j+1,n'}}$
for any path $\gamma^\varsigma$ vertical in $\widetilde{Q}^\varsigma_{j,n}$.

First, we consider the case $\varsigma = -$, so
$\gamma^-(a) \in \mathcal{Q}^-_{j,n} \cap \mathcal{L}^{-n}_{j+1}$ and
$\gamma^-(b) \in \mathcal{Q}^-_{j,n} \cap \mathcal{L}^{-n+1/2}_{j+1}$.
We deduce from Lemma~\ref{lem:MinMax} that
\begin{align*}
\Pi_\theta(f^n(\gamma^-(a))) &\ge
\min_{x \in \mathcal{Q}^-_{j,n} \cap \mathcal{L}^{-n}_{j+1}}
\Pi_\theta(f^n(x)) =
\min_{x \in \mathcal{Q}_{j,n} \cap \mathcal{L}^{-n}_{j+1}}
\Pi_\theta(f^n(x)) = \delta_j/(2n+2), \\
\Pi_\theta(f^n(\gamma^-(b))) &\le
\max_{x \in \mathcal{Q}^-_{j,n} \cap \mathcal{L}^{-n+1/2}_{j+1}}
\Pi_\theta(f^n(x)) < \mu_j \delta_j/(2n-1).
\end{align*}
The above results and identities
$\nu_{j+1,n'} = \delta_{j+1}/(2n'+2)$ and
$\omega_{j+1,n'} = \delta_{j+1}/(2n'-2)$ imply that
if inequalities
\begin{equation}
\label{eq:StrictInequalities1}
\mu_j \delta_j/(2n-1) \le \delta_{j+1}/(2n'+2), \qquad
\delta_{j+1}/(2n'-2) \le \delta_j/(2n+2)
\end{equation}
hold, then $f^n(\gamma^-(a))$ and $f^n(\gamma^-(b))$ are
above and below $\mathcal{Q}_{j+1,n'}$, respectively,
so they are in different connected components of
$\overline{\mathcal{D}_{j+1} \setminus \mathcal{Q}_{j+1,n'}}$
for any path $\gamma^-$ vertical in $\widetilde{Q}^-_{j,n}$.

Second, we consider the case $\varsigma = +$, so
$\gamma^+(a) \in \mathcal{Q}^+_{j,n} \cap \mathcal{L}^{-n+1/2}_{j+1}$ and
$\gamma^+(b) \in \mathcal{Q}^+_{j,n} \cap \mathcal{L}^{-n+1}_{j+1}$.
Similar arguments show that if inequalities
\begin{equation}
\label{eq:StrictInequalities2}
\mu_j \delta_j/(2n-1) \le \delta_{j+1}/(2n'+2), \qquad
\delta_{j+1}/(2n'-2) \le \delta_j/2n
\end{equation}
hold, then $f^n(\gamma^+(a))$ and $f^n(\gamma^+(b))$ are
below and above $\mathcal{Q}_{j+1,n'}$, respectively,
so they are in different connected components of
$\overline{\mathcal{D}_{j+1} \setminus \mathcal{Q}_{j+1,n'}}$
for any path $\gamma^+$ vertical in $\widetilde{Q}^+_{j,n}$.

Finally, after a straightforward algebraic manipulation,
we check that inequalities~(\ref{eq:StrictInequalities1}) 
and~(\ref{eq:StrictInequalities2}) hold for any $(n,n') \in \Upsilon_j$.
This ends the proof for the case $\mu_j < 1$.

The case $\mu_j > 1$ follows from similar arguments.
We skip the details.
\end{proof}

No inequality in~\eqref{eq:SetUpsilon} is strict.
However, we need some strict inequalities for a technical reason.
Let us explain it.
We will use the objects defined above and the fundamental lemma
to construct our symbolic dynamics in Section~\ref{sec:ChaoticMotions}.
However, if we were to try to construct our symbolic dynamics directly with
the symbol sets being the fundamental quadrilaterals $Q^\varsigma_{j,n}$,
we would run into problems at the boundaries,
where neighboring quadrilaterals intersect.
To be precise,
$\mathcal{Q}^+_{j,n}$ and $\mathcal{Q}^-_{j,n}$ have a common
side contained in $\mathcal{L}_j^{-n+1/2}$, whereas
$\mathcal{Q}^-_{j,n}$ and $\mathcal{Q}^+_{j,n+1}$ have a
common side contained in $\mathcal{L}_j^{-n}$.
The following corollary to Lemma~\ref{lem:Fundamental} solves
this problem by establishing the existence of
pairwise disjoint strict horizontal slabs
$\widetilde{\mathcal{K}}^\varsigma_{j,n} \varsubsetneq_{\rm h}
 \widetilde{\mathcal{Q}}^\varsigma_{j,n}$ with
\emph{exactly} the same stretching properties as the original
fundamental quadrilaterals $\widetilde{\mathcal{Q}}^\varsigma_{j,n}$.
It requires some strict inequalities.

\begin{corollary}[Fundamental Corollary]
\label{cor:Fundamental}
With the above notations, let
\[
\Xi_j =
\left\{
(n,n') \in \Nset^2 :
\alpha_j^- n + \beta_j^- < n' < \alpha_j^+ n - \beta_j^+,
\ n \ge \chi_j, \ n' \ge \chi_{j+1}
\right\},
\]
where $\alpha_j^- = \delta_{j+1}/\delta_j\max\{1,\mu_j\}$,
$\alpha_j^+ = \delta_{j+1}/\delta_j\min\{1,\mu_j\}$ and
$\beta_j^\pm = \alpha_j^\pm + 1$ for all $j$.
There are pairwise disjoint strict horizontal slabs
\[
\widetilde{\mathcal{K}}^\varsigma_{j,n} \varsubsetneq_{\rm h}
 \widetilde{\mathcal{Q}}^\varsigma_{j,n}, \qquad
\forall j, \ n \ge \chi_j, \ \varsigma \in \{-,+\},
\]
such that:
\begin{enumerate}[(a)]
\item
$f^n:\widetilde{\mathcal{K}}^\varsigma_{j,n} \stretches
     \widetilde{\mathcal{K}}^{\varsigma'}_{j+1,n'}$; and
\item
$f^n(\mathcal{K}^{\varsigma}_{j,n}) \cap
 \mathcal{K}^{\varsigma'}_{j+1,n'} \cap
 \mathcal{L} = \emptyset$,
\end{enumerate}
for all $j$, $(n,n') \in \Xi_j$
and $\varsigma,\varsigma' \in \{-,+\}$.
(See Definition~\ref{def:SingularitySegment} for the meaning of
 $\mathcal{L}$.)
\end{corollary}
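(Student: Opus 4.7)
The strict inequalities in $\Xi_j$, compared with the non-strict ones in $\Upsilon_j$, are precisely what makes slack available for shrinking. My plan is to obtain $\widetilde{\mathcal{K}}^\varsigma_{j,n}$ by pulling the horizontal sides of $\widetilde{\mathcal{Q}}^\varsigma_{j,n}$ inward by an amount $\epsilon = \epsilon_{j,n} > 0$, leaving the vertical sides unchanged. Concretely, let $\widetilde{\mathcal{K}}^-_{j,n}$ be the oriented cell bounded by $\mathcal{L}^{-n+\epsilon}_{j+1}$ (base), $\mathcal{L}^{-n+1/2-\epsilon}_{j+1}$ (top), and the corresponding sub-arcs of $\mathcal{L}_j$ and $\mathcal{L}^1_j$ (left and right); define $\widetilde{\mathcal{K}}^+_{j,n}$ analogously, with base $\mathcal{L}^{-n+1/2+\epsilon}_{j+1}$ and top $\mathcal{L}^{-n+1-\epsilon}_{j+1}$. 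That $\widetilde{\mathcal{K}}^\varsigma_{j,n} \varsubsetneq_{\rm h} \widetilde{\mathcal{Q}}^\varsigma_{j,n}$ is immediate: the vertical sides sit inside those of $\mathcal{Q}^\varsigma_{j,n}$ by construction, and the base/top are strictly inside $\mathcal{Q}^\varsigma_{j,n}$ because $\epsilon>0$.

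Pairwise disjointness is a direct verification. Within a single fundamental domain $\mathcal{D}_j$, consecutive slabs $\mathcal{K}^-_{j,n}$ and $\mathcal{K}^+_{j,n}$ are separated by the open strip lying between $\mathcal{L}^{-n+1/2-\epsilon}_{j+1}$ and $\mathcal{L}^{-n+1/2+\epsilon}_{j+1}$, while $\mathcal{K}^+_{j,n+1}$ and $\mathcal{K}^-_{j,n}$ are separated by the strip between $\mathcal{L}^{-n-\epsilon}_{j+1}$ and $\mathcal{L}^{-n+\epsilon}_{j+1}$. Across different fundamental domains, $\mathcal{D}_j$ and $\mathcal{D}_{j'}$ only share (at most) an isolated boundary point, so the interiors are automatically disjoint.

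For (a), I would repeat the min/max analysis of Lemma~\ref{lem:MinMax} for the shifted horizontal sides $\mathcal{L}^{-n+\epsilon}_{j+1}$, $\mathcal{L}^{-n+1/2\pm\epsilon}_{j+1}$, $\mathcal{L}^{-n+1-\epsilon}_{j+1}$. The extreme $\theta$-values coming from circular dynamics (parts (i)) and from the Hubacher-type estimate (parts (ii) and (iii)) all depend continuously on $\epsilon$ and reduce to those of Lemma~\ref{lem:MinMax} as $\epsilon\to 0$. Substituting these into the two pairs of inequalities~\eqref{eq:StrictInequalities1} and~\eqref{eq:StrictInequalities2} from the proof of Lemma~\ref{lem:Fundamental}, one checks algebraically that each of them is equivalent (after dividing by a positive factor) to a condition of the shape $n' \ge \alpha_j^- n + \beta_j^-$ or $n' \le \alpha_j^+ n - \beta_j^+$. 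In $\Upsilon_j$ these inequalities are non-strict and marginally satisfied; in $\Xi_j$ they are strict, so there is room of size bounded below in terms of the gap $\min\{\alpha_j^+ n - \beta_j^+ - n',\, n' - \alpha_j^- n - \beta_j^-\}$. Taking $\epsilon_{j,n}$ strictly smaller than this gap (for all admissible $n'$) preserves the endpoint-separation argument from the proof of Lemma~\ref{lem:Fundamental} verbatim, which yields~(a).

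For (b), note that $\mathcal{L}$ is a finite union of straight segments, so $f^{-n}(\mathcal{L})\cap \mathcal{K}^\varsigma_{j,n}$ is a finite union of compact arcs (here I use that $f$ is a homeomorphism and that on $\mathcal{K}^\varsigma_{j,n}$ the iterate $f^n$ is given by an explicit composition of circular dynamics on $\Gamma_j$ with one transition through the node, hence the pre-image arcs are smooth and transverse to the boundary). The horizontal sides of $\mathcal{K}^\varsigma_{j,n}$ vary continuously with $\epsilon$, so for all but finitely many $\epsilon \in (0,\epsilon_*)$ they meet $f^{-n}(\mathcal{L})\cap \mathcal{K}^\varsigma_{j,n}$ transversely and not at endpoints on the vertical sides. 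A small further decrease of $\epsilon$ then excludes the finitely many residual pre-image arcs from the interior of $\mathcal{K}^\varsigma_{j,n}$ relevant to property (b), since by the stretching argument above the intersection $f^n(\mathcal{K}^\varsigma_{j,n})\cap \mathcal{K}^{\varsigma'}_{j+1,n'}$ is contained in an open parabolic strip bounded away from the relevant singular segments after shrinkage.

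The main obstacle is managing (b) uniformly in the infinitely many indices $(j,n,\varsigma)$ and target indices $(j+1,n',\varsigma')$: a naive single $\epsilon$ will not do. The fix is that the pre-image arcs in $\mathcal{K}^\varsigma_{j,n}$ depend only on $(j,n,\varsigma)$, not on $(n',\varsigma')$, so choosing $\epsilon_{j,n} \in (0,\epsilon_*)$ outside the countable exceptional set associated with $(j,n,\varsigma)$ handles property (b) against \emph{every} admissible target simultaneously. Combining this with the bound from (a) produces a sequence $(\epsilon_{j,n})$ yielding the required slabs.
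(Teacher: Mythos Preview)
Your plan for part~(a) is sound and close in spirit to the paper's argument: both exploit the fact that the strict inequalities defining $\Xi_j$ give slack to shrink the fundamental quadrilaterals while preserving the stretching property. The paper streamlines the bookkeeping by introducing a single target cell $\mathcal{R}_{j+1,n} = \bigcup_{(n,n')\in\Xi_j} \mathcal{Q}_{j+1,n'}$: the strict versions of~\eqref{eq:StrictInequalities1}--\eqref{eq:StrictInequalities2} say precisely that the images of the horizontal sides of $\widetilde{\mathcal{Q}}^\varsigma_{j,n}$ lie \emph{strictly} above or below $\mathcal{R}_{j+1,n}$, so a single small shrinkage of those sides keeps them strictly outside, and one obtains $f^n:\widetilde{\mathcal{K}}^\varsigma_{j,n} \stretches \widetilde{\mathcal{R}}_{j+1,n}$, hence stretching to every horizontal slab $\widetilde{\mathcal{K}}^{\varsigma'}_{j+1,n'} \subset_{\rm h} \widetilde{\mathcal{R}}_{j+1,n}$ at once. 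This avoids redoing the min/max computations for shifted sides and sidesteps your concern about uniformity in~$n'$.

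Your treatment of part~(b), however, is substantially overcomplicated and rests on a misunderstanding of the geometry. The set $\mathcal{L}\cap\mathcal{D}_{j+1}$ consists only of (pieces of) $\mathcal{L}_{j+1}$, $\mathcal{L}_{j+1}^{1/2}$, and $\mathcal{L}_{j+1}^1$. Their preimages under $f^n$ inside $\mathcal{Q}_{j,n}$ are \emph{exactly} the segments $\mathcal{L}_{j+1}^{-n}$, $\mathcal{L}_{j+1}^{-n+1/2}$, $\mathcal{L}_{j+1}^{-n+1}$---that is, precisely the horizontal sides of $\widetilde{\mathcal{Q}}^\pm_{j,n}$. (For the middle one: a point of $\mathcal{L}_{j+1}^{-n+1/2}$ reaches $(b_j-\theta,\theta)$ after $n-1$ circular steps and then lands, by the Hubacher computation, on $(a_{j+1}+\bar\theta,\bar\theta)\in\mathcal{L}_{j+1}^{1/2}$.) Hence \emph{any} strict horizontal slab $\widetilde{\mathcal{K}}^\varsigma_{j,n}\varsubsetneq_{\rm h}\widetilde{\mathcal{Q}}^\varsigma_{j,n}$ already misses all three preimage segments, so $f^n(\mathcal{K}^\varsigma_{j,n})\cap\mathcal{L}\cap\mathcal{D}_{j+1}=\emptyset$, which immediately gives~(b). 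There are no ``finitely many residual pre-image arcs'' to exclude, no transversality to arrange, and no exceptional $\epsilon$'s: property~(b) comes for free with the strict-slab condition. Your argument would still go through, but it obscures the simple reason the corollary holds.
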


\begin{proof}
Inequalities~(\ref{eq:StrictInequalities1}) and~(\ref{eq:StrictInequalities2})
become \emph{strict} for any $(n,n') \in  \Xi_j$.
We consider the oriented cells
$\widetilde{\mathcal{R}}_{j+1,n}$, where
\[
\mathcal{R}_{j+1,n} =
\bigcup_{(n,n') \in \Xi_j} \mathcal{Q}_{j+1,n'} \subset
\mathcal{D}_{j+1},
\]
and orientations are chosen in such a way that the left and right sides
of these big oriented cells are still contained in
$\mathcal{L}_{j+1}$ and $\mathcal{L}_{j+1}^1$, respectively.
Note that $\mathcal{D}_{j+1} \setminus \mathcal{R}_{j+1,n}$ has
a connected component above and other one below $\mathcal{R}_{j+1,n}$.

Fix an index $j$ such that $\mu_j < 1$.
Let $n \ge \chi_j$.
We refer to Figure~\ref{fig:FundamentalLemma} for a visual guide.
The reader should imagine that the blue quadrilateral shown in
that figure is our whole cell $\mathcal{R}_{j+1,n}$.

\emph{Strict} versions of inequalities~(\ref{eq:StrictInequalities1})
and~(\ref{eq:StrictInequalities2}) imply that the images by $f^n$ of
both the base side of $\widetilde{\mathcal{Q}}^-_{j,n}$
and the top side of $\widetilde{\mathcal{Q}}^+_{j,n}$ are
\emph{strictly} above $\mathcal{R}_{j+1,n}$; whereas
the image by $f^n$ of the top side of $\widetilde{\mathcal{Q}}^-_{j,n}$,
which coincides with the base side of $\widetilde{\mathcal{Q}}^+_{j,n}$,
is \emph{strictly} below $\mathcal{R}_{j+1,n}$.
Thus, there are strict horizontal slabs
$\widetilde{\mathcal{K}}^\varsigma_{j,n} \varsubsetneq_{\rm h}
 \widetilde{\mathcal{Q}}^\varsigma_{j,n}$, with $\varsigma \in \{-,+\}$,
such that the images by $f^n$ of both
the base side of $\widetilde{\mathcal{K}}^-_{j,n}$
and the top side of $\widetilde{\mathcal{K}}^+_{j,n}$ are
strictly above $\mathcal{R}_{j+1,n}$; whereas
the image by $f^n$ of both the top side of $\widetilde{\mathcal{K}}^-_{j,n}$
and the base side of $\widetilde{\mathcal{K}}^+_{j,n}$
are strictly below $\mathcal{R}_{j+1,n}$.
Consequently,
$f^n(\mathcal{K}^{\varsigma}_{j,n}) \cap \mathcal{R}_{j+1,n} \cap
 \mathcal{L} = \emptyset$ and
$f^n:\widetilde{\mathcal{K}}^\varsigma_{j,n} \stretches
     \widetilde{\mathcal{R}}_{j+1,n}$, which implies that
$f^n(\mathcal{K}^{\varsigma}_{j,n}) \cap
 \mathcal{K}^{\varsigma'}_{j+1,n'} \cap
 \mathcal{L} = \emptyset$ and
$f^n:\widetilde{\mathcal{K}}^\varsigma_{j,n} \stretches
    \widetilde{\mathcal{K}}^{\varsigma'}_{j+1,n'}$
for all $(n,n') \in \Xi_j$ and $\varsigma,\varsigma' \in \{-,+\}$,
since
$\widetilde{\mathcal{K}}^{\varsigma'}_{j+1,n'} \subset_{\rm h}
 \widetilde{\mathcal{R}}_{j+1,n}$ for all $(n,n') \in \Xi_j$ and
 $\varsigma' \in \{-,+\}$.
This ends the proof of the stretching and intersecting properties
when $\mu_j < 1$. The case $\mu_j > 1$ follows from similar arguments.
We omit the details.

Finally,
these strict horizontal slabs are necessarily pairwise disjoint
because the original fundamental quadrilaterals
$\widetilde{\mathcal{Q}}^\varsigma_{j,n}$ only
share some of their horizontal sides.
\end{proof}

\section{Symbols, shift spaces and shift maps}
\label{sec:Symbols}

In this section,
we define an alphabet $\Qv \subset \Zset^k$ with infinitely many symbols,
then we consider two shift spaces
$\Qf^+ \subset \Qv^{\Nset_0}$ and $\Qf \subset \Qv^\Zset$ of
admissible one-sided and two-sided sequences,
and finally we study some properties of the
shift map $\sigma:\Qf \to \Qf$.
We present these objects in a separate section,
minimizing their relation with circular polygons,
since we believe that they will be useful in future works about
other problems.

For brevity, we will use the term shift instead of subshift,
but $\Qf^+ \varsubsetneq \Qv^{\Nset_0}$ and $\Qf \varsubsetneq \Qv^\Zset$. The sets $\Qv$, $\Qf^+$ and $\Qf$ are defined in terms of
some positive factors $\alpha^\pm_j$,
some positive addends $\beta^\pm_j$
and some integers $\chi_j \ge 2$ for $j=1,\ldots,k$, with $k \ge 1$.
(There are interesting billiard problems that will require $k < 3$,
 or even $k=1$.)

We only assume three hypotheses about these factors and integers:
\begin{itemize}
\item[({\bf A})]
$0 < \alpha^-_j < \alpha^+_j$ for $j=1,\ldots,k$;
\item[({\bf B})]
$\alpha := \alpha^+ > 1$ and $\alpha^+ \alpha^- = 1$,
where $\alpha^\pm = \prod_{j=1}^k \alpha^\pm_j$; and
\item[({\bf X})]
Integers $\chi_2,\ldots,\chi_k \ge 2$ are large enough and
$\chi_1 \gg \chi_2,\ldots,\chi_k$.
\end{itemize}
There is no assumption on the addends.
Clearly, all arcs $\Gamma_1,\ldots,\Gamma_k$ of the circular polygon~$\Gamma$
are equally important,
so $\chi_1 \gg \chi_2,\ldots,\chi_k$ is a purely technical hypothesis.
It is used only once, at the beginning of the proof of
Lemma~\ref{lem:AdmissibleSymbols}.
It is needed just to establish the topological transitivity of
the subshift map.
The rest of Theorem~\ref{thm:SymbolicDynamicsIntro},
as well as Theorems~\ref{thm:BoundaryIntro}, \ref{thm:PeriodicIntro}
and~\ref{thm:AsymptoticConstantIntro} do not need it.

We remark two facts related to billiards in circular polygons,
although we forget about billiards in the rest of this section.
The first remark is a trivial verification.

\begin{lemma}\label{lem:Factors}
The factors $\alpha^\pm_j$ defined in Lemma~\ref{lem:Fundamental}
satisfy hypotheses {\rm ({\bf A})} and {\rm ({\bf B})}.
\end{lemma}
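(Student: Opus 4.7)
The verification is essentially direct computation from the definitions in Lemma~\ref{lem:Fundamental}, so I would present it as a short chain of algebraic observations.

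The plan is to first establish hypothesis (\textbf{A}). Since consecutive arcs of any circular polygon have, by definition, different radii, one has $\mu_j = \sqrt{r_j/r_{j+1}} \neq 1$ for every $j \mod k$. Hence $\min\{1,\mu_j\} < \max\{1,\mu_j\}$, which gives $0 < \alpha_j^- < \alpha_j^+$ at once.

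Next I would verify $\alpha^+\alpha^- = 1$. Using the elementary identity $\min\{1,\mu_j\}\max\{1,\mu_j\} = \mu_j$, we have
\[
\alpha^+\alpha^- =
\prod_{j=1}^k \frac{\delta_{j+1}^2}{\delta_j^2\,\mu_j}.
\]
Since the index $j$ is cyclic, the telescoping product $\prod_{j=1}^k \delta_{j+1}^2/\delta_j^2$ equals $1$, while
\[
\prod_{j=1}^k \mu_j =
\prod_{j=1}^k \sqrt{r_j/r_{j+1}} = 1,
\]
again by telescoping. Therefore $\alpha^+\alpha^- = 1$.

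Finally I would prove the strict inequality $\alpha^+ > 1$. Cyclicity of $\prod_{j=1}^k \delta_{j+1}/\delta_j = 1$ reduces the claim to
\[
\alpha^+ = \prod_{j=1}^k \frac{1}{\min\{1,\mu_j\}} > 1.
\]
Every factor is $\ge 1$, so it suffices to exhibit one index $j$ with $\mu_j < 1$. If every $\mu_j$ were $\ge 1$, the identity $\prod_j \mu_j = 1$ together with $\mu_j \neq 1$ would be contradictory; thus some $\mu_j < 1$ and the product is strictly greater than $1$. The only conceptual point is the use of the circular-polygon assumption that consecutive radii differ, which is what keeps all $\mu_j$ away from $1$; there is no real obstacle in this lemma.
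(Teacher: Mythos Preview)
Your proof is correct and follows essentially the same approach as the paper: both use $\mu_j \neq 1$ for hypothesis~(\textbf{A}) and the telescoping products $\prod_j \delta_{j+1}/\delta_j = 1$ and $\prod_j \mu_j = 1$ for hypothesis~(\textbf{B}). You are actually more explicit than the paper about why $\alpha^+ > 1$ strictly (the paper just says hypothesis~(\textbf{B}) follows from the telescopic products), but the underlying idea is identical.
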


\begin{proof}
Hypothesis ({\bf A}) follows from properties $\mu_j \neq 1$.
Hypothesis ({\bf B}) follows from the telescopic products
\[
\prod_{j=1}^k \frac{\delta_j}{\delta_{j+1}} = 1, \qquad
\prod_{j=1}^k \mu_j = \prod_{j=1}^k \sqrt{\frac{r_j}{r_{j+1}}} = 1,
\]
which are easily obtained from the cyclic identities
$\delta_{k+1} = \delta_1$ and $r_{k+1} = r_1$.
\end{proof}

The second remark is that we prefer to encode in a single symbol
all information related to each complete turn around $\Gamma$,
although we may construct our symbolic dynamics directly with
the disjoints sets $\widetilde{\mathcal{K}}^\varsigma_{j,n}$ as symbols.
That is, if a generic sliding orbit follows,
along a complete turn around the boundary $\Gamma$,
the itinerary
\[
\mathcal{K}^{\varsigma_1}_{1,n_1} \subset \mathcal{D}_1,
\mathcal{K}^{\varsigma_2}_{2,n_2} \subset \mathcal{D}_2, \ldots,
\mathcal{K}^{\varsigma_k}_{k,n_k} \subset \mathcal{D}_k,
\]
where $\widetilde{\mathcal{K}}^\varsigma_{j,n}$ are the pairwise disjoint
horizontal slabs described in Corollary~\ref{cor:Fundamental},
then we construct the symbol
\[
\qv = (q_1,\ldots,q_k) \in \Zset^k, \qquad
|q_j| = n_j, \qquad
\Sign(q_j) = \varsigma_j,
\]
which motivates the following definition.

\begin{definition}
\label{def:Alphabet}
The \emph{alphabet of admissible symbols} is the set
\[
\Qv =
\left\{
\qv = (q_1,\ldots,q_k) \in \Zset^k :
\begin{array}{l}
\alpha_j^- |q_j| + \beta_j^- < |q_{j+1}| <
\alpha_j^+ |q_j| - \beta_j^+, \, \mbox{$\forall j=1,\ldots,k-1$} \\
|q_j| \ge \chi_j, \mbox{ $\forall j=1,\ldots,k$}
\end{array}
\right\}.
\]
\end{definition}

This alphabet has infinitely many symbols by hypothesis~({\bf A}).
Thinking in the billiard motivation behind these symbols,
we ask that symbols associated to consecutive turns around $\Gamma$
satisfy the following admissibility condition.

\begin{definition}
\label{def:Admissible}	
We say that a finite, one-sided, or two-sided sequence of admissible symbols
$\qf = (\qv^i)_{i \in I} \subset \Qv$,
with $\qv^i = (q_1^i,\ldots,q_k^i)$,
is \emph{admissible} if and only if
\[
\alpha_k^- |q^i_k| + \beta_k^- < |q^{i+1}_1| <
\alpha_k^+ |q^i_k| - \beta_k^+, \qquad \forall i.
\]
\end{definition}

Admissible sequences are written with Fraktur font: $\qf$.
Its vector symbols are written with boldface font and labeled
with superscripts: $\qv^i$.
Components of admissible symbols are written with the standard font
and labeled with subscripts: $q^i_j$ or $q_j$.

\begin{definition}
\label{def:ShiftSpace}
The \emph{shift spaces of admissible sequences} are
\begin{align*}
\Qf^+ &=
\left\{ \qf = (\qv^i)_{i \in \Nset_0} \in \Qv^{\Nset_0} :
  \mbox{$\qf$ is admissible} \right\}, \\
\Qf &=
\left\{ \qf = (\qv^i)_{i \in \Zset} \in \Qv^\Zset :
  \mbox{$\qf$ is admissible} \right\}.
\end{align*}
\end{definition}

If $\qv = (q_1,\ldots,q_k) \in \Qv$, then we write
$|\qv| = |q_1| + \cdots + |q_k|$.
We equip $\Qf$ with the topology defined by the metric
\[
d_{\Qf} : \Qf \times \Qf \to [0,+\infty),\qquad
d_{\Qf} (\pf,\qf) =
\sum_{i \in \Zset}
\frac{1}{2^{|i|}} \frac{|\pv^i - \qv^i |}{1 + | \pv^i - \qv^i |}.
\]

We want to estimate the size of the maxima
(sometimes, the minima as well) of the sets
\begin{equation}
\label{eq:XiDefinition}
\Xi^i_j(n) =
\left\{
n^i_j \in \Nset :
\exists \qf \in \Qf \mbox{ such that $|q^0_1| = n$ and $|q^i_j| = n^i_j$}
\right\}, \quad
i \in \Zset, \ j=1,\ldots,k,
\end{equation}
when $n \ge \chi_1$ or $n \gg 1$.
We ask in~\eqref{eq:XiDefinition} for the existence of some $\qf \in \Qf$
---that is, some two-sided infinite sequence---,
but it does not matter. We would obtain exactly the same sets
just by asking the existence of some finite sequence
$q^0_1,\ldots,q^0_k,q^1_1,\ldots,q^1_k,\ldots,q^i_1,\ldots,q^i_j$
that satisfies the corresponding admissibility conditions.

Several estimates about maxima and minima of
sets~\eqref{eq:XiDefinition} are listed below.
Their proofs have been postponed to Appendix~\ref{app:AdmissibleSymbols}.

\begin{lemma}
\label{lem:AdmissibleSymbols}
We assume hypotheses {\rm ({\bf A})}, {\rm ({\bf B})} and {\rm ({\bf X})}.
Set $\zeta^i_j(n) = \min \Xi^i_j(n)$, $\xi^i_j(n) = \max \Xi^i_j(n)$.
Let $\rho^0(n) = n$ and
$\rho^i(n) = \sum_{j=1}^k \sum_{m=0}^{i-1} \xi^m_j(n)$ for all $i \ge 1$.
\begin{enumerate}[(a)]
\item
There are positive constants
$\nu < \lambda$, $\nu' < \lambda'$, $\tau < 1$ and $\gamma^\pm$,
which depend on factors $\alpha^\pm_j$ and addends $\beta^\pm_j$
but not on integers $\chi_j$, such that the following properties hold.
\begin{enumerate}[i)]
\item
\label{item:AdmissibleSymbols1}
$\nu n \le \xi^0_j(n) \le \lambda n$
for all $j=1,\ldots,k$ and $n \ge \chi_1$;
\item
\label{item:AdmissibleSymbols2}
$\tau \alpha^{|i|} \xi^0_j(n) \le \xi^i_j(n) \le \alpha^{|i|} \xi^0_ j(n)$
for all $j=1,\ldots,k$, $i \in \Zset$ and $n \ge \chi_1$;
\item
\label{item:AdmissibleSymbols3}
$\nu' \xi_j^i(n) \le \rho^i(n) \le \rho^{i+1}(n) \le \lambda' \xi_j^i(n)$
for all $j=1,\ldots,k$, $i \ge 0$, and $n \ge \chi_1$;
\item
\label{item:AdmissibleSymbols4}
$\zeta^1_1(n) \le \max\{ \chi_1,n/\alpha + \gamma^-\} \le n-1 < n+1 \le
 \alpha n - \gamma^+ \le \xi^1_1(n)$
for all $n > \chi_1$ and
$\zeta^1_1(n) = n < n+1 \le
 \alpha n - \gamma^+ \le \xi^1_1(n)$ for $n=\chi_1$; and
\item
\label{item:AdmissibleSymbols5}
Once fixed any $N \in \Nset$, we have that
\[
\chi_1 \le \zeta^1_1(n) \le n/\alpha + \gamma^- \le
n - N < n + N \le
 \alpha n - \gamma^+ \le \xi^1_1(n), 
\]
for all sufficiently large $n$. 
\end{enumerate}
\item
\label{item:XiSets}
$\Xi^i_j(n) = [\zeta^i_j(n), \xi^i_j(n)] \cap \Nset$
for all $j \mod k$, $i \in \Zset$ and $n \ge \chi_1$;
that is, $\Xi^i_j(n)$ has no gaps in $\Nset$.
Besides,
$\big[ \max\{\chi_1,n-|i|\}, n+|i| \big] \cap \Nset
\subset \Xi^i_1(n)$ for all $i \in \Zset$ and $n \ge \chi_1$.
\end{enumerate}
\end{lemma}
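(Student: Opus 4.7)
The plan is to exploit the fact that admissibility is a one-dimensional linear chain condition: writing $m_{i,j} := |q^i_j|$, the constraints in Definitions~\ref{def:Alphabet} and~\ref{def:Admissible} amount to requiring that consecutive entries of the chain $m_{0,1}, m_{0,2}, \ldots, m_{0,k}, m_{1,1}, \ldots$ satisfy $\alpha^-_j m_{\cdot,j} + \beta^-_j < m_{\cdot,j+1} < \alpha^+_j m_{\cdot,j} - \beta^+_j$ (with cyclic indices) together with $m_{\cdot,j} \ge \chi_j$. Hypothesis~({\bf B}) says that the product of the upper-bound multipliers around one full cycle equals $\alpha$, and that of the lower-bound multipliers equals $\alpha^{-1}$. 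Moreover, because admissibility is local, any finite admissible chain can be extended to a two-sided one by picking any value in the (nonempty, by hypothesis~({\bf X})) admissibility window at each further step; so the sets $\Xi^i_j(n)$ can equivalently be defined through finite chains, as stated in the remark following~\eqref{eq:XiDefinition}.

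For parts~(a)(\ref{item:AdmissibleSymbols1})--(\ref{item:AdmissibleSymbols2}), I would iterate the linear recurrence. Starting from $m_{0,1}=n$ and saturating the upper inequality at each step gives the closed form $\xi^0_j(n) = n\prod_{\ell<j}\alpha^+_\ell + \Order(1)$, yielding positive $\nu<\lambda$ (independent of the $\chi_\ell$) that bracket $\xi^0_j(n)/n$. Continuing through $|i|$ full cycles multiplies by $\alpha^{|i|}$ (forward: product of $\alpha^+_j$; backward: product of $1/\alpha^-_j$, also equal to $\alpha$ by~({\bf B})), whence $\xi^i_j(n) = \alpha^{|i|}\xi^0_j(n) + \Order(\alpha^{|i|})$. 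The additive corrections are absorbed into a fixed $\tau\in(0,1)$ provided $\chi_1$ is taken large enough, yielding the two-sided bound in~(\ref{item:AdmissibleSymbols2}). Part~(\ref{item:AdmissibleSymbols3}) then follows: $\rho^i(n)$ is a finite geometric sum of ratio $\alpha$, naturally trapped between $\xi^{i-1}_j(n)$ (a single term) and a constant multiple of $\xi^i_j(n)$ (the largest term).

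For parts~(\ref{item:AdmissibleSymbols4}) and~(\ref{item:AdmissibleSymbols5}), I would specialize the iterated recurrence to $(i,j)=(1,1)$, so that one full cycle transforms $n$ to $m_{1,1}$ with cumulative multiplier $\alpha^{\pm 1}$ and cumulative additive constants $\gamma^\pm$; straightforward arithmetic then yields the displayed inequalities once $\chi_1$ is chosen large relative to $\gamma^\pm$. The delicate sub-case $\zeta^1_1(\chi_1)=\chi_1$ requires exhibiting an admissible chain with $m_{0,1}=m_{1,1}=\chi_1$; here hypothesis~({\bf X}) is crucial, ensuring that the intermediate admissibility windows forced jointly by $m_{0,1}=\chi_1$ and $m_{1,1}=\chi_1$ have enough integer content to accommodate valid $m_{0,2},\ldots,m_{0,k}$ with $m_{0,\ell}\ge\chi_\ell$.

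For part~(\ref{item:XiSets}), absence of gaps in $\Xi^i_j(n)$ is proved by induction along the chain: each admissibility window $(\alpha^-_j m+\beta^-_j,\,\alpha^+_j m-\beta^+_j)$ has width $(\alpha^+_j-\alpha^-_j)m-(\beta^+_j+\beta^-_j)$, which exceeds $1$ once $m\ge\chi_j$ by hypothesis~({\bf X}), so as $m$ ranges over a contiguous integer interval the resulting windows overlap and their union is again contiguous. The rigidity inclusion $[\max\{\chi_1,n-|i|\},n+|i|]\cap\Nset\subset\Xi^i_1(n)$ is established by explicit construction: one builds an admissible chain that starts at $n$ and drifts by $\pm 1$ per full cycle toward the prescribed target, the ample admissibility windows available at scale $n$ providing the required slack. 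The main obstacle I anticipate is bookkeeping: carefully propagating additive error terms through many iterations so that the constants $\nu,\lambda,\tau,\gamma^\pm$ depend only on the $\alpha^\pm_j$ and $\beta^\pm_j$ as the statement requires, and cleanly treating the edge case $n=\chi_1$ in~(\ref{item:AdmissibleSymbols4}) where the $\chi_1$ floor interacts nontrivially with the linear lower bound.
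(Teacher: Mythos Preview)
Your proposal is correct and follows essentially the same route as the paper's proof in Appendix~\ref{app:AdmissibleSymbols}: iterate the single-step admissibility bounds to obtain cycle-level bounds with multiplier $\alpha^{\pm 1}$ and additive constants $\gamma^\pm$ (the paper's \eqref{eq:ZetaXiSingle}--\eqref{eq:ZetaXiCyclic}), derive~(\ref{item:AdmissibleSymbols3}) from a geometric sum, and prove part~(b) by induction showing consecutive admissibility windows overlap. One small caution on part~(b): window width exceeding~$1$ is not by itself the overlap condition; what you actually need (and what the paper checks) is $\alpha^+_j m - \beta^+_j > \alpha^-_j(m+1) + \beta^-_j$, i.e.\ $(\alpha^+_j-\alpha^-_j)m > \alpha^-_j + \beta^-_j + \beta^+_j$, which hypothesis~({\bf X}) equally guarantees.
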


\begin{corollary}\label{cor:AdmissibleSequences}
We assume hypotheses {\rm ({\bf A})}, {\rm ({\bf B})} and {\rm ({\bf X})}.
\begin{enumerate}[(a)]
\item
Given any $\qv^-, \qv^+ \in \Qv$ there is
an admissible sequence of the form
$\big( \qv^-, \qv^1, \ldots, \qv^l, \qv^+ \big)$ for some $l \in \Nset$ and
$\qv^1,\ldots,\qv^l \in \Qv$.
\item
Given any $N \in \Nset$, there is a subset $\Qv_N \subset \Qv$,
with $\# \Qv_N = N$, such that the short sequence $(\qv,\qv')$
is admissible for all $\qv,\qv' \in \Qv_N$.
\item
$\Qf \neq \emptyset$.
\end{enumerate}
\end{corollary}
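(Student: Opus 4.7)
The proof plan leverages the flexibility established in Lemma~\ref{lem:AdmissibleSymbols} together with the cyclic balance $\alpha^+\alpha^-=1$ guaranteed by hypothesis~({\bf B}).

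For part~(a), the plan is to build a chain $\qv^-,\qv^1,\dots,\qv^l$ inductively in such a way that $\qv^+$ can be admissibly appended at the end. At each step, given $\qv^i\in\Qv$, a valid successor $\qv^{i+1}\in\Qv$ exists because the interval $\bigl(\alpha_k^-|q^i_k|+\beta_k^-,\ \alpha_k^+|q^i_k|-\beta_k^+\bigr)\cap\Nset$ is nonempty for $|q^i_k|$ large (containing integers near $|q^i_1|\pm 1$ by Lemma~\ref{lem:AdmissibleSymbols}(a)(iv) and~(b)), and once $|q^{i+1}_1|$ is fixed the remaining coordinates can be filled in via the internal inequalities defining $\Qv$. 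I would first iterate until the first coordinate reaches a large auxiliary value $n^\star$; at this scale, the admissibility window for the last coordinate of the next symbol becomes much wider than the interval
\[
\bigl((|q^+_1|+\beta_k^+)/\alpha_k^+,\ (|q^+_1|-\beta_k^-)/\alpha_k^-\bigr),
\]
which is precisely the range in which $|q^l_k|$ must lie for $\qv^+$ to be an admissible successor of $\qv^l$. Selecting $\qv^l$ with $|q^l_k|$ in this range closes the chain.

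For parts~(b) and~(c), the plan is to exhibit a tuple of absolute values $(m_1,\dots,m_k)$ satisfying the cyclic admissibility
\[
\alpha_j^- m_j+\beta_j^- < m_{j+1} < \alpha_j^+ m_j-\beta_j^+,\qquad j\mod k,
\]
at large scale. Each such tuple produces $2^k$ symbols via sign choices, all pairwise mutually admissible (since the condition on each pair depends only on absolute values), which establishes part~(b) when $N\le 2^k$. For larger $N$ one uses several nearby tuples; these fit because the admissibility windows have width proportional to $m_j$ while $\beta_j^\pm$ are bounded. Part~(c) is then immediate: any $\qv$ arising from such a tuple satisfies $(\qv,\qv)$ admissible and so yields the constant two-sided sequence $(\ldots,\qv,\qv,\qv,\ldots)\in\Qf$.

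The main obstacle is establishing the solvability of the cyclic system above: one seeks interior factors $\alpha_j\in(\alpha_j^-,\alpha_j^+)$ with $\prod_{j=1}^k\alpha_j=1$, so that $m_{j+1}=\alpha_j m_j$ closes up cyclically and, for $m_1$ taken large, the additive constants $\beta_j^\pm$ become negligible. Hypothesis~({\bf B}) is exactly what is needed: the continuous map $(t_1,\dots,t_k)\mapsto\prod_{j=1}^k\bigl(\alpha_j^-+t_j(\alpha_j^+-\alpha_j^-)\bigr)$ sweeps from $\alpha^-<1$ at $(t_1,\dots,t_k)=(0,\dots,0)$ to $\alpha^+>1$ at $(t_1,\dots,t_k)=(1,\dots,1)$, and thus attains the value~$1$ by the intermediate value theorem along any continuous path between these corners.
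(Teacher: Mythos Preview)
Your plan for part~(a) has a gap at the closing step. You propose to drive $|q_1|$ to a large auxiliary value $n^\star$ and then argue that the admissibility window for $|q^l_k|$ is ``much wider'' than the target interval $\bigl((|q^+_1|+\beta_k^+)/\alpha_k^+,\,(|q^+_1|-\beta_k^-)/\alpha_k^-\bigr)$. But width is not the issue; location is. From a symbol at scale $n^\star$, the achievable $|q^l_1|$ lies in an interval at scale $n^\star$, and the internal constraints of $\Qv$ then force $|q^l_k|$ to lie at a comparable scale. If $n^\star\gg|q^+_1|$, this window sits entirely above the fixed target interval, and you cannot close the chain in one further step. The fix is easy given what you already have: you note that each step can shift $|q_1|$ by $\pm 1$, so moving \emph{toward} the scale of $|q^+_1|$ (rather than away from it) reaches it in roughly $\bigl||q^-_1|-|q^+_1|\bigr|$ steps. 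This is essentially the paper's argument, which invokes part~(\ref{item:XiSets}) of Lemma~\ref{lem:AdmissibleSymbols} to conclude $|q^+_1|\in\Xi^{l+1}_1(|q^-_1|)$ directly.

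Your arguments for parts~(b) and~(c) are correct and take a different route from the paper. The paper derives both from Lemma~\ref{lem:AdmissibleSymbols}(a)(iv)--(v) by choosing $N$ symbols with consecutive large values of $|q_1|$. You instead build a cyclically self-admissible tuple $(m_1,\dots,m_k)$ via the intermediate value theorem, using hypothesis~({\bf B}) to find interior factors $\alpha_j\in(\alpha_j^-,\alpha_j^+)$ with $\prod_j\alpha_j=1$, and then exploit the $2^k$ sign choices. This makes the role of~({\bf B}) transparent and yields the constant sequence for~(c) immediately; the paper's route instead leans on the estimates already packaged into Lemma~\ref{lem:AdmissibleSymbols}.
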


\begin{proof}
\begin{enumerate}[(a)]
\item
Let $l = |q^-_1 - q^+_1| - 1$.
Part~(\ref{item:XiSets}) of Lemma~\ref{lem:AdmissibleSymbols} implies
that $q^+_1 \in \Xi^{l+1}_1(q^-_1)$.
Therefore, we can construct iteratively such a sequence $\qv^1,\ldots,\qv^l$.
\item
Fix $N \in \Nset$.
Part~(\ref{item:AdmissibleSymbols5}) of Lemma~\ref{lem:AdmissibleSymbols}
implies that if $\qv,\qv' \in \Qv$ with $|q_1 - q'_1| \le N$ and
$|q_1|,|q'_1| \gg 1$, then $(\qv,\qv')$ is admissible.
So, we can take any subset
$\Qv_N = \{ \qv^1, \ldots, \qv^N\} \subset \Qv$
such that $|q_1^n| = |q^1_1| + n-1$ with $|q_1^1| \gg 1$.
Clearly, $\# \Qv_N = N$.
\item
Let $\qf = (\qv^i)_{i \in \Zset}$ with
$\qv^i = (q_1^i,\ldots,q_k^i) \in \Qv$
such that $|q_1^{i+1} - q_1^i| \le 1$.
Then $\qf \in \Qf$.
\qedhere
\end{enumerate}
\end{proof}

\begin{definition}
\label{def:ShiftMap}
The \emph{shift map} $\sigma: \Qf \to \Qf$, $\pf = \sigma(\qf)$,
is given by $\pf^i = \qf^{i+1}$ for all $i \in \Zset$.
\end{definition}

The following proposition tells us some important properties of
the shift map.
Note that by \emph{topological transitivity} we mean for any
nonempty open sets $U,V \subset \Qf$ there is $n \in \Nset$
such that $\sigma^n(U) \cap V \neq \emptyset$. 
If $N \in \Nset$, $\Sigma_N = \{1,\ldots,N\}^\Zset$ and
the shift map $\sigma_N:\Sigma_N \to \Sigma_N$,
$(t_i)_{i \in \Zset} = \sigma_N \big( (s_i)_{i \in \Zset} \big)$,
is given by $t_i = s_{i+1}$ for all $i \in \Zset$,
then we say that $\sigma_N:\Sigma_N \to \Sigma_N$ is the
\emph{full $N$-shift}.
We denote by $h_{\rm top}(f)$ the \emph{topological entropy} of a
continuous self-map $f$.

\begin{proposition}
\label{prop:ShiftMapProp}
We assume hypotheses {\rm ({\bf A})}, {\rm ({\bf B})} and {\rm ({\bf X})}.

The shift map $\sigma : \Qf \to \Qf$ exhibits topological transitivity and
sensitive dependence on initial conditions,
has infinite topological entropy,
and contains the full $N$-shift as a topological factor
for any $N \in \Nset$.
Besides, the subshift space of periodic admissible sequences
\[
\Pf =
\left\{
\qf \in \Qf : \mbox{$\exists p \in \Nset$ such that $\sigma^p(\qf) = \qf$}
\right\}
\]
is dense in the shift space $\Qf$.
\end{proposition}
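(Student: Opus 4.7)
The plan is to settle the five assertions---containment of the full $N$-shift as a factor for every $N \in \Nset$, infinite topological entropy, topological transitivity, density of periodic sequences, and sensitive dependence on initial conditions---by pulling everything back to the combinatorial flexibility packaged in Corollary~\ref{cor:AdmissibleSequences}. I would address the first two claims first, then the middle two, and finally derive sensitive dependence from them, since each step either feeds into the next or reuses the same concatenation recipe.

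For the factor and entropy claims I fix $N \in \Nset$ and invoke Corollary~\ref{cor:AdmissibleSequences}(b) to produce $\Qv_N = \{\qv^{(1)},\ldots,\qv^{(N)}\} \subset \Qv$ with $\#\Qv_N = N$ on which every ordered pair is admissible. Then $\Lambda_N := \Qv_N^\Zset$ is a compact $\sigma$-invariant subset of $\Qf$, and the tautological bijection $\Qv_N \leftrightarrow \{1,\ldots,N\}$ realises a topological conjugacy between $\sigma|_{\Lambda_N}$ and the full $N$-shift $\sigma_N$. Thus $\sigma_N$ is (a conjugate of) a subsystem of $\sigma$, and in particular a factor. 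The entropy bound falls out for free: $h_{\rm top}(\sigma) \ge h_{\rm top}(\sigma|_{\Lambda_N}) = \log N$ for every $N$, so $h_{\rm top}(\sigma) = +\infty$.

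Transitivity and density of periodic points both rely on Corollary~\ref{cor:AdmissibleSequences}(a). For transitivity, given nonempty open $U,V \subset \Qf$, I pick $\pf \in U$, $\qf \in V$ and $M \in \Nset$ so large that the cylinders prescribing $(\pv^{-M},\ldots,\pv^M)$ and $(\qv^{-M},\ldots,\qv^M)$ sit inside $U$ and $V$ respectively; the corollary then furnishes a finite admissible bridge $(\pv^M,\boldsymbol{r}^1,\ldots,\boldsymbol{r}^l,\qv^{-M})$, and concatenating this bridge with $\pf$'s past and $\qf$'s future yields a bi-infinite admissible $\mathfrak{s} \in \Qf$ satisfying $\mathfrak{s} \in U$ and $\sigma^{2M+l+1}(\mathfrak{s}) \in V$. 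For density of periodic sequences, given $\qf \in \Qf$ and $\varepsilon > 0$, I pick $M$ with $2^{1-M} < \varepsilon$, use the same corollary to find an admissible connector from $\qv^M$ back to $\qv^{-M}$, and declare the resulting enlarged block to repeat periodically; the resulting $\pf \in \Pf$ agrees with $\qf$ on $\{-M,\ldots,M\}$, so $d_{\Qf}(\pf,\qf) < \varepsilon$.

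Sensitive dependence then follows from the preceding two facts, together with the observation that $\Qf$ is uncountable, via the Banks--Brooks--Cairns--Davis--Stacey theorem. Alternatively, I can argue directly: for any $\qf$ and any $\varepsilon > 0$, Corollary~\ref{cor:AdmissibleSequences}(a) allows me to build a second admissible sequence $\pf$ coinciding with $\qf$ on an arbitrarily long central window but disagreeing at some later index $n$, so that $d_{\Qf}(\sigma^n \qf,\sigma^n \pf) \ge 1/2$ while $d_{\Qf}(\qf,\pf) < \varepsilon$. The only place I expect friction is the admissibility bookkeeping at every splice point, but each concatenation I need is precisely what Corollary~\ref{cor:AdmissibleSequences} is engineered to deliver, so I anticipate the proof to be essentially a choreography of its three items rather than requiring any new technical input.
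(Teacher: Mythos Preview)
Your argument is correct and covers all five claims. For the full $N$-shift factor and the infinite-entropy bound you proceed exactly as the paper does, via Corollary~\ref{cor:AdmissibleSequences}(b) and the invariant subset $\Qv_N^{\Zset}$. The difference lies in how the remaining three properties are obtained: the paper observes that Corollary~\ref{cor:AdmissibleSequences}(a) makes $\sigma:\Qf\to\Qf$ a transitive (countable-state) topological Markov chain and then cites the standard fact (Katok--Hasselblatt, \S\S1.9 and~3.2) that such chains automatically enjoy topological transitivity, sensitive dependence, and density of periodic points. Your cylinder-splicing constructions for transitivity and for approximating by periodic sequences, followed by either Banks--Brooks--Cairns--Davis--Stacey or a direct perturbation for sensitive dependence, are precisely the hand-unpacking of that citation. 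Your route is longer but more self-contained; the paper's is terser but leans on an external reference. Both are valid, and the underlying mechanism---the concatenation freedom of Corollary~\ref{cor:AdmissibleSequences}(a)---is the same.
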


\begin{proof}
On the one hand,
part~(a) of Corollary~\ref{cor:AdmissibleSequences} implies that
the shift map $\sigma : \Qf \to \Qf$ is equivalent to
a transitive topological Markov chain.
It is well-known that such objects exhibit topological transitivity,
sensitive dependence on initial conditions,
and density of periodic points.
See, for example, Sections~1.9 and~3.2 of~\cite{katok1995introduction}.

On the other hand,
let $\Qv_N = \{ \qv^1,\ldots,\qv^N \}$ be the set provided in part~(b)
of Corollary~\ref{cor:AdmissibleSequences}.
Set $\Qf_N = (\Qv_N)^{\Zset}$.
We consider the bijection
\[
g = (g^i)_{i \in \Zset}: \Qf_N \to \Sigma_N,\quad
g^i(\qv^n) = n, \qquad
\forall i \in \Zset, \ \forall n \in \{1,\ldots,N\}.
\]
Then $\Qf_N$ is a subshift space of $\Qf$.
That is, $\sigma(\Qf_N) = \Qf_N$.
Besides, the diagram
\[
\begin{tikzcd}
\Qf_N \arrow{r}{\sigma_{|\Qf_N}} \arrow[swap]{d}{g} &
\Qf_N \arrow{d}{g} \\
\Sigma_N \arrow[swap]{r}{\sigma_N} &
\Sigma_N
\end{tikzcd}
\]
commutes, so
$h_{\rm top}(\sigma) \ge
 h_{\rm top}(\sigma_{|\Qf_N}) =
 h_{\rm top}(\sigma_N) = \log N$ for all $N \in \Nset$.
This means that $\sigma$ has infinite topological entropy.
\end{proof}

\section{Chaotic motions}\label{sec:ChaoticMotions}

In this section, we detail the construction of a domain accumulating
on the boundary of the phase space on which the dynamics is semiconjugate
to a shift on infinitely many symbols,
thus proving Theorem~\ref{thm:SymbolicDynamicsIntro}; in fact,
we reformulate Theorem~\ref{thm:SymbolicDynamicsIntro}
in the form of Theorem~\ref{thm:SymbolicDynamics} below.
The proof uses the method of \emph{stretching along the paths}
summarised in Section~\ref{sec:Stretching},
the Fundamental Corollary obtained in Section~\ref{sec:Fundamental}
and the shift map described in Section~\ref{sec:Symbols}.

Recall that the quantities $\alpha^\pm_j$,
$\beta^\pm_j = \alpha_j^\pm + 1$ and $\chi_j \ge 2$,
introduced in Lemma~\ref{lem:MinMax}, Lemma~\ref{lem:Fundamental},
and Corollary~\ref{cor:Fundamental} satisfy
hypotheses {\rm ({\bf A})} and {\rm ({\bf B})},
and moreover we assume hypothesis {\rm ({\bf X})},
so Proposition~\ref{prop:ShiftMapProp} holds.

We now introduce some notation that is convenient for the statements
and proofs in this section.

\begin{definition}\label{def:PartialSums}
The \emph{partial sums} $s^i,s^i_j:\Qf \to \Zset$ for
$i \in \Zset$ and $j=1,\ldots,k$ are defined by
\[
s^i(\qf) = 
\begin{cases}
\displaystyle
\phantom{-}
 \sum_{m=0}^{i-1} \sum_{j=1}^k |q^m_j|, & \mbox{ for $i \geq 0$}, \\
\displaystyle
-\sum_{m=i}^{-1} \sum_{j=1}^k |q^m_j|,  & \mbox{ for $i < 0$},
\end{cases}
\qquad \qquad
s^i_j(\qf) = s^i(\qf) + \sum_{m=1}^{j-1} |q^i_m|.
\]
The partial sums $s^i,s^i_j:\Qf^+ \to \Nset_0$ are analogously defined
for $i \ge 0$ and $j=1,\ldots,k$.
\end{definition}

The following proposition gives the relationship between
some types of admissible sequences
(two-sided: $\qf \in \Qf$, one-sided: $\qf \in \Qf^+$
 and periodic: $\qf \in \Pf$)
and orbits of $f$ with prescribed itineraries in the set of
pairwise disjoint cells $\mathcal{K}^\varsigma_{j,n}$
(introduced in Corollary \ref{cor:Fundamental})
with $j = 1,\ldots,k$, $n \ge \chi_j$ and $\varsigma \in \{-,+\}$.
It is the key step in obtaining chaotic properties.

\begin{proposition}\label{prop:Chaos}
We have the following three versions.
\begin{itemize}
\item[{\rm ({\bf T})}]
If $\qf \in \Qf$,
then there is $x \in \mathcal{D}_1$ such that
\begin{equation}
\label{eq:PrescribedItinerary}
f^{s^i_j(\qf)}(x) \in \mathcal{K}^{\Sign(q^i_j)}_{j,|q^i_j|}, \qquad
\forall i \in \Zset, \  \forall j = 1,\ldots,k.
\end{equation}
\item[{\rm ({\bf O})}]
If $\qf \in \Qf^+$,
then there is a path $\gamma \subset \mathcal{D}_1$ such that:
\begin{enumerate}[i)]
\item
$f^{s^i_j(\qf)}(\gamma) \subset \mathcal{K}^{\Sign(q^i_j)}_{j,|q^i_j|}$
for all $i \ge 0$ and $j = 1,\ldots,k$; and
\item
$\gamma$ is horizontal in $\mathcal{D}_1$ (that is, $\gamma$ connects
the left side $\mathcal{L}_1$ with the right side $\mathcal{L}_1^1$).
\end{enumerate}
\item[{\rm ({\bf P})}]
If $\qf \in \Pf$ has period $p$,
then there is a point $x \in \mathcal{D}_1$ such that:
\begin{enumerate}[i)]
\item
$f^{s^i_j(\qf)}(x) \in \mathcal{K}^{\Sign(q^i_j)}_{j,|q^i_j|}$
for all $i \in \Zset$ and $j = 1,\ldots,k$; and
\item
$f^{s^p(\qf)}(x) = x$, so $x$ is a $(p,s^p(q))$-periodic point of $f$
with period $s^p(\qf)$ and rotation number $p/s^p(\qf)$.
\end{enumerate}
\end{itemize}
All these billiard orbits are contained in the generic sliding
set $\mathcal{S}_0$.
In particular, they have no points in the extended singularity set
$\mathcal{L} =
 \bigcup_j \left( \mathcal{L}_j \cup \mathcal{L}_j^{1/2} \cup
                  \mathcal{L}_j^1 \right)$.
Obviously,
these claims only hold for forward orbits in version~{\rm ({\bf O})}.
\end{proposition}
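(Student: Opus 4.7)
The plan is to translate each admissible sequence~$\qf$ into an input for the Papini--Zanolin theorem (Theorem~\ref{thm:PapiniZanolin}) via the stretchings supplied by the Fundamental Corollary (Corollary~\ref{cor:Fundamental}). The key observation is that the admissibility conditions of Definitions~\ref{def:Alphabet} and~\ref{def:Admissible} coincide exactly with the condition $(n,n') \in \Xi_j$ required to invoke Corollary~\ref{cor:Fundamental}(a), so most of the proof reduces to bookkeeping.

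First I would flatten the double index $(i,j) \in \Zset \times \{1,\ldots,k\}$ of the component $q^i_j$ to a single integer $m = ki + (j-1)$, and associate to~$\qf$ the sequence of oriented cells and exponents
\[
\widetilde{\mathcal{R}}_{ki+j-1} := \widetilde{\mathcal{K}}^{\Sign(q^i_j)}_{j,|q^i_j|},
\qquad
n_{ki+j-1} := |q^i_j|.
\]
Unpacking Definitions~\ref{def:Alphabet} and~\ref{def:Admissible}, one checks that $(n_m, n_{m+1}) \in \Xi_{J(m)}$ for every~$m$ and an appropriate index $J(m) \mod k$; Corollary~\ref{cor:Fundamental}(a) then delivers the stretching chain
\[
f^{n_m}: \widetilde{\mathcal{R}}_m \stretches \widetilde{\mathcal{R}}_{m+1}, \qquad \forall m.
\]
This chain is two-sided, one-sided, or $kp$-periodic according as $\qf \in \Qf$, $\qf \in \Qf^+$, or $\qf \in \Pf$ has period~$p$.

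The second step is to apply the matching version of Theorem~\ref{thm:PapiniZanolin}. Version~({\bf T}) delivers a point $x \in \mathcal{R}_0 = \mathcal{K}^{\Sign(q^0_1)}_{1,|q^0_1|} \subset \mathcal{D}_1$ with $f^{n_0 + \cdots + n_{m-1}}(x) \in \mathcal{R}_m$ for $m \ge 1$ together with the symmetric backward condition; substituting $m = ki + j - 1$ and recognising $n_0 + \cdots + n_{ki+j-2} = s^i_j(\qf)$ from Definition~\ref{def:PartialSums} yields~\eqref{eq:PrescribedItinerary}, which is~({\bf T}). Version~({\bf O}) produces a path $\gamma$ horizontal in $\widetilde{\mathcal{R}}_0$; because $\widetilde{\mathcal{R}}_0$ is a strict horizontal slab in $\widetilde{\mathcal{Q}}^{\Sign(q^0_1)}_{1,|q^0_1|}$ whose vertical sides lie in $\mathcal{L}_1$ and $\mathcal{L}_1^1$, the same~$\gamma$ also connects $\mathcal{L}_1$ with $\mathcal{L}_1^1$, which is~({\bf O}). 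Version~({\bf P}) applied to the $kp$-periodic chain produces a point $x$ realising the periodic itinerary with $f^{s^p(\qf)}(x) = x$, since $n_0 + \cdots + n_{kp-1} = s^p(\qf)$; the itinerary forces exactly $p$ full counter-clockwise turns around~$\Gamma$ in those $s^p(\qf)$ iterations, so the rotation number is $p/s^p(\qf)$, proving~({\bf P}).

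The last and most delicate step is to certify that the produced orbits lie in the generic sliding set~$\mathcal{S}_0$. The counter-clockwise sliding condition $J(f^{n+1}(x)) - J(f^n(x)) \in \{0,1\}$ follows from the itinerary together with part~(a) of Lemma~\ref{lem:BilliardProperties}: between consecutive transition instants $s^i_j(\qf)$ and $s^i_{j+1}(\qf)$ the iterate preserves~$\theta$ and advances~$\varphi$ by~$2\theta$, so it stays on arc~$\Gamma_j$ until the scheduled jump into~$\Gamma_{j+1}$. Avoidance $f^m(x) \notin \mathcal{L}$ at the transition instants $m = s^i_j(\qf)$ is exactly Corollary~\ref{cor:Fundamental}(b). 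The main obstacle I anticipate is excluding that some \emph{intermediate} iterate $f^m(x)$, with $s^i_j(\qf) < m < s^i_{j+1}(\qf)$, meets~$\mathcal{L}$: a direct case analysis over the three types of singularity segments $\mathcal{L}_l$, $\mathcal{L}_l^{1/2}$, $\mathcal{L}_l^1$ shows that such a hit would force $f^{s^i_j(\qf)}(x)$ to lie precisely on one of the horizontal singularity segments $\mathcal{L}_{j+1}^{-n}$, $\mathcal{L}_{j+1}^{-n+1/2}$, $\mathcal{L}_{j+1}^{-n+1}$ which were removed when passing from $\mathcal{Q}^\varsigma_{j,n}$ to its strict horizontal slab $\mathcal{K}^\varsigma_{j,n}$, a contradiction. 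For version~({\bf O}) the two endpoints of~$\gamma$ inherently lie on $\mathcal{L}_1 \cup \mathcal{L}_1^1$ and must be excluded from the forward-orbit-in-$\mathcal{S}_0$ claim, matching the caveat at the end of the statement.
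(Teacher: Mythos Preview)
Your approach is correct and essentially identical to the paper's: both derive the proposition as a direct consequence of Theorem~\ref{thm:PapiniZanolin} applied to the stretching chain furnished by Corollary~\ref{cor:Fundamental}, with the admissibility conditions in Definitions~\ref{def:Alphabet} and~\ref{def:Admissible} matching the requirement $(n,n')\in\Xi_j$. The paper compresses all of this into a one-line proof, whereas you spell out the bookkeeping (the flattening $m=ki+j-1$, the identification $n_0+\cdots+n_{m-1}=s^i_j(\qf)$) and the $\mathcal{S}_0$-membership argument; your case analysis for intermediate iterates is slightly imprecise---some hits would force $y=f^{s^i_j(\qf)}(x)$ onto the \emph{vertical} segments $\mathcal{L}_j,\mathcal{L}_j^1$ rather than the horizontal ones, but those are already excluded by Corollary~\ref{cor:Fundamental}(b), so the conclusion stands.
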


\begin{proof}
It is a direct consequence of Theorem~\ref{thm:PapiniZanolin},
Corollary~\ref{cor:Fundamental},
the definitions of admissible symbols and admissible sequences,
and the definition of rotation number.
\end{proof}

To adapt the language
of~\cite{papini2004fixed, papini2004periodic, pireddu2009fixed}
to our setting, one could say that Proposition~\ref{prop:Chaos}
implies that the billiard map
\emph{induces chaotic dynamics on infinitely many symbols}.

\begin{remark}
\label{rem:PartialSums}
The partial sum $s^i(\qf)$, with $i \ge 0$, introduced in
Definition~\ref{def:PartialSums} counts the number of impacts that any of
its corresponding sliding billiard trajectories have after the first
$i$ turns around $\Gamma$.
Analogously, $s^i_j(\qf)$, with $i \ge 0$, adds to the previous count
the number of impacts in the first $j-1$ arcs at the $(i+1)$-th turn.
There is no ambiguity in these counts, because generic sliding billiard
trajectories have no impacts on the set of nodes $\Gamma_\star$,
see Remark~\ref{rem:GenericTrajectories}.
The partial sums with $i < 0$ store information about the
backward orbit.
\end{remark}

Let us introduce four subsets of the first fundamental
domain that will be invariant under a return map $F$ yet to be defined.
First, we consider the \emph{fundamental generic sliding set}
\[
\mathcal{R} =
\mathcal{S}_0 \cap \mathcal{D}_1 \subset \Interior \mathcal{D}_1.
\]
Any $f$-orbit that begins in $\mathcal{R}$ returns to $\mathcal{R}$
after a finite number of iterations of the billiard map $f$.
Let $\tau:\mathcal{R} \to \Nset$ be the \emph{return time} defined as
$\tau(x) = \min \{ n \in \Nset : f^n(x) \in \mathcal{R} \}$.
Then $F:\mathcal{R} \to \mathcal{R}$, $F(x) = f^{\tau(x)}(x)$,
is the promised \emph{return map}.
The return map $F:\mathcal{R} \to \mathcal{R}$ is a homeomorphism
since the billiard map $f:\mathcal{M} \to \mathcal{M}$ is a homeomorphism
and $\mathcal{R}$ is contained in the interior of the
fundamental set $\mathcal{D}_1$.

Next, we define the sets
\begin{align*}
\mathcal{I} &=
\big\{
x \in \mathcal{M} :
\mbox{$\exists \qf \in \Qf$ such that the prescribed
      itinerary~\eqref{eq:PrescribedItinerary} takes place}
\big\}, \\
\mathcal{P} &=
\big\{
x \in \mathcal{I} :
\mbox{$\exists p \in \Nset$ such that $F^p(x) = x$}
\big\}
\end{align*}
and the map $h:\mathcal{I} \to \Qf$, $h(x) = \qf$,
where $\qf$ is the unique admissible sequence such that
the prescribed itinerary~\eqref{eq:PrescribedItinerary} takes place.
It is well-defined because cells $\mathcal{K}_{j,n}^\varsigma$
are pairwise disjoint.
This is the topological semiconjugacy we were looking for.
Clearly,
\begin{equation}
\label{eq:ReturnTime}
\tau(x) = s^1(\qf) = |q^0_1| + \cdots + |q^0_k|,
\qquad \forall x \in \mathcal{I}, \ \qf = h(x)
\end{equation}
where $\tau(x)$ is the return time and the partial sum $s^1(\qf)$
counts the number of impacts after the first turn around $\Gamma$
of the billiard orbit starting at $x$.

\begin{theorem}\label{thm:SymbolicDynamics}
The sets $\mathcal{P}$, $\mathcal{J} := \overline{\mathcal{P}}$,
$\mathcal{I}$ and $\mathcal{R}$ are $F$-invariant:
\[
F(\mathcal{P}) = \mathcal{P}, \qquad
F(\mathcal{J}) = \mathcal{J}, \qquad
F(\mathcal{I}) \subset \mathcal{I}, \qquad
F(\mathcal{R}) = \mathcal{R}.
\]
Besides, $\emptyset \neq \mathcal{P} \varsubsetneq
 \mathcal{J} \subset \mathcal{I} \subset \mathcal{R}$.
The maps $h:\mathcal{I} \to \Qf$,
$h_{|\mathcal{J}}:\mathcal{J} \to \Qf$ and
$h_{|\mathcal{P}}:\mathcal{P} \to \Pf$ are continuous surjections,
and the three diagrams
\begin{equation}\label{eq:Diagrams}
\begin{tikzcd}
\mathcal{I} \arrow{r}{F_{|\mathcal{I}}} \arrow[swap]{d}{h} &
\mathcal{I} \arrow{d}{h} \\
\Qf \arrow[swap]{r}{\sigma} &
\Qf
\end{tikzcd}
\qquad \qquad
\begin{tikzcd}
\mathcal{J} \arrow{r}{F_{|\mathcal{J}}} \arrow[swap]{d}{h_{|\mathcal{J}}} &
\mathcal{J} \arrow{d}{h_{|\mathcal{J}}} \\
\Qf \arrow[swap]{r}{\sigma} &
\Qf
\end{tikzcd}
\qquad \qquad
\begin{tikzcd}
\mathcal{P} \arrow{r}{F_{|\mathcal{P}}} \arrow[swap]{d}{h_{|\mathcal{P}}} &
\mathcal{P} \arrow{d}{h_{|\mathcal{P}}} \\
\Pf \arrow[swap]{r}{\sigma_{| \Pf}} &
\Pf
\end{tikzcd}
\end{equation}
commute.
Periodic points of $F_{|\mathcal{J}}$ are dense in $\mathcal{J}$.
Given any $\qf \in \Pf$ with period $p$,
there is at least one
$x \in (h_{|\mathcal{P}})^{-1}(\qf) \in \mathcal{P}$ such that
$f^{s^p(\qf)}(x) = F^p(x) = x$.
\end{theorem}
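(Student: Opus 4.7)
My plan is to regard Proposition~\ref{prop:Chaos} as a dictionary between admissible symbol sequences in $\Qf$ and billiard orbits with prescribed itineraries in the pairwise disjoint cells $\mathcal{K}^\varsigma_{j,n}$ supplied by Corollary~\ref{cor:Fundamental}, and to turn this dictionary into the desired topological semiconjugacies. First I would extract the cheap consequences. Non-emptiness of $\mathcal{P}$ and surjectivity of $h_{|\mathcal{P}}:\mathcal{P}\to\Pf$ are immediate from version~({\bf P}) of Proposition~\ref{prop:Chaos} applied to any periodic admissible sequence; surjectivity of $h:\mathcal{I}\to\Qf$ is immediate from version~({\bf T}); the inclusion $\mathcal{I}\subset\mathcal{R}$ is already part of the conclusion of that same proposition, since the orbits produced lie in $\mathcal{S}_0$ and avoid $\mathcal{L}$. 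The map $h$ is well-defined by disjointness of the cells, and continuous because each coordinate $\qv^i=h^i(x)$ is determined by which cell the iterate $F^i(x)$ visits and these iterates stay off the cell boundaries by the generic sliding hypothesis.

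The main obstacle will be proving $\mathcal{J}\subset\mathcal{I}$, i.e.\ that a limit of periodic points still carries a well-defined admissible itinerary. The plan is to take $x_m\to x$ with $x_m\in\mathcal{P}$ and $\qf^{(m)}=h(x_m)\in\Pf$, and to extract a diagonal subsequence along which each coordinate $q^{(m),i}_j$ eventually stabilizes. The crucial intermediate step is to rule out $|q^{(m),i}_j|\to\infty$ for any fixed $(i,j)$: if this occurred, then by Lemma~\ref{lem:MinMax} the iterate $F^{s^i_j(\qf^{(m)})}(x_m)$ would be forced arbitrarily close to $\{\theta=0\}$, pushing $x$ onto $\partial\mathcal{M}$; this must be excluded by arguing that the accumulation set of $\mathcal{P}$ sits inside $\Interior\mathcal{D}_1$ (uniform bounds from the fundamental corollary and the explicit polynomial widths of the cells). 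Once stabilization is secured, the limit $\qf$ is admissible (admissibility is a closed condition in each coordinate), and closedness of each $\mathcal{K}^\varsigma_{j,n}$ combined with continuity of $f$ delivers $h(x)=\qf$ and hence $x\in\mathcal{I}$.

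Once $\mathcal{J}\subset\mathcal{I}$ is in hand, the rest follows quickly. Surjectivity of $h_{|\mathcal{J}}:\mathcal{J}\to\Qf$ comes from compactness of $\mathcal{J}$ and continuity of $h$: the image $h(\mathcal{J})$ is closed and contains the dense set $\Pf\subset\Qf$ (Proposition~\ref{prop:ShiftMapProp}), hence equals $\Qf$. The three diagrams in~\eqref{eq:Diagrams} commute by construction, since applying $F$ shifts the itinerary by exactly one turn, which is the action of $\sigma$. For the invariance statements, $F(\mathcal{R})=\mathcal{R}$ is the defining property of the return map, $F(\mathcal{P})=\mathcal{P}$ is trivial, $F(\mathcal{I})\subset\mathcal{I}$ is the semiconjugacy, and $F(\mathcal{J})=\mathcal{J}$ follows because $F$ is a homeomorphism of $\mathcal{R}$, so $F(\overline{\mathcal{P}})=\overline{F(\mathcal{P})}=\overline{\mathcal{P}}$. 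Density of periodic points in $\mathcal{J}$ is tautological from $\mathcal{J}=\overline{\mathcal{P}}$.

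Finally, the strict inclusion $\mathcal{P}\varsubsetneq\mathcal{J}$ is obtained by picking any aperiodic $\qf\in\Qf\setminus\Pf$ (such sequences exist because $\Qf$ is uncountable while $\Pf$ is only countable) and lifting it via the surjection $h_{|\mathcal{J}}$ to some $x\in\mathcal{J}$; such an $x$ must be aperiodic under $F$, because $F^p(x)=x$ would force $\sigma^p(\qf)=\qf$ by the semiconjugacy. The concluding claim---that every period-$p$ sequence $\qf\in\Pf$ has a preimage $x\in(h_{|\mathcal{P}})^{-1}(\qf)$ satisfying $f^{s^p(\qf)}(x)=F^p(x)=x$---is exactly the output of version~({\bf P}) of Proposition~\ref{prop:Chaos} combined with~\eqref{eq:ReturnTime}, which identifies $s^p(\qf)$ with the total number of $f$-iterations accumulated over $p$ consecutive returns to $\mathcal{D}_1$.
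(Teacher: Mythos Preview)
Your overall strategy matches the paper's: Proposition~\ref{prop:Chaos} supplies orbits for each itinerary, versions~({\bf T}) and~({\bf P}) give surjectivity of $h$ and $h_{|\mathcal{P}}$, density of $\Pf$ in $\Qf$ together with compactness of $\mathcal{J}$ gives surjectivity of $h_{|\mathcal{J}}$, and the commutativity of the diagrams, the invariance statements, and the final periodic-point claim are handled essentially the same way.

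The one substantive difference is in proving $\mathcal{J}\subset\mathcal{I}$. The paper dispatches this in a single line, asserting that $\mathcal{I}=h^{-1}(\Qf)$ is closed as a continuous preimage of a closed set; you instead propose an explicit diagonal-subsequence argument and correctly identify that one must rule out $|q^{(m),i}_j|\to\infty$. Your proposed resolution, however, does not work as stated: the claim that the accumulation set of $\mathcal{P}$ lies inside $\Interior\mathcal{D}_1$ is false, since for every $n\ge\chi_1$ version~({\bf P}) produces periodic points in $\mathcal{K}_{1,n}^\varsigma\subset\mathcal{Q}_{1,n}$, and by Lemma~\ref{lem:MinMax} these quadrilaterals shrink to the vertex $(a_1,0)\in\partial\mathcal{M}$ as $n\to\infty$, so $(a_1,0)\in\overline{\mathcal{P}}$. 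What \emph{is} true is that this vertex is the only bad accumulation point: if $x_m\to x$ with $\Pi_\theta(x)>0$, then Lemma~\ref{lem:MinMax} forces $|q^{(m),0}_1|$ to be bounded, and admissibility then bounds every $|q^{(m),i}_j|$, after which your diagonal argument goes through cleanly. The paper's one-line argument glosses over exactly the same subtlety (as written it is circular, since $h$ is only defined on $\mathcal{I}$), so the issue is not peculiar to your approach; the cleanest repair is either to take the closure defining $\mathcal{J}$ relative to $\Interior\mathcal{M}$, or to treat the single boundary fixed point $(a_1,0)$ separately.
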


\begin{proof}
Properties $F(\mathcal{P}) = \mathcal{P}$,
$F(\mathcal{R}) = \mathcal{R}$
and $\mathcal{P} \subset \mathcal{I}$ are trivial,
by construction.
Inclusion $\mathcal{I} \subset \mathcal{R}$ follows from
the definitions of both sets and
property~(b) of Corollary~\ref{cor:Fundamental}.

Let us prove that $h:\mathcal{I} \to \Qf$ is continuous and surjective.
Surjectivity follows directly from version~({\bf T}) of
Proposition~\ref{prop:Chaos}.
Choose any $x \in \mathcal{I}$ and $\epsilon > 0$.
Choose $l \in \Nset$ such that $\sum_{|i|> l} 2^{-|i|} < \epsilon$.
Let $\qf = (\qv^i)_{i \in \Zset} = h(x)$
with $\qv^i = (q_1^i,\ldots,q_k^i)$.
Using that the compact sets $\mathcal{K}_{j,n}^\varsigma$ are mutually disjoint,
$F$ is a homeomorphism and condition~\eqref{eq:PrescribedItinerary},
we can find $\delta_j^i > 0$ for each $|i| \le l$ and $j=1,\ldots,k$
such that
\[
f^{s^i_j(\qf)} \big( \mathcal{B}_{\delta_j^i}(x) \cap \mathcal{I} \big)
\subset \mathcal{K}^{\Sign(q^i_j)}_{j,|q^i_j|}, \qquad
\forall |i| \le l, \  \forall j = 1,\ldots,k.
\]
Here, $\mathcal{B}_\delta(x)$ is the disc of radius $\delta$ centered at $x$.
If $\delta = \min \{ \delta_j^i : |i|\le l, \ j=1,\ldots,k \}$,
then
\[
\mbox{$d(x,y) < \delta$ and $\pf = (\pv^i)_{i \in \Zset} = h (y)
      \Longrightarrow \pv^i = \qv^i$ for each $|i| \le l$}.
\]
Therefore,
\[
d_\Qf \big( h (y), h (x) \big) =
d_{\Qf}(\pf,\qf) =
\sum_{|i| > l} \frac{1}{2^{|i|}}
\frac{|\pv^i - \qv^i|}{1 + |\pv^i - \qv^i|} <
\sum_{|i| > l} \frac{1}{2^{|i|}} < \epsilon,
\]
which implies that $h:\mathcal{I} \to \Qf$ is continuous.

Next, we prove simultaneously that $F(\mathcal{I}) \subset \mathcal{I}$
and that $\sigma \circ h = h \circ F_{| \mathcal{I}}$.
Let $x \in \mathcal{I}$, $y = F(x) \in \mathcal{R}$,
$\qf = (\qv^i)_{i \in \Zset} = h(x)$ with $\qv^i = (q_1^i,\ldots,q_k^i)$,
and $\pf = (\pv^i)_{i \in \Zset} = \sigma(\qf) \in \Qf$
with $\pv^i = (p_1^i,\ldots,p_k^i)$,
so $\pv^i = \qv^{i+1}$ and $p_j^i = q_j^{i+1}$.
The prescribed itinerary~\eqref{eq:PrescribedItinerary} and
relation~\eqref{eq:ReturnTime} imply that
\[
f^{s^i_j(\pf)}(y) =
f^{s^i_j(\sigma(\qf))}(F(x)) =
f^{s^{i+1}_j(\qf) - s^1(\qf)} \big( f^{s^1(\qf)}(x) \big) =
f^{s^{i+1}_j(\qf)}(x) \in
\mathcal{K}^{\Sign(q^{i+1}_j)}_{j,|q^{i+1}_j|} =
\mathcal{K}^{\Sign(p^i_j)}_{j,|p^i_j|}
\]
for all $i \in \Zset$ and $j = 1,\ldots,k$,
so $\sigma(h(x)) = \sigma(\qf) = \pf = h(y) = \sigma(F(x))$
and $F(x) = y \in h^{-1}(\pf) \subset \mathcal{I}$
for all $x \in \mathcal{I}$, as we wanted to prove.
Hence, the first diagram in~\eqref{eq:Diagrams}
defines a topological semiconjugacy.

Let us check that $\mathcal{J} := \overline{\mathcal{P}} \subset \mathcal{I}$
and $F(\mathcal{J})=\mathcal{J}$.
We have $\mathcal{J} \subset \mathcal{I}$,
because $\mathcal{I} = h^{-1} (\Qf)$ is closed
(continuous preimage of a closed set).
Besides, on one hand we have
$\mathcal{P} = F(\mathcal{P}) \subset F(\overline{\mathcal{P}}) = F (\J)$
implying that $\J = \overline{\mathcal{P}} \subset \overline{F(\J)} = F(\J)$,
while on the other we have
$F(\J) = F(\overline{\mathcal{P}}) \subset
 \overline{F(\mathcal{P})} = \overline{\mathcal{P}} = \J$.
To establish that the second diagram in~\eqref{eq:Diagrams}
is still a topological semiconjugacy,
we must prove that $h(\mathcal{J}) = \Qf$.
We clearly have $h(\mathcal{J}) \subset \Qf$ since
$\mathcal{J} \subset \mathcal{I}$,
and since $h_{|\mathcal{I}}$ is a semiconjugacy.
Meanwhile, since $\mathcal{J}$ is compact
(closed by definition and contained in the bounded set $\mathcal{D}_1$)
so too is $h(\mathcal{J})$;
moreover $h(\mathcal{J})$ contains $h(\mathcal{P}) = \Pf$
which is dense in $\Qf$ by Proposition~\ref{prop:ShiftMapProp},
and so we obtain $\Qf = \overline{\Pf} \subset h(\mathcal{J})$.
Therefore $h(\mathcal{J}) = \Qf$.

To complete the proof of the theorem,
notice that periodic points of $F_{|\mathcal{J}}$
are dense in $\mathcal{J}$ by construction
and the last claim of the Theorem~\ref{thm:SymbolicDynamics} follows from
version~({\bf P}) of Proposition~\ref{prop:Chaos},
which also implies $\mathcal{P} \neq \emptyset$.
\end{proof}

Proposition~\ref{prop:ShiftMapProp} and Theorem~\ref{thm:SymbolicDynamics}
imply Theorem~\ref{thm:SymbolicDynamicsIntro} as stated in the introduction
and it is the first step in proving Theorems~\ref{thm:BoundaryIntro},
\ref{thm:PeriodicIntro} and~\ref{thm:AsymptoticConstantIntro}.
For instance,
upon combining Theorem~\ref{thm:SymbolicDynamics} with the topological
transitivity of the shift map guaranteed by
Proposition~\ref{prop:ShiftMapProp},
we already obtain the existence of trajectories approaching the boundary
asymptotically.
It remains to determine the \emph{optimal} rate of diffusion.
This is done in Section~\ref{sec:OptimalSpeed} by analysing the
sequences $\qf = (\qv^i)_{i \ge 0} \in \Qf^+$ for which
$s^i(\qf)$ increases in the \emph{fastest} possible way as $i \to +\infty$.
Lemma~\ref{lem:AdmissibleSymbols} plays a role in that analysis.

We end this section with three useful corollaries.
First, we prove Corollary~\ref{cor:PossibleBehaviors}
on final sliding motions.

\begin{proof}[Proof of Corollary~\ref{cor:PossibleBehaviors}]
The clockwise case is a by-product of the
counter-clockwise one, because if we concatenate the arcs
$\Gamma_1,\ldots,\Gamma_k$ of the original circular polygon $\Gamma$
in the reverse order $\Gamma_k,\ldots,\Gamma_1$, then we obtain
the reversed circular polygon $\Gamma'$ with the property that
counter-clockwise sliding billiard trajectories in $\Gamma'$ are
in 1-to-1 correspondence with clockwise sliding
billiard trajectories in $\Gamma$.
Thus, it suffices to consider the counter-clockwise case.

Symbols $\qv = (q_1,\ldots,q_k) \in \Qv \subset \Zset^k$
keep track of the proximity of the fundamental quadrilaterals
$\mathcal{Q}_{j,|q_j|}$ to the inferior boundary of $\mathcal{M}$.
That is, the larger the absolute value $|q_j|$,
the smaller the angle of reflection $\theta$ for any
$x = (\varphi,\theta) \in \mathcal{Q}_{j,|q_j|}$.
For this reason, by construction,
if one considers a bounded sequence in $\Qf$,
(respectively, a sequence $\qf \in \Qf$ such that
 $\chi_j \le \min_{i \in \Zset} q^i_j <
  \lim \sup_{|i| \to +\infty} q^i_j = +\infty$ for all $j=1,\ldots,k$)
(respectively, a sequence $\qf \in \Qf$ such that
 $\lim_{|i| \to +\infty} q^i_j = +\infty$ for all $j=1,\ldots,k$),
the corresponding sliding orbit in $\mathcal{J} \subset \mathcal{M}$
belongs to $\mathcal{B}_0^- \cap \mathcal{B}_0^+$
(respectively, $\mathcal{O}_0^- \cap \mathcal{O}_0^+$)
(respectively, $\mathcal{A}_0^- \cap \mathcal{A}_0^+$).
By considering two-sided sequences $\qf \in \Qf$ which have
different behaviors at each side,
one can construct trajectories which belong
to $\mathcal{X}_0^- \cap \mathcal{Y}_0^+ \neq \emptyset$
for any prescribed choice
$\mathcal{X},\mathcal{Y} = \mathcal{B}, \mathcal{O}, \mathcal{A}$
such that $\mathcal{X} \neq \mathcal{Y}$.
The existence of all these sequences comes from
part~(\ref{item:XiSets}) of Lemma~\ref{lem:AdmissibleSymbols},
since we can control the size of $|q^i_j|$ just from
the size of $|q^i_1|$.
\end{proof}

\begin{corollary}
\label{cor:InfiniteEntropy}
With the notation as in Theorem~\ref{thm:SymbolicDynamics},
the following properties are satisfied. 
\begin{enumerate}[(a)]
\item
The return map $F|_{\mathcal{J}}$ has infinite topological entropy. 
\item
There is a compact $F$-invariant set $\mathcal{K} \subset \mathcal{J}$
such that $F|_{\mathcal{K}}$ is topologically semiconjugate to
the shift $\sigma : \Qf \to \Qf$ via the map $h|_{\mathcal{K}}$
in the sense of~\eqref{eq:Diagrams}; it is topologically transitive;
and it has sensitive dependence on initial conditions.
\end{enumerate}
\end{corollary}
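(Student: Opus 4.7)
For part (a), the plan is to invoke the standard monotonicity of topological entropy under topological factor maps: if $h : X \to Y$ is a continuous surjection intertwining maps $F$ and $G$, then $h_{\rm top}(F) \geq h_{\rm top}(G)$. By Theorem~\ref{thm:SymbolicDynamics}, the restriction $h|_{\mathcal{J}} : \mathcal{J} \to \Qf$ is precisely such a factor map from $F|_{\mathcal{J}}$ onto $(\Qf,\sigma)$; combined with $h_{\rm top}(\sigma) = +\infty$ from Proposition~\ref{prop:ShiftMapProp}, this yields $h_{\rm top}(F|_{\mathcal{J}}) = +\infty$, proving part~(a).

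For part (b), I would construct $\mathcal{K}$ as the orbit closure of a well-chosen preimage of a $\sigma$-transitive point. Since $\sigma : \Qf \to \Qf$ is topologically transitive on the Polish space $\Qf$, which has no isolated points (the alphabet $\Qv$ being infinite by hypothesis~({\bf A})), a Baire-category argument provides some $\qf_\star \in \Qf$ with dense $\sigma$-orbit. Using surjectivity of $h|_{\mathcal{J}}$ from Theorem~\ref{thm:SymbolicDynamics}, choose $x_\star \in \mathcal{J}$ with $h(x_\star) = \qf_\star$, and set
\[
\mathcal{K} := \overline{\{F^n(x_\star) : n \in \Zset\}} \subset \mathcal{J}.
\]
By construction $\mathcal{K}$ is compact (closed in the compact set $\mathcal{J}$) and $F$-invariant. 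The restriction $h|_{\mathcal{K}}$ is continuous and intertwines $F|_{\mathcal{K}}$ with $\sigma$ by restriction of the commuting diagram in~\eqref{eq:Diagrams}; as the compact set $h(\mathcal{K})$ is closed in $\Qf$ and contains the dense orbit $\{\sigma^n(\qf_\star)\}_{n \in \Zset}$, we obtain $h(\mathcal{K}) = \Qf$, so this restriction supplies the required semiconjugacy. Topological transitivity of $F|_{\mathcal{K}}$ is immediate, since the orbit of $x_\star$ is dense in $\mathcal{K}$ by construction.

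The hard part will be establishing sensitive dependence on initial conditions for $F|_{\mathcal{K}}$, since this property does not in general transfer from a factor to its extension. My plan is to combine part~(a) with a dichotomy theorem. First, applying part~(a) to the factor map $h|_{\mathcal{K}} : (\mathcal{K},F|_{\mathcal{K}}) \to (\Qf,\sigma)$ yields $h_{\rm top}(F|_{\mathcal{K}}) = +\infty$, so $F|_{\mathcal{K}}$ cannot be equicontinuous, as equicontinuous maps on compact metric spaces have zero topological entropy. Second, $\mathcal{K}$ has no isolated points: it is infinite since $h(\mathcal{K}) = \Qf$ is infinite, and if some $p \in \mathcal{K}$ were isolated, density of the orbit of $x_\star$ would force $p$ to lie in that orbit and then all iterates $F^n(x_\star)$ to be isolated, which is impossible in a compact metric space (an infinite set must admit a limit point). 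Finally, for a topologically transitive continuous map on a compact metric space with no isolated points, the absence of equicontinuity implies sensitive dependence on initial conditions (the Auslander--Yorke dichotomy, as extended by Akin--Auslander--Berg to topologically transitive systems), which completes the proof.
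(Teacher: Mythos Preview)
Your proof of part~(a) matches the paper's: both invoke monotonicity of entropy under factor maps together with $h_{\rm top}(\sigma)=+\infty$ from Proposition~\ref{prop:ShiftMapProp}.

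For part~(b) the paper does not construct $\mathcal{K}$ at all; it invokes a packaged Auslander--Yorke-type result (Theorem~2.1.6(v) of Pireddu's thesis~\cite{pireddu2009fixed}) as a black box. Your approach of taking $\mathcal{K}$ to be the orbit closure of a preimage of a $\sigma$-transitive point, and then verifying compactness, invariance, surjectivity of $h|_\mathcal{K}$, and transitivity, is a correct and natural unpacking of that result. The one genuine gap is the ``no isolated points'' step: you assert that if all iterates $F^n(x_\star)$ are isolated then this contradicts compactness because ``an infinite set must admit a limit point,'' but that limit point can perfectly well lie in $\mathcal{K}\setminus\{F^n(x_\star):n\in\Zset\}$ rather than in the orbit (two-sided orbit closures under homeomorphisms \emph{can} have every orbit point isolated, as in the one-point compactification of $\Zset$ under translation), so no contradiction arises. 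Since sensitive dependence genuinely fails at isolated points, this breaks your route to sensitivity. A clean repair avoids perfection of $\mathcal{K}$ entirely: the Akin--Auslander--Berg dichotomy says a forward-transitive compact metric system is either sensitive or \emph{almost} equicontinuous, and almost equicontinuous transitive systems are uniformly rigid and hence have zero topological entropy; since $h_{\rm top}(F|_\mathcal{K})=+\infty$, almost equicontinuity is excluded and sensitivity follows directly. (For this you should choose $\qf_\star$ with dense \emph{forward} $\sigma$-orbit, which the forward transitivity in Proposition~\ref{prop:ShiftMapProp} permits.)
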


\begin{proof}
\begin{enumerate}[(a)]
\item
It follows from the fact that $\sigma:\Qf \to \Qf$ has
infinite topological entropy and it is a topological
factor of $F:\mathcal{J} \to \mathcal{J}$.
\item
It is a direct consequence of our Theorem~\ref{thm:SymbolicDynamics}
and a theorem of Auslander and Yorke.
See~\cite[Item~(v) of Theorem~2.1.6]{pireddu2009fixed} for details.
\qedhere
\end{enumerate}
\end{proof}

Given any integers $1 \le p < q$,
let $\Pi(p,q)$ be the set of $(p,q)$-periodic billiard trajectories
in the circular $k$-gon $\Gamma$.
That is, the set of periodic trajectories that close
after $p$ turns around $\Gamma$ and $q$ impacts in $\Gamma$,
so they have rotation number $p/q$.
The symbol~$\#$ denotes the \emph{cardinality} of a set.
Let $2^{\Rset^{n+1}}$ be the power set of $\Rset^{n+1}$.
Let $G_q: 2^{\Rset^{n+1}}\to \Nset_0$ be the function
\[
G_q(K) =
\# \left\{
\xv = (x_1,\ldots,x_{n+1}) \in K \cap \Zset^{n+1} :
x_1 + \cdots + x_{n+1} = q
\right\}
\]
that counts the integer points in any subset
$K \subset \Rset^{n+1}$ whose coordinates sum $q \in \Nset$.

\begin{corollary}
\label{cor:ConvexPolytopes}
Let $\alpha^\pm_j$, $\beta^\pm_j = \alpha^\pm_j + 1$
and $\chi_j$ be the quantities defined in
Lemma~\ref{lem:MinMax} and Corollary~\ref{cor:Fundamental}.
If $p,q \in \Nset$ with $1 \leq p < q$, then
\begin{equation}
\label{eq:LowerBoundPpq}
\# \Pi(p,q) \ge 2^{n+1} G_q\big( P^{(p)} \big),
\end{equation}
where $n+1 = k p$ and
\begin{equation}
\label{eq:UnboundedPolytope}
P^{(p)} =
\left\{
\xv \in \Rset^{n+1} :
\begin{array}{l}
\alpha^-_j x_j + \beta^-_j < x_{j+1} < \alpha^+_j x_j - \beta^+_j,
\quad \forall j=1,\ldots,n \\
\alpha^-_{n+1} x_{n+1} + \beta^-_{n+1} < x_1 <
\alpha^+_{n+1} x_{n+1} - \beta^+_{n+1}, \\
x_j \ge \chi_j, \quad \forall j=1,\ldots,n+1
\end{array}
\right\}
\end{equation}
is an unbounded convex polytope of $\Rset^{n+1}$. 
\end{corollary}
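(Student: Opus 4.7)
The plan is to translate the lattice-point count on the right-hand side of~\eqref{eq:LowerBoundPpq} into a count of admissible periodic sequences in $\Pf$, and then to invoke version~({\bf P}) of Proposition~\ref{prop:Chaos} to extract a $(p,q)$-periodic sliding billiard trajectory from each such sequence. Before starting, I would verify that $P^{(p)}$ is an unbounded convex polytope of $\Rset^{n+1}$: convexity is automatic as an intersection of half-spaces, and unboundedness follows by rescaling---after rewriting each defining inequality so that the slack is a positive constant $\beta_j^\pm$, one checks that $t\xv \in P^{(p)}$ whenever $\xv \in P^{(p)}$ and $t \ge 1$ (since then $t\beta_j^\pm \ge \beta_j^\pm$), so $P^{(p)}$ contains an entire ray from every point.

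Given any lattice point $\xv = (x_1,\ldots,x_{n+1}) \in P^{(p)} \cap \Zset^{n+1}$ with $x_1+\cdots+x_{n+1}=q$ and any sign vector $\varsigma = (\varsigma_1,\ldots,\varsigma_{n+1}) \in \{-,+\}^{n+1}$, I would assemble a sequence $\qf = (\qv^i)_{i \in \Zset} \in \Qv^\Zset$ by setting $q^i_j = \varsigma_{ik+j}\, x_{ik+j}$ for $0 \le i \le p-1$ and $1 \le j \le k$, and extending via $\qv^{i+p} = \qv^i$. Under the cyclic convention $\alpha_{n+1}^\pm = \alpha_k^\pm$, $\beta_{n+1}^\pm = \beta_k^\pm$, $\chi_{n+1} = \chi_k$, the inequalities of~\eqref{eq:UnboundedPolytope} translate exactly into the intra-symbol admissibility condition of Definition~\ref{def:Alphabet} for each $\qv^i$, into the inter-symbol admissibility condition of Definition~\ref{def:Admissible} between every consecutive pair $(\qv^i,\qv^{i+1})$ (the closing inequality linking $x_{n+1}$ and $x_1$ being precisely the one that matches $\qv^{p-1}$ to $\qv^p = \qv^0$), and into the lower bound $|q^i_j| \ge \chi_j$. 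Hence $\qf \in \Pf$ with $\sigma^p(\qf) = \qf$. Invoking Proposition~\ref{prop:Chaos}~({\bf P}) then furnishes a point $x = x(\xv,\varsigma) \in \mathcal{D}_1$ with $f^{s^p(\qf)}(x) = F^p(x) = x$ realizing the prescribed itinerary through the cells $\mathcal{K}^{\Sign(q^i_j)}_{j,|q^i_j|}$; since $s^p(\qf) = \sum_l x_l = q$ and the orbit makes $p$ turns around $\Gamma$, it lies in $\Pi(p,q)$.

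The total number of admissible pairs $(\xv,\varsigma)$ is $2^{n+1} G_q(P^{(p)})$, and because the cells $\mathcal{K}^\varsigma_{j,n}$ supplied by Corollary~\ref{cor:Fundamental} are pairwise disjoint, two distinct pairs realize distinct phase-space itineraries and therefore correspond to distinct periodic orbits in $\Pi(p,q)$, giving~\eqref{eq:LowerBoundPpq}. The main obstacle is precisely this last injectivity step: a priori two pairs $(\xv,\varsigma),(\xv',\varsigma')$ related by a cyclic shift of the underlying $p$-periodic sequence produce the same geometric billiard orbit merely entered at a different turn, so one must pin down the identification of orbits with pairs $(\xv,\varsigma)$ (for example by fixing a canonical starting cell in $\mathcal{D}_1$ per orbit). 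Should such cyclic coincidences fail to be eliminable, the count deteriorates by at most a factor of $p$, which is harmless for the asymptotic $c_\star(p) q^{kp-1}$ lower bound appearing in Theorem~\ref{thm:PeriodicIntro}(a) but is the delicate bookkeeping to settle the inequality as stated.
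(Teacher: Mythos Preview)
Your approach mirrors the paper's proof almost exactly: the paper defines the $2^{n+1}$-to-$1$ map $\psi_p:\Pf_p\to P^{(p)}\cap\Zset^{n+1}$ sending a $p$-periodic admissible sequence to the vector of absolute values $(|q^0_1|,\ldots,|q^{p-1}_k|)$, observes that $s^p(\qf)=x_1+\cdots+x_{n+1}$, and then invokes version~({\bf P}) of Proposition~\ref{prop:Chaos} to produce at least $2^{n+1}$ distinct generic sliding $(p,q)$-periodic trajectories per lattice point. Your construction is the inverse of $\psi_p$ together with the same appeal to Proposition~\ref{prop:Chaos}.

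The cyclic-shift concern you flag is legitimate, and it is \emph{not} addressed in the paper's proof either: the paper simply asserts that each lattice point ``gives rise to, at least, $2^{n+1}$ different generic sliding $(p,q)$-periodic billiard trajectories'' without discussing whether trajectories coming from pairs $(\xv,\varsigma)$ and $(\xv',\varsigma')$ that are $k$-cyclic shifts of one another might coincide as unpointed orbits. Since any such coincidence forces $\qf'=\sigma^l(\qf)$ for some $0<l<p$ (disjointness of the cells $\mathcal{K}^\varsigma_{j,n}$ makes the itinerary of a point unique), the overcount is at most a factor of $p$, exactly as you note; this is absorbed into the constants $c_\star(p)$ and $M_\star$ and leaves Theorem~\ref{thm:PeriodicIntro} intact. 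Your write-up is therefore at least as careful as the paper's on this point.
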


\begin{proof}
Let $p,q \in \Nset$ such that $1 \le p < q$.
Set $n+1 = kp$.
Let $\Pf_p$ be the set of admissible periodic sequences of period $p$.
We consider the map $\psi_p: \Pf_p \to \Nset^{n+1}$
defined by
\[
\psi_p(\qf) = \xv = (x_1,\ldots,x_{n+1}) =
\left(
|q^0_1|,\ldots,|q^0_k|,
|q^1_1|,\ldots,|q^1_k|,\ldots,
|q^{p-1}_1|,\ldots,|q^{p-1}_k|
\right),
\]
where $\qf = (\qv^i)_{i \in \Zset} \in \Pf_p$
and $\qv^i = (q_1^i,\ldots,q_k^i) \in \Qv$.
Note that $s^p(\qf) = x_1 + \cdots + x_{n+1}$ when $x = \psi_p(\qf)$.
Besides, $\psi_p(\Pf_p) \subset P^{(p)} \cap \Zset^{n+1}$ and
the map $\psi_p: \Pf_p \to P^{(p)} \cap \Zset^{n+1}$
is $2^{n+1}$-to-1 by construction.
Therefore, each point $\xv \in P^{(p)} \cap \Zset^{n+1}$
whose coordinates sum $q$
gives rise to, at least, $2^{n+1}$ different generic sliding
$(p,q)$-periodic billiard trajectories,
see version~({\bf P}) of Proposition~\ref{prop:Chaos}.
\end{proof}

Lower bound~\eqref{eq:LowerBoundPpq} is far from optimal,
since it does not take into account the periodic billiard trajectories
that are not generic or not sliding.
But we think that it captures with great accuracy the growth rate
of $\# \Pi(p,q)$ when $p/q$ is relatively small and $q \to +\infty$.
It will be the first step in proving
Theorem~\ref{thm:PeriodicIntro} in Section~\ref{sec:PeriodicTrajectories}.

\section{Optimal linear speed for asymptotic sliding orbits}
\label{sec:OptimalSpeed}

In this section we establish the existence of uncountably many
\emph{points} in the fundamental domain $\mathcal{D}_1$
that give rise to generic asymptotic sliding billiard trajectories
(that is, those trajectories in the intersection
 $\mathcal{A}_0^- \cap \mathcal{A}_0^+ \subset \mathcal{S}_0$
 described in the introduction)
that approach the boundary asymptotically with optimal
uniform linear speed as $|n| \to +\infty$.
We also look for trajectories just in
$\mathcal{A}_0^+ \subset \mathcal{S}_0$,
in which case we obtain uncountably many \emph{horizontal paths}
(not points) in $\mathcal{D}_1$.
The dynamic feature that distinguishes such trajectories
in that they approach the boundary in the fastest way possible
among all trajectories that give rise to admissible
sequences of symbols.

We believe that the union of all these horizontal paths
(respectively, all these points)
is a Cantor set times an interval
(respectively, the product of two Cantor sets).
However, in order to prove it rigorously,
we would need to prove that our semiconjugacy
$h_{|\mathcal{J}}:\mathcal{J} \to \Qf$, see~\eqref{eq:Diagrams},
is, indeed, a full conjugacy.
Both sets are $F$-invariant and they accumulate on
the first node of the circular polygon.
Obviously, there are similar sets for each one of the other nodes.

The reader must keep in mind the notations listed at the beginning
of Section~\ref{sec:ChaoticMotions},
the estimates in Lemma~\ref{lem:AdmissibleSymbols},
and the interpretation of the partial sums
$s^i,s^i_j:\Qf \to \Nset_0$, with $i \in \Zset$ and $j = 1,\ldots,k$,
presented in Remark~\ref{rem:PartialSums}.

\begin{definition}
\label{def:SignSpaces}
The uncountably infinite \emph{sign spaces} are
\begin{align*}
\Tf^+ &=
\left\{
\tf = (\tv^i)_{i \ge 0} :
\tv^i = (t_1^i,\ldots,t_k^i) \in \{-,+\}^k
\right\}, \\
\Tf &=
\left\{
\tf = (\tv^i)_{i \in \Zset} :
\tv^i = (t_1^i,\ldots,t_k^i) \in \{-,+\}^k
\right\}.
\end{align*}
\end{definition}

To avoid any confusion,
be aware that the dynamical index of the iterates
of asymptotic generic sliding trajectories was called
$n \in \Zset$ in Theorem~\ref{thm:BoundaryIntro},
but it is called $l \in \Zset$ in Theorem~\ref{thm:Boundary} below.

\begin{theorem}\label{thm:Boundary}
There are constants $0 < d_- < d_+$ such that the following
properties hold.
\begin{enumerate}[(a)]
\item
There are pairwise disjoint paths
$\gamma_n^{\tf} \subset \mathcal{K}_{1,n}^{t_1^1} \subset \mathcal{D}_1$
for any $n \ge \chi_1$ and $\tf \in \Tf^+$,
`horizontal' since they connect the left side
$\mathcal{L}_1$ with the right side $\mathcal{L}_1^1$,
such that
\[
\left.
\begin{array}{c}
\Pi_\theta \big(f^l(x)\big) = \Pi_\theta(x),
\quad \forall l = 0,\ldots, n-1 \\
n d_- \Pi_\theta(x) \le l \Pi_\theta \big(f^l(x)\big) \le n d_+ \Pi_\theta(x),
\quad \forall l \ge n
\end{array}
\right\}
\quad \forall x \in \gamma_n^{\tf}, \
\forall n \ge \chi_1, \ \forall \tf \in \Tf^+.
\]
\item
There are pairwise distinct points
$x_n^{\tf} \in \mathcal{K}_{1,n}^{t_1^1} \subset \mathcal{D}_1$
for any $n \ge \chi_1$ and $\tf \in \Tf$ such that
\[
\left.
\begin{array}{c}
\Pi_\theta \big(f^l(x_n^\tf)\big) =
\Pi_\theta \big( x_n^\tf \big),
\quad \forall l = 0,\ldots, n-1 \\
\Pi_\theta \big(f^{l}(x_n^\tf)\big) =
\Pi_\theta\big( f^{-1}(x_n^\tf) \big),
\quad \forall l = -1,\ldots,-m \\
n d_- \Pi_\theta(x_n^{\tf}) \le
|l| \Pi_\theta \big( f^l( x_n^{\tf} ) \big) \le
n d_+ \Pi_\theta( x_n^\tf ),
\quad \forall l \ge n \mbox{ or } l<-m
\end{array}
\right\}
\ \forall n \ge \chi_1, \ \forall \tf \in \Tf,
\]
where $m = -\xi^{-1}_k(n) \in \Nset$.
\end{enumerate}
\end{theorem}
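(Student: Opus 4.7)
The plan is to extract the paths and points of the theorem from the symbolic dynamics of Theorem~\ref{thm:SymbolicDynamics} by selecting, for each pair $(n,\tf)$, an admissible sequence $\qf^{n,\tf}$ whose magnitudes $|q_j^i|$ grow at the fastest admissible rate as $|i| \to \infty$. By Lemma~\ref{lem:AdmissibleSymbols}(ii) this rate is exponential with base $\alpha = \alpha^+ > 1$, and by part~(iii) of the same lemma the partial sums $s^i(\qf)$ grow at the same rate. Since any point in $\mathcal{Q}_{j,|q_j^i|}$ has $\theta$-coordinate $\asymp 1/|q_j^i|$ by Lemma~\ref{lem:MinMax}(a), this selection will produce the linear decay $\theta_l \asymp 1/l$ asserted by the theorem.

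Fix $n \ge \chi_1$. In the forward direction I set $|q_1^0| = n$ and define recursively
\[
|q_{j+1}^i| = \lfloor \alpha_j^+ |q_j^i| - \beta_j^+ \rfloor - 1, \qquad
|q_1^{i+1}| = \lfloor \alpha_k^+ |q_k^i| - \beta_k^+ \rfloor - 1,
\]
so each new magnitude sits strictly inside the admissibility window of Definition~\ref{def:Alphabet}; the ``$-1$'' produces the required strict inequality. The backward half of the sequence needed for part~(b) is defined analogously, by inverting the admissibility inequalities and picking the largest admissible integer value for $|q_j^i|$ given $|q_{j+1}^i|$ (or for $|q_k^i|$ given $|q_1^{i+1}|$). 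Hypothesis~({\bf X}) guarantees that for $n \ge \chi_1$ all windows thus encountered are non-empty. Assigning the sign $t_j^i$ to $|q_j^i|$ then produces sequences $\qf^{n,\tf} \in \Qf^+$ (respectively, $\in \Qf$).

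Applying Proposition~\ref{prop:Chaos} version~({\bf O}) to the one-sided sequence yields a path $\gamma_n^\tf$ horizontal in $\mathcal{D}_1$ that $f$-realizes the itinerary and is contained in $\mathcal{K}_{1,n}^{t_1^0}$; version~({\bf T}) applied to the two-sided sequence yields the point $x_n^\tf$. Pairwise disjointness of these paths (respectively, distinctness of these points) is automatic because the cells $\mathcal{K}_{j,m}^\varsigma$ are pairwise disjoint by Corollary~\ref{cor:Fundamental}, so distinct itineraries cannot share an orbit. The constancy claim $\Pi_\theta(f^l(x)) = \Pi_\theta(x)$ for $0 \le l \le n-1$ and its backward analogue follow from part~(\ref{item:CircularDynamics}) of Lemma~\ref{lem:BilliardProperties}: within a single arc the angle of reflection is a constant of motion, and by construction the orbit remains on $\Gamma_1$ for exactly $n$ consecutive iterates starting at $x$. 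For a general $l > 0$ with $s^i_j(\qf) \le l < s^i_{j+1}(\qf)$ the iterate $f^l(x)$ lies in $\mathcal{K}_{j,|q_j^i|}^{t_j^i}$, so Lemma~\ref{lem:MinMax}(a) gives
\[
\frac{\delta_j}{2|q_j^i|+2} \le \Pi_\theta(f^l(x)) \le \frac{\delta_j}{2|q_j^i|-2}.
\]
Combining these pointwise bounds with parts (i)--(iii) of Lemma~\ref{lem:AdmissibleSymbols} and with our maximal choice $|q_j^i| \asymp \xi^i_j(n)$, one obtains $l \, \Pi_\theta(f^l(x))$ bounded between two constant multiples of $n \, \Pi_\theta(x)$, with constants depending only on the factors $\alpha^\pm_j$ and addends $\beta^\pm_j$; this yields the required $d_\pm$.

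The main obstacle will be keeping $d_-$ and $d_+$ genuinely uniform in $n$, $\tf$ and $l$. This calls for careful bookkeeping of the ratios between consecutive $|q_j^i|$, between $s^i(\qf)$ and $s^{i+1}(\qf)$, and handling the edge cases where $l$ sits near a transition between arcs or turns. The lower bound $d_- > 0$ is the most delicate: it demands that the maximal-growth recipe not allow $s^i(\qf)$ to outpace $|q_j^i|$ by more than a fixed factor, which is precisely what the two-sided estimate $\tau \alpha^{|i|} \xi^0_j(n) \le \xi^i_j(n) \le \alpha^{|i|} \xi^0_j(n)$ of Lemma~\ref{lem:AdmissibleSymbols}(ii) provides.
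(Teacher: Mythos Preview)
Your approach is correct and follows the same strategy as the paper's proof. The one simplification the paper makes is to set $|q_j^i| = \xi^i_j(n)$ exactly (the maximum of the set $\Xi^i_j(n)$ defined in~\eqref{eq:XiDefinition}) rather than via your explicit floor-minus-one recursion; this lets Lemma~\ref{lem:AdmissibleSymbols}(iii) apply verbatim, since the quantity $\rho^i(n)$ appearing there is by definition the partial sum of the $\xi^m_j(n)$, and so spares you the extra step of checking that your recursion stays within a fixed factor of $\xi^i_j(n)$. One small slip worth fixing: for $s^i_j(\qf) \le l < s^i_{j+1}(\qf)$ the iterate $f^l(x)$ need not lie in $\mathcal{K}_{j,|q_j^i|}^{t_j^i}$ itself---only $f^{s^i_j(\qf)}(x)$ is guaranteed to---but since $\theta$ is constant along the arc $\Gamma_j$ you still get $\Pi_\theta(f^l(x)) = \Pi_\theta\big(f^{s^i_j(\qf)}(x)\big)$ and the bound from Lemma~\ref{lem:MinMax}(a) carries over unchanged.
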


\begin{proof}
\begin{enumerate}[(a)]
\item
Identity $\Pi_\theta \big(f^l(x)\big) = \Pi_\theta(x)$
for all $x \in \mathcal{Q}_{1,n}$ and $l=0,\ldots,n-1$ is trivial,
because these first impacts are all over the first arc $\Gamma_1$,
so the angle of reflection remains constant.
Henceforth, we just deal with the case $l \ge n$.

Fix $n \ge \chi_1$ and $\tf = (\tv^i)_{i \ge 0} \in \Tf^+$ with
$\tv^i = (t_1^i,\ldots,t_k^i)$.
Let $\nf = (\nv^i)_{i \ge 0} \in \big( \Nset^k \big)^{\Nset_0}$
with $\nv^i = (n_1^i,\ldots,n_k^i) \in \Nset^k$ be the sequence given by
\[
n^i_j := \xi^i_j(n) = \max \Xi^i_j(n),
\]
where $\Xi^i_j(n) \subset \Nset$ is the set~\eqref{eq:XiDefinition}.
We view $n^0_1 = n$ as the `starting' value,
since the sequence $\nf$ is completely determined by $n$.
However, we do not make this dependence on $n$ explicit for the sake of brevity.
Let $\rho^0 = n$,
\begin{align*}
\rho^i &=
s^i(\nf) =
\sum_{m=0}^{i-1} \sum_{j=1}^k n^m_j, \quad \forall i > 0,\\
\rho^i_j &=
s^i_j(\nf) = s^i(\nf) + \sum_{m=1}^{j-1} n^i_m, \quad
\forall j \mod k, \ \forall i \ge 0.
\end{align*}
Note that $\rho_1^i = \rho^i$.
We use the convention $\rho_{k+1}^i = \rho^{i+1}$.

There is $\qf = (\qv^i)_{i \ge 0} \in \Qf^+$
with $\qv^i = (q_1^i,\ldots,q_k^i)$ such that $\Sign(\qf) = \tf$
and $|\qf| = \nf$ by definition.
Note that $s^i_j(\qf) = \rho^i_j$ for any $i \ge 0$ and $j=1,\ldots,k$.
Version~({\bf O}) of Proposition~\ref{prop:Chaos} implies that
there is a path $\gamma_n^\tf \in \mathcal{D}_1$,
horizontal in the sense that it connects the left side
$\mathcal{L}_1$ with the right side $\mathcal{L}_1^1$,
such that
\[
f^{\rho^i_j}(x) \subset \mathcal{K}^{t^i_j}_{j,n^i_j},\qquad
\forall x \in \gamma_n^\tf, \quad
\forall i \ge 0, \quad
\forall j = 1,\ldots,k.
\]
In particular, $\gamma_n^{\tf} \subset \mathcal{K}_{1,n}^{t_1^1}$.
The paths $\gamma_n^{\tf}$ are pairwise disjoint,
because the cells $\mathcal{K}_{j,n}^\varsigma$ are.

Fix $x = (\varphi,\theta) \in \gamma_n^\tf$ and $l \ge n$.
Set $(\varphi_l,\theta_l) = f^l (\varphi,\theta)$.
Our goal is to prove that
\begin{equation}
\label{eq:GoalInequalities}
n d_- \le l \theta_l/\theta \le n d_+,
\end{equation}
for some constants $0 < d_- < d_+$ that do no depend on the choices
of the starting value $n \ge \chi_1$,
the sign sequence $\tf \in \Tf^+$,
the point $x \in \gamma_n^\tf$ or the forward iterate $l \ge n$.

Let $i \ge 0$ be the number of complete turns around $\Gamma$ that
this billiard trajectory performs from the $0$-th impact
to the $l$-th impact,
and let $j \in \{1,\ldots, k\}$ be the arc index where
the $l$-th impact lands,
so $\rho^i \le \rho_j^i \leq l < \rho_{j+1}^i \le \rho^{i+1}$.
Set $r = \rho_j^i$.
Then
$(\varphi_r,\theta_r) \in
 \mathcal{K}_{j,n_j^i}^{t_j^i} \subset
 \mathcal{Q}_{j,n_j^i}$, and so,
since the orbit segment
\(
(\varphi_r,\theta_r), (\varphi_{r+1},\theta_{r+1}), \ldots,
(\varphi_{l-1},\theta_{l-1}), (\varphi_l,\theta_l)
\)
remains in the circular arc $\Gamma_j$ without crossing
the singularity segment $\mathcal{L}_{j+1}$,
we have
\[
\frac{\delta_j}{2 n_j^i + 2} =
\min_{y \in Q_{j,n_j^i}} \Pi_{\theta} (y) \le
\theta_l =
\theta_r \leq \max_{y \in Q_{j,n_j^i}} \Pi_{\theta} (y) =
\frac{\delta_j}{2 n_j^i - 2},
\]
see Lemma~\ref{lem:MinMax}.
From
$x = (\varphi,\theta) \in \gamma_n^\tf \subset
 \mathcal{K}_{1,n}^{t_1^1} \subset \mathcal{Q}_{1,n}$,
we also have
\[
\frac{\delta_1}{2 n + 2} =
\min_{y \in Q_{1,n}} \Pi_{\theta} (y) \le
\theta \le \max_{y \in Q_{1,n}} \Pi_{\theta} (y) =
\frac{\delta_1}{2 n - 2}.
\]
By combining the last three displayed sets of inequalities,
we get that
\begin{equation}
\label{eq:ToTheGoal}
\frac{\delta_j}{\delta_1} \frac{n-1}{n_j^i + 1} \rho^i \le
l \theta_l/\theta \le
\frac{\delta_j}{\delta_1} \frac{n+1}{n_j^i - 1} \rho^{i+1}.
\end{equation}
Let $\nu' < \lambda'$ be the positive constants that appear in
Part~(\ref{item:AdmissibleSymbols3}) of Lemma~\ref{lem:AdmissibleSymbols},
so
\begin{equation}
\label{eq:ToTheGoal2}
\nu' n^i_j \le \rho^i \le \rho^{i+1} \le \lambda' n^i_j.
\end{equation}
Bound~\eqref{eq:GoalInequalities} follows
from~\eqref{eq:ToTheGoal} and~\eqref{eq:ToTheGoal2}
if we take
\begin{align*}
d_+
&=
\frac{\lambda'}{\delta_1} \max \{ \delta_1,\ldots,\delta_k \}
\max \left\{ \frac{(n+1) n^i_j}{(n^i_j -1) n} :
             n \ge \chi_1, \ n^i_j \ge \chi_j, \ j=1,\ldots,k \right\} \\
&=
\frac{\lambda'}{\delta_1} \max \{ \delta_1,\ldots,\delta_k \}
\max \left\{ \frac{(\chi_1 + 1) \chi_j}{(\chi_j - 1) \chi_1} :
             j=1,\ldots,k \right\}, \\
d_- &=
\frac{\nu'}{\delta_1} \min \{ \delta_1,\ldots,\delta_k \}
\min \left\{ \frac{(n-1) n^i_j}{(n^i_j +1) n} :
             n \ge \chi_1, \ n^i_j \ge \chi_j, \ j=1,\ldots,k \right\} \\
&=
\frac{\nu'}{\delta_1} \min \{ \delta_1,\ldots,\delta_k \}
\min \left\{ \frac{(\chi_1 - 1) \chi_j}{(\chi_j + 1) \chi_1} :
             j=1,\ldots,k \right\}.
\end{align*}

\item
The proof is similar, but using
version~({\bf T}) of Proposition~\ref{prop:Chaos}.
We omit the details.
We just stress that if $x \in \mathcal{Q}_{1,n}$,
$h(x) = \qf = (\qv^i)_{i \in \Zset}$ and $m := |q^{-1}_k| = -\xi^{-1}_k(n)$,
then the first $m$ backward iterates of the point $x$
impact on the last arc $\Gamma_k$.
\qedhere
\end{enumerate}
\end{proof}

The constants $0 < a < b$ in Theorem~\ref{thm:BoundaryIntro} are directly
related to the constants $0 < d_- < d_+$ in Theorem~\ref{thm:Boundary}.
To be precise, we can take
\begin{align*}
a &=
\min_{n \ge \chi_1}
\frac{1}{n d_+ \max_{x \in \mathcal{Q}_{1,n}} \Pi_\theta(x)} =
\min_{n \ge \chi_1} \frac{2n-2}{n d_+ \delta_1} =
\frac{2\chi_1 - 2}{\chi_1 \delta_1 d_+} > 0,\\
b &=
\max_{n \ge \chi_1}
\frac{1}{n d_- \min_{x \in \mathcal{Q}_{1,n}} \Pi_\theta(x)} =
\max_{n \ge \chi_1} \frac{2n+2}{n d_- \delta_1} =
\frac{2 \chi_1 + 2}{\chi_1 \delta_1 d_-} > a.
\end{align*}

The sequences
$\left(\cup_{\tf \in \Tf^+} \gamma^\tf_n \right)_{n \ge \chi_1}$ and
$\left( \cup_{\tf \in \Tf^+} x^\tf_n \right)_{n \ge \chi_1}$
are composed by uncountable sets of horizontal paths and points,
respectively, with the desired optimal uniform linear speed.
The index $n \ge \chi_1$ of the sequence counts the number of
impacts that the corresponding billiard trajectories have in the
first arc $\Gamma_1$ at the beginning.
The fundamental quadrilaterals $\mathcal{Q}_{1,n}$ tend
to the first node as $n \to +\infty$:
$\lim_{n \to +\infty} \mathcal{Q}_{1,n} = (a_1,0)$,
so we conclude that both sequences accumulate on that node
when $n \to +\infty$.

Let us justify the optimality of linear speed.

\begin{proposition}
\label{prop:OptimalSpeed}
There is no billiard trajectory in a circular polygon such that
\[
\lim_{n \to +\infty} n \theta_n = 0.
\]
\end{proposition}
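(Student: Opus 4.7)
The plan is to argue by contradiction: assume a billiard trajectory $\{(\varphi_n,\theta_n)\}$ satisfies $\lim_{n\to+\infty} n\theta_n = 0$. Then in particular $\theta_n \to 0$. A preliminary geometric observation is that the Euclidean chord between consecutive impacts has length of order $\theta$, so for $\theta$ sufficiently small this chord cannot jump over an entire arc of the polygon. Consequently the trajectory is eventually sliding in the sense of Definition~\ref{def:GenericSliding}, and without loss of generality I assume it is sliding from the start.

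The central idea is to track a single dwell on an arc $\Gamma_j$, from the entry time $N_{\rm in}$ (first impact on $\Gamma_j$) to the exit time $N_{\rm out}$ (last impact before jumping to $\Gamma_{j+1}$). By part~(a) of Lemma~\ref{lem:BilliardProperties}, the angle is preserved throughout the dwell: $\theta_{N_{\rm in}} = \cdots = \theta_{N_{\rm out}} =: \theta$. Part~(c) of Lemma~\ref{lem:BilliardProperties} applied to the preceding transition yields $\varphi_{N_{\rm in}} - a_j \le 2\theta$, so the entry position lies within $2\theta$ of the node $a_j$. Iterating part~(a) gives $\varphi_{N_{\rm out}} = \varphi_{N_{\rm in}} + 2\theta\,(N_{\rm out}-N_{\rm in})$, while the subsequent impact landing on $\Gamma_{j+1}$ forces $\varphi_{N_{\rm out}} > b_j - 2\theta$. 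Combining these inequalities,
\[
N_{\rm out} - N_{\rm in} \;\ge\; \frac{\delta_j - 4\theta}{2\theta} \;\ge\; \frac{\delta_\star}{4\theta},
\qquad \delta_\star := \min_j \delta_j > 0,
\]
whenever $\theta$ is sufficiently small.

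Finally I combine these ingredients. Evaluating at $n = N_{\rm out}$ yields
\[
n\theta_n \;=\; N_{\rm out}\,\theta \;\ge\; \bigl(N_{\rm in} + \delta_\star/(4\theta)\bigr)\theta \;\ge\; \delta_\star/4.
\]
Because $\theta$ is preserved on each arc but $\theta_n \to 0$, the trajectory must cross infinitely many nodes, so the sequence of exit times $N_{\rm out} \to +\infty$ is infinite. Hence $\limsup_{n\to+\infty} n\theta_n \ge \delta_\star/4 > 0$, contradicting the assumption and proving the proposition. The only potentially delicate step I anticipate is the initial reduction to sliding trajectories, but this is a straightforward chord-length argument; all other steps use only Lemma~\ref{lem:BilliardProperties} and elementary algebra.
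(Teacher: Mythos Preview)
Your proof is correct, and it takes a genuinely different route from the paper's argument. The paper argues indirectly: it first recalls that the trajectories constructed via the symbolic dynamics satisfy $1/\theta_n \le b|n|$, then invokes part~(\ref{item:Balint_etal}) of Lemma~\ref{lem:BilliardProperties} (the lower bound $\bar\theta \ge (\mu_j-\epsilon)\theta$ at each transition) to claim that \emph{every} sliding trajectory obeys a similar bound with a slightly worse constant $\hat b$, obtained by redoing the earlier estimates with $\mu_j$ replaced by $\mu_j-\epsilon$. In contrast, your argument bypasses the symbolic machinery entirely: you simply observe that on each arc $\Gamma_j$ the angle is frozen at some value $\theta$ while the angular position advances by $2\theta$ per step, so traversing an arc of central angle $\delta_j$ already costs $\gtrsim \delta_j/(2\theta)$ impacts, forcing $N_{\rm out}\theta_{N_{\rm out}} \ge \delta_\star/4$ at every exit time. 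This is more elementary and yields an explicit lower bound for $\limsup n\theta_n$.

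Two small remarks. First, the position bound $\varphi_{N_{\rm in}} - a_j \le 2\theta$ (and the analogous exit bound) is not the \emph{statement} of part~(\ref{item:Balint_etal}) but rather the B\'alint~\emph{et~al.}\ identity $\bar\varphi = a_{j+1} + \varphi^-$, $\varphi^- \in [0,2\bar\theta]$, quoted in its \emph{proof}; you may want to cite it that way. Second, the reduction to sliding is cleaner via continuity of the billiard map at the boundary (since $f(\varphi,0)=(\varphi,0)$, small $\theta$ forces $\bar\varphi-\varphi$ to be smaller than $\min_j\delta_j$), rather than the chord-length heuristic; but either way the point is routine, as you note.
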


\begin{proof}
We have already proved that all asymptotic billiard trajectories
that give rise to admissible sequences of symbols satisfy an upper bound
of the form
\[
1/\theta_n \le b |n|, \qquad \forall |n| \gg 1
\]
for some uniform constant $b > 0$
The problem is that there could be some \emph{slightly faster}
billiard trajectories that \emph{do not} give rise to admissible sequences.

For instance,
if we look at the fundamental quadrilateral $\mathcal{Q}_{j,n}$
displayed in Figure~\ref{fig:FundamentalQuadrilaterals} and its
image $f^n(Q_{j,n})$ displayed in Figure~\ref{fig:FundamentalLemma},
we see that all points $x \in \mathcal{Q}_{j,n}$ close enough to
$\mathcal{L}_{j+1}^{-n+1/2}$ have an image $f^n(x)$
below the lowest admissible fundamental quadrilateral
$\mathcal{Q}_{j+1,m}$ with $m = \max \{ n' \ge \chi_1 : (n,n') \in \Xi_j\}$.
Therefore, since we only deal with admissible sequences of symbols,
we have `lost' the lower non-admissible portion of the red quadrilateral
with parabolic shape in Figure~\ref{fig:FundamentalLemma}.

However, part~(\ref{item:Balint_etal}) of Lemma~\ref{lem:BilliardProperties}
shows that, once we fix any
$\epsilon \in \big( 0,\min\{ \mu_1,\ldots,\mu_k \} \big)$,
we have
\[
\Pi_\theta \big( f^n(x) \big) \ge (\mu_j - \epsilon) \Pi_\theta(x),
\qquad \forall x \in \mathcal{Q}_{j,n}, \quad
\forall j \mod k,\quad
\forall n \gg 1,
\]
provided $\mu_j < 1$,
so these lower non-admissible portions can not be much lower than
the ones that we have already taken into account.
This means that if we repeat the computations of all constants
that appear along our proofs,
but replacing $\mu_j$ with $\mu_j - \epsilon$ provided $\mu_j < 1$,
then we obtain a new uniform constant $\hat{b} \in (b,+\infty)$ such that
\[
1/\theta_n \le \hat{b} |n|, \qquad \forall |n| \gg 1
\]
for all billiard trajectories, with no exceptions.
\end{proof}

\section{On the number of periodic trajectories}
\label{sec:PeriodicTrajectories}

In this section,
we construct exponentially large (in $q$) lower bounds on the number
of periodic trajectories of period $q$,
thus proving Theorem~\ref{thm:PeriodicIntro}.
The strategy of the proof is to use the lower
bound~\eqref{eq:LowerBoundPpq}
provided in Corollary~\ref{cor:ConvexPolytopes}.
In Section~\ref{ssec:StatementResults} we state the main results.
Then Section~\ref{ssec:ProofPolynomial} contains the proof of
a general polynomial lower bound from which we deduce the
asymptotic exponential lower bound in Section~\ref{ssec:ProofAsymptotic}.

\subsection{Statement of the results}
\label{ssec:StatementResults}

Recall that factors $\alpha^\pm_j$ from the fundamental lemma
satisfy hypotheses ({\bf A}) and~({\bf B}) (see Lemma~\ref{lem:Factors}).
Throughout this section we do not increase the size of $\chi_j$.
Indeed,
we no longer need the estimates contained in
Lemma~\ref{lem:AdmissibleSymbols},
although we still need those contained in Lemma~\ref{lem:MinMax}.
So, we may consider significantly smaller integers $\chi_j$.
For instance, we may take~\eqref{eq:chi_mulessthan1} when $\mu_j < 1$.
Recall also the unbounded convex polytope $P^{(p)} \subset \Rset^{n+1}$ 
introduced in Corollary~\ref{cor:ConvexPolytopes},
with $p \in \Nset$ and $n+1 = kp$.

Let $\Pi(p,q)$ be the set of $(p,q)$-periodic billiard
trajectories for any $1 \le p < q$.
Let $\Pi(q) = \cup_{1 \le p < q} \Pi(p,q)$
be the set of all periodic trajectories with period $q$.
We state three lower bounds on the number of periodic billiard
trajectories in the theorem below.
First, a polynomial general lower bound of $\# \Pi(p,q)$.
Second, an exponential asymptotic lower bound of
$\# \Pi(q)$ as $q \to +\infty$.
Third,
a polynomial asymptotic lower bound of $\# \Pi(p,q)$
as $q \to +\infty$, for any fixed $p \in \Nset$.
The symbol~$\#$ denotes the \emph{cardinality} of a set.
The \emph{floor} and \emph{ceiling} functions are denoted with symbols
$\lfloor \cdot \rfloor$ and $\lceil \cdot \rceil$.

\begin{theorem}
\label{thm:LowerBound}
If $\Gamma$ is a circular $k$-gon and $p \in \Nset$,
there are constants
$a_\star, b_\star, h_\star, x_\star, M_\star, c_\star(p) > 0$
such that the following three lower bounds hold:
\begin{enumerate}[(a)]
\item
\(
\displaystyle
\# \Pi(p,q) \ge
2\left( a_\star q/kp - b_\star \right)^{kp-1}/kp
\)
for all $q > b_\star k p/a_\star$.

\item
$\# \Pi(q) \ge
 \# \Pi(p,q) \ge
 M_\star \rme^{h_\star q}/q$
when $p = \lfloor x_\star q/k\rfloor$ and $q \to +\infty$.

\item
$\# \Pi(p,q) \ge c_\star q^{kp - 1} + \Order(q^{kp - 2})$
as $q \to +\infty$ for any fixed $p \in \Nset$.
\end{enumerate}
\end{theorem}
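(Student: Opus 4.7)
The starting point is Corollary~\ref{cor:ConvexPolytopes}, which already reduces everything to a lattice-point count:
\[
\#\Pi(p,q) \ge 2^{kp}\, G_q\big(P^{(p)}\big),
\]
where $P^{(p)}\subset\Rset^{kp}$ is the unbounded convex polytope in~\eqref{eq:UnboundedPolytope}. The plan is to inscribe a rectangular box of common side length $L$ in $P^{(p)}$, and then to count the lattice points of that box lying on the hyperplane $\{x_1+\cdots+x_{kp}=q\}$.

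To embed the box, I would exploit the $k$-periodicity of the data $\alpha_j^{\pm}$, $\beta_j^{\pm}$ together with hypothesis~(B) (Lemma~\ref{lem:Factors}). Choosing the geometric-mean ratios $r_j=\sqrt{\alpha_j^-\alpha_j^+}\in(\alpha_j^-,\alpha_j^+)$, one has the cyclic closure $\prod_{j=1}^k r_j=\sqrt{\alpha^-\alpha^+}=1$. Setting $c_1=1$ and $c_{j+1}=r_jc_j$ extends $k$-periodically to a diagonal $\mathbf{c}=(c_1,\ldots,c_{kp})$ strictly inside the cone $\alpha_j^-x_j<x_{j+1}<\alpha_j^+x_j$, with slack bounded away from zero uniformly in $p$. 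Then for $\lambda>0$ large and $L\le a'\lambda-b'$ with constants $a',b'>0$ depending only on the $k$-periodic data, the box $B_\lambda=\prod_{j=1}^{kp}[\lambda c_j,\lambda c_j+L]$ lies inside $P^{(p)}$.

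For the lattice count, substitute $y_j=x_j-\lambda c_j\in\{0,\ldots,L\}$; the count equals the coefficient $a_r$ of $t^r$ in $(1+t+\cdots+t^L)^{kp}$ at $r=q-\lambda\sum c_j$. The total mass is $(L+1)^{kp}$ spread over $r\in[0,kpL]$, so the average is $\sim L^{kp-1}/(kp)$; the sequence $(a_r)$ is log-concave (a convolution of log-concave uniforms) and symmetric about $r^\ast=kpL/2$, hence $a_r\ge L^{kp-1}/(kp)$ for any $r$ in a sufficiently wide central band around $r^\ast$. Choosing $\lambda$ so that $r$ closely approximates $r^\ast$ uses $\sum_{j=1}^{kp}c_j=pS$ with $S=\sum_{j=1}^kc_j$ and yields $L\ge a_\star q/(kp)-b_\star$ for constants $a_\star=a'k/(S+ka'/2)$ and $b_\star>0$ independent of $p$. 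Combined with the $2^{kp}\ge 2$ factor from Corollary~\ref{cor:ConvexPolytopes}, this proves part~(a).

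Parts~(b) and~(c) then follow from~(a) by optimization. For~(b), setting $p=\lfloor x_\star q/k\rfloor$ makes $kp\sim x_\star q$, so the base $a_\star q/(kp)-b_\star$ tends to the constant $a_\star/x_\star-b_\star$; choosing $x_\star\in(0,a_\star/(1+b_\star))$ forces this constant to exceed $1$, which yields $\#\Pi(p,q)\ge M_\star\rme^{h_\star q}/q$ with $h_\star=x_\star\log(a_\star/x_\star-b_\star)>0$. For~(c) with fixed $p$, a standard lattice-point asymptotic for the bounded $(kp-1)$-dimensional convex slice $\{\mathbf{x}\in P^{(p)}:\sum x_j=q\}$ (which scales linearly with $q$) gives $G_q(P^{(p)})=c(p)q^{kp-1}+\Order(q^{kp-2})$ as $q\to+\infty$, with $c(p)$ equal to the $(kp-1)$-dimensional volume of the slice at $q=1$; multiplying by $2^{kp}$ yields $c_\star(p)$. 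The main technical obstacle is the compatibility check: verifying that the unimodality-based lower bound $L^{kp-1}/(kp)$ actually applies at the specific value of $r=q-\lambda\sum c_j$ produced by our construction, and that every geometric constant in the argument truly depends only on the $k$-periodic data rather than on $p$, which reduces to the $k$-periodicity of both the polytope constraints and the diagonal $\mathbf{c}$.
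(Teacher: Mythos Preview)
Your approach is essentially the same as the paper's: both inscribe a cube in $P^{(p)}$ along the geometric-mean ray $\alpha_j=\sqrt{\alpha_j^-\alpha_j^+}$ (your $r_j$, $c_j$ are the paper's $\alpha_j$, $A_j$), then bound the lattice count on the hyperplane $\sum x_j=q$ via unimodality of the coefficients of $(1+t+\cdots+t^L)^{kp}$, and finally optimize in $p/q$ for~(b) and read off the leading term for~(c).

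The one place where the paper is cleaner is precisely the technical obstacle you flag. Rather than arguing that $a_r$ is large ``in a central band'' (log-concavity alone does not give this without an extra variance estimate), the paper first rounds the real center $\ov$ with $\sum o_j=q$ to an integer point $\ov'$ with the same sum, then takes the cube $[0..2m]^{kp}$ shifted so that the required sum is \emph{exactly} the peak index $(kp)m$; the averaging bound $G_{(kp)m}([0..2m]^{kp})\ge (2m+1)^{kp-1}/(kp)$ then applies directly with no band argument. For~(b) the paper goes further and optimizes $x_\star$ explicitly via the Lambert $W$ function, whereas you only need existence of some $x_\star$ with $a_\star/x_\star-b_\star>1$, which suffices for the theorem as stated. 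For~(c) the paper derives the crude $c_\star(p)=2(a_\star)^{kp-1}/(kp)^{kp}$ straight from~(a); the sharper volume asymptotic you invoke is treated separately (Proposition~\ref{prop:Optimalc}).
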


\begin{remark}
\label{rem:Constants}
We give explicit expressions for all involved constants.
We can take
\begin{align*}
a_\star &=
4\min
\left\{
\frac{(\alpha_1 - \alpha^-_1) A_1}{(1 + \alpha^-_1)A},
\frac{(\alpha^+_1 - \alpha_1) A_1}{(1 + \alpha^+_1)A},
\ldots,
\frac{(\alpha_k - \alpha^-_k) A_k}{(1 + \alpha^-_k)A},
\frac{(\alpha^+_k - \alpha_k) A_k}{(1 + \alpha^+_k)A}
\right\}, \\
b_\star &= 6 + 4 \max \{ \chi_1,\ldots,\chi_k \}, \\
h_\star &= a_\star W_0(b_\star/\rme)/b_\star,\\
x_\star &= a_\star W_0(b_\star/\rme)/((1 + W_0(b_\star/\rme))b_\star), \\
M_\star &= 2(a_\star / x_\star - b_\star)^{-k-1}/x_\star,\\
c_\star(p) &= 2 (a_\star)^{kp-1}/(kp)^{kp},
\end{align*}
where
$\alpha_j = \sqrt{\alpha^-_j \alpha^+_j}$,
$A_j = \prod_{i=1}^{j-1} \alpha_i$,
$A = \frac{1}{k} \sum_{j=1}^k A_j$ and
$W_0:[-1/\rme,+\infty) \to [-1,+\infty)$ is the real part of
the principal branch of the Lambert $W$ function.
Note that $A_{k+1} = A_1 = 1$ by hypothesis ({\bf B}).
Therefore, $A_j = A_{j \mod k}$.
Function~$W_0(x)$ is implicitly determined by relations
$W_0( x \rme^x) = x$ for all $x \ge -1$ and
$W_0(x) \rme^{W_0(x)} = x$ for all $x \ge -1/\rme$,
see~\cite{Corless_etal1996}.
\end{remark}

The exponent $h_\star = a_\star W_0(b_\star/\rme)/b_\star > 0$
in the exponentially small lower bound
is the most important constant in Theorem~\ref{thm:LowerBound}.
It is `proportional' to $a_\star$.
We note that there is $i \in \{1,\ldots,k\}$ such that
$A = \frac{1}{k} \sum_{j=1}^k A_j \ge A_i$,
so $a_\star < 4$.
The exponent $h_\star$ also depends on $b_\star$ through
the Lambert function $W_0$, but we believe that this
is due to the techniques used and does not come from
any fundamental characteristic of the problem.
It is known that $W_0(x)/x$ is decreasing for $x > 0$,
$\lim_{x \to 0^+} W_0(x)/x = W'(0) = 1$ and
$W_0(x)/x$ is asymptotic to $\frac{\log x}{x}$ as $x \to +\infty$.
Hence, $h_\star < a_\star/\rme < 4/\rme$ for any $\Gamma$.
We conclude that the expression
$h_\star = a_\star W_0(b_\star/\rme)/b_\star$ is, by no means,
optimal.
If $\Gamma$ tends to a circle,
then $\alpha_j^-$ and $\alpha_j^+$ become closer and closer,
so $h_\star$ tends to zero.

The optimal constant $c_\star(p)$ that satisfies the third bound
can be way bigger than the crude value
$c_\star(p) = 2 (a_\star)^{kp-1}/(kp)^{kp}$
obtained directly from the first bound.
We give a way to compute the optimal value
$c_\star(p) = 2^{kp} \lim_{q \to +\infty} q^{1-kp} G_q\big( P^{(p)} \big)$
in Proposition~\ref{prop:Optimalc},
whose proof is postponed to Appendix~\ref{app:Optimalc}.
If $P$ is a Jordan measurable set of $\Rset^n$,
let $\Volume(P)$ be its \emph{$n$-dimensional volume}.
Let
$H_{n+1} = \left\{ \xv \in \Rset^{n+1} : x_1 + \cdots + x_{n+1} = 1 \right\}$.
Let $\Pi_{n+1}: \Rset^{n+1} \to \Rset^n$ be the projection
\[
\xv = (x_1,\ldots,x_{n+1}) \mapsto \tilde{\xv} = (x_1,\ldots,x_n).
\]
Projected objects onto $\Rset^n$ are distinguished with a tilde.
Recall that $n +1 = pk$.

\begin{proposition}
\label{prop:Optimalc}
\begin{enumerate}[(a)]
\item
If $\Gamma$ is a circular $k$-gon and $p \in \Nset$, then
\[
\# \Pi(p,q) \ge 2^{kp} G_q \big( P^{(p)} \big) \ge
2^{kp} V \big( \tilde{K}^{(p)}_{\infty} \big) q^{kp-1} + \Order(q^{kp-2})
\quad \mbox{ as $q \to +\infty$},
\]
where
$\tilde{K}^{(p)}_\infty = \overline{\lim_{q \to +\infty} \tilde{P}^{(p)}_q}$
is the closure of the limit of the bounded convex polytopes
\[
\tilde{P}^{(p)}_q = \Pi_{n+1}\big( P^{(p)}_q \big),\qquad
P^{(p)}_q = P^{(p)}/q \cap H_{n+1},\qquad
P^{(p)}/q = \{ \xv /q : \xv \in P^{(p)} \},
\]
which are computed by $q$-contraction,
section with hyperplane $H_{n+1}$ and
projection by $\Pi_{n+1}$ of the unbounded
convex polytope $P^{(p)}$ defined in~\eqref{eq:UnboundedPolytope}.
\item
This lower bound is optimal in the sense that
\[
\lim_{q \to +\infty} q^{1-kp} G_q\big( P^{(p)} \big) =
\Volume\big( \tilde{K}^{(p)}_\infty \big).
\]
\item
The half-space representation of the limit compact convex polytope is
\begin{equation}
\label{eq:ProjectedLimitPolytope}
\tilde{K}^{(p)}_\infty =
\overline{\lim_{q \to +\infty} \tilde{P}^{(p)}_q} =
\left\{
\tilde{\xv} \in \Rset^n :
\begin{array}{l}
\alpha^-_j x_j \le x_{j+1} \le \alpha^+_j x_j,
\quad \forall j=1,\ldots,n-1 \\
\alpha^-_n x_n \le 1 - \varsigma(\tilde{\xv}) \le \alpha^+_n x_n \\
\alpha^-_{n+1} (1 - \varsigma(\tilde{\xv})) \le x_1 \le
\alpha^+_{n+1} (1 - \varsigma(\tilde{\xv})) \\
x_j \ge 0, \quad \forall j=1,\ldots,n \\
\varsigma(\tilde{\xv}) \le 1
\end{array}
\right\},
\end{equation}
where $\varsigma(\tilde{\xv}) = x_1 + \cdots + x_n$.
\end{enumerate}
\end{proposition}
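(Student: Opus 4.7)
The first inequality in part~(a) is exactly~\eqref{eq:LowerBoundPpq}, so the entire content of the proposition reduces to proving~(b) and~(c) and then combining the optimal limit with Corollary~\ref{cor:ConvexPolytopes}. Write $n = kp-1$, so that $P^{(p)} \subset \Rset^{n+1}$ and $\tilde{P}^{(p)}_q \subset \Rset^n$. The plan has two stages: first identify the limit body $\tilde{K}^{(p)}_\infty$ explicitly (part~(c)), then count integer points in $q \tilde{P}^{(p)}_q$ via an Ehrhart-type sandwich that accommodates the $q$-dependence of the polytope (part~(b)), from which the asymptotic lower bound in~(a) follows by multiplying by $2^{kp}$.

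For part~(c), I would substitute $y_j = x_j/q$ in the defining inequalities of $P^{(p)}$. The strict affine inequalities $\alpha^-_j x_j + \beta^-_j < x_{j+1} < \alpha^+_j x_j - \beta^+_j$ become $\alpha^-_j y_j + \beta^-_j/q < y_{j+1} < \alpha^+_j y_j - \beta^+_j/q$, and the lower bounds $x_j \ge \chi_j$ become $y_j \ge \chi_j/q$. Intersecting $P^{(p)}/q$ with $H_{n+1}$ eliminates $y_{n+1} = 1 - \varsigma(\tilde{y})$, and the non-negativity $y_{n+1} \ge 0$ becomes $\varsigma(\tilde{y}) \le 1$. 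Taking closures and passing to the pointwise limit $q \to +\infty$ kills every $\Order(1/q)$ additive perturbation and yields precisely the half-space description~\eqref{eq:ProjectedLimitPolytope}. Compactness of $\tilde{K}^{(p)}_\infty$ follows from $x_j \ge 0$ and $\varsigma(\tilde{\xv}) \le 1$; non-emptiness of interior follows because hypothesis~({\bf A}) gives $\alpha^-_j < \alpha^+_j$ so each strip constraint is non-degenerate, and hypothesis~({\bf B}) ensures the cyclic chain closes up consistently, so the ray $x_j \propto A_j$ sits strictly inside the cone once normalized.

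For part~(b), an elementary bijection that drops $x_{n+1} = q - \varsigma(\tilde{\xv})$ (an integer whenever $q$ and the remaining coordinates are) yields $G_q(P^{(p)}) = \#(q \tilde{P}^{(p)}_q \cap \Zset^n)$. The main obstacle is that $\tilde{P}^{(p)}_q$ itself moves with $q$, so Ehrhart's theorem does not apply verbatim. To overcome this, for each $\epsilon > 0$ I would construct fixed convex polytopes $\tilde{K}^-_\epsilon \subset \tilde{K}^{(p)}_\infty \subset \tilde{K}^+_\epsilon$ by tightening, respectively loosening, each linear inequality of~\eqref{eq:ProjectedLimitPolytope} by $\epsilon$. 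Since the additive perturbations in the inequalities defining $\tilde{P}^{(p)}_q$ are all of order $\Order(1/q)$, the inclusions $\tilde{K}^-_\epsilon \subset \tilde{P}^{(p)}_q \subset \tilde{K}^+_\epsilon$ hold for every $q$ large enough depending on $\epsilon$. The classical lattice-point asymptotic $\#(qK \cap \Zset^n) = \Volume(K) q^n + \Order(q^{n-1})$ for a fixed bounded convex body $K$ with non-empty interior applies to $\tilde{K}^\pm_\epsilon$, and combined with the sandwich plus the continuity $\Volume(\tilde{K}^\pm_\epsilon) \to \Volume(\tilde{K}^{(p)}_\infty)$ as $\epsilon \to 0^+$ it delivers $G_q(P^{(p)}) = \Volume(\tilde{K}^{(p)}_\infty) q^n + \Order(q^{n-1})$, which is the limit of part~(b) and, after multiplication by $2^{kp}$ and invocation of~\eqref{eq:LowerBoundPpq}, the refined lower bound in~(a). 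The technical heart is the uniform boundary-layer estimate that controls the $\Order(q^{n-1})$ correction by a surface-area bound on $\partial \tilde{K}^\pm_\epsilon$: standard for simple polytopes, but requiring that constants be tracked uniformly in $\epsilon$ as $\epsilon \to 0^+$ so that the two sides of the sandwich collapse to the same leading coefficient.
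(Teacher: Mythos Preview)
Your approach is correct and broadly parallel to the paper's: both reduce $G_q(P^{(p)})$ to $\#(q\tilde{P}^{(p)}_q \cap \Zset^n)$ via the same bijection, and both obtain part~(c) by the straightforward substitution $y_j=x_j/q$ and limit. The difference lies in the lattice-point estimate. For the lower bound with the $\Order(q^{n-1})$ remainder in part~(a), the paper invokes Wills' inequality $\#(tK\cap\Zset^n)\ge \Volume(K)\big(t-\sqrt{n}/2\Inradius(K)\big)^n$ (extended to open bounded convex sets by a translation trick), applied directly to $\tilde{P}^{(p)}_q$ with $t=q$ and $s=q-1$; it then uses that $\Volume(\tilde{K}^{(p)}_q)=\Volume(\tilde{K}^{(p)}_\infty)+\Order(1/q)$ and $\Inradius(\tilde{K}^{(p)}_q)=\Inradius(\tilde{K}^{(p)}_\infty)+\Order(1/q)$, both immediate from comparing the half-space representations facet by facet. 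Your $\epsilon$-sandwich reaches the same conclusion without citing Wills, and the uniformity issue you flag is genuinely needed only for the $\Order(q^{n-1})$ remainder, not for the bare limit in part~(b). For the upper bound in part~(b) the paper simply exploits the inclusion $\tilde{K}^{(p)}_q\subset\tilde{K}^{(p)}_\infty$ (evident because every constraint of $\tilde{K}^{(p)}_q$ is a tightening of the corresponding constraint of $\tilde{K}^{(p)}_\infty$) together with the Jordan-measurability characterisation of volume, so your outer polytope $\tilde{K}^+_\epsilon$ is unnecessary. Both routes are valid; the paper's is shorter by virtue of the external reference, yours more self-contained.
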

There exist several algorithms to compute the volume of
compact convex polytopes from their half-space representations,
so expression~\eqref{eq:ProjectedLimitPolytope} can be used
to compute $V\big( \tilde{K}^{(p)}_\infty \big)$.

\subsection{Proof of the polynomial general lower bound}
\label{ssec:ProofPolynomial}

Recall that $P^{(p)}$ is the unbounded convex 
polytope~\eqref{eq:UnboundedPolytope}.
We will introduce a cube
\begin{equation}
\label{eq:Cube}
K = \{ \xv \in \Rset^{n+1} : |\xv - \ov|_\infty \le t \},
\end{equation}
which is the ball centered at the point $\ov \in \Rset^{n+1}$
of radius $t$ in the infinity norm $| \cdot |_\infty$.
Its center $\ov=(o_1,\ldots,o_{n+1})$ will have three key properties:
1)~$\ov \in P^{(p)}$, 2)~$\sum_{j=1}^{n+1} o_j = q$, and
3)~$o_j = o_{j \mod k}$.
Then, radius $t$ is taken as the largest value such that $K \subset P^{(p)}$.
For convenience, we will not make explicit the dependence of $K$ on
the integers $1 \le p < q$.

\begin{lemma}
\label{lem:ContainedCube}
Let $k,n,p,q \in \Nset$ such that $1 \le p < q$ and $n+1 = kp$.
Recall constants listed in Remark~\ref{rem:Constants}.
If $\kappa_\star = a_\star/4$,
$\tau_\star = \max \{ \chi_1,\ldots,\chi_k \}$,
$0 \le t < t_\star = \kappa_\star q/(n+1) - \tau_\star$ and
\[
\ov = (o_1,\ldots,o_{n+1}) \in \Rset^{n+1}, \qquad
o_j = \frac{qA_{j \mod k}}{(n+1)A},
\]
then $o_1 + \cdots + o_{n+1} = q$ and
$K = \{ \xv \in \Rset^{n+1} : |\xv - \ov|_\infty \le t \} \subset P^{(p)}$.
\end{lemma}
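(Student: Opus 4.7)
The plan is to establish the two claims of the lemma separately. For the sum identity $\sum_{j=1}^{n+1} o_j = q$, since $n+1 = kp$ and $o_j = o_{j \mod k}$ is $k$-periodic, I would regroup the sum as $p$ copies of $\sum_{j=1}^k o_j$, and then use $A = \frac{1}{k}\sum_{j=1}^k A_j$ to see that $\sum_{j=1}^k o_j = \frac{q}{(n+1)A} \sum_{j=1}^k A_j = kq/(n+1)$, so the total is $pkq/(n+1) = q$.

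For the containment $K \subset P^{(p)}$, I write a generic point of $K$ as $\xv = \ov + \boldsymbol{\epsilon}$ with $|\boldsymbol{\epsilon}|_\infty \le t$ and check each defining inequality of $P^{(p)}$. The crucial algebraic fact is $o_{j+1} = \alpha_j o_j$, which follows from $A_{j+1} = A_j \alpha_j$ (and the cyclic identity $A_{k+1} = A_1 = 1$ that makes the last inequality of $P^{(p)}$ uniform with the others). For the lower cyclic inequality, this gives
\[
x_{j+1} - \alpha_j^- x_j = (\alpha_j - \alpha_j^-) o_j + \epsilon_{j+1} - \alpha_j^- \epsilon_j \ge (\alpha_j - \alpha_j^-) o_j - (1+\alpha_j^-)t,
\]
which exceeds $\beta_j^- = 1 + \alpha_j^-$ as soon as $t < \frac{(\alpha_j - \alpha_j^-) A_j}{(1+\alpha_j^-) A} \cdot \frac{q}{n+1} - 1$. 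By the very definition of $a_\star$, the prefactor is $\ge \kappa_\star = a_\star/4$, so $t < \kappa_\star q/(n+1) - 1$ suffices; and this is weaker than $t < t_\star$ since $\tau_\star \ge 2 > 1$. The upper cyclic inequality $x_{j+1} < \alpha_j^+ x_j - \beta_j^+$ is handled symmetrically, producing the same sufficient bound using the other half of the minimum that defines $a_\star$.

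The only constraint I expect to require a small additional argument is $x_j \ge \chi_j$. From $x_j \ge o_j - t > o_j - t_\star = \tau_\star + \frac{q(A_j - \kappa_\star A)}{(n+1)A}$, this reduces to showing that $A_j \ge \kappa_\star A$ for every $j$, since then the second summand is non-negative and $\tau_\star \ge \chi_j$. This is the one nontrivial step, but it is a direct consequence of the half of the minimum in $a_\star$ associated to $\alpha_j^+$: the inequality $\kappa_\star \le (\alpha_j^+ - \alpha_j) A_j/((1+\alpha_j^+) A)$ rearranges to
\[
A_j/A \ge \kappa_\star \cdot \frac{1+\alpha_j^+}{\alpha_j^+ - \alpha_j},
\]
and the factor $(1+\alpha_j^+)/(\alpha_j^+ - \alpha_j)$ is at least $1$ because $\alpha_j^+ > \alpha_j$ (since $\alpha_j = \sqrt{\alpha_j^- \alpha_j^+}$ and $\alpha_j^- < \alpha_j^+$ by hypothesis~{\rm ({\bf A})}) and $1 + \alpha_j > 0$. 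So $A_j \ge \kappa_\star A$, which closes the argument. Note that the analogous manipulation with the $\alpha_j^-$ side alone does not give this bound, which is why the symmetric definition of $a_\star$ is essential and why the lemma chose the center $\ov$ at the geometric mean $\alpha_j = \sqrt{\alpha_j^- \alpha_j^+}$ rather than some other weighting.
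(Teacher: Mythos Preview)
Your proof is correct and follows essentially the same approach as the paper's: both use $o_{j+1} = \alpha_j o_j$ to reduce each defining inequality of $P^{(p)}$ to a linear bound on $t$ of the form $t < \kappa \cdot q/(n+1) - \tau$, and both identify $\kappa_\star = a_\star/4$ as the minimum of the relevant $\kappa$'s. The only cosmetic difference is that the paper packages the thresholds as $\kappa_j^\pm, \kappa_j, \tau_j^\pm, \tau_j$ and then notes that $\kappa_j^+ < \kappa_j$ (equivalently $(\alpha_j^+ - \alpha_j)/(1+\alpha_j^+) < 1$) to drop the $\kappa_j$'s from the minimum, whereas you verify $A_j \ge \kappa_\star A$ directly from the $\alpha_j^+$ half of the definition of $a_\star$ --- the same inequality in slightly different dress.
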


\begin{proof}
Clearly,
$o_1 + \cdots + o_{n+1} =
 \frac{q}{(n+1)A} \sum_{j=1}^{n+1} A_j =
 \frac{q p}{(n+1)A} \sum_{j=1}^k A_j =
 \frac{kp}{n+1}q = q$.

If $\xv \in K$, then $\xv = \ov + t \uv$ for some $\uv \in \Rset^{n+1}$
such that $|\uv|_\infty \le 1$.
With the suitable choice of the radius $t$,
the point $\xv$ satisfies the following three sets of inequalities
that define the unbounded convex polytope
$P^{(p)}$ given in~(\ref{eq:UnboundedPolytope}):
\begin{itemize}
\item
\emph{First set (with $2n$ inequalities).}
Since $o_{j+1} = \alpha_j o_j$ for all $j=1,\ldots,n$, we see that
\[
\alpha^-_j x_j + \beta^-_j < x_{j+1} < \alpha^+_j x_j - \beta^+_j
\Leftrightarrow
\left\{
\begin{array}{l}
(\alpha^-_j u_j - u_{j+1})t < (\alpha_j - \alpha^-_j)o_j - \beta^-_j \\
(u_{j+1} - \alpha^+_j u_j)t < (\alpha^+_j - \alpha_j)o_j - \beta^+_j
\end{array}
\right.
\]
\item
\emph{Second set (with $2$ inequalities).}
Since $A_1 = 1$ and
$A_{n+1} = A_k = \prod_{j=1}^{k-1} \alpha_j = 1/\alpha_k$,
we get that $o_1 = \alpha_k o_{n+1} = \alpha_k o_k$.
Besides,
$\beta^\pm_{n+1} = \beta^\pm_k$ and $\alpha^\pm_{n+1} = \alpha^\pm_k$.
Hence,
\[
\hspace{-3pt}
\alpha^-_{n+1} x_{n+1} + \beta^-_{n+1} < x_1 <
\alpha^+_{n+1} x_{n+1} - \beta^+_{n+1}
\Leftrightarrow
\left\{
\begin{array}{l}
(\alpha^-_k u_{n+1} - u_1)t < (\alpha_k - \alpha^-_k)o_k - \beta^-_k \\
(u_1 - \alpha^+_k u_{n+1})t < (\alpha^+_k - \alpha_k)o_k - \beta^+_k
\end{array}
\right.
\]
\item
\emph{Third set (with $n+1$ inequalities).}
$x_j \ge \chi_j \Leftrightarrow
-u_j t \le o_j - \chi_j$.
\end{itemize}
Let us analyse the RHS and LHS of the above $3n+3$ inequalities.
Coordinates $o_j$ can be as big as needed if we take $q/(n+1) \gg 1$,
because quotients $A_{j \mod k}/A$ do not depend on $p$, $q$ or $n$.
Thus, using that $\alpha^-_j < \alpha_j < \alpha^+_j$ for all $j=1,\ldots,k$,
all RHS can be made positive if we take $q/(n+1) \gg 1$.
On the other hand,
we can bound the LHS as follows:
\[
(\alpha^-_j u_j - u_{j+1}) t \le (1 + \alpha^-_j) t,\qquad
(u_{j+1} - \alpha^+_j u_j) t \le (1 + \alpha^+_j) t,\qquad
-u_j t \le t,
\]
because $|u_j| \le |\uv|_\infty \le 1$ for all $j=1,\ldots,n+1$
and $t \ge 0$.
Therefore, these $3n+3$ inequalities hold when we take any
$t \in [0, t_\star)$ with
\begin{align*}
t_\star
&=
\min
\left\{
\frac{(\alpha_j - \alpha^-_j)o_j - \beta^-_j}{1 + \alpha^-_j},
\frac{(\alpha^+_j - \alpha_j)o_j - \beta^+_j}{1 + \alpha^+_j},
o_j - \chi_j : j=1,\ldots,n+1
\right\} \\
&=
\min
\left\{
\kappa^-_j q/(n+1) - \tau^-_j, \kappa^+_j q/(n+1) - \tau^+_j,
\kappa_j q/(n+1) - \tau_j :
j=1,\ldots,k
\right\},
\end{align*}
where
\[
\kappa^\pm_j = \frac{|\alpha_j - \alpha^\pm_j| A_j}{(1 + \alpha^\pm_j)A}, \qquad
\kappa_j = \frac{A_j}{A}, \qquad
\tau^\pm_j = \frac{\beta^\pm_j}{1 + \alpha^\pm_j} = 1,\qquad
\tau_j = \chi_j.
\]
All these arguments imply that $K \subset P^{(p)}$
provided that $0 \le t < t_\star :=\kappa_\star q/(n+1) - \tau_\star$,
where
\[
\kappa_\star =
\min
\left\{
\kappa^-_1, \kappa^+_1, \kappa_1, \ldots, \kappa^-_k, \kappa^+_k, \kappa_k
\right\} > 0, \quad
\tau_\star =
\max\left\{
\tau^-_1, \tau^+_1, \tau_1, \ldots, \tau^-_k, \tau^+_k, \tau_k
\right\} > 0.
\]
These constants $\kappa_\star$ and $\tau_\star$
do not depend on $p$, $q$ or $n$.

We note that $(\alpha^+_j - \alpha_j)/(1+\alpha^+_j) < 1$.
Hence, $\kappa^+_j < \kappa_j$ and we can take
\[
\kappa_\star =
\min \left\{\kappa^-_1, \kappa^+_1, \ldots, \kappa^-_k,\kappa^+_k \right\} =
a_\star/4,
\]
see Remark~\ref{rem:Constants}.
\end{proof}

We look for a lower bound on $G_q(K)$,
where $K$ is a cube of the form~(\ref{eq:Cube}) such that
$\sum_{j=1}^{n+1} o_j= q$.
Note that $G_q\big( \{0,1\}^n \big) = \binom{n}{q}$,
so $G_q\big( \{0,1\}^{2q} \big)  = \binom{2q}{q} \ge 4^q/(2q+1)$
grows exponentially fast as $q \to +\infty$.
We want to generalise this idea.
Since there is no standard notation for the generalised
binomial coefficients that we need
---for instance, symbols $\binom{n,m}{q}$ and $\binom{n}{q}^{(m)}$
can be found in~\cite{Neuschel2014,Li2020}---,
we use our own notation.
Set
\[
[0..m] := \Zset \cap [0,m] = \{0,1,\ldots,m-1,m\}.
\]
Then $G_q\big( [0..m]^n \big)$ counts the number of \emph{weak compositions}
of $q$ into $n$ parts with no part exceeding $m$.
Note that $G_q\big( [0..m]^n \big) = 0$ for any $q \not \in [0..nm]$.
It is well know~\cite[section I.3]{FlajoletSedgewick2009} that
\[
\sum_{q=0}^{\infty} G_q\big( [0..m]^n \big) x^q =
(1 + x + x^2 + \cdots + x^m)^n.
\]
Using this polynomial identity,
Andrews~\cite{andrews1975polynomials} deduced that, once $m,n \in \Nset$ are fixed,
the sequence $G_q\big( [0..m]^n \big)$ is unimodal in $q$
and reaches its maximum at $q = \lfloor nm/2 \rfloor$.

\begin{lemma}
\label{lem:MultinomialCoefficients}
$G_{\lfloor nm/2 \rfloor}\big( [0..m]^n\big) \ge
 \frac{(m+1)^n}{nm+1} \ge
 \frac{(m+1)^{n-1}}{n}$ for all $m,n \in \Nset$.
\end{lemma}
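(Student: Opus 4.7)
The plan is to exploit two facts that the paper has already set up: the generating-function identity
\[
\sum_{q=0}^{\infty} G_q\big([0..m]^n\big)\, x^q = (1 + x + \cdots + x^m)^n,
\]
and Andrews' unimodality result, which has just been invoked, asserting that for fixed $m,n$ the sequence $\big(G_q([0..m]^n)\big)_{q=0}^{nm}$ is unimodal in $q$ with maximum at $q = \lfloor nm/2 \rfloor$.

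First I would substitute $x=1$ in the generating-function identity to obtain the total-mass relation $\sum_{q=0}^{nm} G_q([0..m]^n) = (m+1)^n$. Since this is a sum of exactly $nm+1$ non-negative integers, the maximum term is at least the average, namely $(m+1)^n/(nm+1)$. By Andrews' theorem, that maximum is attained at $q = \lfloor nm/2 \rfloor$, which yields the first inequality
\[
G_{\lfloor nm/2 \rfloor}\big([0..m]^n\big) \ge \frac{(m+1)^n}{nm+1}.
\]

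Second, the remaining inequality $(m+1)^n/(nm+1) \ge (m+1)^{n-1}/n$ is purely arithmetic: cross-multiplying by the positive quantities $n$ and $nm+1$, it reduces to $n(m+1) \ge nm+1$, i.e.\ $n \ge 1$, which holds for every $n \in \Nset$. Concatenating the two inequalities gives the claim.

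The only mildly delicate point is that Andrews' theorem is being cited rather than reproved, but this is clearly the paper's intention since it is stated in the paragraph preceding the lemma; no additional obstacle is anticipated.
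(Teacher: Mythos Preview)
Your proof is correct and follows essentially the same approach as the paper: sum the $nm+1$ nonnegative terms to get $(m+1)^n$, invoke Andrews' result that the maximum occurs at $q=\lfloor nm/2\rfloor$, and bound the maximum below by the average. The paper obtains the total $(m+1)^n$ by counting $\#\big([0..m]^n\big)$ directly rather than substituting $x=1$ in the generating function, but these are the same computation, and the paper leaves the elementary second inequality implicit.
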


\begin{proof}
It follows from
$\# [0..nm] = nm+1$,
$\sum_{q=0}^{nm} G_q\big( [0..m]^n \big) =
 \# \left( [0..m]^n \right) = (m+1)^n$, and inequalities
$G_q\big( [0..m]^n \big) \le G_{\lfloor nm/2 \rfloor}\big( [0..m]^n\big)$
for all $q \in [0..mn]$.
\end{proof}

Now we are ready to establish the lower bound on $G_q(K)$ that
we are looking for.

\begin{lemma}
\label{lem:LowerBoundCube}
Let $n,q \in \Nset$ and $t > 0$.
If $K$ is a cube of the form~(\ref{eq:Cube})
such that $\sum_{j=1}^{n+1} o_j = q$ and $t \ge 3/2$,
then
\begin{equation}
\label{eq:LowerBoundCube}
G_q(K) \ge \frac{(2t-3)^n}{n+1}.
\end{equation}
\end{lemma}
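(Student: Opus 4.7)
My plan is to find a carefully chosen integer shift $(c_1, \ldots, c_{n+1}) \in \Zset^{n+1}$ such that the shifted integer box $[c_1, c_1+m] \times \cdots \times [c_{n+1}, c_{n+1}+m]$ is entirely contained in $K$, and such that the constraint $\sum x_j = q$, after the substitution $y_j = x_j - c_j$, becomes the constraint $\sum y_j = \lfloor (n+1)m/2 \rfloor$. This is precisely the value of the sum at which Lemma~\ref{lem:MultinomialCoefficients} (applied with $n$ replaced by $n+1$) provides the largest count $G_{\lfloor (n+1)m/2 \rfloor}([0..m]^{n+1}) \ge (m+1)^n/(n+1)$. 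I will take $m = \lfloor 2t - 2 \rfloor$, which satisfies $m \ge 1$ (since $t \ge 3/2$) and $m + 1 > 2t - 3$, making the final comparison $(m+1)^n \ge (2t-3)^n$ automatic.

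The first step is to verify the geometric ingredient: for any integer $c_j$ with $|c_j - (o_j - m/2)| \le 1$, the interval $[c_j, c_j + m]$ is contained in $[o_j - t, o_j + t]$, provided that $m \le 2t - 2$. This is immediate from the triangle inequality, and ensures that once the shifts are admissible in the sense that $c_j \in \{\lfloor o_j - m/2 \rfloor, \lceil o_j - m/2 \rceil\}$, the shifted box sits inside $K$.

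The heart of the argument is to realize the arithmetic constraint $\sum c_j = q - \lfloor (n+1)m/2 \rfloor$ while keeping each $c_j \in \{ \lfloor o_j - m/2 \rfloor, \lceil o_j - m/2 \rceil \}$. Writing $\alpha_j = o_j - m/2 - \lfloor o_j - m/2 \rfloor \in [0,1)$ and letting $k$ denote the number of indices at which the ceiling is chosen, a direct calculation gives $\sum c_j = q - (n+1)m/2 - \sum \alpha_j + k$, so the target equation reduces to $k = \sum \alpha_j + \{(n+1)m/2 \}$, where $\{\cdot\}$ denotes the fractional part. The hypothesis $\sum o_j = q \in \Zset$ forces $\sum \alpha_j \equiv -(n+1)m/2 \pmod{1}$, which is exactly the congruence that makes the right-hand side a non-negative integer; and the bounds $0 \le \alpha_j < 1$ place the resulting value of $k$ in $\{ 0, 1, \ldots, n+1 \}$. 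Hence such an assignment of floors and ceilings really does exist.

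With such shifts in hand, the substitution $y_j = x_j - c_j$ identifies the integer points of the shifted box that satisfy $\sum x_j = q$ with the integer points of $[0..m]^{n+1}$ that satisfy $\sum y_j = \lfloor (n+1)m/2 \rfloor$. Applying Lemma~\ref{lem:MultinomialCoefficients} and using $m + 1 > 2t - 3$ then yields $G_q(K) \ge (m+1)^n/(n+1) \ge (2t-3)^n/(n+1)$, as desired. The only step that demands real care is the combinatorial realization of the shifts described in the previous paragraph; every other step reduces to book-keeping.
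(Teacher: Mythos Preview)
Your proof is correct and follows essentially the same strategy as the paper: embed an integer box inside $K$, shift so that the constraint $\sum x_j = q$ becomes the central value for which Lemma~\ref{lem:MultinomialCoefficients} applies, and read off the bound. The only differences are cosmetic---the paper first finds an integer center $\ov'$ with $|\ov-\ov'|_\infty\le 1$ and $\sum o'_j=q$ and then uses the box $\prod_j[o'_j-m',o'_j+m']$ with $m'=\lfloor t\rfloor-1$, whereas you locate the corner $\cv$ directly and take side length $m=\lfloor 2t-2\rfloor$; one small point to tighten is that you need $k\le |\{j:\alpha_j>0\}|$ (not merely $k\le n+1$) so that the $k$ ceiling-choices can be made at indices where floor and ceiling actually differ, but this follows immediately from $k=\sum\alpha_j+\{(n+1)m/2\}<|\{j:\alpha_j>0\}|+1$.
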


\begin{proof}
There exists an integer point
$\ov' \in \Zset^{n+1}$ such that $|\ov - \ov'|_\infty \le 1$
and $\sum_{j=1}^{n+1} o'_j = q$.
If $\ov \in \Zset^{n+1}$, we take $\ov' = \ov$.
If $\ov \not \in \Zset^{n+1}$, we can take, for instance,
\[
o'_j =
\begin{cases}
\lfloor o_j \rfloor + 1, & \text{for } j \le i, \\
\lfloor o_j \rfloor,     & \text{otherwise},
\end{cases}
\]
where $i = q - \sum_{j=1}^{n+1} \lfloor o_j \rfloor \in [1..n]$,
so that
$\sum_{j=1}^{n+1} o'_j =
 i + \sum_{j=1}^{n+1} \lfloor o_j \rfloor = q$.

Set $m = \lfloor t \rfloor - 1 \in \Nset \cup \{ 0 \}$ and
$v_j = o'_j - m$.
Clearly, $[v_j, v_j + 2m] \subset [o_j - t, o_j + t]$.
Hence, given any $\yv \in [0..2m]^{n+1}$
such that $\sum_{j=1}^{n+1} y_j = (n+1)m$,
the sum of the components of the vector
$\xv = \yv + \vv \in K \cap \Zset^{n+1}$ is equal to
\[
\sum_{j=1}^{n+1} x_j =
(n+1)m + \left( \sum_{j=1}^{n+1} o'_j \right) - (n+1)m =
q.
\]
Besides, the correspondence
$[0..2m]^{n+1} \ni \yv \mapsto \xv = \yv + \vv \in K \cap \Zset^{n+1}$
is injective, which implies that
\[
G_q(K) \ge
G_{(n+1)m} \big( [0..2m]^{n+1} \big) =
G_{\lfloor (n+1)2m/2\rfloor} \big( [0..2m]^{n+1} \big) \ge
\frac{(2m+1)^n}{n+1} \ge
\frac{(2t-3)^n}{n+1}.
\]
We have used Lemma~\ref{lem:MultinomialCoefficients}
and $m = \lfloor t \rfloor - 1 \ge t - 2$
in the last two inequalities.
\end{proof}

To end, we prove the first lower bound stated in
Theorem~\ref{thm:LowerBound}.

\emph{Proof of the polynomial general lower bound.}
This bound follows from bound~(\ref{eq:LowerBoundPpq}),
the inclusion $K \subset P^{(p)}$, bound~(\ref{eq:LowerBoundCube}),
condition $0 \le t < t_\star := \kappa_\star q/(n+1) - \tau_\star$
required in Lemma~\ref{lem:ContainedCube},
and the identities $a_\star = 4 \kappa_\star$,
$b_\star = 4\tau_\star + 6$ and $n+1 = kp$.
Namely,
\begin{align*}
\# \Pi(p,q) &\ge
2^{n+1} G_q(P^{(p)}) \ge
\max_{t \in [3/2,t_\star)} \big\{ 2^{n+1} G_q(K) \big \} \ge
\max_{t \in [3/2,t_\star)} \frac{2(4t-6)^n}{n+1} =
\frac{2(4t_\star-6)^n}{n+1} \\
&=
\frac{2}{n+1}
\left( \frac{4\kappa_\star q}{n+1} - 4\tau_\star - 6 \right)^n =
\frac{2}{n+1} \left( \frac{a_\star q}{n+1} - b_\star \right)^n =
\frac{2}{kp} \left( \frac{a_\star q}{kp} - b_\star \right)^{kp-1}.
\end{align*}
Note that $[3/2,t_\star) \neq \emptyset$ since
$q > b_\star kp/a_\star$ implies that
$t_\star = \kappa_\star q/(n+1) - \tau_\star > 3/2$.
\qed

\subsection{Proof of the two asymptotic lower bounds}
\label{ssec:ProofAsymptotic}

We describe the exponentially fast growth
of $\#\Pi(p,q)$ when $p = \lfloor xq/k \rfloor$ and $q \to +\infty$
for some fixed limit ratio $x > 0$.
We shall also determine the limit ratio $x_\star > 0$
that gives the largest exponent $h_\star$ in the exponential bound.

\begin{lemma}
\label{lem:ExponentialLowerBound}
Let $0 < a < b$ and $k \in \Nset$.
If
\begin{itemize}
\item
$M(x) = 2(a/x- b)^{-k-1}/x$ for $0 < x < a/b$;
\item
$h(x) = x \log(a/x -b)$ for $0 < x < a/b$; and
\item
$G(p,q) = 2(aq/kp - b)^{kp-1}/kp$
for $p,q \in \Nset$ such that $0 < kp/q < a/b$,
\end{itemize}
then
\[
G(\lfloor xq/k \rfloor,q)
\ge M(x) \rme^{h(x) q}/q, \qquad
\forall q \ge (1+b)kp/a, \ \forall x \in \big( 0,a/(b+1) \big].
\]
The exponent $h:(0,a/b) \to \Rset$ reaches its maximum value
$h_\star = h(x_\star) = a W_0(b/\rme)/b > 0$ at the point
$x_\star = a W_0(b/\rme)/((1 + W_0(b/\rme))b)$.
\end{lemma}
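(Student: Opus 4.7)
My plan is to substitute $p' := \lfloor xq/k \rfloor$ into $G$ and use the elementary floor inequalities $xq - k < kp' \le xq$ to compare the resulting expression with $M(x)\rme^{h(x)q}/q$. The restriction $x \in (0, a/(b+1)]$ is precisely what guarantees that the base of the power, $a/x - b$, is at least $1$, so that monotonicity in the exponent is in the direction needed.

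First, I rewrite the target as $M(x)\rme^{h(x)q}/q = 2(a/x - b)^{xq - k - 1}/(xq)$ by pulling out the exponent $k+1$ and unpacking $\exp\bigl(xq \log(a/x - b)\bigr) = (a/x - b)^{xq}$. Next, from $kp' \le xq$ I obtain $aq/(kp') \ge a/x$, and from $x \le a/(b+1)$ I obtain $a/x - b \ge 1$; thus $aq/(kp') - b \ge a/x - b \ge 1$. Combining this with $kp' - 1 > xq - k - 1$ yields
\[
\bigl(aq/(kp') - b\bigr)^{kp' - 1} \ge (a/x - b)^{kp' - 1} \ge (a/x - b)^{xq - k - 1},
\]
and dividing by $kp' \le xq$ gives $G(p', q) \ge 2(a/x - b)^{xq - k - 1}/(xq)$, which matches the required lower bound exactly. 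The hypothesis $q \ge (1+b)kp/a$ is, in fact, automatic from $p = \lfloor xq/k \rfloor$ together with $x \le a/(b+1)$, since these imply $kp(b+1)/a \le xq(b+1)/a \le q$; it additionally guarantees that the base $a/x - b$ is both well-defined and $\ge 1$.

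For the maximization of $h(x) = x \log(a/x - b)$ on $(0, a/b)$, I would set $y = a/x$ and rewrite $h'(x) = 0$ as $\log(y - b) = y/(y - b)$. The substitution $z = y - b$ transforms this into $\log z = 1 + b/z$, and then $w := b/z$ converts it into the transcendental equation $w \rme^w = b/\rme$, whose unique positive solution is $w = W_0(b/\rme)$ by the defining property of the principal branch of the Lambert function. Back-substituting gives $y_\star = b(1 + W_0(b/\rme))/W_0(b/\rme)$ and hence $x_\star = aW_0(b/\rme)/\bigl((1 + W_0(b/\rme))b\bigr)$; plugging this into $h(x_\star) = x_\star \log(a/x_\star - b) = x_\star \log z_\star = x_\star (1 + W_0(b/\rme))$ collapses to $h_\star = aW_0(b/\rme)/b$. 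Observing that $h$ vanishes at both endpoints $x \to 0^+$ and $x \to (a/b)^-$ (the latter because $\log(a/x - b) \to -\infty$ but is dominated by the restriction from the other factor; more cleanly, $h$ is positive precisely on $(0, a/(b+1))$ and vanishes at the endpoints of that subinterval) confirms this interior critical point is the global maximum.

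The only subtlety, rather than a genuine obstacle, is ensuring all inequalities chain in the correct direction simultaneously in the base and in the exponent; this hinges entirely on the restriction $x \le a/(b+1)$, without which the monotonicity step $(a/x - b)^{kp' - 1} \ge (a/x - b)^{xq - k - 1}$ would reverse when $a/x - b < 1$.
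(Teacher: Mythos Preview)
Your proposal is correct and follows essentially the same approach as the paper: the same floor inequalities $xq - k < kp' \le xq$ together with the base bound $a/x - b \ge 1$ drive the first part, and a reduction of the critical-point equation to the Lambert form $we^w = b/\rme$ yields the optimizer in the second. The only cosmetic difference is that the paper changes variables in $h$ via $x = a/(b + \rme^{\hat{x}})$ \emph{before} differentiating, whereas you differentiate first and then substitute; the two routes are equivalent.
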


\begin{proof}
If $q \in \Nset$, $x \in (0,a/(b+1)]$ and $p = \lfloor xq/k \rfloor$,
then $aq/kp - b \ge 1$, $xq - k < kp \le xq$ and
\[
G(p,q) =
\frac{2}{kp} \left( \frac{aq}{kp} - b \right)^{kp - 1} \ge
\frac{2}{kp} \left( \frac{aq}{kp} - b \right)^{xq-k-1} \ge
\frac{2}{xq}\left( \frac{a}{x} - b \right)^{xq-k-1} =
\frac{1}{q} M(x) \rme^{h(x)q}.
\]
Next, we look for the global maximum of $h(x)$.
After the changes of variable
\begin{align*}
(0,+\infty) \ni \hat{x} &\leftrightarrow
x = \frac{a}{b + \rme^{\hat{x}}} \in \big( 0,a/(b + 1) \big), \\
(0,+\infty) \ni \hat{h} &\leftrightarrow h = a\hat{h} \in(0,+\infty),
\end{align*}
we get that
$h(x) = x\log(a/x - b) = x\hat{x} = a \hat{x}/(b + \rme^{\hat{x}})$,
so $\hat{h}(\hat{x}) = \hat{x}/(b + \rme^{\hat{x}})$.
We have reduced the search of the global maximum point
$x_\star \in(0,a/(b +1))$ of $h(x)$ to
the search of the global maximum point $\hat{x}_\star > 0$
of $\hat{h}(\hat{x})$.
Since
\[
\frac{\rmd \hat{h}}{\rmd \hat{x}}(\hat{x}) =
\frac{b + (1-\hat{x})\rme^{\hat{x}}}{(b + \rme^{\hat{x}})^2} = 0
\Leftrightarrow
(\hat{x}-1)\rme^{\hat{x}-1} = b/\rme \Leftrightarrow
\hat{x} = \hat{x}_\star := 1 + W_0(b/\rme),
\]
we deduce that $\hat{h}(\hat{x})$ reaches its maximum value
\[
\hat{h}_\star =
\hat{h}(\hat{x}_\star) =
\frac{\hat{x}_\star}{b + \rme^{\hat{x}_\star}} =
\frac{1}{\rme^{\hat{x}_\star}} =
\frac{1}{\rme}\frac{1}{\rme^{W_0(b/\rme)}} =
\frac{W_0(b/\rme)}{b}
\]
at the point $\hat{x} = \hat{x}_\star$.
In order to compute $\hat{x}_\star$,
we have used that $W_0(b/\rme) \rme^{W_0(b/\rme)} = b/\rme$.
Expressions for $x_\star$ and $h_\star$
are obtained by undoing both changes of variable.
\end{proof}

We can prove now the second and third lower bounds stated in
Theorem~\ref{thm:LowerBound}.

\emph{Proof of both asymptotic lower bounds.}
The second bound of Theorem~\ref{thm:LowerBound}
follows from the first one by applying Lemma~\ref{lem:ExponentialLowerBound}
with $a = a_\star$ and $b = b_\star$.
Analogously,
the third bound of Theorem~\ref{thm:LowerBound}
follows from the first one by taking
$c_\star(p) = 2 (a_\star)^{kp-1}/(kp)^{kp}$.
\qed

\section{The length spectrum of circular polygons}
\label{sec:LengthSpectrum}

The purpose of this section is to prove
Theorem~\ref{thm:AsymptoticConstantIntro},
which shows an unusual feature of the length spectrum of
billiards in circular polygons.
In particular,
it shows that the well-known results of
Marvizi-Melrose~\cite{marvizi1982spectral}
fail to hold for circular polygons.
This was expected because there are so many periodic billiard trajectories
inside circular polygons ---as we have seen in the previous section---
that we can construct sequences of them whose
lengths have rather different asymptotic behaviors.

Let $|\Gamma|$ be the length of $\Gamma$.
Let $\kappa(s)$ be the curvature of $\Gamma$ as a function of an
arc-length parameter $s \in [0,|\Gamma|)$.
If $z,z' \in \Gamma$ are any two consecutive impact points of a
billiard trajectory $g$,
then the segment $[z,z'] \subset \Rset^2$ is a \emph{link} of $g$ and
$\int_z^{z'} \rmd s$ is the distance from $z$ to $z'$ along $\Gamma$.
Note that $|z' - z| < \int_z^{z'} \rmd s$ by convexity.
If $g = \{ z_0,\ldots,z_{q-1}\} \subset \Gamma$ is a
$q$-periodic billiard trajectory,
let $L(g) = |z_1 - z_0| + \cdots + |z_{q-1} - z_0|$
be its \emph{length}.
Let
\[
\underline{L}_q =
\inf \{ L(g) : g \in \Pi(1,q) \},\qquad
\overline{L}_q =
\sup \{ L(g) : g \in \Pi(1,q) \}.
\]

To begin with,
let us recall the Marvizi-Melrose results for smooth ovals.
A \emph{smooth oval} is a regular, simple, closed,
oriented $C^\infty$ curve with positive curvature everywhere.

\begin{theorem}[Marvizi \& Melrose~\cite{marvizi1982spectral}]
\label{thm:MarviziMelrose}
Let $\Gamma$ be any smooth oval.
\begin{enumerate}[(a)]
\item
\label{item:MM1}
$\lim_{q \to +\infty}
q^i
\big( \overline{L}_q - \underline{L}_q
\big)= 0$ for all $i \in \Nset$.
\item
\label{item:MM2}
There are asymptotic coefficients $c_i \in \Rset$
such that if $g_q \in \Pi(1,q)$, then
\[
L(g_q) \asymp
|\Gamma| + \sum_{i = 1}^{\infty} \frac{c_i}{q^{2i}}, \quad
\mbox{as $q \to +\infty$.}
\]
\item
\label{item:MM3}
$c_1 =
-\frac{1}{24}
\left[ \int_{\Gamma} \kappa^{2/3}(s) \rmd s \right]^3 < 0$.
\item
\label{item:MM4}
If $[z,z']$ is a link of $g_q \in \Pi(1,q)$, then
\[
\int_{z}^{z'} \rmd s \asymp \frac{1}{q} |\Gamma|, \quad
\mbox{uniformly as $q \to +\infty$.}
\]
\end{enumerate}
\end{theorem}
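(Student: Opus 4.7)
The plan is to work with the billiard map $f$ near the boundary $\{\theta = 0\} \subset \mathcal{M}$ via an exact symplectic normal form. First, I regard $f$ as an exact symplectic twist map with generating function $H(\varphi,\varphi') = -|z(\varphi') - z(\varphi)|$, so that the length of any $q$-periodic trajectory $g_q = \{z_0,\ldots,z_{q-1}\}$ equals the critical value $-\sum_{i=0}^{q-1} H(\varphi_i,\varphi_{i+1})$ of the discrete action on the space of closed inscribed $q$-gons. I then introduce the Lazutkin coordinates
\[
x(s) = \frac{1}{C}\int_0^s \kappa^{2/3}(u)\rmd u, \qquad
y = \frac{4}{C}\kappa^{-1/3}(s)\sin(\theta/2), \qquad
C = \int_0^{|\Gamma|}\kappa^{2/3}(u)\rmd u,
\]
in which a direct Taylor expansion of the reflection law at small angles yields
\[
x_1 = x_0 + y_0 + \Order(y_0^3), \qquad
y_1 = y_0 + \Order(y_0^4), \qquad y_0 \to 0^+,
\]
so that $f$ is a $C^\infty$ perturbation of the integrable rotation $(x,y)\mapsto(x+y,y)$.

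Second, since $\Gamma$ is $C^\infty$, I bring $f$ to Birkhoff normal form: there is a formal smooth symplectic tangent-to-identity conjugacy after which $f$ reads $(x,y)\mapsto(x+\omega(y),y)$ with $\omega(y) = y + \sum_{i\ge 1} a_i y^{2i+1}$, the absence of even-power terms being a consequence of the $\Zset_2$-reversibility of the billiard under $I(\varphi,\theta) = (\varphi,\pi-\theta)$. Combining this normal form with Douady's refinement of Lazutkin's theorem produces a Cantor family of $C^\infty$ KAM invariant curves accumulating on $\{y = 0\}$ and, via Moser-type estimates, a confinement of every orbit between two consecutive KAM curves to an annulus of width $\Order(y^N)$ for every $N \in \Nset$. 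Every trajectory in $\Pi(1,q)$ has rotation number $1/q$ and, by Aubry--Mather theory for exact twist maps, is squeezed between two KAM curves of rotation numbers $1/q \pm \Order(q^{-N})$. This forces item~(\ref{item:MM1}): the $C^\infty$ action is almost constant on a super-polynomially thin annulus, so $\overline{L}_q - \underline{L}_q = \Order(q^{-N})$ for every $N$. Item~(\ref{item:MM4}) then falls out because on the confining KAM curves $f$ conjugates to rotation by $1/q$; the increment in the Lazutkin variable $x$ between consecutive impacts is $1/q + \Order(q^{-N})$, and undoing the change of variables gives an arc-length step of $|\Gamma|/q$ uniformly, modulo super-polynomial error.

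Third, for items~(\ref{item:MM2}) and~(\ref{item:MM3}), I evaluate the action along the formal normal-form orbit with $y = 1/q$. The generating function of $f$ admits a full asymptotic expansion in $y$ whose coefficients are smooth periodic functions of $x$ of definite parity under $I$; the odd-power contributions in $1/q$ cancel after summation around a period, producing
\[
L(g_q) \asymp |\Gamma| + \sum_{i\ge 1} \frac{c_i}{q^{2i}},
\]
with coefficients independent of $g_q \in \Pi(1,q)$ up to the errors absorbed by~(\ref{item:MM1}). The leading coefficient is then extracted elementarily: by item~(\ref{item:MM4}) each link subtends an arc of length $\Delta s_i = C/\bigl(q\kappa^{2/3}(s_i)\bigr) + \Order(q^{-3})$, and the chord-defect expansion $|z'-z| = \Delta s - \tfrac{1}{24}\kappa^2(\Delta s)^3 + \Order(\Delta s^5)$ gives a per-link contribution of $-C^3/(24q^3) + \Order(q^{-5})$; summing over the $q$ links yields
\[
c_1 = -\frac{C^3}{24} = -\frac{1}{24}\left[\int_\Gamma \kappa^{2/3}(s)\rmd s\right]^3,
\]
which is negative by strict convexity. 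The main obstacle I anticipate is not this leading-order bookkeeping but the systematic proof that only even powers of $1/q$ occur at every order, which requires precise tracking of the parity of every coefficient in the Birkhoff normal form under the reversing involution $I$.
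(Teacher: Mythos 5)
This theorem is not proved in the paper: it is the Marvizi--Melrose theorem, stated verbatim with a citation to~\cite{marvizi1982spectral}, and it appears only as background against which the paper's Theorem~\ref{thm:AsymptoticConstant} (the failure of these conclusions for circular polygons) is measured. There is therefore no paper proof to compare against, and a proof was never expected of you here.

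That said, your sketch is in spirit close to the actual Marvizi--Melrose argument, with one notable detour. Their proof does not rely on KAM or Lazutkin--Douady invariant curves at all: it constructs a smooth interpolating Hamiltonian flow for the billiard map near $\{\theta=0\}$, i.e.\ precisely the formal $C^\infty$ conjugacy to $(x,y)\mapsto(x+\omega(y),y)$ you write down, and then the uniform and full asymptotic expansion of the $(1,q)$-lengths (items~(a), (b), (d)) follow by evaluating the action along that flow, without any Diophantine machinery. Your route through KAM confinement between invariant curves has a genuine gap: to conclude $\overline{L}_q-\underline{L}_q=\Order(q^{-N})$ you need the resonance gap around rotation number $1/q$ at height $y\sim 1/q$ to be super-polynomially thin, and that does not come for free from the existence of a Cantor family of caustics; it requires the exponentially small splitting estimates for near-integrable twist maps, which is substantially harder than what Marvizi--Melrose use and which you only assert. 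Your honest flag about the even-parity bookkeeping is also a real issue, but it is handled cleanly in the interpolating-flow framework via the reversing involution, exactly as you suspect. The leading-order computation of $c_1$ via the chord-defect expansion is correct and matches the literature.

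Since the paper itself only cites this result, the intended reading is ``take Theorem~\ref{thm:MarviziMelrose} as given''; you should not attempt to reprove it within the scope of this paper.
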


The symbol $\asymp$ means that the RHS is asymptotic to the LHS.
The first property implies that the Melrose-Marvizi asymptotic
coefficients $c_i$ do not depend on the choice of the
sequence of periodic trajectories $(g_p)_q$.
All of them can be explicitly written in terms of the curvature.
For instance, the formulas for $c_1$, $c_2$, $c_3$, and $c_4$
can be found in~\cite{sorrentino2015Mather}.
Property~(d) means that not only are the lengths of $g_q$
asymptotically well-behaved, but as $q \to +\infty$, the distribution
of the points in $g_q$ is asymptotically well-behaved with respect to
any one point.
Hence, property~(d) is like a weak local version of property~(b).
There is also a strong local version
in~\cite[Theorem~5.9]{marvizi1982spectral}.

From now on, with $\Gamma$ denoting a circular polygon,
we use the notation introduced in previous sections,
although we are now only interested in $(1,q)$-periodic trajectories,
so $1 \le j \le k$ and we no longer need to consider $j$ modulo $k$.
Recall that $p=1$ along this section.

We will check that none of properties~(\ref{item:MM1})--(\ref{item:MM4})
of Theorem~\ref{thm:MarviziMelrose} hold for circular polygons.

First, we consider the simplest periodic sliding trajectories
in a circular polygon,
wich are the trajectories that impact \emph{all its nodes}
in such a way that the angle of reflection
remains \emph{constant} along the whole trajectory.
These \emph{nodal} sliding periodic trajectories can only take place
in certain circular polygons, which we call \emph{rational}.

\begin{definition}
\label{def:RationalCircularPolygon}
We say that a circular polygon $\Gamma$ is \emph{rational} when all
its central angles are rational multiples of $\pi$, so
\[
\delta_j = m_j \delta,
\]
for some $\delta = \gcd(\delta_1,\ldots,\delta_k)$ and
$m_j = \delta_j/\delta \in \Nset$.
Set $M = \sum_{j=1}^k m_j$.
Then $M \delta = 2\pi$.
A billiard trajectory inside a rational circular polygon
is \emph{nodal} when it impacts all nodes (interspersed with
possibly many other non-nodal impacts)
in the counter-clockwise ordering.
\end{definition}

Squared pseudo-ellipses and Moss's eggs are rational circular polygons,
see Section~\ref{sec:CircularPolygons}.
Any nodal orbit in a rational circular polygon has
constant angle of reflection, is sliding, and is periodic with
a rotation number of the form $1/q$, $q$ being the period.

Nodal billiard trajectories give the simplest examples of sequences
of sliding periodic billiard trajectories in circular polygons
where properties~(\ref{item:MM3}) and~(\ref{item:MM4}) of
Theorem~\ref{thm:MarviziMelrose} fail,
because it is really easy to compute their lengths.

\begin{proposition}
\label{prop:RationalCircularGons}
Let $\Gamma$ be a rational circular $k$-gon with
arcs $\Gamma_j$, radii $r_j$ and central angles $\delta_j$.
Set $\delta = \gcd(\delta_1,\ldots,\delta_k)$.
Fix some $\psi = \delta/2i$ with $i \in \Nset$.
Let $g_q$ be the billiard trajectory generated by
\[
(\varphi_n,\theta_n) = f^n(a_1,\psi), \qquad \forall n \in \Zset.
\]
\begin{enumerate}[(a)]
\item
The billiard trajectory $g_q$ is nodal and
$g_q \in \Pi(1,q)$ with period $q = Mi$.
\item
$L(g_{q}) =
 |\Gamma| - \pi^2 |\Gamma|/6q^2 + \Order(1/q^4)$
as $q = Mi \to +\infty$.
\item
If $[z,z']$ is a circular link of $g_q$ associated to the arc $\Gamma_j$,
then
\[
\int_{z}^{z'} \rmd s = \frac{1}{q} 2\pi r_j \neq \frac{1}{q} |\Gamma|.
\]
\end{enumerate}
\end{proposition}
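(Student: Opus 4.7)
The plan is to verify directly that the orbit starting at $(a_1,\psi)$ with $\psi=\delta/2i$ visits the nodes $a_1,a_2,\ldots,a_k$ in order, closes up after a computable number of iterations, and then to compute its total length by a single trigonometric identity followed by a Taylor expansion.

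First I would prove part (a) by tracking the angular coordinates. Since $2\psi=\delta/i\le\delta\le\delta_j$ for every $j$ and the starting angle of reflection is $\psi$, part~(\ref{item:CircularDynamics}) of Lemma~\ref{lem:BilliardProperties} applies as long as we remain on a single arc, giving $\varphi_{n+1}=\varphi_n+2\psi$ and $\theta_{n+1}=\theta_n=\psi$. Starting from $\varphi_0=a_1$, a direct induction shows that after $m_1i$ iterations we reach $\varphi_{m_1 i}=a_1+2m_1 i\psi=a_1+m_1\delta=a_1+\delta_1=a_2$, i.e.\ the next node. The observation just after Lemma~\ref{lem:BilliardProperties} (that part~(a) still applies at a nodal impact because the two consecutive arcs share their tangent at the node) then guarantees that the angle of reflection is still $\psi$ on the next iterate and that the same argument repeats on $\Gamma_{j+1}$. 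Continuing inductively over $j=1,\ldots,k$, the trajectory visits each node $a_j$ (at iterate $(m_1+\cdots+m_{j-1})i$) and returns to $a_1$ after exactly $Mi$ iterations, having made one turn around $\Gamma$. Therefore $g_q$ is nodal, sliding, and $(1,q)$-periodic with $q=Mi$.

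Next I would prove part (b) by a direct geometric count. On arc $\Gamma_j$ the trajectory consists of $m_j i$ consecutive chords, each subtending a central angle $2\psi$ at $O_j$, and hence each having Euclidean length $2r_j\sin\psi$. Summing,
\[
L(g_q)=\sum_{j=1}^k m_j i\cdot 2r_j\sin\psi
=2i\sin\psi\cdot\frac{|\Gamma|}{\delta}
=\frac{2i|\Gamma|}{\delta}\sin\!\Bigl(\frac{\delta}{2i}\Bigr),
\]
where I used $|\Gamma|=\sum_j\delta_j r_j=\delta\sum_j m_j r_j$. Plugging in $q=Mi$ and $\delta=2\pi/M$, the relation $\delta/(2i)=\pi/q$ reduces the formula to $L(g_q)=(q|\Gamma|/\pi)\sin(\pi/q)$. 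A Taylor expansion $\sin x=x-x^3/6+\Order(x^5)$ then yields
\[
L(g_q)=|\Gamma|-\frac{\pi^2|\Gamma|}{6q^2}+\Order(1/q^4),
\]
which is the desired asymptotic formula. Note that this asymptotic constant is exactly the lower bound $-\pi^2|\Gamma|/6$ appearing in Theorem~\ref{thm:AsymptoticConstant}, providing an independent sanity check.

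Finally, part (c) is immediate: if $[z,z']$ is a circular link on $\Gamma_j$, the arc of $\Gamma$ between $z$ and $z'$ has length equal to $r_j$ times the subtended central angle $2\psi=\delta/i=2\pi/q$, so $\int_z^{z'}\rmd s=2\pi r_j/q$; since the circular polygon has arcs of different radii, this generally differs from the average $|\Gamma|/q=\sum_j \delta_j r_j/q$. The only subtle point in the whole argument is verifying that a nodal impact does not change $\theta$; this is the ingredient isolated in the remark following Lemma~\ref{lem:BilliardProperties} and is the only place where the definition of circular polygon (equal tangent at consecutive arcs) is crucially used.
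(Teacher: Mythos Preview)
Your proof is correct and follows essentially the same approach as the paper: both track the angular coordinate via Lemma~\ref{lem:BilliardProperties}(\ref{item:CircularDynamics}) and the nodal-impact remark to establish (a), both compute $L(g_q)=\sum_j m_j i\cdot 2r_j\sin\psi$ and Taylor-expand for (b), and both compute the arclength of a link as $2\psi r_j=2\pi r_j/q$ for (c). Your only cosmetic deviation is that in (b) you first rewrite the sum as the closed form $(q|\Gamma|/\pi)\sin(\pi/q)$ before expanding, whereas the paper expands $\sin\psi$ directly inside the sum; the two computations are algebraically equivalent.
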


\begin{proof}
\begin{enumerate}[(a)]
\item
Once we fix the index $i \in \Nset$,
we deduce that
$\varphi_{im_1} = a_1 + 2i m_1 \psi = a_1 + \delta_1 = b_1 = a_2$.
That is, the first $im_1$ links of the billiard trajectory connect
both nodes of the first arc.
In particular, the angle of reflection does not change
when we enter the second arc, so the next $im_2$ links connect both its
nodes, and so on.
This means that the orbit is nodal and periodic with
rotation number $1/q$ and period $q = Mi$.

\item
The length of each link in the $j$-th arc is equal to
$\ell_j = 2 r_j \sin \psi$, so
\begin{align*}
L(g_q) &=
\sum_{j=1}^k i m_j \ell_j =
\sum_{j=1}^k 2 i m_j r_j
\left[ \psi - \frac{1}{6} \psi^3 + \Order\big(\psi^5\big) \right] \\
&= \sum_{j=1}^k m_j \delta r_j -
\frac{\delta^2}{24} \left[\sum_{j=1}^k m_j \delta r_j \right] \frac{1}{i^2} +
\Order\big( i^{-4} \big) =
|\Gamma| - \frac{\pi^2 |\Gamma|}{6q^2} + \Order(1/q^4),
\end{align*}
where we have used that $\delta_j = m_j \delta$,
$|\Gamma| = \sum_{j=1}^k \delta_j r_j$, $q = Mi$,
and $M \delta = 2\pi$.

\item
If two consecutive impact points $z$ and $z'$ belong to
the arc $\Gamma_j$, then
\[
\int_{z}^{z'} \rmd s =
2 \psi r_j =
\frac{1}{i} \delta r_j =
\frac{1}{q} M\delta r_j =
\frac{1}{q} 2\pi r_j \neq
\frac{1}{q} |\Gamma|.
\qedhere
\]
\end{enumerate}
\end{proof}

The previous proposition has been obtained without
the heavy machinery developed in this paper,
but it needs a rather special type of circular polygons.
Next, we deal with general circular polygons,
where the computations are more involved.

\begin{remark}
\label{rem:PeriodicTrajectories}
Corollary~\ref{cor:ConvexPolytopes} implies that there are
at least $2^k$ generic sliding periodic billiard trajectories
$g_q \in \Pi(1,q)$ with \emph{exactly}
$x_j \in \Nset$ impacts in the arc $\Gamma_j$, $j=1,\ldots,k$,
for any integer point $\xv = (x_1,\ldots,x_k) \in
 P^{(1)} \cap \Zset^k = P^{(1)} \cap \Nset^k$ such that
$x_1 + \cdots + x_k = q$.
Here, $P^{(1)}$ is the unbounded convex polytope of $\Rset^k$
defined in~\eqref{eq:UnboundedPolytope} for $p = 1$.
\end{remark}

We need a couple of technical results before tackling
the proof of Theorem~\ref{thm:AsymptoticConstantIntro}.

First, we compute the lengths of generic
sliding $(1,q)$-periodic billiard trajectories.
By definition, they impact all arcs but no nodes.
Angles similar to $\varphi_j^\pm$ and $\psi_j$ below were
considered in the proof of part~(\ref{item:Balint_etal}) in
Lemma~\ref{lem:BilliardProperties}.

\begin{lemma}
\label{lem:LengthPeriodicTrajectory}
If $g_q \in \Pi(1,q)$ is a generic sliding periodic billiard trajectory
inside $\Gamma$, then
\begin{equation}
\label{eq:LengthPeriodicTrajectory}
L(g_q) =
\sum_{j=1}^k \big(\ell^-_j + (x_j-1) \ell_j + \ell^+_j\big),\qquad
\ell_j = 2r_j \sin\psi_j, \qquad
\ell^\pm_j = \frac{r_j \sin \varphi^\pm_j}
     {\cos(\psi_j - \varphi_j^\pm)},
\end{equation}
where
\begin{itemize}
\item[(i)]
$x_j \in \Nset$ is the number of impact points in $\Gamma_j$;
\item[(ii)]
$\psi_j > 0$ is the constant angle of reflection along the $x_j$
impacts in $\Gamma_j$; and
\item[(iii)]
$\varphi^\pm_j \in (0,2\psi_j)$ are the impact angles such that
$[z(b_j - \varphi^+_j),z(a_{j+1} + \varphi^-_{j+1})]$ is the transition
link connecting $\Gamma_j$ and $\Gamma_{j+1}$.
\end{itemize}
Besides, $\varphi^-_j + 2 (x_j-1) \psi_j + \varphi^+_j = \delta_j$
for all $j=1,\ldots,k$.
\end{lemma}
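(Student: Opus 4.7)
The plan is to decompose $L(g_q)$ into pairwise contributions from consecutive impacts, grouped arc-by-arc. Because $g_q$ is generic sliding and $(1,q)$-periodic, the $q$ impacts visit the $k$ arcs in cyclic counter-clockwise order, with $x_j$ successive impacts on $\Gamma_j$ and $x_1 + \cdots + x_k = q$. Of the $q$ links, exactly $x_j - 1$ are internal to $\Gamma_j$ (both endpoints on $\Gamma_j$) and the remaining $k$ are transition links between the last impact on $\Gamma_j$ and the first impact on $\Gamma_{j+1}$. Each transition link will be apportioned to the two arcs it connects by splitting it at the point $P$ where it crosses the ray from $O_j$ through the shared node $B_j = A_{j+1}$: the $\Gamma_j$ side will yield $\ell^+_j$, and the $\Gamma_{j+1}$ side will yield $\ell^-_{j+1}$.

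For the internal links I would invoke part~(\ref{item:CircularDynamics}) of Lemma~\ref{lem:BilliardProperties}: since the angle of incidence/reflection is preserved between consecutive impacts in the same circular arc, the $x_j$ impacts on $\Gamma_j$ share a common angle $\psi_j > 0$, and consecutive such impacts are separated by angular step $2 \psi_j$ along $\Gamma_j$. A standard chord-length computation in the circle of radius $r_j$ then yields $\ell_j = 2 r_j \sin \psi_j$. Moreover, since the first impact on $\Gamma_j$ sits at angular coordinate $a_j + \varphi^-_j$ and the last at $b_j - \varphi^+_j$, equating the angular span to the number of steps gives $\delta_j - \varphi^-_j - \varphi^+_j = 2 (x_j - 1) \psi_j$, which is exactly the angular constraint in the statement.

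It only remains to compute $\ell^+_j$; the formula for $\ell^-_j$ follows by the symmetric argument applied at the other end of the arc. Let $z_1 = z(b_j - \varphi^+_j)$ and let $P$ be the intersection described above. In the triangle $O_j z_1 P$ one has $|O_j z_1| = r_j$, the angle at $O_j$ equals $\varphi^+_j$ by definition of $\varphi^+_j$, and the angle at $z_1$ equals $\pi/2 - \psi_j$ since the transition chord leaves $z_1$ at angle $\psi_j$ to the tangent and the tangent is perpendicular to $O_j z_1$. The remaining angle at $P$ is therefore $\pi/2 + \psi_j - \varphi^+_j$, whose sine is $\cos(\psi_j - \varphi^+_j)$; the law of sines immediately yields $\ell^+_j = |z_1 - P| = r_j \sin \varphi^+_j / \cos(\psi_j - \varphi^+_j)$. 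Summing the contributions $\ell^-_j + (x_j - 1) \ell_j + \ell^+_j$ over $j$ reconstructs $L(g_q)$ in the desired form. The one point that will require a short verification is that the splitting point $P$ lies between $z_1$ and $z_2$ on the transition chord, which I expect to follow from $\varphi^\pm_j \in (0, 2\psi_j)$ together with the $C^1$ strict convexity of $\Gamma$, since both arcs lie on the same side of their common tangent at the node.
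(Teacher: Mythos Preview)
Your proof is correct and follows essentially the same approach as the paper: decompose $L(g_q)$ into internal links (chords of length $2r_j\sin\psi_j$) and transition links, split each transition link at its intersection with the common normal through the node $B_j=A_{j+1}$, and compute each piece via the law of sines. The paper presents the trigonometric computation slightly differently, embedding the triangle $O_j z_1 P$ inside an auxiliary isosceles triangle $\Delta O_j P Q$ with $Q$ off the curve, but your direct application of the law of sines to $O_j z_1 P$ is cleaner and yields the same formula.
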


\begin{figure}[t]
\begin{center}
\includegraphics*[width=6.5cm]{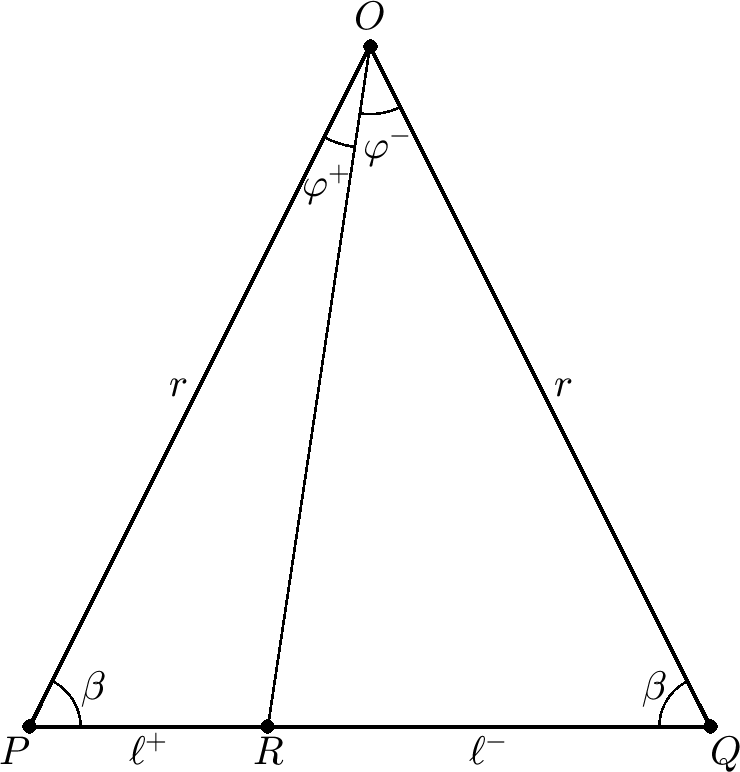}
\end{center}
\caption{\small Isosceles triangle $\Delta OPQ$ and
a point $R \in [P,Q] \setminus \{P,Q\}$
such that $\angle QPO = \angle OQP = \beta$,
$|\vec{OP}| = | \vec{OQ}| = r$,
$\angle POQ = 2 \psi = \varphi^+ + \varphi^- = \angle POR + \angle ROQ$,
and $|\vec{PQ}| = \ell = \ell^+ + \ell^- = |\vec{PR}| + |\vec{RQ}|$.
}
\label{fig:Trigonometry}
\end{figure}

\begin{proof}
If we apply the law of sinus to the three triangles $\Delta POQ$,
$\Delta POR$ and $\Delta ROQ$ displayed in Figure~\ref{fig:Trigonometry}
(see its caption for the definition of each quantity),
we get that
\[
\frac{\ell}{2\sin \psi \cos \psi} =
\frac{\ell}{\sin 2\psi} =
\frac{r}{\sin \beta} =
\frac{r}{\cos \psi}, \qquad
\frac{\ell^\pm}{\sin \varphi^\pm} =
\frac{r}{\sin(\pi-\beta-\varphi^\pm)} =
\frac{r}{\cos(\psi-\varphi^{\pm})},
\]
since $\beta = \pi/2 - \psi$,
$\angle ORP = \pi - \beta - \varphi^+$ and
$\angle QRO = \pi - \beta - \varphi^-$.
Therefore,
\[
\ell = 2r \sin \psi,\qquad
\ell^\pm = \frac{r \sin \varphi^\pm}{\cos(\psi - \varphi^\pm)}.
\]

We deduce~\eqref{eq:LengthPeriodicTrajectory} from those three formulas.

If $g_q$ has $x_j$ impacts in $\Gamma_j$ with constant
angle of reflection $\psi_j$,
then it has $x_j-1$ circular links with a certain constant
length~$\ell_j$.
Each one of these circular links $[P,Q]$ is the base of
an isosceles triangle $\Delta OPQ$ like the one displayed
in Figure~\ref{fig:Trigonometry},
with $O = O_j$, $r = r_j$ and $\psi = \psi_j$.
Hence $\ell_j = 2 r_j \sin \psi_j$.

Let us consider the transition link
$[z(b_j - \varphi^+_j),z(a_{j+1} + \varphi^-_{j+1})]$
connecting $\Gamma_j$ and $\Gamma_{j+1}$ and
the isosceles triangle $\Delta OPQ$ with
\[
O=O_j, \qquad
P = z(b_j - \varphi^+_j) \in \Gamma_j, \qquad
Q = O_j + r_j \rme^{\rmi (b_j - \varphi^+_j + 2\psi_j)}.
\]
We stress that $Q$ is an auxiliary point: $Q \not \in \Gamma$.
Let $R = [P,Q] \cap [O,z(b_j)]$.
Then $r = r_j$, $\varphi^+ = \varphi^+_j$ and $\ell^+ = \ell^+_j$.
Therefore,
$\ell^+_j = r_j \sin \varphi^+_j / \cos(\psi_j - \varphi^+_j)$.
The formula for $\ell^-_{j+1}$ is deduced in a similar way,
but taking
\[
O = O_{j+1}, \qquad
P = O_{j+1} +
    r_{j+1} \rme^{\rmi (a_{j+1} - \varphi^-_{j+1} - 2\psi_{j+1})}, \qquad
Q=z(a_{j+1} + \varphi^-_{j+1}) \in \Gamma_{j+1},
\]
$R = [P,Q] \cap [O,z(a_{j+1})]$,
$r = r_{j+1}$, $\varphi^- = \varphi^-_{j+1}$ and
$\ell^- = \ell^-_{j+1}$.
(In this case, the auxiliary point is $P$:
 $P \not \in \Gamma$).
By construction, the transition link
$[z(b_j - \varphi^+_j),z(a_{j+1} + \varphi^-_{j+1})]$
has length $\ell^+_j + \ell^+_{j+1}$.
This proves~\eqref{eq:LengthPeriodicTrajectory}.

Finally,
relation $\varphi^-_j + 2 (x_j-1) \psi_j + \varphi^+_j = \delta_j$
is geometrically evident.
\end{proof}

Next, we need a technical result about the extreme values of
the differentiable strictly concave function~(\ref{eq:AuxiliaryFunction})
over a bounded convex polytope $P^{(1)}_\infty$ related to the
unbounded convex polytope $P^{(1)}$ of $\Rset^k$ defined
in~\eqref{eq:UnboundedPolytope} for $p=1$.
Recall that
$\Delta_{k-1} = \{\xv \in \Rset^k : \xv > 0, \ x_1 + \cdots + x_k = 1\}$
is the open $(k-1)$-simplex and
$H_k = \{ \xv \in \Rset^k : x_1 + \cdots + x_k = 1\}$.

\begin{lemma}
\label{lem:AsymptoticConstantInterval}
The bounded convex polytope
$P^{(1)}_\infty =
\lim_{q \to \infty}
\left( \big\{ \xv/q : \xv \in P^{(1)} \big\} \cap H_k \right)$
is given by
\begin{equation}
\label{eq:LimitPolytope1}
P^{(1)}_\infty =
\left\{
\xv \in \Rset^k :
\begin{array}{l}
\alpha^-_j x_j < x_{j+1} < \alpha^+_j x_j,
\quad \forall j=1,\ldots,k-1 \\
\alpha^-_k x_k < x_1 < \alpha^+_k x_k  \\
x_j > 0, \quad \forall j=1,\ldots,k \\
x_1 + \cdots + x_k = 1
\end{array}
\right\}
\end{equation}
and its compact closure $K^{(1)}_\infty$ is contained in
the open simplex $\Delta_{k-1}$.
Let
\begin{equation}
\label{eq:AuxiliaryFunction}
h: \Delta_{k-1}  \to (-\infty,0), \qquad
h(\yv) =
-\frac{1}{24} \sum_{j=1}^k \frac{\delta_j^3 r_j}{y^2_j}.
\end{equation}
Set $I_1 = h\big( P^{(1)}_\infty \big)$,
$c^-_1 = \inf I_1$ and $c^+_1 = \sup I_1$.
Then $c_1^+ \in I_1$ and 
\begin{equation}
\label{eq:ExtremalValues}
-\infty < c_1^- \le -\pi^2 |\Gamma|/6 < c_1^+ =
\frac{1}{24} \left[ \int_\Gamma \kappa^{2/3}(s) \rmd s \right]^3 < 0.
\end{equation}
\end{lemma}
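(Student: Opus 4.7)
The plan is to establish the four assertions in succession by leveraging strict concavity of $h$ and a careful evaluation at two distinguished points of the closure $K^{(1)}_\infty$.

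First I would identify the half-space representation of $P^{(1)}_\infty$ and prove $K^{(1)}_\infty \subset \Delta_{k-1}$. With $p = 1$ (so $n+1 = k$), the polytope $P^{(1)}$ is cut out by the strict inequalities $\alpha^-_j x_j + \beta^-_j < x_{j+1} < \alpha^+_j x_j - \beta^+_j$ (cyclically) together with $x_j \geq \chi_j$. Dividing by $q$, intersecting with $H_k$, and letting $q \to \infty$ makes the additive constants $\beta^\pm_j/q$ and $\chi_j/q$ disappear, giving exactly~\eqref{eq:LimitPolytope1}. For the positivity: if $\yv \in K^{(1)}_\infty$ had some $y_j = 0$, then $y_{j+1} \leq \alpha^+_j y_j = 0$ would force $y_{j+1} = 0$, and iterating cyclically would yield $\yv = \boldsymbol{0}$, contradicting $\sum_j y_j = 1$. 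Hence every coordinate is strictly positive on $K^{(1)}_\infty$.

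Next I would compute $c_1^+$ by constrained concave optimization. The Hessian of $h$ is diagonal with negative entries $-\tfrac{1}{4}\delta_j^3 r_j / y_j^4$, so $h$ is strictly concave on $\Delta_{k-1}$. Lagrange multipliers applied to the constraint $\sum_j y_j = 1$ produce the unique critical point $y^\star_j = \delta_j r_j^{1/3}/M$, where $M = \sum_j \delta_j r_j^{1/3} = \int_\Gamma \kappa^{2/3}(s)\rmd s$ (using $\kappa \equiv 1/r_j$ on $\Gamma_j$). Direct substitution gives $h(\yv^\star) = -M^3/24$. Verifying $\yv^\star \in P^{(1)}_\infty$ amounts to checking that the ratio $y^\star_{j+1}/y^\star_j = (\delta_{j+1}/\delta_j)\mu_j^{-2/3}$ lies \emph{strictly} between $\alpha^-_j$ and $\alpha^+_j$; unpacking the definitions in the two cases $\mu_j < 1$ and $\mu_j > 1$ reduces this to the strict orderings $1 < \mu_j^{-2/3} < \mu_j^{-1}$ or their reverses, both of which hold because $\mu_j \neq 1$. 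Hence $c_1^+ = h(\yv^\star) \in I_1$.

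To bound $c_1^-$, I would evaluate $h$ at the boundary point $\yv^\circ = (\delta_j/(2\pi))_{j=1}^k$. The ratio $\delta_{j+1}/\delta_j$ equals $\alpha^-_j$ when $\mu_j<1$ and $\alpha^+_j$ when $\mu_j>1$, so $\yv^\circ \in K^{(1)}_\infty$ saturates exactly one boundary inequality at each index. A direct computation gives $h(\yv^\circ) = -\pi^2|\Gamma|/6$, and since $P^{(1)}_\infty$ is the nonempty interior of the convex set $K^{(1)}_\infty$, continuity of $h$ and approachability of boundary points from inside yield $c_1^- \leq -\pi^2|\Gamma|/6$. The strict separation $-\pi^2|\Gamma|/6 < c_1^+$, equivalent to $M^3 < 4\pi^2|\Gamma|$, follows from H\"older's inequality with conjugate exponents $3/2$ and $3$:
\[
M = \sum_j \delta_j^{2/3}\,(\delta_j r_j)^{1/3} \leq \Big(\sum_j \delta_j\Big)^{2/3}\Big(\sum_j \delta_j r_j\Big)^{1/3} = (2\pi)^{2/3}|\Gamma|^{1/3},
\]
with equality demanding all radii $r_j$ be equal, which the definition of circular polygon excludes; the bound $c_1^+ < 0$ is immediate from $h<0$ on $\Delta_{k-1}$.

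The main obstacle will be the verification that $\yv^\star$ lies in the \emph{open} polytope $P^{(1)}_\infty$ rather than merely in $K^{(1)}_\infty$, because this requires disentangling the definitions of $\alpha^\pm_j$ into the two cases $\mu_j\lessgtr 1$ and tracking the cyclic indices carefully. Everything else is routine: strict concavity for the maximum, a single H\"older estimate for the separation, and continuous extension to a single boundary point for the lower estimate.
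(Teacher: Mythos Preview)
Your proposal is correct and follows essentially the same route as the paper: both identify the global maximizer of the strictly concave function $h$ via Lagrange multipliers as $y^\star_j = \delta_j r_j^{1/3}/\sum_l \delta_l r_l^{1/3}$, verify it lies in the open polytope, and evaluate $h$ at the boundary point $y^\circ_j = \delta_j/(2\pi)$ to get $-\pi^2|\Gamma|/6$. The only notable difference is that the paper packages both points into a one-parameter family $\wv(\xi) \propto (\delta_j r_j^\xi)_j$ and deduces the strict separation $h(\wv(0)) < h(\wv(1/3))$ from strict concavity (since $\wv(0)\neq\wv(1/3)$), whereas you obtain it via a direct H\"older estimate---both arguments are valid and equally short.
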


\begin{proof}
Expression~\eqref{eq:LimitPolytope1} is trivial.
We check that $K^{(1)}_{\infty} \subset \Delta_{k-1}$
by a reductio ad absurdum argument.
Let us assume that $\xv = (x_1,\ldots,x_k) \in K^{(1)}_{\infty}$
and $x_i = 0$ for some $i$.
Then inequalities
$\alpha^-_j x_j \le x_{j+1} \le \alpha^+_j x_j$ for $j=1,\ldots,k-1$
and $\alpha^-_k x_k \le x_1 \le \alpha^+_k x_k$
imply that
\[
x_{i+1} = \cdots = x_k = x_1 = \cdots = x_{i-1} = 0,
\]
so identity $x_1 + \cdots + x_k = 1$ fails. Contradiction.

The image of a compact convex set by a continuous function
that only takes negative values is a compact interval of $(-\infty,0)$,
so $\overline{I_1} = h \big( K_\infty^{(1)} \big) =
[c^-_1,c^+_1]$ for some numbers $-\infty < c^-_1 \le c^+_1 < 0$.
Let us estimate the minimum value $c^-_1$,
compute exactly the maximum value $c^+_1$,
prove that $c^-_1 < c^+_1$,
and check that $c^+_1 \in I_1$.

We claim that function~(\ref{eq:AuxiliaryFunction})
attains its maximum value only at $\yv = \wv(1/3)$, where
\[
\wv(\xi) =
\frac{1}{S(\xi)}\big( s_1(\xi), \ldots, s_k(\xi) \big) \in
\Delta_{k-1}, \qquad
s_j(\xi) = \delta_j r^\xi_j, \qquad
S(\xi) = \sum_{j=1}^k \delta_j r^\xi_j
\]
for all $\xi \in \Rset$.
On the one hand,
the gradient of $\Rset_+^k \ni \yv \mapsto \sum_{j=1}^k \delta^3_j r_j/y^2_j$
is the vector with components $-2\delta^3_j r_j/y^3_j$,
so $\yv \in \Delta_{k-1}$ is a critical point of~(\ref{eq:AuxiliaryFunction})
if and only if
\[
\big( s_i(1/3)/y_i \big)^3 =
\delta^3_i r_i /y^3_i =
\delta^3_j r_j /y^3_j =
\big( s_j(1/3)/y_j \big)^3, \qquad \forall i \neq j.
\]
This means that $\wv(1/3)$ is the only critical point
of~(\ref{eq:AuxiliaryFunction}).
On the other hand,
$\sum_{j=1}^k \delta^3_j r_j/y^2_j$ is a nonnegative weighted
sum of convex terms $1/y^2_j$,
so $-\frac{1}{24}\sum_{j=1}^k \delta^3_j r_j/y^2_j$ is
a strictly concave function on $\Rset_+^k$
and~(\ref{eq:AuxiliaryFunction}) is a differentiable strictly concave function.
Hence, the local maximum $\wv(1/3)$ is a strict global maximum.
This proves the claim.
Furthermore,
\[
h(\wv(\xi)) =
-\frac{1}{24}
\sum_{j=1}^k \frac{\delta^3_j r_j}{\big( s_j(\xi)/S(\xi) \big)^2} =
-\frac{1}{24} S(\xi)^2 \sum_{j=1}^k \delta_j r^{1-2\xi}_j =
-\frac{1}{24} S(\xi)^2 S(1-2\xi).
\]
In particular, $h(\wv(0)) < h(\wv(1/3))$ and
\begin{align*}
h(\wv(0)) &=
-S(0)^2 S(1)/24 = -(2\pi)^2 |\Gamma|/24 = -\pi^2 |\Gamma|/6, \\
h(\wv(1/3)) &=
-S(1/3)^3/24 =
-\frac{1}{24} \left[ \sum_{j=1}^k \delta_j r^{1/3}_j \right]^3 =
-\frac{1}{24} \left[ \int_{\Gamma} \kappa^{2/3}(s) \rmd s \right]^3.
\end{align*}
Here we have used that $|\Gamma_j| = \delta_j r_j$ and
$\int_{\Gamma_j} \kappa^{2/3}(s) \rmd s = |\Gamma_j|/r_j^{2/3} =
 \delta_j r_j^{1/3}$ since $\Gamma_j$ is a circular arc of radius $r_j$
and central angle $\delta_j$.

Hence, property $c_1^+ \in I_1$ and
inequalities~(\ref{eq:ExtremalValues}) hold provided
$\wv(0) \in K^{(1)}_\infty$ and $\wv(1/3) \in P_\infty^{(1)}$.
It turns out that $\wv(\xi)$ satisfies the $3k+1$ conditions listed
in~(\ref{eq:LimitPolytope1}), so that $\wv(\xi) \in P^{(1)}_\infty$,
for all $\xi \in (0,1/2]$.
For instance, $\wv(\xi)$ satisfies the first $2k-2$ inequalities:
\begin{align*}
\xi \in (0,1/2]
&\Rightarrow
r^\xi_j \min\left\{ 1, \sqrt{r_{j+1}/r_j} \right\} <
r^\xi_{j+1} <
r^\xi_j \max\left\{ 1, \sqrt{r_{j+1}/r_j} \right\} \\
&\Rightarrow
\alpha^-_j w_j(\xi) < w_{j+1}(\xi) < \alpha^+_j w_j(\xi)
\mbox{ for $j=1,\ldots,k-1$}.
\end{align*}
Inequalities
$\alpha^-_k w_k(\xi) < w_1(\xi) < \alpha^+_k w_k(\xi)$ are proved
in a similar way.
Inequalities $w_j(\xi) > 0$ and
identity $w_1(\xi) + \cdots + w_k(\xi) = 1$ are trivial.
Finally,
$\wv(0) = \lim_{\xi \to 0^+} \wv(\xi) \in K^{(1)}_\infty$
\end{proof}

The main result of this section is nothing more than a reformulation of
Theorem~\ref{thm:AsymptoticConstantIntro}.

\begin{theorem}
\label{thm:AsymptoticConstant}
Let $P^{(1)}_\infty \subset \Delta_{k-1}$,
$I_1 = h\big( P^{(1)}_\infty \big) \subset (-\infty,0)$,
$c_1^- = \inf I_1$ and $c_1^+ = \max I_1$
be the open bounded convex polytope of $\Rset^k$,
the image interval, and the extreme values introduced in
Lemma~\ref{lem:AsymptoticConstantInterval}, respectively.
Extreme values $c_1^\pm$ satisfy inequalities~\eqref{eq:ExtremalValues}.
For any fixed $c \in [c_1^-,c_1^+]$ there exist a period $q_0 \in \Nset$
and a sequence $(g_q)_{q \ge q_0}$ of generic sliding periodic
billiard trajectories $g_q \in \Pi(1,q)$ such that
\[
L(g_q) = |\Gamma| + c/q^2 + \Order(1/q^3),\quad
\mbox{as $q \to +\infty$.}
\]
Consequently, there exist a
sequence $(h_q)_q$, with $h_q \in \Pi(1,q)$, such that
\[
c_1^- =
\liminf _{q \to +\infty} \big( (L(h_q) - |\Gamma|)q^2 \big) <
\limsup _{q \to +\infty} \big( (L(h_q) - |\Gamma|)q^2 \big) =
c_1^+,\quad
\mbox{as $q \to +\infty$.}
\]
\end{theorem}

\begin{proof}
If $c \in (c_1^-,c_1^+]$,
then $c = h(\yv)$ for some $\yv \in P^{(1)}_\infty$.
If $q \in \Nset$ is big enough,
then there exists a point $\xv = (x_1,\ldots,x_k) \in \Nset^k$
such that $|q\yv - \xv |_\infty \le 1$ and $\xv \in P^{(1)} \cap qH_k$,
where $P^{(1)}$ is the unbounded convex polytope
defined in (\ref{eq:UnboundedPolytope}) for $p=1$.
Let us prove this claim.

First, we observe that $y_j > 0$, so $qy_j \ge 1$ when $q \gg 1$.
If $q\yv \in \Nset^k$, then we take $\xv = q\yv$.
If $q\yv \not \in \Nset^k$, then we can take, for instance,
\[
x_j =
\begin{cases}
\lfloor q y_j \rfloor + 1,   & \text{for } j \le i, \\
\lfloor q y_j \rfloor, & \text{otherwise},
\end{cases}
\]
where $i = q - \sum_{j=1}^k \lfloor qy_j \rfloor \in \{1,\ldots,k-1\}$,
so that
$\sum_{j=1}^k x_j = i + \sum_{j=1}^k \lfloor qy_j \rfloor = q$.
This means that $\xv \in qH_k$.
To end the proof of the claim,
we deduce that $\xv \in P^{(1)}$ from limits
$\lim_{q \to +\infty} \xv/q =
 \yv \in P^{(1)}_\infty$ and
$P^{(1)}_\infty =
 \lim_{q \to \infty}
 \left( \big\{ \xv/q : \xv \in P^{(1)} \big\} \cap H_k\right)$.
Recall that $P_\infty^{(1)}$ is an open set in $H_k$.

As we have explained before, see Remark~\ref{rem:PeriodicTrajectories},
if $q \gg 1$ then there are at least $2^k$
generic sliding periodic billiard trajectories $g_q \in \Pi(1,q)$
with exactly $x_j \in \Nset$ impacts on the arc $\Gamma_j$ and
length~(\ref{eq:LengthPeriodicTrajectory}).
The numbers $x_j \in \Nset$,
the constant angles of reflection $\psi_j > 0$
and the impact angles $\varphi^\pm_j \in (0,2\psi_j)$
described in Lemma~\ref{lem:LengthPeriodicTrajectory} satisfy identity
$\varphi^-_j + 2 (x_j-1) \psi_j + \varphi^+_j = \delta_j$
and uniform estimates
$x_j = qy_j + \Order(1)$, $\varphi^\pm_j = \Order(1/q)$ and
$\psi_j = \delta_j/2(x_j - 1) + \Order(1/q^2) = \Order(1/q)$
as $q \to +\infty$.
Therefore,
\[
\ell^\pm_j =
\frac{r_j \sin \varphi^\pm_j}{\cos(\psi_j - \varphi_j^\pm)} =
r_j \varphi^\pm_j +
\Order\Big(
(\varphi^\pm_j)^3,\varphi^\pm_j|\psi_j - \varphi^\pm_j|^2 \Big) =
r_j \varphi^\pm_j  + \Order(1/q^3)
\]
and
\begin{align*}
(x_j - 1)\ell_j
&=
2 r_j (x_j-1) \sin \psi_j =
2 r_j (x_j - 1) \Big( \psi_j - \psi_j^3/6 +
\Order\big(\psi^5_j \big) \Big) \\
&= 
2 r_j (x_j-1)\psi_j - r_j (x_j-1) \psi_j^3/3 + \Order( 1/q^4) \\
&=
2 r_j (x_j-1)\psi_j -
\frac{\delta^3_j r_j}{24(x_j -1)^2} + \Order(1/q^3) \\
&=
2 r_j (x_j-1)\psi_j - 
\frac{\delta^3_j r_j}{24y^2_j} \frac{1}{q^2} + \Order(1/q^3).
\end{align*}
Finally, we estimate the total length~(\ref{eq:LengthPeriodicTrajectory})
as follows:
\begin{align*}
L(g_q)
&=
\sum_{j=1}^k \left( \ell^-_j + (x_j-1) \ell_j + \ell^+_j \right) \\
&=
\sum_{j=1}^k r_j (\varphi^-_j + 2(x_j-1)\psi_j + \varphi^+_j) +
h(\yv)/q^2 + \Order(1/q^3) \\
&=
|\Gamma| + c/q^2 + \Order(1/q^3).
\end{align*}
We have used that
$\varphi^-_j + 2 (x_j-1) \psi_j + \varphi^+_j = \delta_j$,
$|\Gamma| = \sum_{j=1}^k \delta_j r_j$ and $c = h(\yv)$
in the last line.
Function $h(\yv)$ was defined in~(\ref{eq:AuxiliaryFunction}).
This ends the proof of the case $c \in (c_1^-,c_1^+]$.

The case $c = c_1^-$ can be obtained from the case
$c \in (c_1^-,c_1^+]$ by using a classical diagonalisation argument
about sequences of sequences of lengths.

Finally, sequence $(h_q)_q$ is constructed by
interleaving two sequences of generic sliding $(1,q)$-periodic
billiard trajectories associated with the asymptotic
coefficients $c_1^-$ and $c_1^+$ respectively.
\end{proof}

As a by-product of Theorem~\ref{thm:AsymptoticConstant},
we get that generic sliding $(1,q)$-periodic billiard trajectories
inside circular polygons are \emph{asymptotically shorter}
than the ones inside smooth ovals,
since $c_1^+$ in~\eqref{eq:ExtremalValues} has the same
formula that constant $c_1$ in part~(\ref{item:MM3})
of Theorem~\ref{thm:MarviziMelrose}.
The relation between the left endpoint $c_1^-$ of the interval $I_1$
and $-\pi^2 |\Gamma|/6$ is an open problem.
If $c_1^- = -\pi^2 |\Gamma|/6$ were true,
nodal billiard trajectories would be the \emph{asymptotically shortest}
sliding $(1,q)$-periodic billiard trajectories as $q \to +\infty$.
This is one of the reasons to take them into account.

Identity $c_1^- = \pi^2 |\Gamma|/6$ holds for squared pseudo-ellipses.
Let $\Gamma = E_{\pi/2,r,R}$ be a squared pseudo-ellipse
of radii $r$ and $R > r$, see Section~\ref{sec:CircularPolygons}.
That is, $\Gamma = E_{\pi/2,r,R}$ is the circular $4$-gon with
radii $r_1 = r_3 = r$ and $r_2 = r_4 = R$, and central angles
$\delta_1 = \delta_2 = \delta_3 = \delta_4 = \pi/2$.
A tedious computation that we omit for the sake of brevity
shows that
\begin{align*}
c^-_1 &=
-\pi^2 |E_{\pi/2,r,R}|/6 =
-\pi^3 (R + r)/6, \\
c^+_1 &=
-\frac{1}{24}
\left[ \int_{E_{\pi/2,r,R}} \kappa^{2/3}(s) \rmd s \right]^3 =
-\frac{1}{24} \left[ \sum_{j=1}^k \delta_j \sqrt[3]{r_j} \right]^3 =
-\frac{\pi^3}{24} \left[\sqrt[3]{R} + \sqrt[3]{r}) \right]^3.
\end{align*}
These two expressions above coincide when $R=r$.
In general, $c^+_1 - c^-_1$
tends to zero when $\Gamma$ tends to a circle of finite radius.

\section*{Acknowledgments}
A. C. has received funding for this project from
the European Research Council (ERC) under the European Union’s Horizon 2020
research and innovation programme (Grant Agreement No 757802).
R.~R.-R. was supported in part by the grant PID-2021-122954NB-100
which was funded by MCIN/AEI/10.13039/501100011033 and
“ERDF: A way of making Europe”.
Thanks to Aida Chaikh and Pau Mart{\'\i}n for useful and
stimulating conversations.
We are also indebted to the referee for several suggestions
that helped us to improve the exposition.

\appendix

\section{Proof of Lemma~\ref{lem:AdmissibleSymbols}}
\label{app:AdmissibleSymbols}

Throughout this proof we shall freely use the natural convention
that the objects $\Xi^i_{k+1},\zeta^i_{k+1},\xi^i_{k+1}$ should be
identified with $\Xi^{i+1}_1,\zeta^{i+1}_1,\xi^{i+1}_1$, respectively.

\begin{enumerate}[(a)]
\item
The key observation is that functions $\zeta^i_j(n)$ and $\xi^i_j(n)$
can be recursively bounded.
To be precise,
since $\zeta^i_{j+1}(n)$ is the smallest integer such that
$\zeta^i_{j+1}(n) > \alpha_j^- \zeta^i_j(n) + \beta_j^-$,
$\xi^i_{j+1}(n)$ is the largest integer such that
$\xi^i_{j+1}(n) < \alpha_j^+ \xi^i_j(n) - \beta_j^+$
and $\beta_j^+ = \alpha_j^+ + 1 > 1$, we deduce that
\begin{equation}
\label{eq:ZetaXiSingle}
\left.
\begin{array}{rl}
\zeta^0_{j+1}(n) \le
\alpha_j^- \zeta^0_j(n) + \beta_j^- + 1, &  \quad \forall j=1.\ldots,k \\
\alpha_j^+ \xi^i_j(n) - \beta_j^+ -1 \le
\xi^i_{j+1}(n) \le
\alpha^+_j \xi^i_j(n), & \quad
\forall j=1,\ldots,k \ \forall i \ge 0.
\end{array}
\right\}
\end{equation}
(A comment is in order.
 The careful reader may notice that, by the definition of the alphabet $\Qv$,
 $\zeta^0_j(n) \ge \chi_j$.
 Thus, it looks like we should have written the bound
\[
\zeta^0_{j+1}(n) \le
\max\big\{
\chi_{j+1},\alpha_j^- \zeta^0_j(n) + \beta_j^- + 1
\big\}, \qquad
\forall j=1,\ldots,k
\]
 instead of the first bound in~\eqref{eq:ZetaXiSingle}.
 However, we do not need it when $\chi_1 \gg \chi_2,\ldots,\chi_k$.
 Under this assumption, which is the second part of hypothesis~({\bf X}),
 we know that the first $k-1$ minima
 $\zeta^0_2(n),\ldots,\zeta^0_{k}(n)$ are not affected by restrictions
 $\xi_j^0(n) \ge \chi_j$, and,
 if necessary, we replace $\zeta^1_1(n) = \zeta_{k+1}^0(n)$
 ---which is the last minimum that we need to take care of---  by $\chi_1$.)

If we apply recursively $k$ times the bounds~\eqref{eq:ZetaXiSingle},
we get the cyclic bounds
\begin{equation}
\label{eq:ZetaXiCyclic}
\left.
\begin{array}{l}
\zeta^1_1(n) \le \max\{ \chi_1,\zeta^0_1(n)/\alpha + \gamma^-_1 \} \\
\alpha \xi^i_j(n) - \gamma^+_j \le \xi^{i+1}_j(n) \le \alpha \xi^i_j(n),
\quad \forall j=1,\ldots,k, \forall i \ge 0
\end{array}
\right\}
\end{equation}
where $\alpha = \prod_{j=1}^k \alpha_j^+$,
$1/\alpha = \prod_{j=1}^k \alpha_j^-$ and
\[
\gamma^\pm_j =
\sum_{m=1}^k \left( \prod_{l=1}^{m-1} \alpha_{j-l}^\pm \right)
\big( \beta_{j-m}^\pm + 1 \big),\qquad
\forall j=1,\ldots,k.
\]
\begin{enumerate}[i)]
\item
If we apply recursively $j-1$ times the bounds for the maxima
in the second line of equation~\eqref{eq:ZetaXiSingle}, we get
\begin{equation}
\label{eq:Xi}
\lambda_j n - \gamma_j = \lambda_j \xi^0_1(n) - \gamma_j \le
\xi^0_j(n) \le \lambda_j \xi^0_1(n) = \lambda_j n,
\qquad \forall j=1,\ldots,k,
\end{equation}
where $\lambda_1 = 1$, $\gamma_1 = 0$, and
\[
\lambda_j = \prod_{l=1}^{j-1} \alpha_l^+,\qquad
\gamma_j =
\sum_{m=1}^{j-1} \left( \prod_{l=1}^{m-1} \alpha_{j-l}^\pm \right)
\big( \beta_{j-m}^\pm + 1 \big),\qquad
\forall j=2,\ldots,k.
\]
If $\lambda = \max\{\lambda_1,\ldots,\lambda_k\}$
and we choose any $\nu$ such that
$0 < \nu < \min\{\lambda_1,\ldots,\lambda_k\}$,
then~\eqref{eq:Xi} implies that $\nu n \le \xi^0_j(n) \le \lambda n$
for all $j=1,\ldots,k$ provided that $\chi_1$ is large enough,
as it is assumed in hypothesis~({\bf X}).
To be precise, if we assume that $n \ge \chi_1 \gg 1$,
then $\nu n \le \lambda_j n - \gamma_j$ for all $j=1,\ldots,k$.
It suffices to take
\[
\chi_1 \ge \max \big\{ \gamma_j/(\lambda_j - \nu) : j=1,\ldots,k \big\}.
\]

\item
We assume that $i \ge 0$.
The upper bound $\xi^i_j(n) \le \alpha^i \xi^0_j(n)$ follows directly
from~\eqref{eq:ZetaXiCyclic}.
The lower bound $\xi^i_j(n) \ge \tau \alpha^i \xi^0_j(n)$
for some $\tau \in (0,1)$ is more tricky.
First, we realise that if we choose any $\kappa \in (1,\alpha)$,
then~\eqref{eq:ZetaXiCyclic} implies the weaker lower bound
$\xi^i_j(n) \ge \kappa^i \xi^0_j(n)$ provided
$n \ge
 \chi_1 \ge \max\big\{\gamma_1^+,\ldots,\gamma_k^+\big\}/(\alpha-\kappa)$.
This means that $\xi^i_j(n)$ grows geometrically as $i \to +\infty$.
Second, we know that
\[
\xi^i_j(n) \ge
\alpha \xi^{i-1}_j(n) - \gamma_j^+ =
\left(1 - \frac{\gamma_j^+}{\alpha \xi^{i-1}_j(n)} \right) \alpha \xi^{i-1}_j(n)
\ge \cdots \ge \tau_{i,j} \alpha^i \xi^0_j(n),
\]
where
\[
0 < \prod_{l=0}^{+\infty}
\left(1 - \frac{\gamma_j^+}{\alpha \xi^l_j(n)} \right) =:
\tau_j < \tau_{i,j} =
\prod_{l=0}^{i-1}
\left(1 - \frac{\gamma_j^+}{\alpha \xi^l_j(n)} \right) < 1, \qquad
\forall i \ge 0.
\]
The above infinite product converges to a non-zero value $\tau_j$
because
\[
\sum_{l=0}^{+\infty} \frac{\gamma_j^+}{\alpha \xi^l_j(n)} \le
 \frac{\gamma_j^+}{\alpha \xi^0_j(n)} \sum_{l=0}^{+\infty} \kappa^{-l} <
 +\infty.
\]
If we set $\tau = \min \{ \tau_1,\ldots,\tau_k \}$,
then $\xi^i_j(n) \ge \tau \alpha^i \xi^i_0(n)$.
This ends the proof for the forward case $i \ge 0$.
The backward case $i < 0$ is proved in a similar way.

\item
Inequality $\rho^i(n) \le \rho^{i+1}(n)$ is trivial.
Using the already proved parts~(\ref{item:AdmissibleSymbols1})
and~(\ref{item:AdmissibleSymbols2}) of this lemma
and the formula for geometric sums, we get
\begin{align*}
\frac{\rho^{i+1}(n)}{\xi^i_j(n)}
&\le
\frac{\sum_{j=1}^k \sum_{m=0}^i \alpha^m \xi^0_j(n)}
     {\tau \alpha^i \xi^0_j(n)} \le
\frac{\sum_{j=1}^k \sum_{m=0}^i \alpha^m \lambda n}{\tau \alpha^i \nu n} =
\frac{k\lambda (\alpha^{i+1} - 1)}{\tau \nu (\alpha - 1) \alpha^i} \\
&\le
\frac{k \lambda \alpha}{\tau \nu (\alpha - 1)} =: \lambda', \\
\frac{\rho^i(n)}{\xi^i_j(n)}
&\ge
\frac{\sum_{j=1}^k \sum_{m=0}^{i-1} \tau \alpha^m \xi^0_j(n)}
     {\alpha^i \xi^0_j(n)} \ge
\frac{\sum_{j=1}^k \sum_{m=0}^{i-1} \tau \alpha^m \nu n}{\alpha^i \lambda n} =
\frac{k \tau \nu (\alpha^i - 1)}{\lambda (\alpha - 1) \alpha^i} \\
&\ge
\frac{k \tau \nu}{\lambda \alpha} =: \nu'.
\end{align*}

\item
The inequalities $n/\alpha + \gamma^- \le n-1 < n+1 \le \alpha n - \gamma^+$
for all $n \ge \chi_1$ follow from hypotheses~({\bf B}) and~({\bf X}).
It suffices to take
\[
\chi_1 \ge
\max\{ (1 + \gamma^+)/(\alpha - 1), (1 + \gamma^-)/(1-1/\alpha) \}.
\]
Set $\gamma^\pm = \gamma^\pm_1$.
The inequalities $\zeta^1_1(n) \le \max\{ \chi_1, n/\alpha + \gamma^- \}$
and $\alpha n - \gamma^+ \le \xi^1_1(n)$ follow directly
by taking $i=0$ in~\eqref{eq:ZetaXiCyclic},
because $\zeta^0_1(n) = n = \xi^0_1(n)$ by definition.

\item
If we take
$n \ge
\max\{ (\chi_1 - \gamma^-)\alpha,
       (N + \gamma^+)/(\alpha - 1),
       (N + \gamma^-)/(1-1/\alpha) \}$,
then
$\chi_1 \le n/\alpha + \gamma^- \le n-N < n+N \le \alpha n - \gamma^+$.
\end{enumerate}
\item
Let us check that the sets $\Xi^i_j(n)$ have no gaps in $\Nset$.
That is, we want to check that $[n^-,n^+] \cap \Nset \subset \Xi^i_j(n)$
for all $n^\pm \in \Xi^i_j(n)$ such that $n^- \le n^+$.

First, we consider the forward case $i \ge 0$.
We prove it by induction on the ordering
\[
\Xi^0_1,\ldots,\Xi^0_k,
\Xi^1_1=\Xi^0_{k+1},\ldots,\Xi^1_k,
\Xi^2_1=\Xi^1_{k+1},\ldots,\Xi^2_k,
\ldots, \Xi^i_1=\Xi^{i-1}_{k+1},\ldots,\Xi^i_j,\Xi^i_{j+1},\ldots.
\]

The base case is trivial: $\Xi^0_1(n) = \{ n \}$.
Let us perform now the inductive step.
We assume that $\Xi^i_j(n)$ has no holes in $\Nset$
for some $i \ge 0$ and $1 \le j \le k$.
The next set is
\[
\Xi^i_{j+1}(n) =
\left\{ n'' \in \Nset :
n'' \ge \chi_{j+1}, \
\exists n' \in \Xi^i_j(n) \mbox{ s. t.
$\alpha_j^- n' + \beta_j^- < n'' < \alpha_j^+ n' - \beta_j^+$}
\right\}.
\]
If $\Xi^i_{j+1}(n)$ has a hole in $\Nset$,
there is $n' \ge \chi_j$ such that
$\alpha_j^+ n' - \beta_j^+ \le \alpha_j^- (n'+1) + \beta_j^-$,
which is impossible by hypotheses~({\bf A}) and~({\bf X}).
It suffices to take
\[
\chi_j > (\alpha_j^- + \beta_j^- + \beta_j^+)/(\alpha_j^+ - \alpha_j^-).
\]
Property $\big[ \max\{\chi_1,n-|i|\}, n+|i| \big] \cap \Nset
\subset \Xi^i_1(n)$ for all $i \in \Zset$ and $n \ge \chi_1$
follows by induction from part~(\ref{item:AdmissibleSymbols4})
of this lemma and the fact that $\Xi^i_1(n)$ has no gaps in $\Nset$.

This ends the proof for the forward case $i \ge 0$.
The backward case $i < 0$ is similar.
\qedhere
\end{enumerate}

\section{Proof of Proposition~\ref{prop:Optimalc}}
\label{app:Optimalc}

Fix any $p \in \Nset$.
We look for the optimal value of $c_\star(p) > 0$ such that
\[
\# \Pi(p,q) \ge
2^{kp} G_q\big( P^{(p)} \big) \ge
c_\star(p) q^n + \Order(q^{n-1})\quad
\mbox{ as $q \to +\infty$}.
\]
Therefore, we want to count as many integer points as possible in
$P^{(p)} \subset \Rset^{n+1}$ whose coordinates sum $q \in \Nset$.
We shall put these points in a 1-to-1 correspondence with the integer
points of a $q$-dilated bounded convex polytope of $\Rset^n$ by means
of a projection.

We shall use a lower bound established by Wills~\cite{Wills1973}.
Let us briefly describe it.
If $t > 0$ and $P \subset \Rset^n$,
then $tP = \{ y\xv : \xv \in P \}$ and $P/t = \{ \xv/t : \xv \in P \}$
are the \emph{$t$-dilation} and \emph{$t$-contraction} of $P$.
The \emph{inradius} $\varrho(K)$ of a proper compact convex set
$K \subset \Rset^n$ is the biggest number $\varrho > 0$ such that
$K$ contains a ball of radius $\varrho$.
Note that $0 < \varrho(K) < \infty$ for any proper compact $K$.

\begin{lemma}
\label{lem:WillsLowerBound}
If $K$ is a proper compact convex subset of $\Rset^n$, then
\[
\# (tK \cap \Zset^n) \ge
\Volume(K) \big(t - \sqrt{n}/2\varrho(K) \big)^n, \qquad
\forall t \ge \sqrt{n}/2\varrho(K).
\]
\end{lemma}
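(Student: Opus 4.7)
The plan is to combine a general lower bound for the number of lattice points in a convex body with a Minkowski-difference estimate that exploits the inscribed ball hypothesis. Throughout, write $C_0 = [-1/2,1/2]^n$ for the closed unit cube centred at the origin, so that $\Volume(C_0) = 1$ and $C_0 \subset B(0,\sqrt{n}/2)$ since the diameter of $C_0$ equals $\sqrt{n}$.

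First I would establish the auxiliary fact that for any convex body $L \subset \Rset^n$,
\[
\#(L \cap \Zset^n) \ge \Volume(L \ominus C_0), \qquad
L \ominus C_0 := \{ \yv \in \Rset^n : \yv + C_0 \subset L \}.
\]
Consider the nearest-lattice-point map $\pi:\Rset^n \to \Zset^n$ given by $\pi(\yv) = (\lfloor y_1 + 1/2 \rfloor,\ldots,\lfloor y_n + 1/2 \rfloor)$. By construction $\pi(\yv) \in \yv + C_0$, so whenever $\yv \in L \ominus C_0$ one has $\pi(\yv) \in L \cap \Zset^n$. On the other hand, the fibre $\pi^{-1}(\xv_0)$ equals the half-open unit cube $\xv_0 + [-1/2,1/2)^n$ and hence has $n$-volume exactly $1$. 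Summing over the image of the restriction $\pi|_{L \ominus C_0}$ yields
\[
\Volume(L \ominus C_0) = \sum_{\xv_0 \in \pi(L \ominus C_0)} \Volume\big(\pi^{-1}(\xv_0) \cap (L \ominus C_0)\big) \le \#\pi(L \ominus C_0) \le \#(L \cap \Zset^n).
\]

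The second ingredient is the following convexity estimate: if $K \supset \ov + \varrho\, B(0,1)$, where $\varrho = \varrho(K)$, then for every $s \in [0,\varrho]$,
\[
K \ominus B(0,s) \supset (1 - s/\varrho)\, K + (s/\varrho)\, \ov.
\]
For $\yv = (1 - s/\varrho)\,\boldsymbol{z} + (s/\varrho)\, \ov$ with $\boldsymbol{z} \in K$ and any $\uv \in B(0,s)$, the point
\[
\yv + \uv = (1 - s/\varrho)\,\boldsymbol{z} + (s/\varrho)\,\big(\ov + (\varrho/s)\uv\big)
\]
is a convex combination of $\boldsymbol{z} \in K$ and $\ov + (\varrho/s)\uv \in \ov + \varrho\, B(0,1) \subset K$, and hence lies in $K$ by convexity; therefore $\yv + B(0,s) \subset K$.

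To conclude, apply the auxiliary inequality with $L = tK$, use $C_0 \subset B(0,\sqrt{n}/2)$ to deduce $\Volume(tK \ominus C_0) \ge \Volume(tK \ominus B(0,\sqrt{n}/2))$, and finally invoke the convexity estimate for $tK$ (whose inscribed ball has centre $t\ov$ and radius $t\varrho$) with $s = \sqrt{n}/2$; the condition $s \le t\varrho$ is precisely the hypothesis $t \ge \sqrt{n}/(2\varrho)$. One obtains
\[
tK \ominus B(0,\sqrt{n}/2) \supset \big(t - \sqrt{n}/(2\varrho)\big)\, K + (\sqrt{n}/(2\varrho))\, \ov,
\]
whose $n$-volume is $\Volume(K)\big(t - \sqrt{n}/(2\varrho)\big)^n$, and chaining the three inequalities yields the stated bound. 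The main obstacle is the convexity estimate in the third display; the lattice-point count and the cube/ball comparison are elementary, but one must pay attention to the factor $\sqrt{n}/2$, which is the circumradius of the unit cube.
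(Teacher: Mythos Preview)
Your proof is correct, but it takes a different route from the paper. The paper does not prove the inequality from scratch: it simply cites Wills~\cite{Wills1973} for the special case $t=1$ (under the hypothesis $\varrho(K) \ge \sqrt{n}/2$) and then reduces the general case to that one by the scaling identities $\Volume(tK) = t^n \Volume(K)$ and $\varrho(tK) = t\varrho(K)$. Your argument, by contrast, is fully self-contained: you combine the elementary lattice-point bound $\#(L \cap \Zset^n) \ge \Volume(L \ominus C_0)$ (obtained by rounding to the nearest lattice point) with a Minkowski-difference estimate that shrinks $tK$ towards an incentre. This buys independence from the reference and makes transparent where the factor $\sqrt{n}/2$ comes from (the circumradius of the unit cube), at the cost of a few more lines; the paper's approach is shorter but relies on an external result.
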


\begin{proof}
The case $t=1$ is proved in~\cite{Wills1973},
assuming that $\varrho(K) \ge \sqrt{n}/2$.
The general case follows directly from this case since
$tK$ is a proper compact convex subset of $\Rset^n$,
$\Volume(tK) = t^n \Volume(K)$,
and $\varrho(tK) = t\varrho(K) \ge 2/\sqrt{n}$
if $t \ge \sqrt{n}/2\varrho(K)$.
\end{proof}

The convex polytope~\eqref{eq:UnboundedPolytope} is not closed,
so the convex polytopes $\tilde{P}^{(p)}_q$ defined
in Proposition~\ref{prop:Optimalc} are not closed either.
However, they are the projection of some convex polytopes contained
in the open simplex
$\Delta_n = \{\xv \in \Rset^{n+1} : \xv > 0, \ x_1 + \cdots + x_n = 1\}$,
which implies that they are bounded.
Hence we need to extend Lemma~\ref{lem:WillsLowerBound} to
proper bounded convex subsets of $\Rset^n$.

\begin{corollary}
\label{cor:WillsLowerBound}
If $P$ is a proper bounded convex subset of $\Rset^n$ and $K = \bar{P}$,
then
\[
\# (tP \cap \Zset^n) \ge
\Volume(K) \big(s - \sqrt{n}/2\varrho(K) \big)^n, \qquad
\forall t > s \ge \sqrt{n}/2\varrho(K).
\]
\end{corollary}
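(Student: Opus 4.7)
The plan is to shrink-and-shift $K$ so that the shifted copy sits inside $tP$ while retaining the inradius and volume of $sK$, and then apply Lemma~\ref{lem:WillsLowerBound} to that translated body.

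First I would fix an interior point $\xv_0 \in \Interior(K)$. This is possible because $\varrho(K) > 0$ forces $\Interior(K) \neq \emptyset$, and since $P$ is convex, a standard fact from convex analysis---the interior of the closure of a full-dimensional convex set coincides with its interior---gives $\Interior(K) = \Interior(\bar{P}) = \Interior(P) \subseteq P$, so in fact $\xv_0 \in P$ automatically.

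Second, for $t > s \ge \sqrt{n}/(2\varrho(K))$, I would consider the affine contraction $H:\Rset^n \to \Rset^n$ defined by $H(\xv) = s\xv + (t-s)\xv_0$ and verify the key inclusion $H(K) \subseteq tP$. This is immediate from convexity: for $\yv \in K$ one writes $H(\yv) = t\bigl[\lambda \yv + (1-\lambda)\xv_0\bigr]$ with $\lambda = s/t \in (0,1)$; since $\xv_0$ is interior to $K$, the convex combination $\lambda \yv + (1-\lambda)\xv_0$ lies in $\Interior(K) \subseteq P$, hence $H(\yv) \in tP$. Rewriting $H(K) = sK'$ with $K' = K + \tfrac{t-s}{s}\xv_0$, we see that $K'$ is merely a translate of $K$, hence itself a proper compact convex body with $\Volume(K') = \Volume(K)$ and $\varrho(K') = \varrho(K)$.

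Finally, Lemma~\ref{lem:WillsLowerBound} applied to $K'$ at parameter $s$ (which satisfies $s \ge \sqrt{n}/(2\varrho(K'))$ by construction) yields
\[
\#(sK' \cap \Zset^n) \ge \Volume(K')\bigl(s - \sqrt{n}/(2\varrho(K'))\bigr)^n = \Volume(K)\bigl(s - \sqrt{n}/(2\varrho(K))\bigr)^n,
\]
and combining with the inclusion $sK' = H(K) \subseteq tP$ gives the required lower bound on $\#(tP \cap \Zset^n)$. The only subtle point is the first step: justifying that the interior of $K$ lies inside $P$, so that the anchor $\xv_0$ we rely on actually belongs to $P$. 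Everything else is a routine verification that the shrink-and-shift $H$ sends $K$ into $tP$ and that Wills' bound applies verbatim to the translated body $K'$.
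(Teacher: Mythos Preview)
Your proof is correct and follows essentially the same geometric idea as the paper's: contract $K$ toward an interior point by the factor $s/t$ so the image lands in $\Interior(K)\subset P$, then apply Wills' lemma to the resulting translate of $K$. The paper's version additionally chooses the interior anchor so that a certain dilation of it is a lattice point (allowing an integer translation of $tP$ before comparing), whereas you bypass that step by working with the inclusion $sK'\subset tP$ directly; this makes your argument marginally cleaner, but the substance is the same.
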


\begin{proof}
The closure $K = \bar{P}$ is compact.
Let $\bar{B}$ be a closed ball of radius $\varrho(K) > 0$
contained in $K$.
Let $B = \Interior \bar{B}$.
Given any point $-\xv \in B$, we have that
$s(\xv + K) \subset t(\xv + P)$ for all $t > s > 0$.
If $t > \sqrt{n}/2\varrho(K)$, then there is a point
$-\xv_t \in B$ such that $t \xv_t \in \Zset^n$.
Then
\begin{align*}
\# (tP \cap \Zset^n) &=
\# \big( (t\xv_t + tP) \cap \Zset^n \big) =
\# \big( t(\xv_t + P) \cap \Zset^n \big) \ge
\# \big( s(\xv_t + K) \cap \Zset^n \big) \\
&\ge
\Volume(\xv_t + K)\big(s- \sqrt{n}/2\varrho(\xv_t + K)  \big)^n =
\Volume(K) \big(s- \sqrt{n}/2\varrho(K)  \big)^n,
\end{align*}
for all $t > s \ge \sqrt{n}/2\varrho(K)$.
\end{proof}

\begin{proof}[Proof of Proposition~\ref{prop:Optimalc}.]
\begin{enumerate}[(a)]
\item
Let $H_{n+1} =
\left\{ \xv \in \Rset^{n+1} : x_1 + \cdots + x_{n+1} = 1 \right\}$.
The cardinality of a finite set is invariant under
$q$-dilations, $q$-contractions, and 1-to-1 projections.
Thus,
\begin{align*}
\# \Pi(p,q) &\ge 2^{kp} G_q\big( P^{(p)} \big) \\
&=
2^{kp} \# \left\{
\xv = (x_1,\ldots,x_{n+1}) \in P^{(p)} \cap \Zset^{n+1} :
x_1 + \cdots + x_{n+1} = q
\right\} \\
&=
2^{kp} \# \big( P^{(p)} \cap \Zset^{n+1} \cap q H_{n+1} \big) \\
&=
2^{kp} \# \big( (P^{(p)}/q) \cap (\Zset^{n+1}/q) \cap H_{n+1} \big) \\
&=
2^{kp} \# \big( P^{(p)}_q \cap (\Zset^{n+1}/q) \big) \\
&=
2^{kp} \# \big( q P^{(p)}_q \cap \Zset^{n+1} \big) \\
&=
2^{kp} \# \big( q \tilde{P}^{(p)}_q \cap \Zset^n \big) \\
&\ge
2^{kp} \Volume\big( \tilde{K}^{(p)}_q \big)
\Big(q - 1 - \sqrt{n}/2\varrho\big( \tilde{K}^{(p)}_q \big) \Big)^n \\
&\ge
2^{kp} V \big( \tilde{K}^{(p)}_{\infty} \big) q^{kp-1} + \Order(q^{kp-2})
\quad \mbox{ as $q \to +\infty$},
\end{align*}
where $\tilde{K}^{(p)}_q$ is the closure of $\tilde{P}^{(p)}_q$.
We have used Corollary~\ref{cor:WillsLowerBound} with
$t = q$ and $s=q-1$ in the second to last inequality.
In the last inequality, we have used estimates
\[
V\big( \tilde{K}^{(p)}_q \big) =
 V\big( \tilde{K}^{(p)}_\infty \big) + \Order(1/q),\qquad
\varrho\big( \tilde{K}^{(p)}_q \big) =
 \varrho\big( \tilde{K}^{(p)}_\infty \big) + \Order(1/q).
\]
These estimates follow from the fact that each facet of
the limit compact polytope $\tilde{K}^{(p)}_\infty$ is at an
$\Order(1/q)$-distance of the corresponding facet of
the polytope $\tilde{K}^{(p)}_q$,
which can be easily seen by comparing the half-space
representation~\eqref{eq:ProjectedLimitPolytope} of $\tilde{K}^{(p)}_\infty$
with the half-space representation
\begin{equation}
\label{eq:ProjectedNoLimitPolytope}
\tilde{K}^{(p)}_q =
\left\{
\tilde{\xv} \in \Rset^n :
\begin{array}{l}
\alpha^-_j x_j + \beta^-_j/q \le x_{j+1} \le \alpha^+_j x_j - \beta^+_j/q,
\quad \forall j=1,\ldots,n-1 \\
\alpha^-_n x_n + \beta^-_n/q \le 1 - \varsigma(\tilde{\xv}) \le
\alpha^+_n x_n - \beta^+_n/q \\
\alpha^-_{n+1} (1 - \varsigma(\tilde{\xv})) + \beta^-_{n+1}/q \le x_1 \le
\alpha^+_{n+1} (1 - \varsigma(\tilde{\xv})) - \beta^+_{n+1}/q\\
x_j \ge \chi_j/q, \quad \forall j=1,\ldots,n \\
\varsigma(\tilde{\xv}) \le 1 - \chi_{n+1}/q
\end{array}
\right\}.
\end{equation}

\item
All convex bodies are Jordan measurable and
\[
\Volume( J ) =
\lim_{q \to +\infty} q^{-n} \# \left( J \cap (\Zset^n/q) \right) =
\lim_{q \to +\infty} q^{-n} \# \left( qJ \cap \Zset^n \right)
\]
for any Jordan measurable set $J \subset \Rset^n$, see~\cite[section 7.2]{Gruber2007}.
Therefore,
\begin{align*}
\lim_{q \to +\infty} q^{-n} G_q\big( P^{(p)} \big) &\le
\lim_{q \to +\infty}
q^{-n} \# \big (q\tilde{K}^{(p)}_q \cap \Zset^n \big) \\
&\le
\lim_{q \to +\infty}
q^{-n} \# \big (q\tilde{K}^{(p)}_\infty \cap \Zset^n \big) =
\Volume\big( \tilde{K}^{(p)}_\infty \big), \\
\lim_{q \to +\infty} q^{-n} G_q\big( P^{(p)} \big) &\ge
\lim_{q \to +\infty}
\Big( V \big( \tilde{K}^{(p)}_{\infty} \big) + \Order(1/q) \Big) =
\Volume\big( \tilde{K}^{(p)}_\infty \big).
\end{align*}
We have used that $\tilde{K}^{(p)}_q \subset \tilde{K}^{(p)}_\infty$
---compare half-space representations~(\ref{eq:ProjectedNoLimitPolytope})
   and~(\ref{eq:ProjectedLimitPolytope})---
in the first line and the lower bound obtained at the beginning of
this proof in the second one.
\item
It is a simple computation using that
$x_{n+1} = 1 - \varsigma(\tilde{\xv})$
when $\xv = (\tilde{\xv},x_{n+1}) \in H_{n+1}$.
\qedhere
\end{enumerate}
\end{proof}

\bibliographystyle{abbrv}

\end{document}